\theoremstyle{plain}
\newtheorem{theo}{Theorem}[section]
\newtheorem*{theo*}{Theorem}
\newtheorem{prop}[theo]{Proposition}
\newtheorem{lemm}[theo]{Lemma}
\newtheorem{coro}[theo]{Corollary}
\newtheorem{assu}[theo]{Assumption}
\newtheorem{defi}[theo]{Definition}
\newtheorem{claim}[theo]{Claim}
\theoremstyle{definition}
\newtheorem{rema}[theo]{Remark}
\newtheorem{nota}[theo]{Notation}
\DeclareMathOperator{\RE}{Re}
\DeclareMathOperator{\IM}{Im}
\DeclareMathOperator{\Op}{Op}
\DeclareMathOperator{\supp}{supp}
\DeclareMathOperator{\sign}{sign}
\DeclareSymbolFont{pletters}{OT1}{cmr}{m}{sl}
\DeclareMathSymbol{s}{\mathalpha}{pletters}{`s}
\def\ba{\begin{align}}
\def\bad{\begin{aligned}}
\def\be{\begin{equation}}
\def\ea{\end{align}}
\def\ead{\end{aligned}}
\def\ee{\end{equation}}
\def\e{\eqref}
\def\blA{\bigl\lVert}
\def\bo{C^0}
\def\brA{\bigr\rVert}
\def\widecontrol#1{\wide \Theta_{#1}}
\def\controltroisT{\Theta_{M,T}}
\def\defn{\mathrel{:=}}
\def\eC#1{W^{{#1},\infty}}
\def\eps{\varepsilon}
\def\Fl#1#2{\mathcal{L}({#1},{#2})}
\def\la{\left\vert}
\def\lA{\left\Vert}
\def\le{\leq}
\def\les{\lesssim}
\def\lk{\ell}
\def\Lk{L}
\def\Lr{\mathcal{L}}
\def\meanH{H_0}
\def\mez{\frac{1}{2}}
\def\mR{\mathcal{R}}
\def\Nr{\mathcal{N}}
\def\opk{\mathcal{K}}
\def\ra{\right\vert}
\def\rA{\right\Vert}
\def\Run{R_1}
\def\Rdeux{R_2}
\def\Rtrois{R_3}
\def\Rquatre{R_4}
\def\Rcinq{R_5}
\def\tdm{\frac{3}{2}}
\def\tq{\frac{3}{4}}
\def\V{W}
\def\xC{\mathbb{C}}
\def\xN{\mathbb{N}}
\def\xR{\mathbb{R}}
\def\xT{\mathbb{T}}
\def\xZ{\mathbb{Z}}
\def\wide{\widetilde}
\def\Dx{\la D_x\ra}
\def\px{\partial_x}
\def\RBony{\mathcal{R}}
\def\Pext{P_{\rm ext}}
\def\indicator#1{\mathbf{1}_{{#1}}}
\def\uo{\indicator{\omega}}
\renewcommand{\a}{\alpha}
\renewcommand{\b}{\beta}
\newcommand{\g}{\gamma}
\newcommand{\lm}{\lambda}
\newcommand{\p}{\pi}
\newcommand{\ph}{\varphi}
\newcommand{\s}{\sigma}
\renewcommand{\t}{\tau}
\newcommand{\pa}{\partial}
\newcommand{\T}{\mathbb T}
\newcommand{\R}{\mathbb R}
\newcommand{\mN}{\mathcal{N}}
\newcommand{\mT}{\mathcal{T}}
\newcommand{\mL}{\mathcal{L}}
\newcommand{\mQ}{\mathcal{Q}}
\def\sqrkappa{{}}
\def\kappamuet{{}}
\def\kappanonmuet{\kappa}
\def\Norm#1#2{{C^0([0,{#1}];{#2})}}
\def\NormP#1#2{{C^0([0,{#1}];{#2})}}
\numberwithin{equation}{section}
\title{Control of water waves}
\author{T. Alazard, P. Baldi, D. Han-Kwan}
\def\notina[#1]#2{\begingroup\def\thefootnote{\fnsymbol{footnote}}\footnote[#1]{#2}\endgroup}
\begin{document}

\begin{abstract}
We prove local exact controllability in arbitrary short time of the two-dimensional incompressible Euler equation with free surface, 
in the case with surface tension. This proves that one can generate 
arbitrary small amplitude periodic gravity-capillary water waves by blowing on a localized portion 
of the free surface of a liquid.
\end{abstract}

\maketitle

%{\color{red}{TODO} : add assumption $s\in 3\xN$ in the statements. }

\vspace{-5mm}

\tableofcontents

\vspace{-7mm}

\section{Introduction}
Water waves are disturbances of the free surface of a liquid.%
\notina[0]{T.A. was partly supported by the grant ``ANA\'E'' ANR-13-BS01-0010-03.
This research was carried out in the frame of Programme STAR, financially supported by UniNA and Compagnia di San Paolo; it was partially supported by the European Research Council under FP7, 
and PRIN 2012 ``Variational and perturbative aspects of nonlinear differential problems''.} 
They are, in general, produced by the immersion of a solid body,
the oscillation of a solid portion of the boundary
or by impulsive pressures applied on the free surface.
The question we address in this paper is the following:
which waves can be generated from the rest position
by a localized pressure distribution applied on the free surface.
This question is strictly related to the generation of waves
in a pneumatic wave maker (see \cite[\S21]{WehausenLaitone}, \cite{CFGK}).
Our main result asserts that, in arbitrarily small time,
one can generate any small amplitude, two-dimensional,
gravity-capillary water waves.
This is a result from control theory.
More precisely, this article is devoted to the study
of the local exact controllability of the incompressible
Euler equation with free surface.

%The mathematical analysis of water waves is now well developed. 
%In addition to the study of travelling waves, which is historically 
%at the heart of the subject, there has been many recent results concerning the Cauchy problem. In this paper, %using 
%tools developed in the analysis of both travelling waves and the Cauchy problem, our goal is to initiate the %analysis the generation of water waves 
%by one of the three previous processes. 
%The question we address is the following: 
%which waves can be generated from rest by a localized 
%pressure distribution applied on the free surface. This question is related to the generation of waves in a pneumatic wave maker (see \cite{CFGK}). 

There are many known control results for linear or nonlinear equations (see the book of Coron~\cite{Coron}), 
including equations describing water waves in some asymptotic regimes, 
like Benjamin-Ono (\cite{LinaresRosier,LLR}), 
KdV (\cite{Rosier,LRZ}) or nonlinear Schr\"odinger equation (\cite{DehmanLebeau}). 
In this paper, instead, we consider the full model, that is the incompressible Euler equation with free surface. 
Two key properties of this equation are that it is quasi-linear (instead of semi-linear as Benjamin-Ono, KdV or NLS) and secondly 
it is not a partial differential equation but instead a pseudo-differential equation, 
involving the Dirichlet-Neumann operator which is 
nonlocal and also depends nonlinearly on the unknown.  
As we explain later in this introduction, 
this requires to introduce new tools to prove the controllability. 

To our knowledge, this is the first control result for a quasi-linear 
wave equation relying on propagation of energy. 
In particular, using dispersive properties of gravity-capillary water waves
(namely the infinite speed of propagation), 
we prove that, for any control domain, 
one can control the equation in arbitrarily small time 
intervals. % $[0,T]$, allowing $T\ll |\omega|$. 

\subsection{Main result}
We consider the dynamics of an incompressible fluid moving under the force 
of gravitation and surface tension. At time~$t$, the fluid domain  
$\Omega(t)$ has a rigid bottom and a free surface described by the equation~$y=\eta(t,x)$, so that
$$
\Omega(t)=\left\{\, (x,y)\in \xR^2\,;\, -b<y<\eta(t,x)\,\right\},
$$
for some positive constant $b$ (our result also holds in infinite depth, for $b=\infty$). 
The Eulerian velocity field $v$ is assumed to be irrotational. 
It follows that $v=\nabla_{x,y} \phi$ 
for some time-dependent potential $\phi$ satisfying 
\begin{equation}\label{intro:1}
\Delta_{x,y}\phi=0,\quad 
\partial_{t} \phi +\mez \la \nabla_{x,y}\phi\ra^2 + P +g y = 0,\quad \partial_y\phi\arrowvert_{y=-b}=0,
\end{equation}
where~$g>0$ is the gravity acceleration,
$P$ is the pressure, $\nabla_{x,y}=(\partial_x,\partial_y)$ and $\Delta_{x,y}=\px^2+\partial_y^2$. 
The water waves equations are given by two boundary conditions on the free surface:  
firstly
$$
\partial_{t} \eta = \sqrt{1+(\px \eta)^2}\, \partial_n \phi \arrowvert_{y=\eta}
$$
where~$\partial_n$ is the outward normal derivative, so 
$\sqrt{1+(\px \eta)^2}\, \partial_n \phi =\partial_y\phi-(\px\eta)\px\phi$. 
Secondly, the balance of forces across the free surface reads
$$
P\arrowvert_{y=\eta}=\kappanonmuet H(\eta)+\Pext(t,x)
$$
where $\kappanonmuet$ is a positive constant, $\Pext$ is an external source term 
and $H(\eta)$ is the % mean curvature:% of the free surface:
curvature:
\begin{equation*}
H(\eta) := \partial_x \bigg( \frac{\partial_x\eta}{\sqrt{1+(\partial_x\eta)^2}} \bigg)
= \frac{\px^2\eta}{(1 + (\px\eta)^2)^{3/2}} \cdot
\end{equation*}

Following Zakharov~\cite{Zakharov1968} 
and Craig and Sulem~\cite{CrSu}, it is equivalent to work 
with the trace of~$\phi$ at the free boundary
$$
\psi(t,x)=\phi(t,x,\eta(t,x)),
$$
and introduce the Dirichlet-Neumann operator~$G(\eta)$ 
that relates~$\psi$ to the normal derivative 
$\partial_n\phi$ of the potential by 
$$
(G(\eta) \psi)  (t,x)=\sqrt{1+(\partial_x\eta)^2}\,
\partial _n \phi\arrowvert_{y=\eta(t,x)}.
$$
Hereafter the surface tension coefficient $\kappanonmuet$ is taken to be~$1$. Then~$(\eta,\psi)$ solves (see~\cite{CrSu}) the system
\begin{equation}\label{WW1}
\left\{
\begin{aligned}
&\partial_t \eta = G(\eta)\psi,\\
&\partial_t\psi +g\eta+  \frac{1}{2} (\px\psi)^2  
-\frac{1}{2} \frac{\bigl(G(\eta)\psi + (\px\eta)(\px\psi)\bigr)^2}{1+ (\px\eta)^2}
= \kappamuet H(\eta)+\Pext.
\end{aligned}
\right.
\end{equation}
This system is augmented with initial data
\be\label{WW2}
\eta\arrowvert_{t=0}=\eta_{in},\quad \psi\arrowvert_{t=0}=\psi_{in}.
\ee
We consider the case when $\eta$ and $\psi$ are $2\pi$-periodic in the space variable $x$ and we set $\xT\defn \xR/(2\pi \xZ)$. Recall that the mean value of $\eta$ is conserved in time and can be taken to be $0$ without loss of generality. 
We thus introduce the Sobolev spaces $\meanH^\sigma(\xT)$ of functions with mean value $0$. 
Our main result asserts that, given 
any control domain $\omega$ and any arbitrary control time $T>0$, the equation \e{WW1} is controllable in time $T$ for small 
enough data.

\begin{theo}\label{T1}
Let $T>0$ and consider a non-empty open subset $\omega\subset \xT$. 
There exist $\sigma$ large enough and a positive constant 
$M_0$ small enough such that, for any two pairs of functions $(\eta_{in},\psi_{in})$,  
$(\eta_{final},\psi_{final})$ in $\meanH^{\sigma+\mez}(\xT)\times H^\sigma(\xT)$ satisfying 
$$
\lA \eta_{in}\rA_{H^{\sigma+\mez}}+\lA \psi_{in}\rA_{H^\sigma}<M_0,\quad 
\lA \eta_{final}\rA_{H^{\sigma+\mez}}+\lA \psi_{final}\rA_{H^\sigma}<M_0,
$$
there exists $P_{ext}$ in $C^0([0,T];H^{\sigma}(\xT))$, supported in 
$[0,T]\times \omega$, that is
$$
\supp P_{ext}(t,\cdot)\subset \omega, \quad \forall t\in [0,T], 
$$ 
such that the Cauchy problem \e{WW1}-\e{WW2} has a unique solution
$$
(\eta,\psi)\in C^0([0,T];\meanH^{\sigma+\mez}(\xT)\times H^{\sigma}(\xT)), 
$$
and the solution $(\eta,\psi)$ satisfies 
$(\eta\arrowvert_{t=T},\psi\arrowvert_{t=T})=(\eta_{final},\psi_{final})$.
\end{theo}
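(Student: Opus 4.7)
The plan is to combine the Hilbert Uniqueness Method with a linearization--plus--fixed--point argument adapted to the quasi-linear, nonlocal structure of \e{WW1}. As a first step, I would paralinearize and symmetrize the Dirichlet--Neumann operator and the nonlinearities of \e{WW1}, following the approach developed for the Cauchy problem of capillary--gravity water waves. Modulo smoothing remainders and after choosing a good complex unknown $u$ built from $(\eta,\psi)$, this should reduce \e{WW1} to a single scalar paradifferential equation of the form
$$
\partial_t u + i T_\gamma u = \Nr(u) + T_\beta \Pext + \text{smoother terms},
$$
where $\gamma = \gamma(t,x,\xi)$ is an elliptic symbol of order $\tdm$ whose principal part at $(\eta,\psi)=0$ is $\sqrt{|\xi|(g+|\xi|^2)}$. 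This exposes the dispersive half--Klein--Gordon structure of gravity--capillary waves.

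The core ingredient is then the controllability of the linearization at the origin,
$$
\partial_t u + i \sqrt{|D_x|(g-\px^2)}\, u = \chi_\omega f,
$$
with $\chi_\omega$ a smooth cutoff supported in $\omega$. By HUM duality, controllability is equivalent to an observability estimate $\lA v(0)\rA_{L^2(\xT)}^2 \le C_T \int_0^T \lA \chi_\omega v(t)\rA_{L^2(\xT)}^2\,dt$ for the adjoint equation. On $\xT$ the eigenfrequencies $\mu_n = \sqrt{n(g+n^2)}$ satisfy $\mu_{n+1}-\mu_n \to \infty$, so an Ingham--type inequality yields observability on \emph{any} interval $[0,T]$. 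This is the quantitative incarnation of the infinite speed of propagation advertised in the introduction, and is exactly what allows $T$ to be chosen arbitrarily small.

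Armed with a bounded linear control operator $\Lr$ for the flat--state problem, I would seek the nonlinear control as a fixed point of the map that, given a candidate pressure, solves \e{WW1}-\e{WW2} forward to time $T$, measures the defect $(\eta(T)-\eta_{final},\psi(T)-\psi_{final})$, and corrects it through $\Lr$ applied either to the linearized problem at the origin or along a reference trajectory. The main obstacle is the \emph{loss of derivatives} built into both the paradifferential reduction and the quasi-linear character of \e{WW1}: the linearized equation loses derivatives with respect to the coefficients, so a naive Banach contraction does not close. I expect the remedy to be either a Nash--Moser scheme (consistent with the statement asking only for some $\sigma$ large enough) or a carefully tuned iteration in a scale of Sobolev spaces based on tame estimates.

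The technical heart will be twofold. First, one must upgrade the flat--state Ingham observability into a \emph{uniform} observability for the variable--coefficient paradifferential operator $\partial_t + iT_\gamma$ obtained in Step~1; this requires a reduction to constant coefficients by conjugation through suitable paradifferential changes of variable, and a careful control of the resulting error terms. Second, one must produce a right inverse $\Lr$ to the linear control map that is tame in the Sobolev scale, so that the iteration converges. These two points (uniform paradifferential observability and tame right inverse) are where I expect the real work to lie; once they are available, the structure of the fixed--point/Nash--Moser argument is essentially standard.
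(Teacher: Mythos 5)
Your overall architecture --- paralinearize to a scalar paradifferential dispersive equation, prove an Ingham-type observability inequality for the flat model, run HUM, conjugate away the variable coefficients, and close an iteration to handle the quasilinearity --- does match the skeleton of the paper. But there are two genuine gaps that would make your proof as sketched fail, plus one place where the paper's route is meaningfully more economical than what you describe.

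\medskip
\noindent\textbf{The control must be real-valued, and you never address it.}
$P_{ext}$ is a physical pressure; it is a real scalar field. After symmetrization the unknown $u$ is complex, and a naive HUM argument in $L^2(\xT;\xC)$ produces a \emph{complex} control $f$. You cannot simply put $\Pext = \RE f$: then the forced equation is no longer the one HUM solved, and the final-state condition is lost. The paper handles this with a non-trivial device: the observability inequality is proved with $\la\RE(Av)\ra^2$ (not $\la Av\ra^2$) on the left, under a non-degeneracy condition on the mean of the initial datum; HUM is then run in the real Hilbert space $L^2_M = \{\varphi : \IM \int M\varphi \, dx = 0\}$, which only forces $u(T) = ib\,M(x)$ for some $b\in\xR$ rather than $u(T)=0$; finally, one uses a built-in conservation law ($\frac{d}{dt}\int\IM u\,dx=0$ whenever $Pu$ is real) and a careful choice of $M$ to conclude $b=0$. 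Arranging this conservation law forces the symbols $p,q$ in the symmetrization to be chosen so that $T_q$ annihilates the mean (this is the condition $T_q=\px T_Q$). None of this is automatic, and your outline has no mechanism for it.

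\medskip
\noindent\textbf{Nash--Moser is the wrong tool here, and the reason the paper avoids it is structural.}
You are right that the loss of derivatives makes a plain Banach contraction fail, and you are even right about \emph{why} (the paper notes that the conjugating operator $A=\Op(e^{i|\xi|^{1/2}\beta})$ is only continuous, not Lipschitz, from $H^r$ to $\mathcal{L}(L^2)$). But a Nash--Moser scheme, with its smoothing operators and quadratic error estimates, is unnecessary and would be much heavier. The paper instead runs a \emph{quasi-linear iteration}: freeze the coefficients at $u_n$, solve the linear control problem for $P(u_n)$ to define $f_{n+1}$ and $u_{n+1}$, and show that $(u_n,f_n)$ is bounded in $C^0 H^\sigma$ while $(u_{n+1}-u_n, f_{n+1}-f_n)$ is summable in $C^0 H^{\sigma-3/2}$. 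What makes this close without Newton-type acceleration is that the controllability proposition for $P(\underline u)$ requires smallness of $\underline u$ only in a fixed \emph{low} norm $H^{s_0}$ while producing a control in $H^\sigma$ for any $\sigma\geq s_0$ --- a tame structure that is precisely what paradifferential operators buy. Your phrase ``carefully tuned iteration in a scale of Sobolev spaces based on tame estimates'' gestures at this, but without identifying the paradifferential mechanism that makes it work, the argument does not close: one must also prove a stability estimate for the control as a function of the coefficients (statement $ii)$ of Proposition \ref{PP10-b-intro}), which in turn requires a regularity property for the HUM control (an argument \`a la Dehman--Lebeau--Laurent).

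\medskip
\noindent\textbf{The Ingham step is harder than ``eigenfrequencies separate, hence any $T$ works.''}
For the flat model your argument is fine (and Reid already did it). But after conjugating the variable-coefficient operator, the oscillatory sums that appear have phases of the form $\sign(n)[\ell(n)t + \beta(t,x)|n|^{1/2}]$, with amplitudes whose $k$-th time derivatives grow like $|n|^{k/2}$. The perturbation $\beta(t,x)|n|^{1/2}$ is \emph{not} small even when $\beta$ is; $e^{i\beta|n|^{1/2}}-1 = O(1)$. This forces a genuine variant of Ingham's inequality (Propositions \ref{P41}, \ref{P43}, \ref{P44} of the paper), with an induction on frequency bands to recover low frequencies and a careful bookkeeping of amplitude growth. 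Your ``reduce to constant coefficients, then standard Ingham'' summary elides the main technical content of that step.

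\medskip
In short: the spine is the right one, but the real-valuedness of the control, the paradifferential quasi-linear scheme (and the stability/regularity estimates it requires), and the nontrivial Ingham variant are where the theorem actually lives, and your sketch does not contain the ideas needed for any of them.
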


\begin{rema}
$i)$ This result holds for any $T>0$ and not only for $T$ large enough. Compared to the Cauchy problem, 
for the control problem it is more difficult to work on short time intervals than on large time intervals. 

$ii)$ This result holds also in the infinite depth case (it suffices to replace $\tanh(b\la \xi\ra)$ by $1$ in the proof). In finite depth, the non-cavitation assumption $\eta(t,x)>-b$ holds automatically for small enough solutions. 
\end{rema}

\subsection{Strategy of the proof}

We conclude this introduction by explaining the strategy of the proof and the 
difficulties one has to cope with. 

\subsubsection*{Remarks about the linearized equation}
We use in an essential way the fact that the water waves equation is a dispersive equation. 
This is used to obtain a control result which holds on arbitrarily small time intervals.
To explain this as well as to introduce the control problem, we begin with the analysis of the linearized 
equation around 
the null solution. 
Recall that $G(0)$ is the Fourier multiplier $\la D_x\ra \tanh(b\la D_x\ra)$. 
Removing quadratic and higher order terms in the equation, System~\e{WW1} becomes
$$
\left\{
\begin{aligned}
&\partial_t \eta = G(0)\psi,\\
&\partial_t\psi +g\eta -\kappamuet \px^2 \eta=\Pext.
\end{aligned}
\right.
$$
Introduce the Fourier multiplier (of order $3/2$)
$$
\Lk\defn  \big( (g-\kappamuet\px^2) G(0)\big)^{\frac{1}{2}}.
$$
%with symbol $\big( (g+\kappamuet\la\xi\ra^2)\la \xi\ra\big)^\mez$. 
The operator $G(0)^{-1}$ is well-defined on periodic functions with mean value zero. Then $u=\psi-i \Lk G(0)^{-1}\eta$ satisfies the dispersive equation
$$
\partial_t u+i\Lk u=P_{ext}.
$$
To our knowledge, the only control result for this linear equation is due to Reid who 
proved in \cite{RReid} a control result with a distributed control. He proved that one can steer any initial data to zero in finite time 
using a control of the form $P_{ext}(t,x)=g(x)U(t)$ ($g$ is given and $U$ 
is unknown). 
His proof is based on 
the characterization of Riesz basis and a variant of 
Ingham's inequality (see the inequality \e{i10} stated at the end of this introduction). 
In this paper we are interested in localized control, satisfying $P_{ext}(t,x)=\uo P_{ext}(t,x)$ where $\omega\subset \xT$ is a given open subset. 
However, using the same Ingham's inequality \e{i10} and 
the HUM method, one obtains a variant of Reid's control result where the control is localized.

\begin{prop}\label{PI}
For any initial data $u_{in}$ in $L^2(\xT)$, any $T>0$ 
and any non empty open domain $\omega\subset \xT$, 
there exists a source term $f\in \bo([0,T];L^2(\xT))$ such that the unique 
solution $u$ to
\begin{equation}\label{i12}
\partial_t u +iL u=\uo f\quad ;\quad u\arrowvert_{t=0}=u_{in},
\end{equation}
satisfies $u\arrowvert_{t=T}=0$ (here $\uo$ is the indicator function of $\omega$).
\end{prop}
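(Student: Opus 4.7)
The strategy is the Hilbert Uniqueness Method (HUM) of J.-L.~Lions: by duality, controllability of \eqref{i12} is equivalent to an observability inequality for the adjoint equation. Since $L$ is a self-adjoint Fourier multiplier, this reduces to establishing
$$
\lA v_0\rA_{L^2(\xT)}^2 \le C(T,\omega) \int_0^T \int_\omega |v(t,x)|^2 \,dx\, dt
$$
for every solution $v$ of $\pa_t v - iLv = 0$ with $v|_{t=0}=v_0$ of mean zero. The control will then be constructed as $f(t,\cdot)=\uo v(t,\cdot)$, where $v$ solves the backward adjoint equation with an initial datum obtained by inverting a Gramian operator whose coercivity is exactly the observability inequality above.

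To prove the inequality I would diagonalize in Fourier series. Writing $v_0 = \sum_{n\in \xZ\setminus\{0\}} c_n e^{inx}$, one has $v(t,x) = \sum_{n} c_n e^{inx} e^{i\lambda_n t}$, where $\lambda_n = \bigl((g+\kappanonmuet n^2)\,|n|\tanh(b|n|)\bigr)^{1/2}$ is the eigenvalue of $L$ on $e^{inx}$. The essential point is that $\lambda_n \sim |n|^{3/2}$ as $|n|\to \infty$, so that the spectral gap $\lambda_{n+1}-\lambda_n$ tends to $+\infty$. Under this super-linear gap condition, a generalized Ingham inequality (of the type \eqref{i10} alluded to at the end of the introduction, see Haraux or Komornik--Loreti) is valid in \emph{any} time $T>0$:
$$
\int_0^T \Big| \sum_n a_n\, e^{i\lambda_n t}\Big|^2 dt \;\ge\; C_T \sum_n |a_n|^2.
$$

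Freezing $x\in\omega$ and applying this to $a_n := c_n e^{inx}$, whose moduli are still $|c_n|$, yields $\int_0^T |v(t,x)|^2 dt \ge C_T \lA v_0\rA_{L^2}^2$, and integrating over $x\in\omega$ delivers the observability inequality with constant $C_T |\omega|$. The HUM procedure then produces a control $f$ supported in $[0,T]\times\omega$ steering $u_{in}$ to $0$.

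The main input, and the place where the argument is non-routine, is the Ingham-type inequality for the spectrum $\lambda_n\sim |n|^{3/2}$: the classical Ingham theorem requires $T$ larger than a threshold dictated by $\inf_n(\lambda_{n+1}-\lambda_n)$, whereas here the gaps diverge, so after isolating finitely many low frequencies (handled by a standard uniqueness/compactness argument) one controls in arbitrarily short time $T>0$. This is the linear manifestation of the infinite speed of propagation of gravity--capillary waves and is what makes Theorem~\ref{T1} plausible for arbitrary $T$. As a secondary point, the HUM construction naturally yields $f\in L^2([0,T];L^2(\xT))$; the $\bo$-in-time regularity can be recovered by exploiting the Riesz-basis structure associated with $(e^{i\lambda_n t})_n$ or by a mild penalization variant of HUM.
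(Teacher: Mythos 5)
Your overall strategy---HUM plus an Ingham inequality for the divergent-gap spectrum $\sim|n|^{3/2}$, with a compactness argument for low frequencies---is essentially the paper's, and your secondary remarks (arbitrary short time, $C^0$-in-time regularity) are correct in spirit. But the step where you ``freeze $x\in\omega$'' contains a genuine error. The symbol $\ell(\xi)$ of $L$ is \emph{even}, so $\lambda_n=\lambda_{-n}$; hence for fixed $x$ the function $t\mapsto v(t,x)=\sum_n c_n e^{inx}e^{-i\lambda_n t}$ is \emph{not} a sum of exponentials with distinct temporal frequencies, and any Ingham inequality only gives $\int_0^T|v(t,x)|^2\,dt\ge C_T\sum_{n\ge 0}\bigl|c_ne^{inx}+c_{-n}e^{-inx}\bigr|^2$, which is not $C_T\sum_n|c_n|^2$. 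Concretely, take $v_0(x)=e^{ix}-e^{-ix}$: then $v(t,x)=2i\sin(x)\,e^{-i\lambda_1 t}$, so $\int_0^T|v(t,0)|^2\,dt=0$ while $\lA v_0\rA_{L^2}\neq 0$, and your claimed pointwise-in-$x$ observability inequality fails. (Your description of the gap structure also suffers from the same oversight: on $\xZ$ the differences $\lambda_{n+1}-\lambda_n$ are not positive for $n<0$.)

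To repair this you must pair the modes $\pm n$ and apply Ingham to the \emph{injective} family of frequencies $\sign(n)\ell(n)$ (this is exactly what \eqref{i10} does: the phase there is $n|n|^{1/2}$, not $|n|^{3/2}$), and \emph{only then} integrate in $x$ over a short subinterval $\omega_0\subset\omega$, controlling the off-diagonal terms $c_n\overline{c_{-n}}\,e^{2inx}$ via the strict inequality $\bigl|\int_{\omega_0}e^{2inx}\,dx\bigr|<|\omega_0|$ for $n\neq 0$. This is precisely the structure of the paper's observability argument (Proposition~\ref{P45}); the regrouping by $\sign(n)$ and the integration of the cross terms over a small interval are both indispensable, and your argument omits them.
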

\begin{rema}\label{R:1}
$i)$ (\emph{Null-controllability for reversible systems}).
By considering the backward equation, the same results holds when one exchanges 
$u\arrowvert_{t=0}$ and $u\arrowvert_{t=T}$. Now by using both results, one deduces 
that for any functions $u_{in},u_{final}$ in $L^2(\xT)$, there exists $f$ such that the unique solution to \e{i12} 
satisfies $u\arrowvert_{t=T}=u_{final}$. This is why one can assume without loss of generality that the final state $u_{final}$ is $0$.

$ii)$ (\emph{Real-valued control}).
This result is not satisfactory for the water waves problem since 
the control $f$ given by Proposition~\ref{PI}Ê
could be complex-valued. To obtain a real-valued 
control requires an extra argument.
\end{rema}

\subsubsection*{Step 1: Reduction to a dispersive equation}

The proof of Theorem~\ref{T1} relies on different tools and different previous results. 
Firstly, Theorem~\ref{T1} is related to the study of the Cauchy problem. 
The literature on the subject goes back to 
the pioneering works of Nalimov~\cite{Nalimov}, 
Yosihara~\cite{Yosihara} and Craig~\cite{Craig1985}. 
There are many results 
and we quote only some of them starting with the well-posedness of the Cauchy problem 
without smallness assumption, which was first proved by 
Wu \cite{WuInvent} 
and Beyer--G\"unther \cite{BG} for the case with surface tension. For some recent 
results about gravity-capillary waves, we refer to Iguchi~\cite{Iguchi},  Germain--Masmoudi--Shatah~\cite{GMS3}, Mesognon~\cite{Mesognon}, 
Ifrim--Tataru~\cite{Ifrim-Tataru}, 
Ionescu--Pusateri~\cite{IP-20141,IP-20142} 
and Ming--Rousset--Tvzetkov \cite{MRT}.

Our study is based on the analysis of the 
Eulerian formulation of the water waves equations by means of microlocal 
analysis. In this direction it is influenced by Lannes~\cite{LannesJAMS} 
as well as~\cite{AM,ABZ1}. 
More precisely, we use a paradifferential approach % to the study of the water waves equations 
in order to paralinearize the water waves equations and then to symmetrize the obtained equations. 
We refer the reader to the appendix for the definition of paradifferential operators $T_a$.

It is proved in \cite{ABZ1} 
that one can reduce the water waves equations to a single dispersive wave 
equation that is similar to the linearized equation. Namely, it is proved in this reference that there are 
symbols $p=p(t,x,\xi)$ and $q=q(t,x,\xi)$ with $p$ of order $0$ in $\xi$ and $q$ of order $1/2$, such that $u=T_p \psi+i T_q \eta$ satisfies an equation of the form
$$
P(u)u=P_{ext}\quad\text{with}\quad P(u)\defn 
\partial_t +T_{V(u)}\partial_x  +i 
\Lk^\mez \big(T_{c(u)} \Lk^\mez \cdot\big),
$$
where $\Lk^\mez=  \big( (g-\kappamuet\px^2) G(0)\big)^{\frac{1}{4}}$, $T_{V(u)}$ and $T_{c(u)}$ are paraproducts. Here $V,c$ depend on the unknown $u$ 
with $V(0)=0$ and $c(0)=1$, and hence 
$P(0)=\partial_t+iL$ is the linearized operator around the null solution. 
We have oversimplified the result (neglecting remainder terms and simplifying the dependence of $V,c$ on $u$) and 
we refer to Proposition~\ref{T24}Ê for the full statement.

We complement the analysis of \cite{ABZ1} in two directions. Firstly, 
using elementary arguments (Neumann series and the implicit function theorem), 
we prove that one can invert the mapping $(\eta,\psi)\mapsto u$. Secondly, we prove that, up to 
modifying the sub-principal symbols of $p$ and $q$, one can further require that
\be\label{i18}
%P(u)u\text{ is real-valued} \quad \Longrightarrow \quad \frac{d}{dt}
\int_\xT \IM u(t,x)\, dt=0.
\ee

\subsubsection*{Step 2: Quasi-linear scheme}
Since the water waves system \e{WW1} is quasi-linear, one cannot 
deduce the controllability of the nonlinear equation from the one of $P(0)$. Instead of using a fixed point argument, 
we use a quasi-linear scheme 
and seek $P_{ext}$ as 
the limit of {\em real-valued} functions $P_n$ 
determined by means of approximate control problems. To guarantee that 
$P_{ext}$ will be real-valued we seek $P_n$ as the real part of some function. 
To insure that $\supp P_n\subset \omega$ we seek $P_n$ under the form
$$
P_n=\chi_\omega \RE f_n.
$$
Hereafter, we fix $\omega$, 
a non-empty open subset of $\xT$, and 
a $C^\infty$ cut-off function $\chi_\omega$, supported on $\omega$, 
such that $\chi_\omega(x)=1$ for all $x$ in some open interval $\omega_1 \subset \omega$.

The approximate control problems are defined by induction as follows:  
we choose $f_{n+1}$ by requiring that the unique solution $u_{n+1}$ of the Cauchy problem 
$$
P(u_n)u_{n+1} = \chi_\omega \RE f_{n+1},\quad u_{n+1}\arrowvert_{t=0}=u_{in}
$$
satisfies $u(T)=u_{final}$. Our goal is to prove that 
\begin{itemize}
\item this scheme is well-defined 
(that is one has to prove a controllability result for 
$P(u_n)$);
\item the sequences $(f_n)$ and $(u_n)$ are bounded in $C^0([0,T];H^\sigma(\xT))$;
\item the series $\sum (f_{n+1}-f_n)$ and $\sum (u_{n+1}-u_n)$ converge in $C^0([0,T];H^{\sigma-\tdm}(\xT))$. 
\end{itemize}
It follows that $(f_n)$ and $(u_n)$ are Cauchy sequences 
in $C^0([0,T];H^{\sigma-\tdm}(\xT))$ (and in fact, by interpolation, in $C^0([0,T];H^{\sigma'}(\xT))$ for any $\sigma'<\sigma$). 

To use the quasi-linear scheme, we need to study a sequence of linear approximate control problems. 
The key point is to study the control problem for the linear operator 
$P(\underline{u})$ for some given function 
$\underline{u}$. Our goal is to prove the following result.

\begin{prop}\label{PP10-b-intro}
Let $T>0$. There exists $s_0$ 
such that, if $\lA \underline{u}\rA_{C^0([0,T];H^{s_0})}$ is small enough, depending on $T$, then the following properties hold. 

$i)$ (Controllability) 
For all $\sigma\ge s_0$ and all
$$
u_{in},u_{final}\in \tilde H^{\sigma}(\xT)\defn \left\{ 
w\in H^\sigma(\xT)\,;\, \IM \int_\xT w(x)\, dx=0\right\},
$$
there exists $f$ satisfying $\lA f\rA_{\bo([0,T];H^{\sigma})}
\le K(T)(\lA u_{in}\rA_{H^{\sigma}}+\lA u_{final}\rA_{H^{\sigma}})$ 
such that the unique solution $u$ to
$$
P(\underline{u})u= \chi_\omega \RE f\quad ;~u_{\arrowvert t=0}=u_{in},
$$
satisfies $u(T)=u_{final}$. 

$ii)$ (Stability) Consider another state $\underline{u}'$ with 
$\lA \underline{u}'\rA_{C^0([0,T];H^{s_0})}$ small enough and denote by $f'$ the control associated to 
$\underline{u}'$. Then 
$$%\be\label{i20}
\lA f-f'\rA_{\bo([0,T];H^{\sigma-\tdm})}\le K'(T)(\lA u_{in}\rA_{H^{\sigma}}+\lA u_{final}\rA_{H^{\sigma}})\lA \underline{u}-\underline{u}'\rA_{C^0([0,T];H^{s_0})}.
$$%\ee
\end{prop}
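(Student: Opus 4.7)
The plan is to use the Hilbert Uniqueness Method of Lions, adapted to account for the two constraints specific to our setting: the admissible controls are of the form $\chi_\omega \RE f$ (real-valued and localized), and the state space is the constrained subspace $\tilde H^\sigma = \{w \in H^\sigma(\xT) : \IM \int_\xT w\, dx = 0\}$. By duality, part $i)$ is equivalent to an observability inequality
$$
\lA v\arrowvert_{t=0}\rA_{L^2}^2 \le K(T) \int_0^T \lA \chi_\omega \RE v\rA_{L^2}^2 \, dt
$$
for every solution $v$ of the adjoint equation $P(\underline u)^* v = 0$ on $[0,T]$, posed on the subspace dual to the constraint \e{i18}. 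Once this is proved, the standard HUM construction produces the $L^2$-minimal control $f = \chi_\omega \RE v$, with $v$ obtained from a quadratic minimization problem; $H^\sigma$-regularity of $f$ is then recovered by propagating Sobolev norms along the adjoint flow and inverting the resulting coercive HUM operator $\Lambda(\underline u)$, which is an isomorphism at every Sobolev level on the constrained subspace.

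I would first establish the observability for the reference operator $P(0) = \partial_t + iL$. Since $L$ is the Fourier multiplier with symbol $\ell_n = ((g + n^2)|n|\tanh(b|n|))^{1/2}$ of order $3/2$, the spectral gaps $\ell_{n+1} - \ell_n$ grow like $|n|^{1/2}$, which is the key dispersive input: an Ingham-type inequality with diverging gaps, exactly the one underlying Proposition~\ref{PI}, yields observability of $P(0)$ from $\omega$ on every time interval $[0,T]$, no matter how small. To pass from observability by $\uo$ (which produces complex controls in Proposition~\ref{PI}) to observability by $\chi_\omega \RE$ (as required by Remark~\ref{R:1} $ii)$), I would restrict to a subinterval $\omega_1 \Subset \omega$ on which $\chi_\omega \equiv 1$ and analyze the time-Fourier expansion of $\RE v$: the frequencies $\pm \ell_n$ still satisfy diverging gaps, so Ingham recovers the spatial coefficients appearing in $\RE v$, and the linear independence of $\{e^{inx}, e^{-inx}\}$ on $\omega_1$ then yields the recovery of all $v_n$ for $n \ne 0$; the constant mode $v_0$ is recovered by the constraint \e{i18}, which by duality normalizes $\IM v_0 = 0$ on the adjoint.

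To pass from $P(0)$ to $P(\underline u)$ I would invoke the paradifferential symmetrization of Proposition~\ref{T24} and conjugate. A suitable paradifferential change of unknown converts $P(\underline u)$ into $\partial_t + iL + \mR(\underline u)$, where $\mR(\underline u)$ is a zero-order paradifferential operator whose operator norm on $L^2$ is bounded by $\lA \underline u\rA_{C^0([0,T];H^{s_0})}$, provided $s_0$ is chosen large enough for the symbolic calculus on $V(\underline u)$ and $c(\underline u)$ to apply. Observability is stable under such small bounded perturbations of the generator, by a Duhamel estimate on the adjoint: if $\lA \underline u\rA_{C^0([0,T];H^{s_0})}$ is small enough relative to $1/K(T)$, the observability for $P(\underline u)^*$ holds with constant $2K(T)$. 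The $H^\sigma$-version is obtained by commuting $\langle D_x\rangle^\sigma$ with the adjoint equation and using the paralinearized energy estimates of~\cite{ABZ1}. Part $ii)$ then follows from the Lipschitz dependence of $V, c$ on $\underline u$: writing
$$
P(\underline u)(u - u') = (P(\underline u') - P(\underline u))\,u' + \chi_\omega \RE(f - f')
$$
and using Lipschitz stability of $\Lambda(\underline u)^{-1}$ between the relevant Sobolev spaces, one obtains the announced bound, with the loss of $3/2$ derivatives reflecting the principal order of $P$.

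The main obstacle is to prove the observability inequality for $P(\underline u)$ with a constant uniform in small $\underline u$ and in arbitrarily short time $T$, while simultaneously preserving the real-valued constraint on the control and the zero-mean constraint \e{i18}. The difficulty is that the paradifferential conjugation reducing $P(\underline u)$ to $P(0)$ plus lower-order terms is implemented by nonlocal operators, which do not \emph{a priori} respect the localization of the observation on $\omega$. Reconciling this microlocal reduction with the localized observation, and carrying it out with sufficient precision on the Sobolev scales to close the quasi-linear scheme of Step~2, is the essential new analytical point beyond the combined HUM--Ingham argument for $P(0)$.
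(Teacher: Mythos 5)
Your high-level scaffolding (HUM plus Ingham plus conjugation to a constant-coefficient operator) is the one used in the paper, but three of your bridge steps would not bear weight as written.

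\textbf{The single ``paradifferential change of unknown'' to $\partial_t + iL + \mR(\underline u)$ does not exist.} After symmetrization one is left with a genuine first-order transport term $W\partial_x$ whose coefficient is of size $O(\|\underline u\|_{H^{s_0}})$ but which cannot be absorbed into a bounded zero-order remainder: it loses one derivative, not zero. To kill it one must conjugate by a pseudo-differential operator $A = \Op\big(q(t,x,\xi)\,e^{i\beta(t,x)|\xi|^{1/2}}\big)$ in the exotic class $S^0_{1/2,1/2}$ (this is Proposition~\ref{P:39}). This changes the form of the observability inequality you need: the phase becomes $\sign(n)\big[\ell(n)t + \beta(t,x)|n|^{1/2}\big]$ with a genuinely $t$-dependent sub-principal part, and the amplitude $q$ has time-derivatives of order $k$ growing like $|n|^{k/2}$. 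Your appeal to the Ingham inequality for the fixed phases $\pm\ell_n t$ is therefore for the wrong objects; one needs the variable-phase, variable-amplitude Ingham inequalities of Propositions~\ref{P41}--\ref{P44}, whose proof is not a perturbation of the classical one. The Duhamel step you propose does appear, but only \emph{after} this exotic conjugation, inside the proof of Corollary~\ref{C:54}; by itself it cannot swallow a first-order term.

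\textbf{Part $ii)$ cannot follow from ``Lipschitz stability of $\Lambda(\underline u)^{-1}$.''} The paper makes a point of this in the introduction to Step~4: the map $\underline u \mapsto A$ is only \emph{continuous}, not Lipschitz, from $H^r$ to $\mathcal L(L^2)$ (if $\|W\|_{H^r}=O(\delta)$ one only gets $\|A-I\|_{\mathcal L(L^2;H^{-1/2})}=O(\delta)$). This is precisely why a contraction argument in fixed Sobolev norms fails and why the paper accepts a $3/2$-derivative loss in the stability estimate. The actual mechanism (Step~7, Proposition~\ref{P:51}~$iv$) is an auxiliary control problem interpolating the two, combined with a Dehman--Lebeau--Laurent style \emph{regularity} result for the control (Proposition~\ref{P:51}~$iii$): one trades the impossible Lipschitz-in-operator-norm bound for a bound in a lower norm plus a higher-regularity bound on one of the two controls. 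Your sketch has no replacement for this.

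\textbf{You correctly flag the clash between nonlocal conjugation and localized control, but you offer no way around it.} The paper resolves it by decoupling the conjugations: (a) the $H^\sigma\to L^2$ reduction is by an elliptic operator $\Lambda_{h,s}$ with a small semiclassical parameter $h$, chosen so that $[\Lambda_{h,s},\chi_\omega]\Lambda_{h,s}^{-1}=O(h)$ in $\mathcal L(L^2)$; the resulting defect of localization is absorbed via the Neumann series for $I+\mathcal K$, with $\|\mathcal K\|_{\mathcal L(L^2)}\le 1/2$ after choosing $h$ small (end of Section~\ref{P:S3}); (b) the variable-to-constant-coefficient reduction of the transport part is by \emph{local} changes of variables $\Phi$ (Proposition~\ref{P:38}), which do respect supports, at the cost of a nontrivial Jacobian factor that forces the HUM target $w(T,x)=ibM(x)$ with $M=\Phi(\underline u(T))(1)\neq 1$; (c) only the exotic operator $A$ is nonlocal, and it is used solely on the adjoint side to prove observability, never to transport the control. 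Without distinguishing these three transformations, your ``single conjugation'' would indeed destroy the localization and the argument would not close.
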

\begin{rema}
$i)$ We oversimplified the assumptions and refer the reader to Section~\ref{section9} 
for the full statement. 

$ii)$ Notice that the smallness assumption on $\underline{u}$ involves only 
some $H^{s_0}$-norm, while the result holds for all 
initial data in $H^\sigma$ with $\sigma\ge s_0$.  This is possible 
because we consider a paradifferential equation. 
This plays a key role in the analysis to overcome losses of derivatives with respect to the coefficients.
\end{rema}

\subsubsection*{Step 3: Reduction to a regularized problem} 
We next reduce the analysis by proving that it is sufficient %enough
\begin{itemize}
\item to consider a {\em classical} equation instead of a {\em paradifferential} equation;
\item to prove a $L^2$-result instead of a Sobolev-result. 
\end{itemize}
This is obtained by commuting $P(\underline{u})$ with some well-chosen elliptic operator $\Lambda_{h,s}$ of order $s$ with
$$
s=\sigma-\tdm
$$
and depending on a small parameter $h$ (the reason to introduce $h$ is explained below). 
In particular $\Lambda_{h,s}$ is chosen so that the operator 
$$
\wide{P}(\underline{u})\defn \Lambda_{h,s} P(\underline{u}) \Lambda_{h,s}^{-1}
$$
satisfies 
\be\label{i22}
\wide{P}(\underline{u})%\defn \Lambda_{h,s} P(\underline{u}) \Lambda_{h,s}^{-1}
= P(\underline{u}) + R(\underline{u})
\ee
where $R(\underline{u})$ is a remainder term of order $0$. 
For instance, if $s=3m$ with $m\in\xN$, set 
$$
\Lambda_{h,s}=I+h^{s}\mathcal{L}^{\frac{2s}{3}}\quad\text{where }\Lr\defn L^\mez\big(T_c L^\mez \cdot\big).
$$
With this choice one has $[\Lambda_{h,s},\mathcal{L}]=0$ so \e{i22} holds with 
$R(\underline{u})= [\Lambda_{h,s},T_{V(\underline{u})}]\Lambda_{h,s}^{-1}$. 
It follows from symbolic calculus that $\lA R(\underline{u})\rA_{\Lr(L^2)}\les \lA V\rA_{\eC{1}}$ uniformly in $h$. 

Moreover, since $V(\underline{u})$ and $c(\underline{u})$ are continuous in time with values in 
$H^{s_0}(\xT)$ with $s_0$ large, one can replace paraproducts by usual products, up to remainder terms in $C^0([0,T];\Lr(L^2))$. We have
$$
\wide{P}(\underline{u})= \partial_t +V(\underline{u}) \partial_x  +i 
\Lk^\mez \big(c(\underline{u})  \Lk^\mez \cdot\big)+
R_2(\underline{u})
$$
where
$$
R_2(\underline{u})\defn R(\underline{u}) + 
\big( T_{V(\underline{u})} - V(\underline{u}) \big) \partial_x  
+ i \Lk^\mez \big(\big(T_{c(\underline{u}) }-c(\underline{u}) \big) \Lk^\mez \cdot\big).
$$
The remainder $R_2(\underline{u})$ belongs to $C^0([0,T];\Lr(L^2))$ uniformly in $h$. On the other hand, 
\be\label{i21}
\blA [\Lambda_{h,s},\chi_\omega]\Lambda_{h,s}^{-1}\brA_{\mathcal{L}(L^2)}=O(h),
\ee
which is the reason to introduce the parameter $h$. 
The key point is that one can reduce the proof of Proposition~\ref{PP10-b-intro} to the proof of the following result.

\begin{prop}\label{PP11-intro}
Let $T>0$. There exists $s_0$ 
such that, if $\lA \underline{u}\rA_{C^0([0,T];H^{s_0})}$ is small enough, then the following properties hold. 

$i)$ (Controllability) For all $v_{in}\in L^2(\xT)$  
there exists $f$ with $\lA f\rA_{\bo([0,T];L^2)}\le K(T)\lA v_{in}\rA_{L^2}$ 
such that the unique solution $v$ to $\wide{P}(\underline{u})v=\chi_\omega \RE f,\quad v_{\arrowvert t=0}=v_{in}$ 
is such that $v(T)$ is an imaginary constant:
$$
\exists b\in \xR \, / ~ \forall x\in \xT, \quad v(T,x)=ib.
$$

$ii)$ (Regularity) Moreover $\lA f\rA_{\bo([0,T];H^\tdm)}\le K(T)\lA v_{in}\rA_{H^\tdm}$.

$iii)$ (Stability) Consider another state $\underline{u}'$ with 
$\lA \underline{u}'\rA_{C^0([0,T];H^{s_0})}$ small enough and denote by $f'$ the control associated to 
$\underline{u}'$. Then 
$$%\be\label{i20}
\lA f-f'\rA_{\bo([0,T];L^{2})}\le K'(T) \lA v_{in}\rA_{H^{\tdm}}\lA \underline{u}-\underline{u}'\rA_{C^0([0,T];H^{s_0})}.
$$%\ee
\end{prop}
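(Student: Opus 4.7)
I would prove the proposition by the HUM method. The key observation is that, thanks to the reduction leading to~\eqref{i22} and the ensuing discussion, one has $\wide{P}(\underline{u})=P(0)+Q(\underline{u})$ where $P(0)=\partial_t+iL$ is the reference dispersive operator and $Q(\underline{u})$ is a zeroth-order perturbation bounded on $L^2$ with norm controlled by $\lA\underline{u}\rA_{C^0([0,T];H^{s_0})}$, uniformly in the regularization parameter $h$ (the bound~\eqref{i21} ensures that the perturbation coming from the localization commutator is $O(h)$). The plan is then to establish a real-valued observability inequality for the adjoint of $P(0)$ from Ingham's inequality~\eqref{i10}, transfer it to $\wide{P}(\underline{u})^*$ by a standard perturbation argument valid for small $\underline{u}$, and run HUM in the quotient by the kernel defect.

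\textbf{Key step: real-part observability.} Writing a solution of $(\partial_t+iL)w=0$ in Fourier modes, $w(t,x)=\sum_{n\in\xZ}c_n\,e^{inx-i\lambda_n t}$ with $\lambda_n=\sqrt{(g+n^2)|n|\tanh(b|n|)}\sim|n|^{\tdm}$, the asymptotic gap $\lambda_{|n|+1}-\lambda_{|n|}\to\infty$ together with $\lambda_n>0$ for $n\neq 0$ allows, via~\eqref{i10} applied to the doubled family $\{\pm\lambda_n\}$ arising after taking the real part, to obtain
\ba
\sum_{n\neq 0}|c_n|^2 \les \int_0^T \lA \chi_\omega \RE w(t,\cdot)\rA_{L^2(\xT)}^2\,dt
\ea
on any interval $[0,T]$. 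The mode $c_0$, which corresponds to purely imaginary constants, forms a one-dimensional defect subspace; this is precisely why the target in statement (i) is only an imaginary constant $ib$ rather than zero. The inequality transfers to $\wide{P}(\underline{u})^*$ by an energy-absorption argument using the $L^2$-boundedness of $Q(\underline{u})^*$, uniformly in $h$. Minimizing the HUM functional
$$
J(w_T)=\mez\int_0^T\lA\chi_\omega\RE w\rA_{L^2}^2\,dt+\langle v_{in},w(0)\rangle_{L^2}
$$
over $w_T$ orthogonal to the imaginary constants then produces the real-valued control $f=\chi_\omega\RE w$ of statement (i).

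\textbf{Regularity and stability.} For (ii), I would run the same HUM construction at the level of $H^{\tdm}$ instead of $L^2$: conjugating $\wide{P}(\underline{u})^*$ by an elliptic Fourier multiplier of order $\tdm$ built from $L$ reduces to an $L^2$-observability inequality of the same structure, so that the control inherits $H^{\tdm}$ regularity from $v_{in}\in H^{\tdm}$. For (iii), I would subtract the two control problems: with $v,v'$ the solutions associated to $\underline{u},\underline{u}'$, the difference satisfies
$$
\wide{P}(\underline{u})(v-v')=\chi_\omega\RE(f-f')-(Q(\underline{u})-Q(\underline{u}'))v',\quad (v-v')\arrowvert_{t=0}=0,
$$
and has terminal value a purely imaginary constant. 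The source is bounded in $L^2$ by $\lA\underline{u}-\underline{u}'\rA_{C^0([0,T];H^{s_0})}\lA v'\rA_{H^{\tdm}}$, so applying the $L^2$-controllability from~(i) with zero initial datum and this source, together with the $H^{\tdm}$ bound on $v'$ from~(ii), yields the stability estimate of~(iii).

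\textbf{Main obstacle.} The main difficulty is the real-part observability inequality, proved uniformly both in small $\underline{u}$ and in the regularization parameter $h$: one must separate carefully the positive and negative branches $\pm\lambda_n$ so that Ingham's theorem applies after taking the real part, identify the defect subspace as exactly $i\xR$, and then check that the perturbation $Q(\underline{u})^*$ can be absorbed with constants independent of $h$. The uniformity in $h$ is what ultimately closes the reduction of Step~3 of the general strategy and allows one to deduce Proposition~\ref{PP10-b-intro} from Proposition~\ref{PP11-intro}.
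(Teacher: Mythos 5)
Your proposal rests on the claim that, thanks to~\eqref{i22} and~\eqref{i21}, one can write $\wide{P}(\underline{u})=P(0)+Q(\underline{u})$ with $Q(\underline{u})$ a \emph{zeroth-order} perturbation bounded on $L^2$ with small norm, and then transfer observability from $\partial_t+iL$ to $\wide{P}(\underline{u})^*$ by a ``standard perturbation argument.'' This is where the argument breaks. The identity~\eqref{i22} says $\wide{P}(\underline{u})=P(\underline{u})+R(\underline{u})$ with $R$ of order $0$; it does \emph{not} say that $P(\underline{u})$ itself differs from $P(0)$ by an operator of order $0$. The operator
$$
\wide{P}(\underline{u})=\partial_t+V\partial_x+iL^{\mez}\bigl(cL^{\mez}\,\cdot\,\bigr)+R_2
$$
differs from $\partial_t+iL$ by the transport term $V\partial_x$ (first order) and by $iL^{\mez}\bigl((c-1)L^{\mez}\,\cdot\,\bigr)$ (order $3/2$); smallness of $V$ and $c-1$ in $H^{s_0}$ makes these small from $H^1\to L^2$ and $H^{3/2}\to L^2$ respectively, but not in $\mathcal{L}(L^2)$. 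The Duhamel/absorption argument you invoke works only for genuinely zeroth-order perturbations: if you split $w=w_0+w_1$ with $w_0$ free and $w_1$ driven by the source $-Q(\underline{u})^*w$, the $L^2$-energy estimate for $w_1$ costs $\|Q(\underline{u})^*w\|_{L^2}\lesssim\|w\|_{H^1}$, which is not controlled by the observed quantity. This is precisely why the paper does \emph{not} perturb off the constant-coefficient operator; instead it normalizes the leading coefficient by a change of variables (Proposition~\ref{P:38}) and, crucially, conjugates by the pseudo-differential operator $A=\Op\bigl(q\,e^{i\beta(t,x)|\xi|^{1/2}}\bigr)\in S^0_{1/2,1/2}$ chosen so that $i[A,\lvert D_x\rvert^{3/2}]+W\partial_x A$ is of order $0$ (Proposition~\ref{P:39}). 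The transport term is thus absorbed into the phase, not into the remainder.

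This has a second consequence you have skipped: once the conjugation by $A$ is made, the modes of the relevant evolution carry phases $\sign(n)\bigl[\ell(n)t+\beta(t,x)|n|^{1/2}\bigr]$ which are \emph{nonlinear in $t$}. The classical Ingham inequality~\eqref{i10} for phases $n|n|^{1/2}t$ no longer applies, and the bulk of the paper's work (Section~\ref{S:4}, Propositions~\ref{P41}--\ref{P44}) is a sharp Ingham-type inequality for such perturbed phases, allowing amplitudes whose $k$-th time derivatives grow like $|n|^{k/2}$. Your high-frequency gap observation is correct for $P(0)$ but is nowhere near sufficient for the variable-coefficient problem. For the same reason your regularity argument (ii) (conjugating by a fixed elliptic multiplier of order $3/2$) does not reduce to an $L^2$ inequality of ``the same structure,'' because the transport term does not commute with such a multiplier up to order $0$; the paper proves regularity instead by the Dehman--Lebeau/Laurent commutator argument (statement $iii)$ of Proposition~\ref{P:51}). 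Finally, your stability argument (iii) applies controllability to an equation satisfied by $v-v'$ with zero initial datum, but this does not determine $f-f'$, which is a fixed difference of two controls and not the control for that auxiliary problem; the paper instead uses the uniqueness characterization of the control as a solution of the adjoint equation together with an intermediate ``interpolating'' control problem (statement $iv)$ of Proposition~\ref{P:51}).
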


Let us explain how to deduce Proposition~\ref{PP10-b-intro} from the latter proposition. 
Consider $u_{in},u_{final}$ in $\tilde H^\sigma(\xT)$ and seek $f\in C^0([0,T];H^\sigma(\xT))$ such that 
$$
P(\underline{u})u=\chi_\omega \RE f,\quad u(0)=u_{in} \quad \Longrightarrow \quad u(T)=u_{final}.
$$
As explained in Remark~\ref{R:1} it is sufficient to consider the case where 
$u_{final}=0$. Now, to deduce this result from Proposition~\ref{PP11-intro}, the main difficulty is 
that the conjugation with $\Lambda_{h,s}$ 
introduces a nonlocal term: indeed, $\Lambda_{h,s}^{-1}(\chi_{\omega}f)$ is not compactly supported in general. This is a possible source of difficulty since we seek a localized control term. We overcome this problem by considering the control problem for $\wide{P}(\underline{u})$ 
associated to some well-chosen initial data $v_{in}$. Proposition~\ref{PP11-intro} asserts that for all $v_{in}\in H^\tdm(\xT)$ 
there is $\widetilde{f}\in C^0([0,T];H^\tdm(\xT))$ such that 
$$
\wide{P}(\underline{u})v_1=\chi_\omega \RE  \widetilde{f},\quad v_1 |_{t=0}=v_{in} 
~\Longrightarrow v_1(T,x)=ib,~ b\in\xR.
$$
Define $\opk v_{in}=v_2(0)$ where 
$v_2$ is the solution to 
$$
\wide{P}(\underline{u})v_2=\big[ \Lambda_{h,s},\chi_\omega\big] \Lambda_{h,s}^{-1}
\RE \widetilde{f},\quad v_2\arrowvert_{t=T}=0.
$$
Using \e{i21}Ê
one can prove that the $\Lr(H^{\tdm})$-norm of $\opk$ is $O(h)$ and hence 
$I+\opk$ is invertible for $h$ small. So, $v_{in}$ can be so chosen that
$v_{in}+\opk v_{in}=\Lambda_{h,s}u_{in}$. 
Then, 
setting $f\defn \Lambda_{h,s}^{-1}\widetilde{f}$ and 
$u\defn \Lambda_{h,s}^{-1}(v_1+v_2)$, one checks that
$$
P(\underline{u})u=\chi_\omega \RE f,\quad u(0)=u_{in},\quad  u(T,x)=ib,~b\in\xR.
$$
It remains to prove that $u(T)$ is not only an imaginary constant, but it is $0$. 
This follows from the property \e{i18}. Indeed, $P$ can be 
so defined that if $P(\underline{u})u$ is a real-valued function, then $\frac{d}{dt}\int_\xT \IM u(t,x)\, dx=0$. Since $\int_\xT \IM u(0,x)\, dx=0$ by assumption, one deduces 
that $\int_\xT \IM u(T,x)\, dx=0$ and hence $u(T)=0$.

\subsubsection*{Step 4: Reduction to a constant coefficient equation} 

The controllability of $\wide{P}(\underline{u})$ 
will be deduced from the classical HUM method. 
A key step in the HUM method consists in proving that 
some bilinear mapping is coercive. 
To determine the appropriate 
bilinear mapping, we follow an idea introduced in \cite{AB} 
and conjugate $\wide{P}(\underline{u})$ to 
a constant coefficient operator modulo a remainder term of order $0$. 

To do so, we use a change of variables and a pseudo-differential change of unknowns to find 
an operator $M(\underline{u})$ such that 
$$
M(\underline{u})\wide{P}(\underline{u})M(\underline{u})^{-1}
=\partial_t+iL+\mathcal{R}(\underline{u}),
$$
where $\lA \mathcal{R}(\underline{u})\rA_{\Lr(L^2)}\les \lA \underline{u}\rA_{H^{s_0}}$ 
(and hence $\mathcal{R}(\underline{u})$ is a {\em small}Ê
perturbation of order $0$). 

To find $M(\underline{u})$, we begin by considering three changes of variables of the form
\[ % \be\label{i21}
(1 + \pa_x \kappa(t,x))^{\frac12} \, h(t,x + \kappa(t,x)),\quad 
h(a(t),x),\quad h(t,x-b(t)),
\] % \ee
to replace $\wide{P}(\underline{u})$ with 
\be\label{defiQu}
Q(\underline{u}) = \pa_t + W \pa_x + i L + R_3,
\ee
where $W = W(t,x)$ satisfies $\int_{\xT} W(t,x)\, dx=0$, 
$\| W \|_{C^0([0,T]; H^{s_0-d})} \les \lA \underline{u}\rA_{\bo([0,T];H^{s_0})}$
where $d>0$ is a universal constant, 
and $\Rtrois$ is of order zero. This is not trivial since 
the equation is nonlocal and also because 
this exhibits a cancellation of a term of order $1/2$. Indeed, 
in general the conjugation of $\Lk^\mez \big(c(\underline{u})  \Lk^\mez \cdot\big)$ and a change of variables generates also a term of order $3/2-1$. 
This term disappears here since we consider transformations 
which preserve the $L^2(dx)$ scalar product. % -norm and the skew-selfadjoint structure.

We next  seek an operator $A$ such that 
$i\big[ A,\Dx^\tdm\big]+W\px A$ is a zero order operator. This leads to consider a pseudo-differential operator $A=\Op(a)$ for some symbol 
$a=a(x,\xi)$ in the H\"ormander class $S^0_{\rho,\rho}$ with $\rho=\mez$, namely 
$a=\exp(i|\xi|^\mez \beta(t,x))$ for some function $\beta$ depending on $W$ 
(see Proposition~\ref{P:39} for a complete statement that also includes a zero order amplitude). 
Here we follow \cite{AB}. 
To keep the paper self-contained (and since some modifications are needed), we recall the strategy of the proof 
in Section~\ref{S:22}.

Concerning the latter transformation, let us compare the equation $P(u)u=0$ 
with the Benjamin-Ono equation:
\be\label{BO}
\partial_t w +w\partial_x w+\mathcal{H}\px^2 w=0,
\ee
where $\mathcal{H}$ is the Hilbert transform. 
The control problem for this equation has been studied through 
elaborate techniques (see for instance the recent paper \cite{LLR}) 
that are specific to this equation and cannot be applied to the 
water waves equations\footnote{This can be seen at the level of the Cauchy problem: 
for the Euler equation with free surface, the well-posedness of the Cauchy problem in the energy space is entirely open.}. 
On the opposite, let us discuss one difference which appears when applying to \e{BO} 
the strategy previously described. Given a function $W=W(t,x)$ with zero mean in $x$, 
let us seek an operator $B$ such that the leading order term in 
$\big[ B,\mathcal{H}\px^2\big]+W\px B$ vanishes. 
This requires (see \cite{Baldi}) to introduce a classical  pseudo-differential operator $B=\Op(b)$ with $b\in S^0_{1,0}$. 
Then the key difference between the two cases could be explained as follows: 
For $r$ large enough, 
\begin{itemize}
\item the mapping $W\mapsto B$ is Lipschitz from $H^r$ into $\mathcal{L}(L^2)$;
\item the mapping $W\mapsto A$ is only continuous from $H^r$ into $\mathcal{L}(L^2)$ (indeed, if $\lA W\rA_{H^r}=O(\delta)$ then we merely have 
$\lA A-I\rA_{\mathcal{L}(L^2;H^{-\mez})}=O(\delta)$). 
\end{itemize}
This is another reason for which % why 
one cannot use a fixed point argument based on a contraction estimate to deduce the existence 
of the control.

\subsubsection*{Step 5: Observability} 

Then, we establish an observability inequality. 
That is, % under some zero-mean assumption, 
we prove in Proposition~\ref{P45} that there exists $\eps > 0$ such that 
for any initial data $v_0$ whose mean value 
$\langle v_0\rangle = \frac{1}{2\pi} \int_{\xT}v_0(x)\, dx$ satisfies 
\be\label{i24}
\la \RE \langle v_0 \rangle \ra \ge \frac12 \la \langle v_0\rangle \ra
- \eps \lA v_0\rA_{L^2},
\ee
%where $\eps$ is a suitable positive constant, 
the solution $v$ of 
$$
\partial_t v+iL v=0,\quad v(0)=v_0 
$$
satisfies 
\be\label{i25}
\int_0^T\int_{\omega}  \la \RE (Av)(t,x)\ra^2 \, dxdt\ge K \int_{\xT} \la v_0(x)\ra^2 \, dx.
\ee
To prove this inequality with the real-part in the left-hand side allows to prove the existence of a real-valued control function; 
a similar property is proved for systems of wave equations by Burq and Lebeau in \cite{BurqLebeau}. 

The observability inequality is deduced using a variant of Ingham's inequality (see Section~\ref{S:4}). Recall that 
Ingham's inequality is an inequality for the $L^2$-norm of a sum of oscillatory functions which generalizes Parseval's inequality (it applies 
to pseudo-periodic functions and not only to periodic functions; 
see for instance \cite{KomornikLoreti}). 
For example, one such result asserts that 
for any $T>0$ there exist two positive constants $C_1=C_1(T)$ and $C_2=C_2(T)$ such that
\be\label{i10}
C_1\sum_{n\in\xZ}|w_n|^2\le \int_{0}^T 
\la \sum_{n\in\xZ} w_n e^{i n|n|^\mez t}\ra^2\, dt\le 
C_2\sum_{n\in\xZ}|w_n|^2
\ee
for all sequences $(w_n)$ in $\ell^2(\xC)$.
The fact that this result holds for any $T>0$ 
(and not only for $T$ large enough) is a consequence of a general result due to Kahane on lacunary series (see \cite{Kahane}).

Note that, since the original problem is quasi-linear, 
we are forced to prove an Ingham type inequality for 
sums of oscillatory functions whose phases differ from the phase of the linearized equation. 
For our purposes, we need to consider phases that do not depend linearly on $t$, of the form
$$
\sign(n) \big[\ell(n)^\tdm t+\beta(t,x)|n|^\mez\big],
\qquad \ell(n)\defn \left( (g+\la n\ra^2)\la n\ra \tanh(b\la n\ra)\right)^\mez,
$$
where $x$ plays the role of a parameter. 
Though it is a sub-principal term, to take into account the perturbation 
$\beta(t,x)|n|^\mez$ requires some care 
since $e^{i\beta(t,x)|n|^{1/2}}-1$ is not small. 
In particular we need to prove upper bounds for expressions in which we allow some amplitude depending on time (and whose derivatives 
in time of order $k$ can grow as $|n|^{k/2}$). 

\subsubsection*{Step 6: HUM method}

Inverting $A$, we deduce from \e{i25} an observability result for 
the adjoint operator $Q(\underline{u})^*$ ($Q(\underline{u})$ is as given by 
\e{defiQu}). Then the controllability will be deduced from the classical HUM method (we refer to 
Section~\ref{P:S7} for a version that makes it possible to consider a real-valued control). 
The idea is that 
the observability property implies that some bilinear form is coercive and hence the existence of the control follows from the Riesz's theorem and a duality argument. A possible difficulty is 
that the control $P_{ext}$ is acting only on the equation for $\psi$. 
To explain this, consider the case where $(\eta_{final},\psi_{final})=(0,0)$. 
Since the HUM method is based on orthogonality arguments, the fact that the control is not acting on 
both equations means for our problem that the final state is orthogonal to a co-dimension $1$ space. 
The fact that this final state can be chosen to be $0$ 
will be obtained by choosing this co-dimension $1$ space in an appropriate way, introducing 
an auxiliary function $M=M(x)$ which is chosen later on. 

Consider any real function $M=M(x)$ with $M-1$ small enough, and introduce
$$
L^2_{M}\defn \left\{\varphi\in L^2(\xT;\xC)\,;\, \IM \int_\xT M(x)\varphi(x)\, dx =0\right\}.
$$
Notice that $L^2_M$ is an $\xR$-Hilbert space. 
Also, for any % $v_{in}\in L^2_M$, 
$v_0 \in L^2_M$, the condition \e{i24} holds. 
Then, using a variant of the HUM method in this space, one deduces that for all $v_{in}\in L^2$ 
(not necessarily in $L^2_M$) 
there is $f\in C^0([0,T];L^2)$ such that, if
$$
Q(\underline{u})w=\pa_t w+ W \pa_x w+ i L w+ R_3 w=\chi_\omega \RE f, \quad w(0)=w_{in},
$$
then
$$
w(T,x)=ibM(x)
$$
for some constant $b\in\xR$. Now % since
$$
Q(\underline{u})=\Phi(\underline{u})^{-1} \wide{P}(\underline{u})\Phi(\underline{u}),
$$
where $\Phi(\underline{u})$ is the composition of 
the transformations in \e{i21}. 
Since $\Phi(\underline{u})$ and $\Phi(\underline{u})^{-1}$ are 
local operators, 
one easily deduce a controllability result for $\wide{P}(\underline{u})$ 
from the one proved for $Q(\underline{u})$. 
Now, choosing 
$M=\Phi(\underline{u}(T,\cdot))(1)$ where $1$ is the constant function $1$, we deduce from $w(T,x)=ibM(x)$ that 
$u(T,x)$ is an imaginary constant, as asserted in 
statement $i)$ of Proposition~\ref{PP10-b-intro}. 
Concerning $M$, notice 
that $M\neq 1$ because of the factor $(1 + \pa_x \kappa(t,x))^{\frac12}$ multiplying $h(t,x + \kappa(t,x))$ in \e{i21}.

\subsubsection*{Step 7: Convergence of the scheme}

Let us discuss the proof of 
the convergence of the sequence of approximate controls $(f_n)$ to 
the desired control $P_{ext}$. This part 
requires to prove new stability estimates 
in order to prove that $(f_n)$ and $(u_n)$ are Cauchy sequences. 
This is where we need statement $ii)$ in Proposition~\ref{PP10-b-intro}, to estimate the difference of two controls associated with different coefficients. 
To prove this stability estimate we shall introduce an auxiliary control problem which, loosely speaking, interpolates the two control problems. 
Since the original nonlinear problem is quasi-linear, there is a loss 
of derivative (this reflects the fact that the flow map is expected to be merely continuous and not Lipschitz on Sobolev spaces). 
We overcome this loss by proving and using a regularity property of the control, see statement $ii)$ in Proposition~\ref{PP11-intro}. 
This regularity result is proved by adapting an argument used by Dehman-Lebeau~\cite{DehmanLebeau} and Laurent \cite{Laurent}. 
We also need to study how the control depends on $T$ or on the 
function $M$.

\subsection{Outline of the paper}
In Section~\ref{P:S2} we recall how to use paradifferential analysis 
to symmetrize the water waves equations. As mentioned above, 
the control problem for the water waves equations is studied by means of a nonlinear scheme. 
This requires to solve a linear control problem at each step. 
We introduce in Section~\ref{SN:3} this linear equation and state 
the main result we want to prove for it. In Section~\ref{P:S3},Ê
we commute the equations 
with a well-chosen elliptic operator to obtain a regularized problem. Once 
this step is achieved, we further transform in Section~\ref{S:22} 
the equations by means of a change of 
variables and by conjugating the equation with some pseudo-differential operator. 
Ingham's type inequalities are proved in Section~\ref{S:4} and then used in 
Section~\ref{P:S6} to deduce an observability result which in turn is used in 
Section~\ref{P:S7} to obtain a controllability result. In Section~\ref{P:S7} we also study 
the way in which the control depends on the coefficients, which requires to introduce several auxiliary control problems. Eventually, in 
Sections \ref{section9} and~\ref{P:S8} we use the previous control results for linear equations 
to deduce our main result Theorem~\ref{T1} by means of a 
quasi-linear scheme. 

To keep the paper self-contained, 
we add an appendix which contains two sections about paradifferential calculus 
and Sobolev energy estimates for classical or paradifferential evolution equations. 
The appendix also contains 
the analysis of various changes of variables which are used to conjugate the equations to a simpler form.

\section{Symmetrization of the water waves equations}\label{P:S2}
Consider the system 
\begin{equation}\label{system}
\left\{
\begin{aligned}
&\partial_t \eta = G(\eta)\psi,\\
&\partial_t\psi +g\eta+  \frac{1}{2} (\px\psi)^2  
-\frac{1}{2} \frac{\bigl(G(\eta)\psi + (\px\eta)(\px\psi)\bigr)^2}{1+ (\px\eta)^2}
= \kappamuet H(\eta)+\Pext(t,x).
\end{aligned}
\right.
\end{equation}
In this section, following \cite{ABZ1,AM} we recall how to use paradifferential analysis to rewrite 
the above system as a wave type equation for some new unknown $u$. 
This analysis is performed in \S\ref{P:S22}. 
In \S\ref{P:S21} and \S\ref{P:S23}, 
we complement the analysis in \cite{ABZ1,AM} by proving 
that all the coefficients can be expressed in terms of $u$ only. 
 
We refer the reader to the appendix for the definitions and the main results of 
paradifferential calculus. 

\subsection{Properties of the Dirichlet-Neumann operator}\label{P:S21}
We begin by recalling that, if $\eta$ in $W^{1,\infty}(\xT)$ and $\psi$ in $H^\mez(\xT)$, then $G(\eta)\psi$ is well-defined and belongs to $H^{-\mez}(\xT)$. 
Moreover, if $(\eta,\psi)$ belongs to $H^s(\xT)\times H^s(\xT)$ for some $s>3/2$, then 
$G(\eta)\psi$ belongs to $H^{s-1}(\xT)$ together with the estimate
\be\label{p20}
\lA G(\eta)\psi\rA_{H^{s-1}}\le C\big( \lA \eta\rA_{H^{s}}\big)\lA \psi\rA_{H^s}.
\ee
Then, it follows from usual nonlinear estimates in Sobolev spaces that the following result holds. 

Following \cite{AM,ABZ1}, the analysis is based on the so-called good unknown of Alinhac 
defined in the next lemma and denoted by $\omega$ 
as the notation for the control domain (both notations will not be used simultaneously).

\begin{lemm}
Let 
$s>3/2$ and $(\eta,\psi)$ in $H^s(\xT)\times H^s(\xT)$, the functions 
\be\label{p24z}
\begin{aligned}
B(\eta)\psi&\defn \frac{G(\eta)\psi+(\partial_x\eta)( \partial_x\psi)}{1+(\partial_x\eta)^2},
\qquad % \\
V(\eta)\psi&\defn \partial_x\psi-(B(\eta)\psi)\partial_x\eta,\\
\omega(\eta)\psi &\defn \psi-T_{B(\eta)\psi}\eta
\end{aligned}
\ee
belong, respectively, to $H^{s-1}(\xT)$, $H^{s-1}(\xT)$, $H^s(\xT)$ and satisfy
\begin{equation}\label{p25}
\lA B(\eta)\psi\rA_{H^{s-1}}+\lA V(\eta)\psi\rA_{H^{s-1}}+\lA \omega(\eta)\psi\rA_{H^s}
\le C\big( \lA \eta\rA_{H^{s}}\big)\lA \psi\rA_{H^s}.
\end{equation}
\end{lemm}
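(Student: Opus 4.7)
My plan is to verify the three assertions in the order $B(\eta)\psi$, $V(\eta)\psi$, $\omega(\eta)\psi$, using only the algebra property of $H^{s-1}(\xT)$ (valid since $s-1>1/2$), the standard Dirichlet--Neumann estimate \e{p20}, and the continuity of paraproducts.

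First, I would handle $B(\eta)\psi$. The numerator $G(\eta)\psi + (\px\eta)(\px\psi)$ lies in $H^{s-1}$: the first term by \e{p20}, the second because $\px\eta,\px\psi\in H^{s-1}$ and $H^{s-1}$ is an algebra. For the denominator, write $1/(1+(\px\eta)^2) = F(\px\eta)$ with $F(t)=1/(1+t^2)$ smooth vanishing at $0$; composition estimates in Sobolev spaces (or a direct Neumann series expansion around $1$) give $1/(1+(\px\eta)^2) - 1 \in H^{s-1}$ with norm controlled by some $C(\lA \eta\rA_{H^s})\lA \eta\rA_{H^s}^2$. Multiplying the numerator by this denominator using once more the algebra property yields $B(\eta)\psi \in H^{s-1}$ together with the bound
\[
\lA B(\eta)\psi\rA_{H^{s-1}} \le C(\lA \eta\rA_{H^s})\lA \psi\rA_{H^s}.
\]

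Next, $V(\eta)\psi = \px\psi - (B(\eta)\psi)\px\eta$ belongs to $H^{s-1}$ by the algebra property and the estimate on $B(\eta)\psi$ already obtained; the same type of bound follows.

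Finally, for the good unknown $\omega(\eta)\psi = \psi - T_{B(\eta)\psi}\eta$, I use the standard continuity of the paraproduct: if $a \in L^\infty$ and $u \in H^s$, then $T_a u \in H^s$ with $\lA T_a u\rA_{H^s}\lesssim \lA a\rA_{L^\infty}\lA u\rA_{H^s}$ (this is a basic paradifferential estimate recalled in the appendix). Since $s-1>1/2$ we have the Sobolev embedding $H^{s-1}(\xT)\hookrightarrow L^\infty(\xT)$, so $B(\eta)\psi \in L^\infty$ with norm bounded by $C(\lA \eta\rA_{H^s})\lA \psi\rA_{H^s}$. Combined with $\psi\in H^s$, this gives $\omega(\eta)\psi \in H^s$ and the final estimate \e{p25}.

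No step is truly delicate here; the only place requiring a little care is the control of the reciprocal $1/(1+(\px\eta)^2)$ in $H^{s-1}$, which is routine once one invokes a smooth composition lemma in Sobolev spaces (or a direct geometric-series argument). Everything else reduces to the algebra property of $H^{s-1}$, the a priori bound \e{p20} on the Dirichlet--Neumann operator, and the $L^\infty$-to-$H^s$ continuity of paraproducts.
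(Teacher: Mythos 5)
Your proof is correct and follows essentially the same route as the paper: estimate $B$ and $V$ via \e{p20}, the product and composition estimates \e{prtame}, \e{esti:F(u)}, then use the Sobolev embedding $H^{s-1}\hookrightarrow L^\infty$ and the $L^\infty$-paraproduct bound \e{esti:quant0} for $\omega$. One tiny slip: you write ``$F(t)=1/(1+t^2)$ smooth vanishing at $0$,'' but $F(0)=1$; what you actually use (and correctly state in the next line) is $F(t)=1/(1+t^2)-1$, which does vanish at $0$, so the argument stands.
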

\begin{proof}
The estimate for $B(\eta)\psi$ and $V(\eta)\psi$ follow \e{p20}, by 
applying the usual nonlinear estimates in Sobolev spaces, see 
\e{esti:F(u)} and \e{prtame}. 
The Sobolev embedding then implies that $B(\eta)\psi$ belongs to 
$L^\infty(\xT)$. 
As a paraproduct with an $L^\infty$-function acts on any Sobolev space (see~\e{esti:quant0}), we deduce that
\be\label{p27}
\lA T_{B(\eta)\psi}\eta\rA_{H^{s}}
\les \lA B(\eta)\psi\rA_{L^\infty}
\lA \eta\rA_{H^s} \le C\left(\lA \eta\rA_{H^s}\right)\lA \psi\rA_{H^s}\lA \eta\rA_{H^s}.
\ee
This immediately implies the estimate for $\omega(\eta)\psi$ in \e{p25}. 
\end{proof}

Consider a Banach space $X$ and an operator $A$ whose operator norm is strictly smaller than $1$. 
Then it is well-known that $I-A$ is invertible. Now write 
$\omega(\eta)\psi$ under the form $(I-A)\psi$ with $A\psi=T_{B(\eta)\psi}\eta$. 
By applying the previous 
argument, it follows from \e{p27} that we have the following result. 

\begin{lemm}\label{LP:3}
Let $s>3/2$. There exists $\eps_0>0$ such that the following property holds. If 
$\lA \eta\rA_{H^s}<\eps_0$, then there exists a linear operator $\Psi(\eta)$ such that:

$i)$ for any $\psi$ in $H^\mez(\xT)$, 
$$
\Psi(\eta)\omega(\eta)\psi=\psi;
$$ 

$ii)$ if $\omega$ in $H^s(\xT)$ then $\Psi(\eta)\omega$ belongs to $H^{s}(\xT)$ and
\begin{equation}\label{p30}
\lA \Psi(\eta)\omega\rA_{H^s}
\le C\big( \lA \eta\rA_{H^{s}}\big)\lA \omega\rA_{H^s}.
\end{equation}
\end{lemm}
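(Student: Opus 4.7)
The key observation is that, although $\omega(\eta)\psi$ looks nonlinear because $B(\eta)\psi$ appears as the symbol of a paraproduct, the map $\psi \mapsto \omega(\eta)\psi$ is in fact \emph{linear} in $\psi$ for a fixed $\eta$: by the definition \e{p24z}, $\psi\mapsto B(\eta)\psi$ is linear, hence so is $\psi\mapsto T_{B(\eta)\psi}\eta$. Setting $A(\eta)\psi\defn T_{B(\eta)\psi}\eta$, we may therefore write $\omega(\eta)=I-A(\eta)$, and the lemma reduces to the invertibility of $I-A(\eta)$ on the relevant spaces via a Neumann series.

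On $H^s$, inequality \e{p27} proved in the preceding lemma reads exactly
\[
\lA A(\eta)\psi\rA_{H^s}\le C\big(\lA\eta\rA_{H^s}\big)\lA\psi\rA_{H^s}\lA\eta\rA_{H^s}.
\]
Choosing $\eps_0$ so small that $C(\eps_0)\eps_0\le\mez$ yields $\lA A(\eta)\rA_{\Fl{H^s}{H^s}}\le\mez$ whenever $\lA\eta\rA_{H^s}<\eps_0$. The Neumann series $\Psi(\eta)\defn\sum_{k\ge 0}A(\eta)^k$ then converges in $\Fl{H^s}{H^s}$, provides a bounded inverse of $\omega(\eta)=I-A(\eta)$, and the geometric bound immediately yields \e{p30}.

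For $i)$, which asks the identity to hold on the larger space $H^\mez$, I would run the same Neumann argument at this lower regularity. Since $G(\eta)$ maps $H^\mez$ into $H^{-\mez}$ (recalled at the beginning of the section) and $1/(1+(\px\eta)^2)\in\eC{1}$ for $\eta\in H^s$ small, one has $B(\eta)\psi\in H^{-\mez}$ with norm $\le C(\lA\eta\rA_{H^s})\lA\psi\rA_{H^\mez}$. A paraproduct estimate with a negative-regularity symbol then gives $T_{B(\eta)\psi}\eta\in H^{s-\mez}\hookrightarrow H^\mez$ with norm $\les\lA\eta\rA_{H^s}\lA\psi\rA_{H^\mez}$. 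After possibly shrinking $\eps_0$ further one obtains $\lA A(\eta)\rA_{\Fl{H^\mez}{H^\mez}}\le\mez$, the Neumann series converges in $\Fl{H^\mez}{H^\mez}$, and its restriction to the dense subspace $H^s$ coincides with the operator $\Psi(\eta)$ built above. Hence $\Psi(\eta)\omega(\eta)\psi=\psi$ for every $\psi\in H^\mez$.

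The only delicate step I expect is the low-regularity paraproduct bound $\lA T_a u\rA_{H^{s-\mez}}\les\lA a\rA_{H^{-\mez}}\lA u\rA_{H^s}$ needed for the $H^\mez$ inversion; this is a routine consequence of the paradifferential calculus recalled in the appendix, but it is the one point where one must go beyond the classical $L^\infty$-symbol estimate \e{p27}. The heart of the proof is genuinely just a Neumann series in a small perturbation of the identity.
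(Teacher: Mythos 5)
Your proposal is correct and follows essentially the same route as the paper: the paper also observes that $\psi\mapsto T_{B(\eta)\psi}\eta$ is linear in $\psi$, writes $\omega(\eta)=I-A(\eta)$, and invokes the Neumann series once \eqref{p27} gives $\lA A(\eta)\rA_{\Fl{H^s}{H^s}}<1$ for $\lA\eta\rA_{H^s}$ small. The only difference is one of exposition — the paper states the Banach-algebra fact and leaves the rest implicit, whereas you also spell out a parallel low-regularity Neumann argument in $H^{1/2}$ to justify statement $i)$ (relying on the negative-symbol paraproduct estimate you flag), which is a reasonable way to make the density/extension step explicit.
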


\begin{nota}
Hereafter, we often simply write 
$B,V,\omega$ instead of $B(\eta)\psi$, $V(\eta)\psi$, $\omega(\eta)\psi$. It follows 
from the above lemma that, if $\eta$ is small enough in $H^s(\xT)$, then $B$ and $V$ can be express in terms of 
$\eta$ and $\omega$:
$$
B=B(\eta)\Psi(\eta)\omega,\quad V=V(\eta)\Psi(\eta)\omega.
$$
\end{nota}

We also record the following corollary of the analysis 
in~\cite{AM,ABZ1}.
\begin{prop}\label{ParaDN}
Let $s\ge s_0$ with $s_0$ fixed large enough. There exists 
$\theta\in (0,1]$ such that
\begin{equation}\label{p38}
G(\eta)\psi=G(0)\omega-\partial_x \bigl( T_{V}\eta\bigr)+F(\eta)\psi
\end{equation}
where $F(\eta)\psi$ satisfies
\be\label{p39}
\lA F(\eta)\psi\rA_{H^{s+\mez}}\\
\le C\left(\lA \eta\rA_{H^s}\right)\lA \eta\rA_{H^s}^\theta \lA \psi\rA_{H^s}.
\ee
\end{prop}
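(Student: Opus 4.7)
The plan is to deduce Proposition~\ref{ParaDN} as an essentially immediate consequence of the paralinearization of the Dirichlet--Neumann operator proved in \cite{AM, ABZ1}, followed by a short interpolation argument exploiting the fact that the remainder vanishes at $\eta=0$.

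\textbf{Step 1: paralinearization.} The core technical result of \cite{AM, ABZ1} is precisely an identity of the form
\[
G(\eta)\psi = G(0)\omega - \partial_x(T_V\eta) + F_0(\eta)\psi,
\]
in which $F_0$ is a \emph{smoothing} remainder. More precisely, for $s_0$ fixed large enough there exists $\rho>0$ such that, for every $s\ge s_0$ and every $\eta$ small in $H^s$,
\[
\| F_0(\eta)\psi \|_{H^{s+\frac12+\rho}} \le C\bigl(\| \eta\|_{H^s}\bigr)\, \| \psi\|_{H^s}.
\]
The proof relies on flattening the fluid domain by an $\eta$-dependent diffeomorphism, paralinearizing the resulting elliptic problem in the strip, and tracking the crucial cancellation between the Dirichlet--Neumann operator applied to the good unknown $\omega=\psi-T_B\eta$ and the paraproduct transport term $\partial_x(T_V\eta)$. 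This is the essence of the Alinhac good unknown technique.

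\textbf{Step 2: vanishing at $\eta=0$.} When $\eta=0$ one has $\omega=\psi$, $\partial_x(T_V\eta)=0$ and $G(\eta)\psi=G(0)\psi$, so the identity forces $F_0(0)\psi=0$. Writing
\[
F_0(\eta)\psi = \int_0^1 \frac{d}{d\lambda}\bigl(F_0(\lambda\eta)\psi\bigr)\,d\lambda
\]
and applying the tame Sobolev product/paraproduct estimates recalled in Section~\ref{P:S21} to each building block of $F_0$ (paraproducts, Bony remainders, and the $\eta$-dependent straightening of the boundary), one obtains the crude bound
\[
\| F_0(\eta)\psi \|_{H^{s-1}} \le C\bigl(\| \eta\|_{H^s}\bigr)\, \| \eta\|_{H^s}\, \| \psi\|_{H^s}.
\]

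\textbf{Step 3: interpolation.} Setting $\theta = \rho/(\rho+\tfrac32) \in (0,1]$ and interpolating between the smoothing estimate of Step~1 (at regularity $s+\tfrac12+\rho$, with no $\| \eta\|$ factor) and the rough estimate of Step~2 (at regularity $s-1$, with one $\| \eta\|$ factor), one finds
\[
\| F_0(\eta)\psi \|_{H^{s+\frac12}} \le C\bigl(\| \eta\|_{H^s}\bigr)\,\| \eta\|_{H^s}^{\theta}\,\| \psi\|_{H^s},
\]
which is \eqref{p39} with $F = F_0$. The main obstacle is Step~1: it is the smoothing of order strictly \emph{greater} than $\tfrac12$ that requires the nontrivial paradifferential cancellation of \cite{ABZ1}; once this gain $\rho>0$ is in hand, Steps~2 and~3 are routine.
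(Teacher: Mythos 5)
Your proposal is correct and follows essentially the same route as the paper: both prove a smoothing estimate with no $\eta$-factor (citing the paralinearization of \cite{AM,ABZ1}, plus the observation that $\langle D_x\rangle - G(0)$ is smoothing), a low-regularity estimate with one $\eta$-factor by a fundamental-theorem-of-calculus argument in the homotopy parameter, and then interpolate. The one place where the paper is noticeably cleaner is your Step~2: rather than differentiating the full remainder $F_0(\lambda\eta)\psi$ in $\lambda$ and arguing through the ``building blocks'' of the paralinearization, the paper first strips off $\omega-\psi$ and $\partial_x(T_V\eta)$ by direct paraproduct estimates (each already carries an $\eta$-factor), reducing the task to bounding $G(\eta)\psi - G(0)\psi$, and then invokes the explicit shape-derivative formula $\varphi'(\tau) = -G(\tau\eta)(B_\tau\eta) - \partial_x(V_\tau\eta)$ of Lannes; naming this formula is what makes the ``routine'' step actually routine, so you should cite it explicitly instead of gesturing at the internal structure of $F_0$.
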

\begin{proof}
We prove that $F(\eta)\psi$ satisfies the following two estimates:
\begin{align}
&\lA F(\eta)\psi\rA_{H^{s+1}}
\le C\big(\lA \eta\rA_{H^{s}}\big)\lA \psi \rA_{H^{s}},\label{p39-a}\\
&\lA F(\eta)\psi\rA_{H^{s-2}}
\le C\big(\lA \eta\rA_{H^{s}}\big)\lA \eta\rA_{H^{s}}
\lA \psi \rA_{H^{s}}.\label{p39-b}
\end{align}
The estimate \e{p39} then follows by interpolation in Sobolev spaces. 

Let us prove \e{p39-a}. 
In \cite{AM,ABZ1} it is proved that, for any $N$, when $s$ is large enough, 
$
G(\eta)\psi=\la D_x\ra\omega-\partial_x \bigl( T_{V}\eta\bigr)+\tilde F(\eta)\psi$
where $\blA \tilde F(\eta)\psi\brA_{H^{s+N}}
\le C\big(\lA \eta\rA_{H^{s}}\big)\lA \psi \rA_{H^{s}}$. 
Now notice that \e{p38} holds with 
$F(\eta)\psi=(\la D_x\ra-G(0))\omega+\tilde{F}(\eta)\psi$. 
Since $G(0)=\la D_x\ra\tanh(b\la D_x\ra)$, the difference 
$\la D_x\ra-G(0)$ is a smoothing operator. So using the estimate 
\e{p25} for $\omega$, we find that $\lA F(\eta)\psi\rA_{H^{s+N}}$ is bounded by the right-hand side of \e{p39-a}. Taking $N=1$ gives the desired result.

We now prove \e{p39-b}. As for \e{p27}, using the paraproduct rule \e{esti:quant0} and \e{p25}, 
one has 
$$
\lA \omega-\psi\rA_{H^{s}}+\blA \px (T_V \eta)\brA_{H^{s-1}}\les 
(\lA B\rA_{L^\infty}+\lA V\rA_{L^\infty})\lA \eta\rA_{H^s}\le C\big(\lA \eta\rA_{H^{s}}\big)\lA \eta\rA_{H^s}\lA \psi \rA_{H^{s}},
$$
hence it is sufficient to prove that
$\lA G(\eta)\psi-G(0)\psi\rA_{H^{s-2}}$ is bounded by the rhs of \e{p39-b}. This in turn will be deduced from an estimate of 
$\lA \varphi'(\tau)\rA_{H^{s-2}}$ where $\varphi(\tau)=G(\tau\eta)\psi$. 
Set $B_\tau=B(\tau\eta)\psi$ and $V_\tau=V(\tau\eta)\psi$. 
It follows from the computation of the shape derivative of the Dirichlet-Neumann operator (see \cite{LannesJAMS}) that 
$\varphi'(\tau)=-G(\tau\eta)(B_\tau \eta)-\px(V_\tau \eta)$. Now the 
estimate \e{p25}Ê
implies that 
$\lA \varphi'(\tau)\rA_{H^{s-2}}\le C\big(\lA \eta\rA_{H^{s}}\big)\lA \eta\rA_{H^{s}}
\lA \psi \rA_{H^{s}}$. Integrating in $\tau$ we complete the proof.
\end{proof}

\subsection{Symmetrization}\label{P:S22}
As already mentioned, the linearized equations are 
%around the null solution are
$$
\left\{
\begin{aligned}
&\partial_t \eta = G(0)\psi,\\
&\partial_t\psi +g\eta -\kappamuet \px^2 \eta=\Pext,
\end{aligned}
\right.
$$
where $G(0)=\la D_x\ra\tanh(b\la D_x\ra)$. Introducing the Fourier multiplier (of order $3/2$)
$$
\Lk\defn  \big( (g-\kappamuet\px^2) G(0)\big)^{\frac{1}{2}},
$$
with symbol
\be\label{defi:ell-lambda}
\ell(\xi)\defn \big( (g+|\xi|^2)\lambda(\xi)\big)^\mez \quad\text{where}\quad \lambda(\xi)\defn \la \xi\ra\tanh(b\la \xi\ra)
\ee
(so that $L=\ell(D_x)$), and considering $u=\psi-i \Lk G(0)^{-1}\eta$, one obtains the equation
$$
\partial_t u+i\Lk u=P_{ext}.
$$
The following proposition contains a similar diagonalization of System \e{system}.

\begin{prop}\label{T24}
Let $\sigma,\sigma_0$ be such that $\sigma\ge \sigma_0$ 
with $\sigma_0$ large 
enough. Consider a solution $(\eta,\psi)$ of~\eqref{system} on the time interval $[0,T]$ with $0<T<+\infty$, 
such that 
$$
(\eta,\psi)\in C^0\big([0,T];H^{\sigma+\mez}_0(\xT)\times H^{\sigma}(\xT)\big).
$$
Introduce
\be\label{p34b}
\begin{aligned}
&c\defn (1+(\px\eta)^2)^{-\tq},\\
&p\defn c^{-\frac{1}{3}}+\frac{5}{18i}\frac{\chi(\xi)\partial_\xi \ell(\xi)}{\ell(\xi)}
c^{-\frac{4}{3}}\px c,\quad 
q= \chi(\xi)\Big(c^{\frac 2 3}\frac{\ell(\xi)}{\lambda(\xi)}+
\big(\px c^{\frac 2 3}\big) \frac{\ell(\xi)}{i\xi\lambda(\xi)}\Big),
\end{aligned}
\ee
where $\ell,\lambda$ are as in \e{defi:ell-lambda}, $\chi\in C^\infty$ satisfies $\chi(\xi)=1$ for $|\xi|\ge 2/3$ 
and $\chi(\xi)=0$ for $\la \xi\ra\le 1/2$. 
Then
$$
u\defn  T_p \omega-iT_q \eta
$$
satisfies
\be\label{p35}
\partial_t u +T_V \px u +iL^\mez \big(T_c L^\mez u\big)+R(\eta,\psi)=  T_p P_{ext},
\ee
for some remainder $R(\eta,\psi)=R_1(\eta)\psi+R_2(\eta)\eta$ with
\be\label{p35b}
\begin{aligned}
&\lA R_1(\eta)\psi\rA_{H^\sigma}\le 
C \big(\lA \eta\rA_{H^{\sigma+\mez}}
\big)\lA \eta\rA_{H^{\sigma+\mez}}^\theta\lA \psi\rA_{H^\sigma},\\
&\lA R_2(\eta_1)\eta_2\rA_{H^\sigma}
\le C \big(\lA \eta_1\rA_{H^{\sigma+\mez}}
\big)\lA \eta_1\rA_{H^{\sigma+\mez}}^\theta\lA \eta_2\rA_{H^{\sigma+\mez}},
\end{aligned}
\ee
for some $\theta\in (0,1]$ given by Proposition~$\ref{ParaDN}$. 
\end{prop}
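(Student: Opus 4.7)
The plan is to follow the paradifferential approach of~\cite{AM,ABZ1}: first reduce \eqref{system} to a $2\times 2$ paradifferential system for $(\eta,\omega)$, where $\omega=\psi-T_B\eta$ is Alinhac's good unknown, then diagonalize this system by choosing paradifferential symbols $p,q$ of orders $0$ and $\mez$ respectively so that $u=T_p\omega-iT_q\eta$ satisfies an equation whose principal part is exactly $\pa_t+T_V\px+iL^\mez(T_cL^\mez\,\cdot)$. The precise formulas \eqref{p34b} are then dictated by matching principal symbols at order $3/2$ and sub-principal symbols at order $\mez$.

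For the paralinearization step, Proposition~\ref{ParaDN} gives directly
\[
\pa_t\eta+\px(T_V\eta)-G(0)\omega=F(\eta)\psi,
\]
with $F(\eta)\psi$ controlled by \eqref{p39}. For the second equation of \eqref{system}, Bony's decomposition of the two quadratic terms, together with the paralinearization of the curvature
$H(\eta)=\px(T_{(1+(\px\eta)^2)^{-3/2}}\px\eta)+(\text{smoother})=\px(T_{c^2}\px\eta)+(\text{smoother})$,
produces, after rewriting $\psi=\omega+T_B\eta$---which is precisely what cancels the dangerous $T_{\px\psi}\px\eta$ contribution coming from $\mez(\px\psi)^2$ modulo admissible remainders---an equation of the form
\[
\pa_t\omega+T_V\px\omega+T_g\eta-\px(T_{c^2}\px\eta)=\Pext+G_1,
\]
where $G_1$ satisfies bounds of the form \eqref{p35b} by the tame paraproduct estimates of the appendix combined with \eqref{p25} and \eqref{p39}.

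The resulting $2\times 2$ paradifferential system has principal symbol matrix with eigenvalues $\pm i\sqrt{\lambda(\xi)(g+c^2\xi^2)}$, which at leading order in $|\xi|$ agrees with $\pm ic\ell(\xi)$---the principal symbol of $\pm iL^\mez(T_cL^\mez\,\cdot)$. To implement the diagonalization, I would apply $T_p$ to the $\omega$-equation and $-iT_q$ to the $\eta$-equation, sum, and expand all compositions ($T_pT_V\px$, $T_qG(0)$, $T_p\px(T_{c^2}\px\,\cdot)$, and $L^\mez T_cL^\mez$) by symbolic calculus up to order $0$. Matching the principal $\omega$- and $\eta$-coefficients to those of the target operator yields the leading identities $p\sim c^{-1/3}$ and $q\sim c^{2/3}\ell/\lambda$ of \eqref{p34b}, while matching sub-principal symbols (of order $\mez$) yields an algebraic system for the lower-order terms of $p,q$ whose solution is precisely what appears in \eqref{p34b}, with the $\chi(\xi)$ cutoff isolating high frequencies (at low frequencies all relevant operators are smoothing).

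The main obstacle will be this sub-principal bookkeeping: one has to carefully expand $L^\mez(T_cL^\mez\,\cdot)$ modulo order $0$, keep track of all derivatives of $c$ in $x$ and of $\ell$ in $\xi$ appearing in commutators and in the expansion of compositions, and verify that the ansatz in \eqref{p34b} exactly solves the resulting sub-principal matching equations (in particular, the coefficient $\frac{5}{18i}$ and the explicit form of the $q$-correction are forced by this algebra). Once this is done, the final remainder decomposes as $R(\eta,\psi)=R_1(\eta)\psi+R_2(\eta)\eta$ by separating contributions linear in $\psi$ (through $\omega$ and through the coefficients $B(\eta)\psi$, $V(\eta)\psi$) from those depending only on $\eta$; the estimates \eqref{p35b} then follow from \eqref{p25}, \eqref{p39}, the paradifferential tame estimates of the appendix, and Sobolev interpolation, with the same exponent $\theta\in(0,1]$ as in Proposition~\ref{ParaDN}.
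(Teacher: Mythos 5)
Your outline follows essentially the same route as the paper: paralinearize via the good unknown $\omega$, reduce to a $2\times 2$ paradifferential system, then pick $p,q$ so that $T_p,\,T_q$ diagonalize it, matching principal and sub-principal symbols and absorbing the rest into remainders estimated by the tame estimates and Proposition~\ref{ParaDN}. One point deserves correction, however: you claim the coefficient $\tfrac{5}{18i}$ and the $q$-correction are \emph{forced} by the sub-principal algebra, but this is not so. The matching conditions leave a one-parameter family of admissible pairs $(p^{(-1)},q^{(-1/2)})$ (the paper introduces a free constant $\alpha$ in $p^{(-1)}$ and then fixes $\alpha+\tfrac16=\tfrac49$), and the particular normalization in \eqref{p34b} is chosen not by the symbolic calculus but because it makes $T_q=\px T_Q$ for an explicit $Q$, ensuring $\int_\xT T_q\eta\,dx=0$ and hence $\int_\xT\IM u\,dx=0$ — a structural property used later for the controllability scheme. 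So your proof should be phrased as \emph{verifying} that the specific $p,q$ in \eqref{p34b} satisfy the two sub-principal identities (after fixing the free parameter appropriately), rather than deriving them as the unique solution; without this you would prove the equation holds for some choice of $p,q$, but not necessarily the one in the statement, which is what the rest of the paper needs.
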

\begin{rema}
Compared to a similar result proved in \cite{ABZ1}, 
there are two differences. We here obtain 
a super-linear remainder term (see \e{p35b}), 
and secondly we prove here that $q$ can be so chosen that 
$T_q=\px T_Q$ for some symbol $Q$; namely,
\be\label{p36}
T_q=\px T_Q \quad\text{with}\quad Q\defn \chi(\xi)c^{\frac{2}{3}}\frac{\ell(\xi)}{\lambda(\xi) i\xi}.
\ee
This will be used to obtain that $\int T_q \eta \, dx=0$. Since it is not a trivial 
task to obtain these additional properties, we shall recall the strategy of the proof from \cite{ABZ1} 
and give a detailed analysis of the required modifications.
\end{rema}
\begin{proof}
The first step consists in paralinearizing the equation. We use in particular the paralinearization of the Dirichlet-Neumann operator (see \e{p38}). Then, by using 
the paralinearization formula for products (replacing products 
$ab$ by $T_ab+T_ba+\RBony(a,b)$), it follows from direct computations (see \cite{ABZ1}) that
\begin{equation}\label{p40}
\left\{
\begin{aligned}
&\partial_t \eta+\partial_x(T_V \eta)- G(0)\omega =F^1,\\
&\partial_t \omega+T_V  \px \omega +T_{a} \eta -\kappamuet H(\eta)=F^2+\Pext,
\end{aligned}
\right.
\end{equation}
where $a$ denotes the Taylor coefficient, which is
$$
a = g+ \partial_t B +V  \partial_x B,
$$
and $F^1$ and $F^2$ are given by 
(see \e{defi:RBony}Ê
for the definition of $\RBony(a,b)$)
\begin{align*}
F^1&=F(\eta)\psi,\\
F^2&=(T_{V}  T_{\partial_x\eta}-T_{V \partial_x\eta})B
+(T_{V \partial_x B}-T_{V}  T_{\partial_x B})\eta\\
&\quad +\mez \RBony(B,B)-\mez\RBony(V,V)+T_V\RBony(B,\partial_x\eta)
-\RBony(B,V\px\eta).
\end{align*}

On the other hand, the paralinearization estimate~\ref{FBony} applied with $\alpha=\sigma-1/2$ implies that
$$
\frac{\px\eta}{\sqrt{1 + (\px\eta)^2}} 
=T_{r} \px \eta + \widetilde{f} , \qquad r\defn (1+(\px\eta)^2)^{-\tdm},
$$
where  $\tilde{f}\in L^\infty(0,T;H^{2\sigma-\tdm})$ is such that
\begin{equation*}
\blA \widetilde{f}\brA_{H^{2\sigma-\tdm}} \le C ( \lA \eta\rA_{H^{\sigma+\mez}}) \lA \eta\rA_{H^{\sigma+\mez}}^2,
\end{equation*}
for some non-decreasing function $C$. 
Hence, directly from \e{p40}, we obtain that
\begin{equation*}
\left\{
\begin{aligned}
&\partial_t \eta+T_V \partial_x\eta- G(0)\omega =f^1,\\
&\partial_t \omega+T_V  \px \omega + g\eta -\kappamuet 
\px(T_{r}\px \eta)=f^2+\Pext,
\end{aligned}
\right.
\end{equation*}
where
\[ 
f^1 \defn F^1-T_{\px V}\eta, \qquad 
f^2 \defn F^2+\kappamuet \px \widetilde{f}+T_{g-a}\eta.
\]
%\begin{align*}
%f^1&\defn F^1-T_{\px V}\eta,\\
%f^2&\defn F^2+\kappamuet \px \widetilde{f}+T_{g-a}\eta.
%\end{align*}
Then introduce 
$\zeta\defn T_q \eta$ and $\theta\defn T_p \omega$. It is found that
\begin{equation}\label{p40a}
\left\{
\begin{aligned}
&\partial_t \zeta+T_V \partial_x\zeta- T_q G(0)\omega =\tilde f^1,\\
&\partial_t \theta+T_V  \px \theta +T_p\big(g\eta -\kappamuet 
\px(T_{r}\px \eta)\big)=\tilde f^2+T_p \Pext,
\end{aligned}
\right.
\end{equation}
where
\begin{align*}
\tilde f^1&\defn T_q f^1 +T_{\partial_t q}\eta +[T_V\px ,T_q]\eta,\\
\tilde f^2&\defn T_p f^2 +T_{\partial_t p}\omega+[T_V\px,T_p]\omega.
\end{align*}

Assuming that $q$ and $p$ are as in the statement of the proposition, 
it easily follows from \e{p39} and the paradifferential rules \e{esti:quant1}, \e{esti:quant0} and \e{esti:quant2} (applied with $\rho=1$ to bound the operator norm of the commutators 
$[T_V\px ,T_q]$ and $[T_V\px ,T_p]$) that
$$
\lA (f^1,f^2)\rA_{H^\sigma}\le C \big(\lA \eta\rA_{H^{\sigma+\mez}}
\big)\lA \eta\rA_{H^{\sigma+\mez}}^\theta\big\{ \lA \psi\rA_{H^\sigma}+\lA \eta\rA_{H^{\sigma+\mez}} \big\}.
$$

It remains to compute $T_q G(0)\omega $ and $T_p\big(g\eta -\kappamuet 
\px(T_{r}\px \eta)\big)$. 
More precisely, it remains to establish that
%\be\label{p40ab}
%\begin{aligned}
\begin{align} \label{p40ab}
&\blA T_q G(0)\omega - L^\mez T_c L^\mez T_p\omega\brA_{H^\sigma}
\le C\big(\lA \eta\rA_{H^{\sigma+\mez}}\big)
\lA \eta\rA_{H^{\sigma+\mez}}^\theta
\lA \omega\rA_{H^\sigma},\\
&\blA T_p\big(g\eta -\kappamuet 
\px(T_r\px \eta)\big) -L^\mez T_c L^\mez T_q\eta\brA_{H^\sigma}
\le C\big(\lA \eta\rA_{H^{\sigma+\mez}}\big) \lA \eta\rA_{H^{\sigma+\mez}}^\theta\lA \eta\rA_{H^{\sigma+\mez}}.
\notag \end{align}
%\end{aligned}
%\ee
(We prove below these estimates with $\theta=1$.) 
Then the estimates \e{p35b}Ê
follow from \e{p25} which gives a bound for $\lA \omega\rA_{H^\sigma}$ in terms of 
$\lA \psi\rA_{H^\sigma}$.

To prove \e{p40ab}, it is convenient 
to introduce the following notation: Given two operators, the notation 
$A\sim B$ means that, 
for any $\mu\in\xR$ there is a constant $C\big(\lA \eta\rA_{H^{\sigma+\mez}}\big)$ such that
$$
\lA (A-B)u\rA_{H^\mu}\le C\big(\lA \eta\rA_{H^{\sigma+\mez}}\big) \lA \eta\rA_{H^{\sigma+\mez}}\lA u\rA_{H^\mu}.
$$
In words, $A\sim B$ means that $A$ equals $B$ modulo a remainder which is 
of order $0$ and quadratic. 

For instance consider real numbers $m,m'$ with $m+m'=2$ and two operators 
$A=T_{a^{(m)}+a^{(m-1)}}$ and $B=T_{b^{(m')}+b^{(m'-1)}}$ 
where
$$
a^{(m)}\in \Gamma^m_2,\quad a^{(m-1)}\in \Gamma^{m-1}_1,\quad b^{(m')}\in \Gamma^{m'}_2,\quad b^{(m'-1)}\in \Gamma^{m'-1}_1
$$
(see Definition \ref{defi Gamma}) with
%Assume in addition that $a^{(m)}$ and $a^{(m-1)}$ are sub-linear in $\eta$, %that is such that 
(see \e{defi:norms}) % the definition~\ref{defi:norms})
\begin{align*}
&M_2^{m'}(b^{(m')})+M_1^{m'-1}(b^{(m'-1)})\le C\big( \lA \eta\rA_{H^{\sigma+\mez}}\big),\\
&M_2^{m}(a^{(m)})+M_1^{m-1}(a^{(m-1)})\le C\big( \lA \eta\rA_{H^{\sigma+\mez}}\big)\lA \eta\rA_{H^{\sigma+\mez}}.
\end{align*}
By using \e{esti:quant2sharp} applied with $\rho=2$ and \e{esti:quant2} applied with 
$\rho=1$, we obtain that
\begin{alignat*}{2}
&T_{a^{(m)}}T_{b^{(m')}}\sim T_{a^{(m)}b^{(m')}+\frac{1}{i}\partial_\xi a^{(m)} \px b^{(m')}}, 
\qquad & 
& T_{a^{(m)}}T_{b^{(m'-1)}}\sim T_{a^{(m)}b^{(m'-1)}}, 
\\
& T_{a^{(m-1)}}T_{b^{(m')}}\sim T_{a^{(m-1)}b^{(m')}}, \qquad & 
& T_{a^{(m-1)}}T_{b^{(m'-1)}}\sim 0,
\end{alignat*}
%\begin{align*}
%&T_{a^{(m)}}T_{b^{(m')}}\sim T_{a^{(m)}b^{(m')}+\frac{1}{i}\partial_\xi a^{(m)} \px b^{(m')}},\\
%&T_{a^{(m)}}T_{b^{(m'-1)}}\sim T_{a^{(m)}b^{(m'-1)}},\\
%&T_{a^{(m-1)}}T_{b^{(m')}}\sim T_{a^{(m-1)}b^{(m')}},\\
%&T_{a^{(m-1)}}T_{b^{(m'-1)}}\sim 0,
%\end{align*}
so
\be\label{pb40}
AB\sim T_{a^{(m)}b^{(m')}+\frac{1}{i}\partial_\xi a^{(m)} \px b^{(m')}+a^{(m)}b^{(m'-1)}
+a^{(m-1)}b^{(m')}}.
\ee

Using the previous notation, to prove \e{p40ab} we have to prove that 
\be\label{pb1}
\begin{aligned}
&T_q G(0) \sim  L^\mez T_c L^\mez \chi(D_x)T_p\\
&T_p\big(gI -\kappamuet 
\px(T_{r}\px \cdot)\big)\chi(D_x) \sim  L^\mez T_c L^\mez \chi(D_x)T_q.
\end{aligned}
\ee
Notice that $\chi(D_x)\eta=\eta$ and 
$L^\mez \chi(D_x)u=L^\mez u$ for any periodic function $u$. 
This is why we can introduce the cut-off function $\chi$ in the calculations. 
This cut-off function is used to handle symbols which are not 
smooth at $\xi=0$.

We remark that, by definition of paradifferential operators, we have
$$
T_q G(0)=T_{q\lambda(\xi)},\quad 
gI -\kappamuet 
\px(T_r\px \cdot)=T_{g+\kappamuet r \xi^2-\kappamuet(\px r)(i\xi)}.
$$
{\em Study of the first identity in} \e{pb1}. 
It follows from symbolic calculus (see \e{esti:quant2sharp}) that
\be\label{pb41}
L^\mez T_c L^\mez\chi(D_x) \sim T_\gamma
\qquad \text{with} \quad 
\gamma=\chi c\ell +\frac{\chi}{i}\big(\partial_\xi \sqrt{\ell}\big)\sqrt{\ell} \px c.
\ee 
Now we seek $q$ under the form $q=q^{(1/2)}+q^{(-1/2)}$ where $q^{(1/2)}$ is of order $1/2$ in $\xi$ 
(more precisely, $q\in \Gamma_{2}^{1/2}$) and 
$q^{(-1/2)}$ is of order $-1/2$ (in $\Gamma^{-1/2}_{1}$). 
Similarly, we seek $p=p^{(0)}+p^{(-1)}$ 
with $p\in \Gamma_{2}^{0}$ and 
$p^{(-1)}\in \Gamma^{-1}_{1}$.

Also, it follows from \e{pb40} 
that 
$L^\mez T_c L^\mez \chi(D_x) T_p\sim T_\gamma T_p\sim T_{\wp_1}$ with 
$$
\wp_1=\gamma p^{(0)}+\chi c\ell p^{(-1)} +\frac{1}{i}\chi c(\partial_\xi \ell)\px p^{(0)}
$$
(the contribution of $(\partial_\xi\chi)\px p^{(0)}$ is in the remainder term). The first identity in \e{pb1} will be satisfied if
$$
q^{(1/2)}\defn \chi cp^{(0)}\frac{\ell(\xi)}{\lambda(\xi)},\quad q^{(-1/2)}=\frac{\chi}{i}\frac{\partial_\xi \ell(\xi)}{\lambda(\xi)}\Big[\mez (\px c)p^{(0)}+c\px p^{(0)}\Big]
+\chi c p^{(-1)}\frac{\ell(\xi)}{\lambda(\xi)}.
$$

{\em Study of the second identity in} \e{pb1}. 
As above, it follows from symbolic calculus that 
$L^\mez T_c L^\mez \chi(D_x)T_q\sim T_\gamma T_q\sim T_{\wp_2}$ with (see \e{pb40}) 
$$
\wp_2=\gamma q^{(1/2)}+\frac{1}{i}\chi c(\partial_\xi \ell)\px q^{(1/2)}+\chi c \ell q^{(-1/2)}.
$$
With $q^{(1/2)}$ and $q^{(-1/2)}$ as given above, we compute that
$$
\wp_2=\chi\Big\{c\ell q^{(1/2)}+\frac{\chi}{i}\frac{\partial_\xi \ell^2}{\lambda(\xi)}
\Big(cp^{(0)}\px c+c^2\px p^{(0)}\Big)+\chi c^2 p^{(-1)}\frac{\ell(\xi)^2}{\lambda(\xi)}\Big\}.
$$
Moreover, by definition of $\ell(\xi)$ one has
$$
\frac{\partial_\xi\ell^2}{\lambda(\xi)} 
=
3\xi+r_1,
\quad 
\frac{\ell(\xi)^2}{\lambda(\xi)}=\kappamuet\xi^2 +r_2,
\qquad r_1,r_2\text{ are of order }0.
$$
Notice that the contribution of the term 
$r_1(cp^{(0)}\px c+c^2\px p^{(0)})$ to $T_{\wp_2}$ and the one of 
$r_2c^2 p^{(-1)}$ 
can be handled as remainder terms and hence  
$$
L^\mez T_c L^\mez \chi(D_x)T_q\sim 
T_{\tilde\wp_2}
$$
with 
$$
\tilde\wp_2=\chi \Big\{ c\ell q^{(1/2)}+\frac{3\chi\kappamuet}{i}\xi\Big(
cp^{(0)}\px c+c^2\px p^{(0)}\Big)+\chi \kappamuet c^2 p^{(-1)}\xi^2\Big\}.
$$
On the other hand, 
$$
T_p\big(gI -\kappamuet 
\px(T_{r}\px \cdot)\big)\chi(D_x)
\sim T_{\chi p\big(g+\kappamuet r \xi^2-\kappamuet(\px r)(i\xi)\big)}.
$$

By definition of $c,\ell,q^{(1/2)}$, recall that 
$r=c^2$ and $\ell^2=(g+\kappamuet\xi^2)\lambda(\xi)$ 
and hence
\begin{align*}
p\big(g+\kappamuet r \xi^2-\kappamuet(\px r)(i\xi)\big) 
%&=\kappamuet p r \xi^2 +gp -\kappamuet p (\px r)(i\xi) \\
&=p\kappamuet c^2 \xi^2+gp -\kappamuet p (\px r)(i\xi)\\
&=p c^2 \frac{\ell(\xi)^2}{\lambda(\xi)}
-gpc^2+gp -\kappamuet p (\px r)(i\xi).
\end{align*}
Since $q^{(1/2)}\defn \chi cp^{(0)} \frac{\ell(\xi)}{\lambda(\xi)}$, we deduce that
$$
p\big(g+\kappamuet r \xi^2-\kappamuet(\px r)(i\xi)\big)\chi
=c \ell q^{(1/2)}+\chi\Big\{p^{(-1)} c^2 (g+\kappamuet\xi^2)+gp(1-c^2) -\kappamuet i p (\px r) \xi\Big\}.
$$ 
Since $1-c^2$ and $\px r$ depend at least linearly on $\eta$ and since $p$ and $p^{(-1)}\xi$ are symbols of order $0$, 
it follows from the estimate \e{esti:quant1} for the operator norm of a paradifferential operator that
\begin{align*}
&\lA T_{p(1-c^2)}u\rA_{H^\mu}\le C \big( \lA \eta\rA_{H^{\sigma+\mez}}\big) 
\lA \eta\rA_{H^{\sigma+\mez}}^2\lA u\rA_{H^\mu},\\
&\blA T_{ p^{(-1)} (\px r) \xi}u\brA_{H^\mu}\le C\big(\lA \eta\rA_{H^{\sigma+\mez}}\big)\lA \eta\rA_{H^{\sigma+\mez}}\lA u\rA_{H^\mu}.
\end{align*}
Similarly, assuming that $p^{(-1)}$ is a symbol of order $-1$ depending linearly on $\eta$ (as this will be true, see \e{p34b}),
we have
$$
\blA T_{p^{(-1)} c^2 g}u\brA_{H^\mu}\le C\big(\lA \eta\rA_{H^{\sigma+\mez}}\big)\lA \eta\rA_{H^{\sigma+\mez}}\lA u\rA_{H^\mu}.$$
Therefore, 
$$
T_p\big(gI -\kappamuet 
\px(T_{r}\px \cdot)\big)\chi(D_x)\sim T_{c \ell q^{(1/2)} 
+\chi p^{(-1)} c^2 \xi^2-\chi p^{(0)} (\px r)(i\xi)} .
$$
Now since $r=c^2$, with $p^{(0)}=c^{-\frac{1}{3}}$, we have 
$$
-p^{(0)} (\px r)(i\xi)=
+\frac{3}{i}\xi\Big(
cp^{(0)}\px c+c^2\px p^{(0)}\Big),
$$
as can be verified by a direct calculation, so 
the second identity in \e{pb1} holds.

It remains to compute $q$. We have
$$
q=\chi\bigg\{
cp^{(0)}\frac{\ell(\xi)}{\lambda(\xi)}+\frac{1}{i}\frac{\partial_\xi \ell(\xi)}{\lambda(\xi)}\Big[\mez (\px c)p^{(0)}+c\px p^{(0)}\Big]
+c p^{(-1)}\frac{\ell(\xi)}{\lambda(\xi)}\bigg\}.
$$
Observe that
$$
\Big[\mez (\px c)p^{(0)}+c\px p^{(0)}\Big]=\frac{1}{6}c^{-\frac{1}{3}}\px c.
$$
We now seek $p^{(-1)}$ such that
$$
c p^{(-1)}\frac{\ell(\xi)}{\lambda(\xi)}=\alpha\frac{1}{i}\frac{\partial_\xi \ell(\xi)}{\lambda(\xi)}
c^{-\frac{1}{3}}\px c
$$
for some constant $\a$ to be determined. We thus set
$$
p^{(-1)}\defn \alpha \frac{\chi(\xi)}{i}\frac{\partial_\xi \ell(\xi)}{\ell(\xi)}
c^{-\frac{4}{3}}\px c.
$$
Then (replacing $\chi^2$ by $\chi$, to the price of adding a 
smoothing operator in the remainder), we have
$$
q\defn 
\chi\bigg\{c^{\frac 2 3}\frac{\ell(\xi)}{\lambda(\xi)}+\frac{1}{i}\frac{\partial_\xi \ell(\xi)}{\lambda(\xi)}\Big[
\Big(\alpha+\frac 1 6\Big) c^{-\frac 1 3}\px c\Big]\bigg\}.
$$
Since
$$
\chi(\xi)\xi\partial_\xi\ell=\tdm \chi \ell+\tau(\xi)
$$
with $\tau(\xi)$ is a smooth symbol of order $1/2$, we have
$$
\frac 2 3 \chi(\xi)\frac{\ell}{\lambda(\xi) i\xi} =
\frac{4}{9i}\chi(\xi)
\frac{\partial_\xi \ell}{\lambda(\xi)}+r'
$$
where $r'$ is of order $-3/2$. Then, 
choosing $\alpha$ such that $\alpha+1/6=4/9$, we find that
$$
q= c^{\frac 2 3}\frac{\ell(\xi)}{\lambda(\xi)}+
\big(\px c^{\frac 2 3}\big) \frac{\ell(\xi)}{i\xi\lambda(\xi)}+\tilde r
$$
where $\tilde r$ is such that
$$
\lA T_{\tilde r}u\rA_{H^{\mu+\tdm}}
\le C\big(\lA \eta\rA_{H^{\sigma+\mez}}\big)\lA \eta\rA_{H^{\sigma+\mez}}\lA u\rA_{H^\mu}.
$$
In particular, the contribution of $\tilde r$ can be handled 
as a remainder term and the same results hold when $q$ is replaced 
by the same expression without $\tilde r$, thereby obtaining \e{p34b}. 
This completes the proof of \e{pb1} and hence the proof of the proposition.
\end{proof}

\subsection{Invertibility of the change of unknowns}\label{P:S23}

We thus obtained an equation of the form
$$
\partial_t u +T_V \px u +iL^\mez\big(T_c L^\mez u\big) + R(\eta,\psi)= T_p P_{ext},
$$
where the coefficients $V$ and $c$ depend on the original unknowns $(\eta,\psi)$. 
We conclude this section by proving that $V$ and $c$ can be expressed in terms of $u$ only. 
We have already seen in Lemma~\ref{LP:3} that these coefficients can be expressed in terms 
of $\eta$ and~$\omega$. So it remains only to express $(\eta,\omega)$ in terms of 
$u$. %This is the purpose of the following lemma.

In this paragraph, the time is seen as a parameter and we skip it.

\begin{nota}\label{N:26}
Introduce the space $\tilde H^\sigma(\xT;\xC)$ of complex-valued functions $u$ satisfying
$$
\int_\xT \IM u (x)\, dx=0.
$$
\end{nota}

Recall (see \e{esti:quant1}) that a paradifferential operator with symbol in 
$\Gamma^m_0$ is bounded from any Sobolev space $H^\mu(\xT)$ to 
$H^{\mu-m}(\xT)$. Recall also that 
$\omega\in H^\sigma(\xT)$ 
whenever $(\eta,\psi)\in H^{\sigma+\mez}_0(\xT)\times H^\sigma(\xT)$. 
Since, as already mentioned, 
$T_q\eta=T_{q\chi}\eta$ where 
$\chi$ is as defined after \e{p34b} and since 
$q\chi\in \Gamma^{1/2}_0$, we deduce that 
$u$ belongs to $H^\sigma(\xT)$. Moreover, 
it follows from \e{p36}Ê
that $T_p \omega-iT_q \eta$ belongs to 
$\tilde H^\sigma(\xT;\xC)$. 

We now introduce the mapping  
$U\colon H^{\sigma+\mez}_0(\xT)\times H^\sigma(\xT) \rightarrow \tilde H^\sigma(\xT;\xC)$ so that
$$
U(\eta,\psi)\defn T_p \omega-iT_q \eta.
$$
The following result shows 
that this nonlinear mapping can be inverted. 
\begin{lemm}\label{T27}
Let $\sigma_0>5/2$. There exists $\eps_0>0$ and $K$ such that the following properties holds. If 
$\lA \eta\rA_{H^{\sigma_0}}<\eps_0$, then there exists
$$
Y\colon \tilde H^{\sigma_0}(\xT;\xC)\rightarrow H^{\sigma_0+\mez}_0(\xT)\times H^{\sigma_0}(\xT),
$$
such that $Y(u)=(\eta,\psi)$ with $u=U(\eta,\psi)$. 
Moreover, for any 
$\sigma>5/2$, 
\be\label{p44a}
\lA \eta\rA_{H^{\sigma+\mez}}\le 2 \lA u\rA_{H^\sigma}, \quad 
\lA \psi\rA_{H^\sigma}\le 2 \lA u\rA_{H^\sigma}.
\ee
\end{lemm}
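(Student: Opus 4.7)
The strategy is to invert $U$ by an implicit function theorem / Neumann series argument based at $(\eta,\psi)=(0,0)$, where $U$ reduces essentially to a Fourier multiplier. When $\eta=0$ one has $c=1$, $\px c=0$, $\omega=\psi$, and $p=1$, $q=q_0(\xi)\defn\chi(\xi)\ell(\xi)/\lambda(\xi)$. Hence $U(0,\psi)=\psi$ and the differential of $U$ at $(0,0)$ is
\[
dU(0,0)(\dot\eta,\dot\psi)=\dot\psi-iT_{q_0}\dot\eta.
\]
Because $q_0$ is real, even, of order $1/2$, vanishes only at $\xi=0$ (through $\chi$) and is elliptic off $\xi=0$, the paradifferential operator $T_{q_0}$ coincides, modulo a smoothing real operator, with the real Fourier multiplier $q_0(D_x)$. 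Taking real and imaginary parts of $u=\dot\psi-iT_{q_0}\dot\eta$ therefore gives $\RE u=\dot\psi$ and $\IM u=-T_{q_0}\dot\eta$ up to smoothing remainders. The constraint $\int_\xT \IM u\,dx=0$ built into $\tilde H^{\sigma_0}(\xT;\xC)$ is exactly what is needed so that $T_{q_0}\dot\eta=-\IM u$ is solvable for a mean-zero $\dot\eta$, and $T_{q_0}$ is an isomorphism from $H^{\sigma_0+\mez}_0(\xT)$ onto the mean-zero subspace of $H^{\sigma_0}(\xT)$.

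Next I would write $U(\eta,\psi)=dU(0,0)(\eta,\psi)+N(\eta,\psi)$, collecting in $N$ the nonlinear dependence of $p,q$ on $\eta$ through $c=(1+(\px\eta)^2)^{-\tq}$ and of $\omega-\psi=-T_{B(\eta)\psi}\eta$ on $(\eta,\psi)$. Each of the three pieces $T_{p-1}\omega$, $T_{q-q_0}\eta$ and $T_{B(\eta)\psi}\eta$ depends at least linearly on $\eta$ beyond the linear part. Using the paradifferential operator norm bound \e{esti:quant1}, the estimate \e{p25} for $\omega$ and $B(\eta)\psi$, and the smoothness of the map $\eta\mapsto(c,\px c)\mapsto(p,q)$, I would deduce the superlinear bound
\[
\lA N(\eta,\psi)\rA_{H^{\sigma_0}}\le C\big(\lA\eta\rA_{H^{\sigma_0+\mez}}\big)\lA\eta\rA_{H^{\sigma_0+\mez}}\big(\lA\eta\rA_{H^{\sigma_0+\mez}}+\lA\psi\rA_{H^{\sigma_0}}\big),
\]
together with a matching Lipschitz bound for the difference of $N$ at two base points.

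Setting $S\defn(dU(0,0))^{-1}$, the equation $U(\eta,\psi)=u$ becomes the fixed-point equation $(\eta,\psi)=Su-SN(\eta,\psi)$. When $\lA u\rA_{H^{\sigma_0}}\le\eps_0$ is small enough, $SN$ is a contraction on the closed ball of radius $2\lA u\rA_{H^{\sigma_0}}$ in $H^{\sigma_0+\mez}_0(\xT)\times H^{\sigma_0}(\xT)$, so the Banach fixed point theorem produces a unique $Y(u)=(\eta,\psi)$ in that ball, yielding \e{p44a} at the low regularity $\sigma_0$. For general $\sigma>5/2$ I would bootstrap: since the smallness is required only on the low norm, applying $S$ and the superlinear estimate on $N$ at the $H^{\sigma+\mez}\times H^\sigma$-level gives
\[
\lA\eta\rA_{H^{\sigma+\mez}}+\lA\psi\rA_{H^\sigma}\le C\lA u\rA_{H^\sigma}+C\eps_0\big(\lA\eta\rA_{H^{\sigma+\mez}}+\lA\psi\rA_{H^\sigma}\big),
\]
so the last term is absorbed on the left-hand side for $\eps_0$ small, producing \e{p44a}.

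The main obstacle is the clean inversion of $T_{q_0}$ together with the matching of the two mean-value constraints. This is where the specific design of the symbols $p,q$ pays off: the cutoff $\chi$ in $q_0$ forces the vanishing at $\xi=0$, the identity \e{p36}, $T_q\eta=\px T_Q\eta$, guarantees that $T_q\eta$ has zero mean so that $U$ actually takes values in $\tilde H^{\sigma_0}(\xT;\xC)$, and the imaginary-mean-zero condition in the definition of $\tilde H^{\sigma_0}$ is precisely what allows the inversion of $T_{q_0}$ at the linear level between mean-zero spaces. Once this linear picture is pinned down, the rest is a routine contraction mapping argument.
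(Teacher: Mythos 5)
Your proposal is correct and follows the same basic strategy as the paper: invert $U$ near $(0,0)$ by a contraction mapping built around the constant-coefficient linearization, using that $q$ is close to the Fourier multiplier $Q(\xi)=\chi(\xi)\ell(\xi)/\lambda(\xi)$ and that the mean constraint in $\tilde H^\sigma$ matches the kernel created by the cutoff $\chi$. The one structural difference is that you set up a monolithic fixed point for the pair $(\eta,\psi)$ at once, while the paper first exploits the fact that the real symbols $p,q$ map real functions to real functions, so $\RE u = T_p\omega$ and $\IM u = -T_q\eta$ decouple: the equation $T_q\eta=-\IM u$ involves only $\eta$ and is solved first by the same Banach fixed point you describe (writing $T_q = Q + (T_q - Q)$ with $(T_q-Q)$ small); then $T_p\omega=\RE u$ is solved by a Neumann series since $T_p - I$ has small operator norm; and finally $\psi=\Psi(\eta)\omega$ is recovered from Lemma~\ref{LP:3}. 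The decoupled version avoids having to organize a product-space contraction and makes the tame estimate in $\sigma$ a little more transparent, but both arguments rest on the same linearization, the same ellipticity/reality of $q_0$, and the same use of the mean-zero constraint, so there is no genuine gap in what you wrote.
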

\begin{proof}Set $u=U(\eta,\psi)\defn T_p \omega-iT_q \eta$. 
Then $T_q \eta=-\IM u$ and $T_p\omega=\RE u$,
where $q$ and $p$ depend on $\eta$. The only difficulty is to express $\eta$ in terms of 
$\IM u$. Once this will be granted, to invert the equation $T_p \omega=\RE u$, 
we use the fact that $T_p$ is a small bounded perturbation 
of the identity so that $T_p$ is invertible, indeed (recalling that $M_\rho^m(a)$ is defined by \e{defi:norms}) 
$$
\lA T_p-I\rA_{\mathcal{L}(H^\sigma)}\les M^0_0(p-1)\le C\left(\lA \eta\rA_{H^{\sigma+\mez}}\right)\lA \eta\rA_{H^{\sigma+\mez}}.
$$

Now to solve the equation $T_q \eta=-\IM u$, we use the Banach fixed point theorem. 
Denote by $Q$ the Fourier multiplier with symbol 
$Q(\xi)\defn \chi(\xi)\ell(\xi)/\lambda(\xi)=\chi(\xi)\sqrt{g+\kappamuet \xi^2}/\sqrt{\lambda(\xi)}$. 
The reason to introduce this symbol is that, with $q$ as given by \e{p34b} one has
\be\label{p45}
M_0^{1/2}\big(q(x,\xi)-Q(\xi)\big)\le C(\lA \eta\rA_{H^{\sigma+\mez}})\lA \eta\rA_{H^{\sigma+\mez}},
\ee
which is obtained by considering separately the principal and sub-principal terms in the definition of $q$. 
Then seek 
$\eta$ in $H^{\sigma+\mez}_0(\xT)$ 
such that $\Phi(\eta)=\eta$ with 
$$
\Phi(\eta)\defn -(g-\kappamuet\px^2)^{-\mez}G(0)^{\mez}\big((T_q-Q) \eta+\IM u\big).
$$
It is easily verified that if $\Phi(\eta)=\eta$ then $T_q\eta=-\IM u$ and also that 
$\Phi$ maps $H^{s+\mez}_0(\xT)$ into itself. To see that $\Phi$ is a contraction, we use 
\e{p45}Ê
to obtain
\begin{align*}
\lA \Phi(\eta_1)-\Phi(\eta_2)\rA_{H^{\sigma+\mez}}
&\les 
\blA \big(T_{q_1}-Q\big)(\eta_1-\eta_2)\brA_{H^\sigma}
+\lA \left(T_{q_1}-T_{q_2}\right)\eta_2\rA_{H^\sigma}\\
&\les M_0^{1/2}\big(q_1-Q\big)\lA \eta_1-\eta_2\rA_{H^{\sigma+\mez}}
+M_0^{1/2}(q_1-q_2)\lA \eta_2\rA_{H^{\sigma+\mez}}\\
&\le C(M)M\lA \eta_1-\eta_2\rA_{H^{\sigma+\mez}}
\end{align*}
where $M\defn \lA \eta_1\rA_{H^{\sigma+\mez}}
+\lA \eta_2\rA_{H^{\sigma+\mez}}$. 
If $M$ is small enough, then $\Phi$ is a contraction. 
\end{proof}

\section{The linear equation}\label{SN:3}

As mentioned in the introduction, we shall study the control problem for the water waves 
equations by means of a nonlinear scheme. This requires to solve a linear control problem at each step. 
We introduce in this section the linear equation we are going to study until section~\ref{P:S8}, 
emphasize one key property of this equation and state the main result we want to prove.

We have seen in the previous section that one can express $V=V(\eta)\psi$ in terms of $u$ only. 
To simplify notations, we write $V=V(u)$, and similarly we write $c=c(u)$. Also, one can write the remainder 
$R(\eta,\psi)$ under the form $R(u)u$ 
where, for any $\underline{u}$, the mapping $u\mapsto R(\underline{u})u$ is linear. 

We have proved that, for $\sigma$ large enough 
and a solution $(\eta,\psi)$ of~\eqref{system} on the time interval $[0,T]$, satisfying 
$$
(\eta,\psi)\in C^0\big([0,T];H^{\sigma+\mez}_0(\xT)\times H^{\sigma}(\xT)\big),
$$
the new unknown $u$ satisfies $u\in C^0([0,T];\tilde H^\sigma(\xT;\xC))$ (where 
$\tilde H^\sigma(\xT;\xC)$ is defined in Notation~\ref{N:26}) and
$$
\partial_t u +T_{V(u)}\px u +iL^\mez\big(T_{c(u)}L^\mez u\big) +R(u)u = T_{p(u)} P_{ext}.
$$

We now fix $\underline{u}\in C^0([0,T];\tilde H^\sigma(\xT;\xC))$ and then set
\be\label{ps31}
V=V(\underline{u}), \quad c=c(\underline{u}), \quad R=R(\underline{u}), \quad p=p(\underline{u})
\ee
and consider the linear operator
$$
P=\partial_t +T_{V}\px  +iL^\mez\big(T_{c}L^\mez \cdot\big)+R.
$$
Except for the second condition in Assumption~\ref{T31A} below, we shall not use the way in which the coefficients depend on $\underline{u}$  
and hence we shall state all the assumptions on $V,c,p,R$ forgetting their  dependence on 
$\underline{u}$ through \e{ps31}.

\begin{assu}\label{T31A}
\begin{enumerate}[i)]
\item Consider two real-valued functions $V,c$ 
in $C^0([0,T];H^{s_0}(\xT))$ for some $s_0$ large enough, with 
$c$ bounded from below by $1/2$. 
The symbol $p$ is given by $p\defn c^{-\frac{1}{3}}+\frac{5}{18i}\frac{\chi(\xi)\partial_\xi \ell(\xi)}{\ell(\xi)}
c^{-\frac{4}{3}}\px c$ with $\chi$ as in \e{p34b}. It is always assumed that the $\eC{\tdm}$-norm of $c-1$ is small enough.

\item \label{T31A-cond3} If $Pu$ is a real-valued function then
$$
\frac{d}{dt}\int_\xT \IM u (t,x)\, dx=0.
$$
\end{enumerate}
\end{assu}
Fix an open domain $\omega\subset \xT$ and denote by $\chi_\omega$ a $C^\infty$ 
cut-off function such that $\chi_\omega(x)=1$ for 
$x\in\omega$. We want to study the following control problem: given an initial data 
$v_{in}$ find $f$ such that the unique solution to 
\be\label{p50}
Pv=T_{p}\chi_\omega \RE f,\quad v_{\arrowvert t=0}=v_{in}
\ee
satisfies $v_{\arrowvert t=T}=0$. The fact that the Cauchy problem for \e{p50} 
admits a unique solution is proved in the appendix, see Proposition~\ref{P:10}. 

Our main goal until Section~\ref{P:S8} will be to prove the following control result.

\begin{prop}\label{PP10}
There exists $s_0$ large enough such that, for all $T\in (0,1]$ and 
all $s\ge s_0$, if Assumption~\ref{T31A} holds then 
there exist two positive constants 
$\wide\delta=\wide\delta(T,s)$ and $K=K(T,s)$ 
such that, if
\be\label{pn200}
\begin{aligned}
&\lA V\rA_{\bo([0,T];H^{s_0})}+\lA c-1\rA_{\bo([0,T];H^{s_0})}\le \wide\delta, \\
& \blA \partial_t^k V\brA_{\bo([0,T];H^1)}
+\blA \partial_t^k c\brA_{\bo([0,T];H^1)}\le \wide\delta \qquad (1\le k\le 3),\\
&\lA R\rA_{\bo([0,T];\mathcal{L}(H^s))}\le \wide\delta,
\end{aligned}
\ee
then 
for any initial data 
$v_{in}\in \tilde H^s(\xT;\xC)$  
there exists $f\in C^0([0,T];H^s(\xT))$ such that: 
\begin{enumerate}
\item \label{PP10-prop1}
the unique solution $v$ to $Pv=T_p\chi_\omega \RE f,\quad v_{\arrowvert t=0}=v_{in}$ satisfies $v(T)=0$;

\item $\lA f\rA_{\bo([0,T];H^s)}\le K\lA v_{in}\rA_{H^s}$. 
\end{enumerate}
\end{prop}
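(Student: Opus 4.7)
The plan mirrors Steps 3--6 of the strategy outlined in the introduction. First I would reduce the paradifferential problem \e{p50} to an $L^2$-controllability problem for a classical evolution equation with a bounded zero-order perturbation, uniformly in a small parameter $h$. Set $s=\sigma-\tdm$ and pick $\Lambda_{h,s}$ of the form $I+h^{s}\Lr^{2s/3}$ with $\Lr = L^{\mez}(T_c L^{\mez}\cdot)$, so that $\wide P(\underline u)\defn \Lambda_{h,s}P\Lambda_{h,s}^{-1}=P+R_1(\underline u)$ with $R_1(\underline u)=[\Lambda_{h,s},T_V]\Lambda_{h,s}^{-1}$ of order zero by symbolic calculus. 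Replacing paraproducts with ordinary products modulo bounded remainders (using that $V,c\in\bo([0,T];H^{s_0})$ with $s_0$ large), one arrives at $\wide P(\underline u)=\partial_t + V\px +iL^{\mez}(cL^{\mez}\cdot)+R_2$ with $\lA R_2\rA_{\Lr(L^2)}$ bounded uniformly in $h$, while the commutator $[\Lambda_{h,s},\chi_\omega]\Lambda_{h,s}^{-1}$ is $O(h)$ in $\Lr(L^2)$ and in $\Lr(H^{\tdm})$. This parameter $h$ is what lets us transfer a localized control from $\wide P$ back to $P$: if one knows an analogue of Proposition~\ref{PP10} for $\wide P$ producing a terminal state of the form $v(T)=ib$ (imaginary constant), then by solving one auxiliary Cauchy problem backward for $\wide P$ and inverting $I+\opk$ with $\opk$ of operator norm $O(h)$, one recovers a genuine localized control for $P$; condition~(ii) of Assumption~\ref{T31A} then upgrades $v(T)=ib$ into $u(T)=0$, because $t\mapsto\int_\xT\IM u\,dx$ is conserved and vanishes at $t=0$ for $v_{in}\in\tilde H^s$.

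Second, I would conjugate $\wide P(\underline u)$ to a constant-coefficient operator modulo a small bounded perturbation, following the scheme of \cite{AB}. The three changes of variables $h\mapsto(1+\px\kappa)^{\mez}h(t,x+\kappa)$, $h\mapsto h(a(t),x)$, $h\mapsto h(t,x-b(t))$, all designed to preserve the $L^2(dx)$ pairing, transport $\wide P$ into $Q(\underline u)=\partial_t+W\px+iL+R_3$ with $W(t,\cdot)$ of zero mean and small $H^{s_0-d}$-norm. The delicate point is that the nonlocal operator $L^{\mez}(cL^{\mez}\cdot)$ must absorb the order-$\mez$ correction generated by the first change of variables, which happens precisely thanks to the $L^2$-isometry property. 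Next, I conjugate by $A=\Op(a)$ with $a=e^{i|\xi|^{\mez}\beta(t,x)}$ in $S^0_{\mez,\mez}$, where $\beta$ is chosen so that $i[A,L]+W\px A$ is of order zero. The result is $\partial_t+iL+\mR(\underline u)$ with $\lA \mR(\underline u)\rA_{\Lr(L^2)}$ small in terms of $\lA\underline u\rA_{H^{s_0}}$. All transformations being local in $x$ (or at worst bounded in $L^2$) up to the $O(h)$ correction already accounted for, the support structure of the control is essentially preserved.

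Third, the controllability of $\partial_t+iL+\mR(\underline u)$ reduces, by the HUM method, to an observability inequality for the adjoint backward equation. For the free evolution $\partial_t v+iLv=0$ one has $v(t,x)=\sum_n w_n e^{inx-i\sign(n)\ell(n)t}$, and since $\ell(n)\sim|n|^{\tdm}$ grows super-linearly, Kahane's lacunary theorem gives an Ingham-type two-sided bound \e{i10} on any interval $(0,T)$, no matter how small. The refinement needed here handles the perturbed phases $\sign(n)[\ell(n)t+\beta(t,x)|n|^{\mez}]$ arising from the conjugation by $A$, treating $e^{i\beta(t,x)|n|^{\mez}}$ as a slowly varying amplitude in $t$ despite the fact that it is not close to $1$. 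Combined with a spectral-gap argument~\e{i24} to handle the zero-mode, this produces the observability \e{i25} with the real part on the left-hand side, from which a real-valued control is obtained à la Burq--Lebeau.

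Finally, applying HUM in the real Hilbert space $L^2_M$ with $M=\Phi(\underline u(T,\cdot))(1)$ yields a real-valued $f\in\bo([0,T];L^2)$ driving $Q(\underline u)$ to a terminal state $ibM(x)$; after undoing the local transformations this becomes a terminal state of the form $ib'$ for $\wide P(\underline u)$, and then $u(T,x)=0$ for $P$ by the argument of paragraph one. For the Sobolev estimate $\lA f\rA_{\bo([0,T];H^s)}\le K\lA v_{in}\rA_{H^s}$, I would adapt the Dehman--Lebeau / Laurent bootstrap by performing the HUM construction at regularity $s$ directly: apply $\lDx{s}$ to the dual problem, use a Sobolev version of the observability inequality (commuting $\lDx{s}$ with $\mR$ costs only bounded terms), and exploit the smoothing of the penalized HUM functional. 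The main obstacle I expect is the interaction between the nonlocal conjugations $\Lambda_{h,s}$ and $A$ and the localization $\chi_\omega$: balancing the $O(h)$ Neumann correction, the pseudodifferential calculus, and the nonlinear-phase Ingham estimate is the technical core of the argument.
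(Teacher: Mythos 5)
Your outline of the reduction to an $L^2$-problem via conjugation by $\Lambda_{h,s}$, the change of variables to $\partial_t+W\px+iL+R_3$, the conjugation by $A=\Op\big(e^{i|\xi|^\mez\beta}\big)$, the Ingham-type observability, and the HUM construction in $L^2_M$ correctly reproduces the paper's route (Sections~4--7). The handling of $I+\opk$ and the use of condition~(\ref{T31A-cond3}) of Assumption~\ref{T31A} to upgrade $v(T)=ib$ to $v(T)=0$ are also faithful. There are two issues worth flagging.

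First, a minor one: you take $\Lambda_{h,s}=I+h^s\Lr^{2s/3}$ with $\Lr=L^\mez(T_cL^\mez\cdot)$, following the informal sketch in the introduction. The actual definition \eqref{pf1} is $\Lambda_{h,s}=I+h^sT_{c^{2s/3}}L^{2s/3}$, and the exact commutation $[\Lambda_{h,s},\Lr]=0$ is replaced by the estimate \eqref{pna3}, whose proof hinges on the vanishing of the Poisson bracket $\{(c\ell)^{2s/3},c\ell\}$. Your version is morally equivalent, but one should be aware that the literal $\Lr^{2s/3}$ is not a paradifferential operator with a tractable symbol, so the paper works with the paradifferential surrogate and absorbs the discrepancy into the remainder.

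Second, and more substantively, your proposed route for the Sobolev estimate $\lA f\rA_{\bo([0,T];H^s)}\le K\lA v_{in}\rA_{H^s}$ would not work as stated. You suggest performing HUM directly at regularity $H^s$, commuting $\lDx{s}$ through the dual equation and asserting that commuting $\lDx{s}$ with $\mR$ and the first-order transport term ``costs only bounded terms.'' That is false in the present setting: $[\lDx{s},W\px]$ and $[\lDx{s},cL]$ lose $s$ derivatives on the coefficients, so boundedness of these commutators would require $W,c$ of regularity $H^{s+\epsilon}$ or $W^{s,\infty}$. But the hypothesis \eqref{pn200} only provides $V,c-1\in H^{s_0}$ with $s_0$ fixed, while $s\ge s_0$ is arbitrary; the whole point of the statement (as the remark after it emphasizes, and as the quasi-linear scheme requires with $s_0=s-2$) is to decouple the regularity of the data from that of the coefficients. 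The paper instead obtains the $H^s$ bound purely by construction: the $L^2$-control $\wide f$ for $\wide P_h$ is pulled back to $f=\Lambda_{h,s}^{-1}\wide f$, and the bound $\blA h^s\Lambda_{h,s}^{-1}\brA_{\Lr(L^2,H^s)}\le K$ of \eqref{pna0} together with $\lA y\rA_{L^2}\le2\lA\Lambda_{h,s}v_{in}\rA_{L^2}\les\lA v_{in}\rA_{H^s}$ gives $\lA f\rA_{\bo([0,T];H^s)}\le K(h)\lA v_{in}\rA_{H^s}$ directly. The reason this succeeds where a naive $\lDx{s}$-commutation fails is that $\Lambda_{h,s}$ is built from the symbol $(c\ell)^{2s/3}$ of the equation itself, so the $s$-dependent loss cancels in the crucial commutator (vanishing Poisson bracket). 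The Dehman--Lebeau/Laurent bootstrap you invoke does appear in the paper, but only to obtain the low-regularity bound at $H^{3/2}$ for $\wide f$ (statement $iii)$ of Proposition~\ref{P:51}), which is needed for the \emph{stability} estimate, not for the $H^s$ bound of Proposition~\ref{PP10}.
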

\begin{rema}
Notice that the smallness assumption on $V$ and $c$ involves only 
some $H^{s_0}$-norm, while the result holds for 
initial data in $H^s$ with $s\ge s_0$. 
We shall use this property with $s_0=s-2$ in the analysis of the quasi-linear scheme. 
This is possible only because we consider a paradifferential equation. 
\end{rema}

We conclude this section by proving that the second condition in Assumption~\ref{T31A} holds when $V,c,p,R$ are given by \e{ps31}. 

\begin{lemm}
Consider $\underline{u}\in C^0([0,T];\tilde H^{s_0}(\xT;\xC))$ with $s_0$ large enough and assume 
that $V,c,p,R$ are given by \e{ps31}. 
If $Pu$ is a real-valued function, then
$$
\frac{d}{dt}\int_\xT \IM u (t,x)\, dx=0.
$$
\end{lemm}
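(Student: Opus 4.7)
The plan is to integrate the equation $Pu=(\text{real})$ over $\xT$ and take imaginary parts, reducing the lemma to showing that a handful of integrals vanish. Writing $\partial_t u = Pu - T_V\px u - iL^{\mez}(T_c L^{\mez}u) - Ru$, integrating over $\xT$, and taking imaginary parts gives
\[
\frac{d}{dt}\int_\xT \IM u \, dx \;=\; \IM \int_\xT Pu\, dx \;-\; \IM \int_\xT T_V \px u\,dx \;-\; \RE \int_\xT L^{\mez}(T_c L^{\mez}u)\,dx \;-\; \IM \int_\xT Ru\,dx,
\]
using $\IM(iz)=\RE z$. Since $Pu$ is real by hypothesis the first term on the right vanishes, and it remains to show the other three are zero.

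The structural fact driving everything is that, in the paradifferential convention adopted in the paper (see the appendix), the output $T_a g$ of any paradifferential operator has Fourier support strictly away from the zero frequency; in particular $\int_\xT T_a g \, dx = 0$ for every admissible symbol $a$ and every function $g$. Applied with $a = V$, $g = \px u$, this yields $\int T_V \px u \,dx = 0$. For the third term, the same fact gives $\int T_c L^{\mez}u\, dx = 0$; since $L^{\mez}$ is the Fourier multiplier with symbol $\sqrt{\ell(\xi)}$, the zero Fourier coefficient of $L^{\mez}(T_c L^{\mez}u)$ equals $\sqrt{\ell(0)}$ times that of $T_c L^{\mez}u$, and is therefore zero; hence $\int L^{\mez}(T_c L^{\mez} u)\,dx = 0$.

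To handle the last term I would go back to the derivation of $R$ in the proof of Proposition~\ref{T24}: $R(\underline{u})u$ is built out of paradifferential operators applied to $\eta$, $\omega$ and their time-derivatives (the contributions $T_q f^1$, $T_p f^2$, $T_{\partial_t q}\eta$, $T_{\partial_t p}\omega$), the commutators $[T_V\px, T_q]$ and $[T_V\px, T_p]$ (which are themselves paradifferential), and the symbolic-calculus discrepancies between $T_q G(0)$ and $L^{\mez}T_c L^{\mez}T_p$, and between $T_p(gI - \kappamuet\px(T_r\px\cdot))$ and $L^{\mez}T_c L^{\mez}T_q$, both paradifferential modulo smoothing. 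The Bony remainders $\RBony(a,b)$ and the Dirichlet--Neumann residual $F(\eta)\psi$ enter only after being composed on the left with a paradifferential operator (via the very definitions of $\tilde f^1$ and $\tilde f^2$ in the proof of Proposition~\ref{T24}), so each contribution to $R(\underline{u})u$ is ultimately of the form $T_a(\cdot)$ and integrates to zero; hence $\int Ru\,dx = 0$.

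The main obstacle is this last step: one must verify, by a careful pass through the proof of Proposition~\ref{T24}, that every constituent of $R(\underline{u})u$ really is in the range of a paradifferential operator, so that the zero-mean property applies. All other steps follow directly from the paradifferential convention.
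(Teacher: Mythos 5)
Your setup — integrating $Pu$ over $\xT$, taking imaginary parts, and reducing to showing three integrals vanish — is correct, and the first two terms are indeed harmless: $\int T_V\px u\,dx = 0$ because $\px u$ has zero mean, and $\int L^{\mez}(T_cL^{\mez}u)\,dx = 0$ because $L^{\mez}$ is a Fourier multiplier whose symbol $\sqrt{\ell(\xi)}$ vanishes at $\xi=0$ (so it annihilates the mean of its argument no matter what). But the argument has two genuine gaps.

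First, the ``structural fact'' that $\int_\xT T_a g\,dx = 0$ for \emph{every} admissible symbol $a$ and \emph{every} $g$ is not a consequence of the paradifferential definition in the appendix without an additional, unstated convention about the frequency cutoff. Writing out $\widehat{T_a u}(0) = (2\pi)^{-1}\sum_{\eta}\chi(-\eta,\eta)\widehat{a}(-\eta,\eta)\widehat{u}(\eta)$, the terms with $\eta\neq 0$ drop out because $\chi(-\eta,\eta)=0$ when $\eps_2<1$, but the $\eta=0$ term contributes $\chi(0,0)\widehat{a}(0,0)\widehat{u}(0)$, and the two defining conditions on $\chi$ are simultaneously imposed and mutually contradictory at $(\theta,\eta)=(0,0)$, so $\chi(0,0)$ is simply undetermined. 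The paper never invokes such a blanket fact; instead it establishes the specific factorization $T_q=\px T_Q$ in \eqref{p36}, and the remark after Proposition~\ref{T24} explicitly stresses that arranging this extra structure of $q$ is ``not a trivial task.'' If the general claim were available for free, that whole discussion would be superfluous.

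Second, your treatment of the $R$ term — the step you yourself flag as the main obstacle — conflates ``paradifferential modulo smoothing'' with ``paradifferential.'' The commutators $[T_V\px,T_q]$, $[T_V\px,T_p]$ and the symbolic-calculus discrepancies in \eqref{p40ab} are not paradifferential operators: they are operators of lower order whose zero-mode behavior is not automatic from any convention and must be checked term by term. The paper's proof avoids this unpacking entirely by working not with the $u$-equation but with the equation for $\zeta=\IM u$ rewritten as \eqref{pn201a}, where every term other than $\pa_t\zeta$ appears under $T_q(\cdot)$ or $T_{\pa_t q}(\cdot)$, and therefore integrates to zero directly by \eqref{p36}. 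That is a cleaner route, and it is where the specific structure proved in Proposition~\ref{T24} actually does its work. To salvage your proposal you would need to (a) replace the blanket paradifferential claim by the specific facts used in the paper ($T_q=\px T_Q$, vanishing of $\ell$ at $\xi=0$, and the zero-mean of $\px u$), and (b) carry out the promised ``careful pass'' through the remainder, which is nontrivial because the remainder is not literally a paradifferential operator.
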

\begin{proof}
Set $\zeta=\IM u$. It follows from \e{p40a} that $\zeta$ satisfies
\begin{align*}
&\partial_t \zeta+T_V \partial_x\zeta- T_q G(0)\omega =\tilde f^1,\\
&\tilde f^1= T_q \big(F(\eta)\psi-T_{\px V}\eta\big) +T_{\partial_t q}\eta +[T_V\px ,T_q]\eta,
\end{align*}
where $F(\eta)\psi$ is given by \e{p38}.  
One can write this equation under the form
\be\label{pn201a}
\partial_t \zeta+T_q \big( \px (T_V \eta)\big)- T_q G(0)\omega =T_q F(\eta)\psi+T_{\partial_t q}\eta.
\ee
Notice that $T_q G(0)\omega$ and $T_q F(\eta)\psi$ are well-defined since 
$\widehat{G(0)\omega}(0)=0=\widehat{F(\eta)\psi}(0)$ (this follows from the definition \e{p38} 
and the fact that the mean values of 
$G(\eta)\psi$, $G(0)\omega$ and $\px (T_V\eta)$ are all $0$). Using 
\e{p36}, one obtains that $\int_{\xT} T_q v \, dx=0=\int_{\xT} T_{\partial_t q} v \, dx$ for any function $v$. So integrating \e{pn201a} we obtain 
the desired result.
\end{proof}

\section{Reduction to a regularized equation}\label{P:S3}

In this section, we reduce the proof of Proposition~\ref{PP10}Ê
to that of a simpler result. We shall prove that:
\begin{itemize}
\item it is enough to consider a classical equation instead of a paradifferential equation (this observation 
will be used below to simplify the computation of a change of variable);
\item it is enough to prove an $L^2$-result instead of a result in higher order Sobolev spaces (this plays a crucial role). 
\end{itemize}

As explained in the introduction, the idea is to 
commute the equation with an elliptic semi-classical operator 
$\Lambda_{h,s}$ of order $s$. To choose this elliptic operator, 
the key point is to prove that $\Lambda_{h,s}$ 
can be so chosen that it satisfies  
the following commutator estimates:
\begin{align*}
&\blA [\Lambda_{h,s},P]\Lambda_{h,s}^{-1}\brA_{\mathcal{L}(L^2)}=O(1),\\
&\blA [\Lambda_{h,s},\chi_\omega]\Lambda_{h,s}^{-1}\brA_{\mathcal{L}(L^2)}=O(h),
\end{align*}
which is the reason to introduce the small parameter $h$. 
Some care is required to do so, 
and we introduce
\be\label{pf1}
\Lambda_{h,s}=I+h^{s} T_{c^{\frac{2s}{3}}}L^{\frac{2s}{3}}
\ee
\begin{lemm}
$i)$ Assume that the $L^\infty_{t,x}$-norm of $c-1$ is small enough. Then 
$\Lambda_{h,s}$ is invertible from $H^s$ to $L^2$ and its inverse is denoted by 
$\Lambda_{h,s}^{-1}$. 

$ii)$ Moreover, for any $s'\le s$, $h^{s'} \Lambda_{h,s}^{-1}$ is uniformly bounded from $L^2$ to $H^{s'}$: 
there is $K>0$ such that for any $h\in (0,1]$ and any $u$ in $L^2(\xT)$, 
\be\label{pna0}
\lA h^{s'} \Lambda_{h,s}^{-1} u\rA_{H^{s'}}\le K \lA u\rA_{L^2}.
\ee
\end{lemm}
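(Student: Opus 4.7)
The plan is to reduce everything to a comparison with the ``frozen coefficient'' Fourier multiplier
\[
B_{h,s}\defn I+h^{s}\Lk^{\frac{2s}{3}},
\]
whose symbol $1+h^{s}\ell(\xi)^{\frac{2s}{3}}$ is bounded below by $1$. Since $\ell(\xi)\ge c_{0}>0$ everywhere and $\ell(\xi)\sim|\xi|^{3/2}$ at infinity, one has $\ell(\xi)^{\frac{2s}{3}}\sim\la\xi\ra^{s}$, so $B_{h,s}$ is an isomorphism $H^{s}\to L^{2}$ and $B_{h,s}^{-1}$ is the Fourier multiplier with symbol $(1+h^{s}\ell(\xi)^{\frac{2s}{3}})^{-1}$. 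The key observation is that the operator
\[
h^{s}\Lk^{\frac{2s}{3}}B_{h,s}^{-1}, \qquad \text{symbol } \frac{h^{s}\ell(\xi)^{\frac{2s}{3}}}{1+h^{s}\ell(\xi)^{\frac{2s}{3}}}\in[0,1],
\]
is a Fourier multiplier of operator norm $\le 1$ on $L^{2}$, uniformly in $h\in(0,1]$.

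For $(i)$, I would write
\[
\Lambda_{h,s}=B_{h,s}+h^{s}T_{c^{\frac{2s}{3}}-1}\Lk^{\frac{2s}{3}}=(I+K_{h})B_{h,s},
\qquad K_{h}\defn T_{c^{\frac{2s}{3}}-1}\bigl(h^{s}\Lk^{\frac{2s}{3}}B_{h,s}^{-1}\bigr).
\]
By the paradifferential estimate \e{esti:quant0}, $\lA T_{c^{2s/3}-1}\rA_{\mathcal{L}(L^{2})}\le C\lA c^{2s/3}-1\rA_{L^{\infty}}\le C_{s}\lA c-1\rA_{L^{\infty}}$ once $c-1$ is small (since $x\mapsto x^{2s/3}$ is Lipschitz near $1$). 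Combined with the uniform $L^{2}$-bound on the second factor, $\lA K_{h}\rA_{\mathcal{L}(L^{2})}\le \tfrac12$ provided $\lA c-1\rA_{L^{\infty}_{t,x}}$ is small enough. A Neumann series then yields $(I+K_{h})^{-1}\in\mathcal{L}(L^{2})$ uniformly in $h$, and hence $\Lambda_{h,s}^{-1}=B_{h,s}^{-1}(I+K_{h})^{-1}\colon L^{2}\to H^{s}$.

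For $(ii)$, the same factorization reduces matters to proving that $h^{s'}B_{h,s}^{-1}$ is uniformly bounded from $L^{2}$ to $H^{s'}$, since $(I+K_{h})^{-1}$ is uniformly bounded on $L^{2}$. Because $h^{s'}B_{h,s}^{-1}$ is a Fourier multiplier on $\xT$, its $\mathcal{L}(L^{2},H^{s'})$-norm is
\[
\sup_{n\in\xZ}\frac{h^{s'}\la n\ra^{s'}}{1+h^{s}\ell(n)^{\frac{2s}{3}}}.
\]
Using $\ell(n)^{\frac{2s}{3}}\gtrsim\la n\ra^{s}$ and substituting $t\defn h\la n\ra\ge 0$, this quantity is bounded (up to a universal constant) by $\sup_{t\ge 0} t^{s'}/(1+t^{s})$. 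Splitting $t\le 1$ (where $t^{s'}\le 1$ and the denominator is $\ge 1$) and $t\ge 1$ (where $t^{s'}/(1+t^{s})\le t^{s'-s}\le 1$ because $s'\le s$), this supremum is finite, which gives \e{pna0}.

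The argument is essentially routine; the only point deserving care is the uniformity in the semiclassical parameter $h$, which is ensured by working entirely with the Fourier multiplier $B_{h,s}$ as reference operator so that the scaling $t=h\la n\ra$ absorbs the $h$-dependence.
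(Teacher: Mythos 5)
Your proposal is correct and follows essentially the same route as the paper: you factor $\Lambda_{h,s}=(I+K_h)B_{h,s}$ with $B_{h,s}=I+h^s L^{2s/3}$, which is exactly the paper's decomposition $(I+B)(I+h^sL^r)$ with $K_h=B$, and then invert by Neumann series using the uniform $L^2$-bound on $h^sL^{2s/3}B_{h,s}^{-1}$. Your explicit verification of the Fourier multiplier bound for (ii) via the substitution $t=h\langle n\rangle$ simply spells out what the paper asserts without proof.
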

\begin{proof}
Set $r=2s/3$. Statement $i)$ is obtained writing $\Lambda_{h,s}$ as $(I+B)(I+h^{s}L^r)$ where $B$ is a bounded operator from $L^2$ into itself. To do so, write
$$
\Lambda_{h,s}=I+h^{s}T_{c^r}L^r=
I+h^{s}L^r+h^sT_{c^r-1}L^r,
$$
to obtain the desired result with 
$B\defn h^s T_{c^r-1}L^r (I+h^sL^r)^{-1}$. 
%\big(I+h^{3m}L^{2m}\big)^{-1}h^{3m}L^{2m}B_0$. 
We now claim that $B$ is a bounded operator on $L^2$, 
with operator norm $O(\lA c-1\rA_{L^\infty})$. This follows easily from \e{esti:quant0} 
(which implies that $T_{c^r-1}$ is of order $0$ with operator norm $O(\lA c-1\rA_{L^\infty})$) and, on the other hand, from the fact that 
the operator $h^{s}L^{r}\big(I+h^{s}L^{r}\big)^{-1}$ is 
bounded from $L^2$ into itself uniformly in $h$ (as can be verified 
using the Fourier transform). 

Now for $\lA c-1\rA_{L^\infty}$ small enough, one has $\lA B\rA_{\Lr(L^2)}\le 1/2$ and 
one can invert $(I+B)$ to obtain
\be\label{pna19}
\Lambda_{h,s}^{-1}=(I+h^{s}L^{r})^{-1}(I+B)^{-1}
\ee
and statement $ii)$ follows from the fact that $h^{s'}(I+h^{s}L^{r})^{-1}$ is uniformly bounded in $\mathcal{L}(L^2;H^{s'})$ for $s'\le s$. 
\end{proof}

The key property is that one has good estimates for the commutators of 
$\Lambda_{h,s}$ and the various operators appearing in the equation. 

\begin{lemm}\label{P:L42}
Assume that the $\eC{\tdm}$-norm of $c-1$ is small enough. Then there is $K>0$ such that for any $h\in (0,1]$ and any $u$ in $L^2(\xT)$, there holds
\begin{align}
&\lA \big[ \Lambda_{h,s}, T_V\px\big] \Lambda_{h,s}^{-1}u\rA_{L^2}\le K 
\lA V\rA_{\eC{1}}\lA u\rA_{L^2},\label{pna1}\\
&\blA \big[ \Lambda_{h,s}, \chi_\omega \big] \Lambda_{h,s}^{-1}u\brA_{L^2}\le K h 
\lA \chi_\omega\rA_{H^{s+1}}\lA u\rA_{L^2},\label{pna2}\\
&\blA \big[ \Lambda_{h,s}, L^\mez (T_c L^\mez\cdot) \big] \Lambda_{h,s}^{-1}u\brA_{L^2}\le K h 
\lA u\rA_{L^2}.\label{pna3}
%&\big[ \Lambda_{h,s}, L^\mez (T_c L^\mez\cdot)\big]=0. \label{pna3}
\end{align}
\end{lemm}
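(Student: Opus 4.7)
The plan is to reduce each of the three estimates to a two-step procedure. First, write $\Lambda_{h,s} - I = h^{s} T_{c^{2s/3}} L^{2s/3}$, so that for any operator $X$
\[
[\Lambda_{h,s}, X] = h^{s} \big[T_{c^{2s/3}} L^{2s/3},\, X\big].
\]
Second, compose with $\Lambda_{h,s}^{-1}$ and exploit the gain estimate \e{pna0}, i.e.\ $\lA \Lambda_{h,s}^{-1} w\rA_{H^m}\les h^{-m}\lA w\rA_{L^2}$ for $m\le s$. If the commutator on the right is an operator of order $m$ with norm controlled by a quantity $Q$, the combined bound is $Q\cdot h^{s-m}$. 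So the only task in each case is to identify $m$ and $Q$ using paradifferential symbolic calculus (\e{esti:quant1}, \e{esti:quant2}, \e{esti:quant2sharp}).

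For \e{pna1}, $X = T_V\px$ is of order $1$. The commutator rule \e{esti:quant2} with $\rho=1$ gives that $[T_{c^{2s/3}} L^{2s/3}, T_V\px]$ is of order $2s/3$ with norm $\les \lA V\rA_{\eC{1}}$ (uniformly for $c-1$ small in $\eC{1}$). Hence $m = 2s/3$ and the net power of $h$ is $h^{s/3}\le 1$. For \e{pna2}, $\chi_\omega$ is a smooth $\xi$-independent multiplier, so $\partial_\xi \chi_\omega\equiv 0$; the Poisson bracket reduces to $\partial_\xi(c^{2s/3}\ell^{2s/3})\cdot \px\chi_\omega$, which yields $m = 2s/3-1$ and the net power $h^{s/3+1}\le h$, with norm controlled by a Sobolev norm of $\chi_\omega$ large enough to feed into \e{esti:quant2sharp}.

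The heart of the argument is \e{pna3}. The principal symbols of $T_{c^{2s/3}}L^{2s/3}$ and $L^{\mez}(T_c L^{\mez}\cdot)$ are
\[
a = c^{2s/3}\,\ell^{2s/3},\qquad b = c\,\ell,
\]
and a direct computation gives
\[
\partial_\xi a\cdot \px b = \tfrac{2s}{3}\,c^{2s/3}(\px c)\,\ell^{2s/3}\,\partial_\xi \ell = \partial_\xi b \cdot \px a,
\]
so the Poisson bracket $\{a,b\}$ vanishes identically. This cancellation is precisely why the prefactor $T_{c^{2s/3}}$ (rather than just the Fourier multiplier $L^{2s/3}$) appears in the definition of $\Lambda_{h,s}$: it forces the principal symbol of $\Lambda_{h,s}-I$ to be a pure power of $c\,\ell$, which Poisson-commutes with $c\,\ell$ itself. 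Consequently the commutator is of order at most $2s/3 + 3/2 - 2 = 2s/3 - 1/2$, so the net power of $h$ is $h^{s/3+1/2}\le h$ whenever $s\ge 3/2$, which is ensured by the choice of $s_0$.

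The main obstacle is handling the sub-principal corrections rigorously in \e{pna3}. The operators $T_{c^{2s/3}} L^{2s/3}$ and $L^{\mez} T_c L^{\mez}$ are compositions which differ from the pure paradifferential operators $T_a$, $T_b$ by lower-order terms coming from the expansion of $T_u\circ T_v$. One must check via \e{esti:quant2sharp} (applied with $\rho=2$) that these corrections, together with the remainder in $[T_a,T_b]-\tfrac{1}{i}T_{\{a,b\}}$, all fit into the order $2s/3-1/2$ class with operator norms uniform in $h\in(0,1]$; this is where the smallness of $c-1$ in $\eC{\tdm}$ and the high regularity $c\in H^{s_0}$ are used.
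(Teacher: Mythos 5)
Your overall strategy coincides with the paper's: write $\Lambda_{h,s}-I = h^s T_{(c\ell)^{2s/3}}$, place the resulting factor $h^{s'}\Lambda_{h,s}^{-1}$ as a uniformly bounded map $L^2\to H^{s'}$ via \e{pna0}, and bound the remaining paradifferential commutator via symbolic calculus. You have also correctly identified the Poisson bracket cancellation $\{(c\ell)^{2s/3},\, c\ell\}=0$ as the crux of \e{pna3}, which is exactly the observation the paper uses and the reason $\Lambda_{h,s}$ carries the prefactor $T_{c^{2s/3}}$ rather than the plain Fourier multiplier.

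Two points in your sketch need attention. First, for \e{pna2} you treat the \emph{multiplication} operator $\chi_\omega$ as though it were the paraproduct $T_{\chi_\omega}$ and apply \e{esti:quant2sharp} directly; but these operators differ, and the symbolic calculus applies only to $T_{\chi_\omega}$. The paper addresses this by writing $\chi_\omega = T_{\chi_\omega}+(\chi_\omega - T_{\chi_\omega})$, bounding $[\Lambda_{h,s},T_{\chi_\omega}]$ exactly as you propose, and estimating $[\Lambda_{h,s}, \chi_\omega - T_{\chi_\omega}]\Lambda_{h,s}^{-1}$ separately via Proposition~\ref{lemPa} (applied with $(r,\mu,\gamma)=(s+1,0,s)$), which yields a contribution of order $h^{s}\le h$. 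This step is genuinely needed; without it the argument is incomplete. Second, for \e{pna3} you invoke \e{esti:quant2sharp} with $\rho=2$, which requires $c\in\eC{2}$, whereas the lemma's hypothesis (and the constant $K$ it advertises) involve only the $\eC{\tdm}$ norm of $c-1$. The paper instead uses $\rho=3/2$, matching the hypothesis, and relies on the Poisson bracket cancellation to drop the commutator to order $2s/3$; since $s\ge s_0$ is large, the resulting power $h^{s/3}$ is still $\le h$. Also, a minor inaccuracy: the composition $T_{c^{2s/3}}L^{2s/3}$ is \emph{exactly} $T_{(c\ell)^{2s/3}}$ (paraproduct followed by a Fourier multiplier on the right is a paradifferential operator with product symbol), so the sub-principal correction you mention arises only from the $L^\mez T_c L^\mez$ piece — this is the symbol $\wp = i^{-1}\sqrt{\ell}(\partial_\xi\sqrt\ell)(\px c)$ in the paper's decomposition $L^\mez(T_cL^\mez\cdot)=T_{c\ell}+T_\wp+R$, and its commutator with $T_{(c\ell)^{2s/3}}$ must be handled explicitly (the paper does so with $\rho=1/2$).
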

\begin{proof}
Write
$$
\big[ \Lambda_{h,s}, T_V\px\big] \Lambda_{h,s}^{-1}=
\big[ T_{(c\ell)^{2s/3}}, T_V\px\big]h^s \Lambda_{h,s}^{-1},
$$
to obtain
$$
\blA \big[ \Lambda_{h,s}, T_V\px\big] \Lambda_{h,s}^{-1}u\brA_{L^2}
\le K \lA  \big[ T_{(c\ell)^{2s/3}}, T_V\px\big]\rA_{\mathcal{L}(H^s,L^2)}
\blA  h^s \Lambda_{h,s}^{-1}\brA_{\mathcal{L}(L^2,H^s)}
\lA u\rA_{L^2}.
$$
It follows from \e{pna0} that $\blA  h^s \Lambda_{h,s}^{-1}\brA_{\mathcal{L}(L^2,H^s)}$ is uniformly bounded in $h$. On the other hand, the commutator estimate~\e{esti:quant4} implies that
$$
\blA  \big[ T_{(c\ell)^{2s/3}}, T_V\px\big]\brA_{\mathcal{L}(H^s,L^2)}\le 
K\lA V\rA_{\eC{1}},
$$
where $K$ depends on $\lA c\rA_{\eC{\tdm}}$ (which by assumption can be bounded by $2$). 

To estimate the second commutator, we begin by establishing that
\be\label{pna5}
\blA \big[ \Lambda_{h,s}, T_{\chi_\omega} \big] \Lambda_{h,s}^{-1}u\brA_{L^2}\le K h 
\lA \chi\rA_{H^s}\lA u\rA_{L^2}.
\ee
To see this write
$$
\big[ \Lambda_{h,s}, T_{\chi_\omega} \big] \Lambda_{h,s}^{-1}=
h\big[ T_{(c\ell)^{2s/3}}, T_{\chi_\omega} \big]h^{s-1} \Lambda_{h,s}^{-1}.
$$
Then we notice that, as above,
$$
\lA \big[ T_{(c\ell)^{2s/3}}, T_{\chi_\omega} \big]\rA_{\mathcal{L}(H^{s-1},L^2)}\le 
K \lA \chi_\omega\rA_{\eC{1}},
$$
and we use that, thanks to \e{pna0}, 
$h^{s-1} \Lambda_{h,s}^{-1}$ is uniformly bounded from $L^2$ to $H^{s-1}$. 

Now it remains to estimate 
$\big[ \Lambda_{h,s}, (\chi_\omega-T_{\chi_\omega}) \big]$. 
It follows from 
Proposition~\ref{lemPa} (applied with $(r,\mu,\gamma)=(s+1,0,s)$) that
\begin{align*}
\lA h^{s}T_{(c\ell)^{2s/3}} (\chi_\omega-T_{\chi_\omega})
\Lambda_{h,s}^{-1}u\rA_{L^2}&\les h^{s} \lA (\chi_\omega-T_{\chi_\omega})
\Lambda_{h,s}^{-1}u\rA_{H^s}\\
&\les h^{s} \lA \chi_\omega\rA_{H^{s+1}}\blA \Lambda_{h,s}^{-1} u\brA_{L^2}
\les h^{s}\lA \chi_\omega\rA_{H^{s+1}}\lA u\rA_{L^2},
\end{align*}
and similarly
$$
\lA  (\chi_\omega-T_{\chi_\omega})h^{s}T_{(c\ell)^{2s/3}}
\Lambda_{h,s}^{-1}u\rA_{L^2}\le K h^{s} 
\lA \chi_\omega\rA_{H^{s+1}}\lA u\rA_{L^2}.
$$
By combining these two estimates, we find that
\be\label{pna6}
\lA \big[ \Lambda_{h,s}, (\chi_\omega-T_{\chi_\omega}) \big] \Lambda_{h,s}^{-1}u\rA_{L^2}\le K h^{s}
\lA \chi_\omega\rA_{H^{s+1}}\lA u\rA_{L^2}.
\ee
By combining \e{pna5} and \e{pna6} we deduce \e{pna2}. 

We now prove the last property \e{pna3}. Write that 
$L^\mez (T_c L^\mez\cdot)=T_{c\ell}+T_{\wp}+R$ where 
$R$ is of order $0$ and 
$\wp=i^{-1} \sqrt{\ell}(\partial_\xi \sqrt{\ell})(\partial_x c)$. Since 
$\Lambda_{h,s}=I+h^sT_{(c\ell)^{2s/3}}$, by definition, $[\Lambda_{h,s},L^\mez (T_c L^\mez\cdot)] \Lambda_{h,s}^{-1}$ can be written as the sum 
$(I)+(II)+(III)$ with 
\begin{align*}
&(I)\defn \big[T_{(c\ell)^{2s/3}},T_{c\ell}\big] h^s \Lambda_{h,s}^{-1},
\quad 
(II)\defn 
\big[T_{(c\ell)^{2s/3}},T_{\wp}\big] h^s \Lambda_{h,s}^{-1},
\\
&(III)\defn 
\big[T_{(c\ell)^{2s/3}},R\big]h^s \Lambda_{h,s}^{-1}.
\end{align*}
Since $h^s \Lambda_{h,s}^{-1}$ belongs to $\Lr(L^2,H^s)$ uniformly in $h$, we need only 
estimate 
$$
\blA \big[T_{(c\ell)^{2s/3}},T_{c\ell}\big]\brA_{\Lr(H^s,L^2)},\quad 
\blA \big[T_{(c\ell)^{2s/3}},T_{\wp}\big] \brA_{\Lr(H^s,L^2)}.
$$
The second term is estimated by means of \e{esti:quant2sharp} applied with $\rho=1/2$. 
To estimate the first term we notice that the Poisson bracket of the symbols vanishes:
$$
\big\{ (c\ell)^{2s/3},c\ell\big\}=\frac{1}{i}\big( 
(\partial_\xi (c\ell)^{2s/3})\partial_x(c\ell)-(\partial_x (c\ell)^{2s/3}) \partial_\xi^\a (c\ell)\big)=0.
$$
Since $\lA c\rA_{\eC{\tdm}}\le 2$ by assumption, 
it follows from \e{esti:quant2sharp} applied with $\rho=3/2$ that 
$$
\blA \big[T_{(c\ell)^{2s/3}},T_{c\ell}\big]\brA_{\Lr(H^s,L^2)}\les 1.
$$
This completes the proof.
\end{proof}

Next we conjugate $P$ with $\Lambda_{h,s}$. Introduce
$$
\widetilde{P}_h\defn \Lambda_{h,s} P \Lambda_{h,s}^{-1}.
$$
Then
\begin{align*}
\widetilde{P}_h&=\partial_t  +T_V\partial_x +iL^\mez (T_c L^\mez\cdot)+\Run \qquad \text{where}\\
\Run^h&\defn \Lambda_{h,s} R\Lambda_{h,s}^{-1}
+ \bigl[\Lambda_{h,s},\partial_t\big]\Lambda_{h,s}^{-1}
+  \bigl[\Lambda_{h,s},T_V\px\big] \Lambda_{h,s}^{-1}
+ i\bigl[\Lambda_{h,s},L^\mez (T_c L^\mez\cdot)\big] \Lambda_{h,s}^{-1}.
\end{align*}

\begin{lemm}\label{PP10.5}
Assume that the $\eC{\tdm}$-norm of $c-1$ is small enough. There holds
\be\label{pna10}
\blA \Run^h u\brA_{L^2}\le K \big(\lA V\rA_{\eC{1}}+
\lA \partial_t c\rA_{L^\infty}
+h^{-s}\lA R\rA_{\Lr(H^s)}\big)\lA u\rA_{L^2},
\ee
for some constant $K$ independent of $h$.
\end{lemm}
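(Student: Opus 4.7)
The natural plan is to estimate each of the four summands defining $\Run^h$ separately. Three of the four are commutators already handled by Lemma~\ref{P:L42}: the piece $[\Lambda_{h,s}, T_V \px]\Lambda_{h,s}^{-1}$ contributes at most $K\lA V\rA_{\eC{1}}\lA u\rA_{L^2}$ by \e{pna1}, while $i[\Lambda_{h,s}, L^\mez(T_c L^\mez\cdot)]\Lambda_{h,s}^{-1}$ contributes $O(h)\lA u\rA_{L^2}$ by \e{pna3}, which is harmless since $h\in(0,1]$. Only the two summands $\Lambda_{h,s} R\Lambda_{h,s}^{-1}$ and $[\Lambda_{h,s},\partial_t]\Lambda_{h,s}^{-1}$ require a short direct argument.

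For the $R$-term, the idea is to factor through $H^s$. Since $T_{c^{2s/3}}L^{2s/3}$ is a paradifferential operator of order $s$ whose symbol norm is controlled by the assumption $\lA c-1\rA_{\eC{\tdm}}$ small, and since $h^s\le 1$, the operator $\Lambda_{h,s}\colon H^s\to L^2$ is bounded uniformly in $h$. Combined with \e{pna0} applied with $s'=s$, this yields
$$
\blA \Lambda_{h,s} R \Lambda_{h,s}^{-1} u\brA_{L^2} \les \lA R\rA_{\Lr(H^s)} \blA \Lambda_{h,s}^{-1} u\brA_{H^s} \les h^{-s}\lA R\rA_{\Lr(H^s)} \lA u\rA_{L^2},
$$
which is the contribution to \e{pna10} from the $R$-piece.

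For the $\partial_t$-term, the key observation is that the Fourier multiplier $L$ has time-independent symbol $\ell(\xi)$, so $\partial_t$ and $L^{2s/3}$ commute. Only the paraproduct factor of $\Lambda_{h,s}$ survives in the commutator:
$$
[\Lambda_{h,s},\partial_t] = -h^s T_{\partial_t(c^{2s/3})} L^{2s/3}.
$$
By the chain rule, $\partial_t(c^{2s/3}) = \tfrac{2s}{3}\, c^{2s/3-1}\,\partial_t c$, and the smallness of $\lA c-1\rA_{L^\infty}$ keeps $c$ and $c^{-1}$ pointwise bounded, so $\lA \partial_t(c^{2s/3})\rA_{L^\infty}\les \lA \partial_t c\rA_{L^\infty}$. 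Then \e{esti:quant0} gives that $T_{\partial_t(c^{2s/3})}$ is bounded on $L^2$ with operator norm $\les\lA\partial_t c\rA_{L^\infty}$. To close the estimate, one verifies that $h^s L^{2s/3}\Lambda_{h,s}^{-1}$ is uniformly bounded on $L^2$: this follows by repeating the Neumann-series argument leading to \e{pna19} together with the elementary Fourier-side bound $\sup_\xi h^s|\xi|^s/(1+h^s|\xi|^s)\le 1$.

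I do not anticipate any serious obstacle: the substantive commutator estimates have already been packaged in Lemma~\ref{P:L42}, and the only new ingredient is the time-derivative manipulation, which reduces to the Moser-type bound for $\partial_t(c^{2s/3})$ enabled by the smallness assumption on $c-1$. Summing the four bounds gives exactly the right-hand side of \e{pna10}.
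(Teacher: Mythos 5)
Your proposal is correct and follows the same decomposition as the paper's proof: the $R$-term is estimated by factoring through $H^s$ (bounding $\Lambda_{h,s}\colon H^s\to L^2$, $R\colon H^s\to H^s$, and $\Lambda_{h,s}^{-1}\colon L^2\to H^s$ by $h^{-s}$), while the $T_V\px$- and $L^\mez(T_cL^\mez\cdot)$-commutators are read off from Lemma~\ref{P:L42}. The only difference is that the paper dismisses $[\Lambda_{h,s},\partial_t]\Lambda_{h,s}^{-1}$ with the phrase ``similar arguments,'' whereas you spell out the computation $[\Lambda_{h,s},\partial_t]=-h^sT_{\partial_t(c^{2s/3})}L^{2s/3}$, the Moser-type bound $\lA\partial_t(c^{2s/3})\rA_{L^\infty}\les\lA\partial_t c\rA_{L^\infty}$, and the uniform boundedness of $h^sL^{2s/3}\Lambda_{h,s}^{-1}$ on $L^2$; this is exactly the argument the paper is implicitly invoking, so there is no substantive difference.
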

\begin{rema}The constant $h^{-s}$ is harmless since 
at the end of this section, $h$ will be fixed depending only on $T$.
\end{rema}
\begin{proof}
%Recalling that $\Lambda_{h,s}=I+h^sT_{(c\ell)^{2s/3}}$ and 
%using \e{pna0}, 
We have
$$%\begin{align*}
\blA \Lambda_{h,s} R\Lambda_{h,s}^{-1}\brA_{\Lr(L^2)}\le 
\blA \Lambda_{h,s} \brA_{\Lr(H^s;L^2)}
\blA R\brA_{\Lr(H^s;H^s)}
\blA \Lambda_{h,s}^{-1}\brA_{\Lr(L^2;H^s)}
\le K h^{-s}  \blA R\brA_{\Lr(H^s;H^s)},
$$
since $\blA \Lambda_{h,s} \brA_{\Lr(H^s;L^2)}\les 1$ and 
$\blA \Lambda_{h,s}^{-1}\brA_{\Lr(L^2;H^s)}\les h^{-s}$. 

%\blA  R\Lambda_{h,s}^{-1}\brA_{\Lr(L^2)}+
%\blA h^sT_{(c\ell)^{2s/3}} R\Lambda_{h,s}^{-1}\brA_{\Lr(L^2)}\\
%&\les \lA  R\rA_{\Lr(L^2)}\blA \Lambda_{h,s}^{-1}\brA_{\Lr(L^2)}\\
%&\quad +\blA T_{(c\ell)^{2s/3}}\brA_{\Lr(H^s,L^2)}\lA R\rA_{\Lr(H^s)}\blA h^s %\Lambda_{h,s}^{-1}\brA_{\Lr(L^2,H^s)}\\%
%&\le K\big( \lA  R\rA_{\Lr(L^2)}+\lA R\rA_{\Lr(H^s)}\big)
%\end{align*}
%for some constant $K$ independent of $h$ (remark that $K$ 
%depends on 
%$\lA c\rA_{L^\infty}$, to bound $\blA T_{(c\ell)^{2s/3}}\brA_{\Lr(H^s,L^2)}$, but 
%one can assume that $\lA c\rA_{\eC{\tdm}}\le 3/2$). 
On the other hand, $\bigl[\Lambda_{h,s},L^\mez (T_c L^\mez\cdot)\big] \Lambda_{h,s}^{-1}$ and  
$\bigl[\Lambda_{h,s},T_V\px\big] \Lambda_{h,s}^{-1}$ 
are estimated by means of Lemma~\ref{P:L42} and $\bigl[\Lambda_{h,s},\partial_t\big]\Lambda_{h,s}^{-1}$ is estimated by similar arguments. 
\end{proof}

We further transform the equation by replacing $T_V\px$ and 
$L^\mez (T_c L^\mez\cdot)$ by $V\px$ and $L^\mez(cL^\mez\cdot)$ 
modulo remainder terms. Namely, write $\widetilde{P}_h$ as
\be\label{Pew2}
\widetilde{P}_h\defn \partial_t +V\px +iL^\mez \big( c L^\mez \cdot\big) +\Rdeux^h
\ee
where $c$ stands for the multiplication operator by $c$ and 
$$
\Rdeux^h u=\Run^h u+T_V\px u -V\px u+i\big(L^\mez T_cL^\mez u 
-L^\mez \big(c L^\mez u\big) \big).
$$
\begin{lemm}\label{PP10.5deux}
Let $s_0>2$ and assume that the $\eC{\tdm}$-norm of $c-1$ is small enough. 
There holds
\be\label{pna15}
\blA \Rdeux^h u\brA_{L^2}\le K \big(\lA V\rA_{H^{s_0}}+\lA c-1\rA_{H^{s_0}}
+\lA \partial_t c\rA_{H^{1}} 
+h^{-s}\lA R\rA_{\Lr(H^s)}\big)\lA u\rA_{L^2},
\ee
for some constant $K$ independent of $h$.
\end{lemm}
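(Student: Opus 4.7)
The natural decomposition is
\[
\Rdeux^h u = \Run^h u + (T_V - V)\px u + i L^{\mez}\bigl((T_c - c)L^{\mez} u\bigr),
\]
and I bound each piece in $L^2$. The first piece is supplied by Lemma~\ref{PP10.5}; its right-hand side involves $\lA V\rA_{\eC{1}}$ and $\lA \partial_t c\rA_{L^\infty}$, which by the one-dimensional Sobolev embeddings $H^{s_0}\hookrightarrow \eC{1}$ (valid for $s_0>3/2$) and $H^{1}\hookrightarrow L^\infty$ are dominated by $\lA V\rA_{H^{s_0}}+\lA \partial_t c\rA_{H^1}$ as required.

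For the second piece I use Bony's product formula $ab=T_a b + T_b a + \RBony(a,b)$ with $a=V$ and $b=\px u$, which gives $(V-T_V)\px u = T_{\px u}V + \RBony(V,\px u)$. Both summands are controlled by a Littlewood--Paley decomposition: Bernstein in dimension one yields $\lA S_{j-N}\px u\rA_{L^\infty}\lesssim 2^{3j/2}\lA u\rA_{L^2}$, and the remaining sum converges against the dyadic decomposition of $V$ provided $s_0>3/2+1/2=2$, giving $\blA (V-T_V)\px u\brA_{L^2}\le K\lA V\rA_{H^{s_0}}\lA u\rA_{L^2}$.

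The third piece is the main obstacle: the outer $L^{\mez}$, of order $3/4$, threatens to absorb the smoothing provided by $T_c-c$. Applying Bony's formula to $v\defn L^{\mez}u$ with $a=c-1$ (using $T_v 1=0$ and $\RBony(1,v)=0$),
\[
(T_c-c)v = -T_{v}(c-1) - \RBony(c-1,v),
\]
so it suffices to bound $T_{L^{\mez}u}(c-1)$ and $\RBony(c-1,L^{\mez}u)$ in $H^{3/4}$ (then the outer $L^{\mez}\colon H^{3/4}\to L^2$ finishes the job). A direct dyadic computation using Bernstein together with the order $3/4$ of $L^{\mez}$ gives $\lA S_{j-N}L^{\mez}u\rA_{L^\infty}\lesssim 2^{5j/4}\lA u\rA_{L^2}$. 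The Littlewood--Paley square function of $T_{L^{\mez}u}(c-1)$ in $H^{3/4}$ is then controlled by $\sum_j 2^{2j(3/4+5/4)}\lA\Delta_j(c-1)\rA_{L^2}^2\lA u\rA_{L^2}^2 = \lA c-1\rA_{H^2}^2\lA u\rA_{L^2}^2$, and a similar argument (summing the diagonal paraproduct $\RBony$) gives the analogous bound under exactly the same threshold $s_0>2$. Assembling the three contributions yields \e{pna15}.

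The main obstacle is therefore the third term: one must balance the regularity loss of $L^{\mez}$ on the outside against the smoothing produced by $T_c-c$, and the calculation shows that the exact threshold $s_0>2$ appearing in the statement is sharp for this approach.
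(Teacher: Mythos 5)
Your proof is correct and follows the same underlying calculation as the paper; the only difference is that where you redo the Bony decomposition and Littlewood--Paley/Bernstein estimates by hand for the two "product minus paraproduct" pieces, the paper simply invokes the packaged estimate of Proposition~\ref{lemPa}, with $(r,\mu,\gamma)=(s_0,-1,0)$ for $T_V\px u - V\px u$ and $(r,\mu,\gamma)=(s_0,-3/4,3/4)$ for $L^\mez\big((T_c-c)L^\mez u\big)$, which is precisely the content of your dyadic computations. One minor inaccuracy: you claim $s_0>2$ is needed already for the transport term $(V-T_V)\px u$, but the threshold there is only $s_0>3/2$ (as the paper notes); it is the dispersive term, where the output must land in $H^{3/4}$ to absorb the outer $L^\mez$, that forces $s_0>2$. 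This does not affect the conclusion since the lemma assumes $s_0>2$.
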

\begin{proof}We have already estimated $\Run^h$, and the right-hand side 
of \e{pna10} is less than the one of \e{pna15} provided that $s_0>3/2$. To estimate 
$T_V\px u -V\px u$, we apply 
Proposition~\ref{lemPa} with $(r,\mu,\gamma)=(s_0,-1,0)$ (and $s_0>3/2$) 
to obtain
$$
\lA T_V\px u -V\px u\rA_{L^2}\les \lA V\rA_{H^{s_0}}\lA \px u\rA_{H^{-1}}\le 
\lA V\rA_{H^{s_0}}\lA u\rA_{L^2}.
$$
The estimate for $\mathcal{L}-L^\mez\big(cL^\mez\cdot\big)=L^\mez\big((T_c-cI)L^\mez\cdot\big)$ 
follows in the same way, assuming that $s_0>2$. 
\end{proof}

We are now ready to give the main reduction. Our goal in this section is to prove that one can deduce Proposition~\ref{PP10} from 
the following proposition. 
%Recall the notation $\widetilde{P}_h=\Lambda_{h,s} P \Lambda_{h,s}^{-1}$. 

\begin{prop}\label{PP11}
Consider an operator of the form
\be\label{Pew1}
\widetilde{P}\defn \partial_t +V\px +iL^\mez \big( c L^\mez \cdot\big) +\Rdeux.
\ee
Let $T\in (0,1]$ and consider an open subset $\omega\subset \xT$. 
There exist an integer $s_0$ large enough and two positive constants 
$\delta=\delta(T)$ and $K=K(T)$ 
such that, if
\be\label{pn200b}
\begin{aligned}
&\lA V\rA_{\bo([0,T];H^{s_0})}+\lA c-1\rA_{\bo([0,T];H^{s_0})}\le \delta, \\
& \blA \partial_t^k V\brA_{\bo([0,T];H^1)}
+\blA \partial_t^k c\brA_{\bo([0,T];H^1)}\le \delta \qquad (1\le k\le 3),\\
&\lA \Rdeux\rA_{\bo([0,T];\mathcal{L}(L^2))}\le \delta,
\end{aligned}
\ee
then  
for any initial data 
$v_{in}\in L^2(\xT)$  
there exists $f\in C^0([0,T];L^2(\xT))$ such that: 
\begin{enumerate}
\item 
the unique solution $v$ to $\widetilde{P}v=\chi_\omega \RE f,\quad v_{\arrowvert t=0}=v_{in}$ is such that 
$v(T)$ is an imaginary constant:
$$
\exists b\in \xR/~\forall x\in \xT, \quad v(T,x)=ib.
$$
\item \label{PP10-prop2} $\lA f\rA_{\bo([0,T];L^2)}\le K\lA v_{in}\rA_{L^2}$. 
\end{enumerate}
\end{prop}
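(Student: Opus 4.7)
The plan is to execute Steps 4--6 of the strategy outlined in the introduction. First I would conjugate $\wide P$ to a constant-coefficient dispersive equation plus a bounded remainder, then prove an observability inequality for the adjoint via an Ingham-type estimate, and finally close the argument with a variant of the HUM method tailored to produce a real-valued control and a target lying in a prescribed $L^2_M$-space.

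\smallskip
\noindent\textbf{Step A (Reduction to constant coefficients).}
Apply the three changes of variables of the form
$(1+\pa_x\kappa)^{1/2}h(t,x+\kappa(t,x))$, $h(a(t),x)$, $h(t,x-b(t))$
described around~\e{defiQu}. Choosing $\kappa,a,b$ so that the advection coefficient has zero mean in $x$ and that the generated order-$1/2$ term (coming from the interplay between $L^{1/2}(cL^{1/2}\cdot)$ and the diffeomorphism) cancels, one rewrites $\wide P$ as
\[
Q = \pa_t + W\pa_x + iL + R_3,\qquad \int_\xT W(t,x)\,dx=0,\qquad \|R_3\|_{\mathcal L(L^2)}\lesssim \delta.
\]
Then conjugate by a pseudo-differential operator $A=\Op(a)$ with $a=\exp(i|\xi|^{1/2}\beta(t,x))\in S^0_{1/2,1/2}$, $\beta$ built from $W$ as in \cite{AB}, so that
$i[A,L]+W\pa_x A$ is of order $0$. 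This yields
\[
M\,Q\,M^{-1} = \pa_t + iL + \mathcal R,\qquad \|\mathcal R\|_{\mathcal L(L^2)}\lesssim \delta,
\]
where $M$ is the composition of the diffeomorphisms and $A$. Throughout, invoke the assumed smallness of $V,c-1$ and their time derivatives in \e{pn200b} to control the symbols of $\kappa,a,b,\beta$ and all commutators.

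\smallskip
\noindent\textbf{Step B (Observability for $\partial_t+iL$).}
With the conjugation in hand, translate the target controllability of $\wide P$ into controllability for $\pa_t+iL+\mathcal R$, with control domain $\omega_1$ (slightly shrunk inside $\omega$) and observation operator $A$. Expanding $v_0\in L^2$ in Fourier series and using the dispersion relation $\ell(n)$, one needs to bound from below
\[
\int_0^T\!\!\int_\omega \bigl|\RE (Av)(t,x)\bigr|^2\,dx\,dt
\]
in terms of $\|v_0\|_{L^2}^2$, whenever $v_0$ satisfies the one-sided mean-value condition \e{i24}. The heart of the matter is an Ingham-type inequality for oscillatory sums whose phases are
\[
\sign(n)\bigl[\ell(n)^{3/2}t+\beta(t,x)|n|^{1/2}\bigr],
\]
with $x$ a parameter. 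The subprincipal $\beta(t,x)|n|^{1/2}$ is not small, but it is of lower order than the main phase and its time derivative of order $k$ grows only like $|n|^{k/2}$; hence a Kahane-type lacunary argument still applies, giving the inequality for arbitrary $T>0$. Inverting $A$ and absorbing $\mathcal R$ by perturbation (which is legitimate since $\delta$ is small) yields the observability inequality for $Q^*$.

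\smallskip
\noindent\textbf{Step C (HUM and the imaginary-constant target).}
The final state being an imaginary constant rather than $0$ reflects the fact that the control acts only through the real part of $f$. Choose the auxiliary function $M(x)=\Phi(\underline u(T,\cdot))(1)$ from Step~6 of the introduction, where $\Phi$ is the composition of the transformations introduced in Step~A; then $M-1$ is small in $H^{s_0}$. Introduce
\[
L^2_M:=\Bigl\{\varphi\in L^2(\xT;\xC)\,;\,\IM\!\int_\xT M(x)\varphi(x)\,dx=0\Bigr\},
\]
an $\xR$-Hilbert space. The condition \e{i24} is automatic for $v_0\in L^2_M$, so the observability inequality from Step~B applies on $L^2_M$. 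The HUM method, formulated on this real Hilbert space (so as to produce a real-valued $f$), then gives a real-valued $f\in C^0([0,T];L^2)$ with $\|f\|_{L^2}\lesssim \|v_{in}\|_{L^2}$ such that the solution of $\wide Pv=\chi_\omega\RE f$, $v(0)=v_{in}$, satisfies $v(T)\in(iM)\xR$. Pulling back by the local operator $\Phi^{-1}$ (and using $M=\Phi(1)$) converts this into $v(T)=ib$ for some $b\in\xR$, giving statement~(1); the bound in (2) is the norm estimate from Riesz representation applied to the coercive HUM bilinear form.

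\smallskip
\noindent\textbf{Main obstacle.}
The hardest part is Step~B: proving the Ingham inequality in the presence of the $x$-dependent, nonlinear-in-$n$ perturbation $\beta(t,x)|n|^{1/2}$ of the phase. Unlike a linear-in-$t$ phase, one cannot directly apply Parseval; one must bound oscillatory sums whose amplitudes depend on time with derivatives growing polynomially in $|n|^{1/2}$, and maintain uniformity in the parameter $x$. A secondary difficulty is ensuring that all the transformations in Step~A are local enough, or are handled by the commutator estimate \e{i21}, so that the cut-off $\chi_\omega$ can be moved through $M$ without destroying the support property of the control.
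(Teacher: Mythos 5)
Your plan follows the same route as the paper: changes of variables to reach $Q=\pa_t+W\pa_x+iL+R_3$ with $W$ mean-zero, an Egorov-type conjugation by $A=\Op(e^{i\beta|\xi|^{1/2}})$ to reach $\pa_t+iL+\mathcal R$ with $\mathcal R$ small, an Ingham-type observability inequality for sums with the perturbed phases $\sign(n)[\ell(n)t+\beta(t,x)|n|^{1/2}]$, and finally a variant of HUM on the real Hilbert space $L^2_M$ with $M=\Phi(1)$ producing the imaginary-constant target. This is precisely Proposition~\ref{P:38}, Proposition~\ref{P:39}, Corollary~\ref{C:54} and Proposition~\ref{P:51}/Lemma~\ref{P:51-w} in the paper.

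Two points in Step~C are garbled, and they touch exactly the delicate issue you flag as the ``secondary difficulty''. First, HUM must be applied at the level of the operator $Q$ (\emph{after} the local changes of variables, \emph{before} the conjugation by $A$), producing a control $\chi_2\RE f_2$ supported in a slightly smaller set $\omega_2$. Then $v:=\Phi^{-1}w$ solves $\wide P v = m\,\Phi^{-1}(\chi_2\RE f_2)$, and the support of this right-hand side stays inside $\omega$ precisely because $\Phi^{-1}$ is a composition of three \emph{local} transformations; the nonlocal operator $A$ never acts on the control, it is used only to prove observability for $Q^*$. This is what resolves your ``main obstacle''; the commutator estimate~\eqref{i21} for $\Lambda_{h,s}$ is a separate mechanism, used in Section~\ref{P:S3} to pass from Proposition~\ref{PP11} to Proposition~\ref{PP10}, not inside the proof of Proposition~\ref{PP11} itself. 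Second, the pull-back by $\Phi^{-1}$ also changes the target: HUM gives $w(T)=ibM$, and then $v(T)=\Phi^{-1}(ibM)=ib$ because $M=\Phi(1)$; your write-up suggests HUM yields $v(T)\in iM\xR$ directly for the $\wide P$-equation, which conflates the two levels.

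On Step~B, ``a Kahane-type lacunary argument still applies'' understates what is needed. The paper splits the proof of Proposition~\ref{P44} into a high-frequency lower bound obtained by integration by parts exploiting the gap $|\mu_n'-\mu_m'|\gtrsim\sqrt{\max(n,m)}\,|n-m|$ (Proposition~\ref{P41}), an upper bound valid for all frequencies and allowing time-dependent amplitudes whose $k$-th time derivatives grow like $|n|^{k/2}$ (Proposition~\ref{P43}), and then a Haraux-style induction adding low frequencies one at a time to obtain the sharp inequality for every $T>0$ under a smallness condition on $\|\beta\|_X$. One also needs, in Proposition~\ref{P45}, the recombination $2\RE w=\sum_n\gamma_n e^{i\mu_n}$ with $\gamma_n=c_n+c_{-n}$, together with the mean-value condition~\eqref{i24}, to control the full $L^2$ norm of $v_0$ from the observation of $\RE(Av)$ alone. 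These ingredients are essential and should appear in a complete proof.
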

\begin{rema}
Notice that the final state $v(T)$ is not $0$ but an imaginary constant. 
\end{rema}

This result will be proved later. In the end of this section, 
we assume that Proposition~\ref{PP11} is true and prove Proposition~\ref{PP10}. 

\begin{proof}[Proof of Proposition~\ref{PP10} given Proposition~\ref{PP11}] 
Proposition~\ref{PP11} holds for any $\wide P$ of the form 
\e{Pew1}. In particular, in view of \e{Pew2}, it holds for 
$\wide P$ replaced 
by $\widetilde{P}_h\defn \Lambda_{h,s} P \Lambda_{h,s}^{-1}$. 
Let us mention that $h$ will be fixed at the end of the proof 
by asking that $K'(T)h<1/4$ where $K'(T)$ depends only on $T$. 

The idea is to apply control property for $\widetilde{P}_h$ 
associated with an unknown initial data to be determined. 

We shall prove that Proposition~\ref{PP11} implies that Proposition~\ref{PP10} holds with the conclusion 
$\ref{PP10-prop1}$ replaced by $v(T)\in i\xR$. 
Then one deduces that $v(T)=0$ by using condition~$\ref{T31A-cond3})$ in Assumption \ref{T31A} and the fact that $\int_{\xT}v_{in}(x)\, dx=0$.

Assume that $\wide\delta \le h^s\delta$ 
where $\wide\delta$ appears in 
the statement of Proposition~\ref{PP10}Ê
and $\delta$ is as given by 
Proposition~\ref{PP11}. Then one has 
$h^{-s}\wide\delta \le \delta$. 
Therefore, if the smallness condition \e{pn200} holds, then Lemma~\ref{PP10.5} 
implies that $\lA \Rdeux^h\rA_{\bo([0,T];\mathcal{L}(L^2))}$ is small, and hence the 
smallness assumption \e{pn200b} holds. This explains why one may apply the conclusion of Proposition~\ref{PP11} under the assumption of Proposition~\ref{PP10}. 

The assumption that Proposition~\ref{PP11} holds implies that for any $y\in L^2(\xT)$ 
there is $\widetilde{f}\in C^0([0,T];L^2(\xT))$ satisfying
\be\label{n210}
\blA \widetilde{f}\brA_{\bo([0,T];L^2)}\le K(T) \lA y\rA_{L^2},
\ee
and such that the unique solution $u_1$ to
$$
\widetilde{P}_h u_1=\chi_\omega \RE \widetilde{f},\quad u_1 |_{t=0}=y,
$$
is such that $u_1(T,x)=ib$ for some $b\in\xR$ and all $x\in \xT$.

Now introduce $u_2$ which is the unique solution of the Cauchy problem (with data at time $T$), 
$$
\widetilde{P}_h u_2=\big( \Lambda_{h,s}T_p \chi_\omega \Lambda_{h,s}^{-1}-\chi_\omega\big)\RE\widetilde{f},\quad u_2(T)=0.
$$
Again, the fact that the above Cauchy problem has a unique solution follows from Proposition~\ref{P:10}. One can then define 
the linear operator $\opk$ by
\be\label{n211}
\opk y=u_2(0).
\ee 
The reason to introduce $u_2$ and the operator $\opk$ is that the function $u$ defined by 
$u\defn u_1+u_2$ satisfies
$$
\widetilde{P}_h u=\Lambda_{h,s} \big(T_p\chi_\omega 
\Lambda_{h,s}^{-1}
\RE\widetilde{f}\big),\quad u(T)=ib,\quad u_{|t=0}=y+\opk y.
$$
Now, assume that $I+\opk$ is invertible with $(I+\opk)^{-1}\in \mathcal{L}(L^2)$. Then 
$y$ can be so chosen that $y+\opk y=\Lambda_{h,s}v_{in}$. Using that 
$\Lambda_{h,s}b=b$ and hence $\Lambda_{h,s}^{-1}b=b$ for any constant $b$, it follows that, with $f\defn \Lambda_{h,s}^{-1}\widetilde{f}$ and 
$v\defn \Lambda_{h,s}^{-1}u$,
$$
Pv=T_p\chi_\omega \RE f,\quad v(T)=ib,\quad v(0)=v_{in},
$$
where $P$ is the original operator, so that $\widetilde{P}_h=\Lambda_{h,s}P\Lambda_{h,s}^{-1}$. 
Moreover, it follows from the conclusion $\ref{PP10-prop2}$ of 
Proposition~\ref{PP11} that
$\blA \widetilde{f}\brA_{\bo([0,T];L^2)}\le K \lA y\rA_{L^2}$ from which 
we deduce that 
$\lA f\rA_{\bo([0,T];H^s)}\le K(h) \lA y\rA_{H^s}$. The fact that the last constant depends on $h$ is not a problem since $h$ is fixed, depending on $T$. Now to see that 
Proposition~\ref{PP10} holds, it remains to check that 
$v(T)=0$. As already mentioned, 
the fact that $v(T)=0$ follows from the fact that $v(T)=ib$ as well as condition~$\ref{T31A-cond3})$ in Assumption \ref{T31A} and the fact that $\int_{\xT}v_{in}(x)\, dx=0$. 

Thus it remains only to prove that $I+\opk$ is a bijection from 
$L^2$ into itself. To see this, it is sufficient to prove that 
$\opk$ is a bounded operator whose operator norm 
in $\mathcal{L}(L^2)$ is strictly smaller than $1$. In this direction, we first use the energy 
estimate \e{pn27b} for the operator $\widetilde{P}$:
$$
\lA u_2(t)\rA_{L^2}\le e^{CT}
\left( \lA u_2(T)\rA_{L^2}+\int_0^T \blA \widetilde{P}u_2 \brA_{L^2}\, dt'\right),
$$
for some constant $C$ depending only on 
$$
M_{s_0}\defn \sup_{t'\in [0,T]}\big\{\lA V(t')\rA_{H^{s_0}}+\lA c(t')-1\rA_{H^{s_0}}
+\lA \Rdeux(t')\rA_{\mathcal{L}(L^2)}\big\}.
$$
Since $u_2(T)=0$, this implies that,
$$
\lA u_2\rA_{\bo([0,T];L^2)}\le e^{CT}
\int_0^T \blA\big( \Lambda_{h,s}T_p \chi_\omega \Lambda_{h,s}^{-1}-\chi_\omega\big)\widetilde{f}(t')
\brA_{L^2}\, dt'.
$$
To estimate the term $\big( \Lambda_{h,s}T_p \chi_\omega \Lambda_{h,s}^{-1}-\chi_\omega\big)\widetilde{f}$ we write it as
$$
[ \Lambda_{h,s},\chi_\omega]\Lambda_{h,s}^{-1}\widetilde{f}+
\Lambda_{h,s}(T_p-I) \chi_\omega \Lambda_{h,s}^{-1}\widetilde{f}.
$$
It follows fromÊ
\e{pna2} that
$$
\blA [ \Lambda_{h,s}, \chi_\omega]\Lambda_{h,s}^{-1}\widetilde{f} \brA_{L^2}
\leq K h \lA \chi_\omega \rA_{H^{s+1}}\blA \widetilde{f} \brA_{L^2}.
$$
It remains to estimate $ \Lambda_{h,s}(T_p-I) \chi_\omega \Lambda_{h,s}^{-1}\widetilde{f}$. 
To do so, we write $\Lambda_{h,s}=I+h^sT_{c^{(2s)/3}}L^{\frac{2s}{3}}$ to split this term as
$$
(T_p-I) \chi_\omega\Lambda_{h,s}^{-1}\widetilde{f}+T_{c^{(2s)/3}}L^{\frac{2s}{3}}(T_p-I) \chi_\omega (h^s \Lambda_{h,s}^{-1})\widetilde{f}.
$$
For the first term we have (using \e{esti:quant1} and \e{pna0} with $s'=0$)
$$
\blA (T_p-I) \chi_\omega \Lambda_{h,s}^{-1}\widetilde{f}\brA_{L^2}\les 
M^0_0(p-1)\lA \chi_\omega\rA_{L^\infty}\blA  \widetilde{f}\brA_{L^2}.
$$
For the second term write (using \e{esti:quant0}, \e{prtame2} 
and \e{pna0} with $s'=s$)
\begin{align*}
\blA T_{c^{(2s)/3}}L^{\frac{2s}{3}}(T_p-I) \chi_\omega (h^s \Lambda_{h,s}^{-1})\widetilde{f}\brA_{L^2}
&\les \blA  (T_p-I) \chi_\omega (h^s \Lambda_{h,s}^{-1})\widetilde{f}\brA_{H^s}\\
&\les \lA T_{p-1}\rA_{\Lr(H^s)}\lA \chi_\omega\rA_{H^s}\blA h^s \Lambda_{h,s}^{-1}\widetilde{f}\brA_{H^s}\\
&\les M^0_0(p-1)\lA \chi_\omega\rA_{H^s}\blA \widetilde{f}\brA_{L^2}.
\end{align*}
It is found that
$$
\blA \Lambda_{h,s}(T_p-I) \chi_\omega \Lambda_{h,s}^{-1}\widetilde{f}\brA_{L^2}
\les  \big(\lA c-1\rA_{L^\infty}+\lA \px c\rA_{L^\infty}\big) \lA \chi_\omega\rA_{H^{s}}\blA \widetilde{f}\brA_{L^2}
\les \wide\delta \blA \widetilde{f}\brA_{L^2}.
$$
This yields 
$$
\lA u_2\rA_{\bo([0,T];L^2)}\les (h+\wide\delta) e^{CT}
\int_0^T \blA \widetilde{f}\brA_{L^2}\, dt'.
$$
In view of \e{n210}, we conclude that
$$
\lA u_2\rA_{\bo([0,T];L^2)}\le K'(T)(h+\wide\delta)\lA y\rA_{L^2},
$$
for some constant $K'(T)$. Then chose $h,\wide\delta$ such that 
$K'(T)h<1/4$, $K'(T)\wide\delta<1/4$.  
We conclude that
\be\label{n217}
\forall t\in [0,T], \quad \lA u_2(t)\rA_{L^2}\le \mez \lA y\rA_{L^2}.
\ee
By applying this inequality with $t=0$, one obtains $\lA \opk y\rA_{L^2}\le \mez\lA y\rA_{L^2}$ which proves that 
$I+\opk$ is invertible in $\mathcal{L}(L^2)$. This completes the proof of Proposition~\ref{PP10}.
\end{proof}

\section{Further reductions}\label{S:22}

Recall that until now we have reduced the study of the control problem in Sobolev spaces for
$P\defn \partial_t  +T_V\partial_x +iL^\mez T_c L^\mez+ R$
to the one of the control problem in $L^2$ for
$\widetilde{P}=\partial_t +V\px +i L^\mez (c L^\mez \cdot)+\Rdeux$.

\subsection{Change of variables}

The goal of this subsection is to reduce the analysis to an equation 
where $ L^\mez (c L^\mez \cdot)$ is replaced with an operator 
with constant coefficient. To do so, we use three change of variables, 
which preserves the $L^2(dx)$ scalar product. 
This allows us to conjugate $\widetilde{P}$ to an operator of the form
$$
\pa_t + W \pa_x + i L+ R
$$
where $R$ is of order $0$ and furthermore 
$W = W(t,x)$ satisfies $\int_{\xT} W(t,x)\, dx=0$.

\begin{prop}\label{P:38}
There exist universal constants $\delta_0 \in (0,1)$, $r \geq 2$, $C > 0$ such that the following properties hold. 
Assume that $c,V,R_2$ satisfy
\be \label{basic smallness}
\| c - 1 \|_{\NormP{T}{L^\infty}} < \delta_0, \quad 
\mN_0 \leq 1,
\ee
where 
\[
\mN_0 := \| c-1 \|_{\NormP{T}{H^r}} + \| V \|_\Norm{T}{H^1}
+ \| \pa_t c \|_\Norm{T}{H^1} + \| R_2 \|_\Norm{T}{\mL(L^2)}.
\]
Then there exist a constant $T_1 > 0$ and a bounded, invertible linear map 
\[
\Phi \colon C^0([0,T];L^2(\xT)) \to C^0([0,T_1];L^2(\xT))
\]
with bounded inverse $\Phi^{-1}$ such that 
\[
\wide P u = m \Phi^{-1}\big( \wide P_3 ( \Phi u )\big), 
\]
where $m = m(t)$ is a function of time only, defined for $t \in [0,T]$, and 
\[
\wide P_3 = \pa_t + W \pa_x + i L + \Rtrois.
\]
The function $W = W(t,x)$ is defined for $t \in [0,T_1]$,  
it satisfies $\int_{\xT} W(t,x)\, dx=0$, and 
\be\label{p214}
\| W \|_\Norm{T_1}{H^2} \leq C \big( \| (c-1,V) \|_\Norm{T}{H^2} 
+ \| \pa_t c \|_\Norm{T}{H^1} %+ \| V \|_\Norm{T}{H^2} 
\big).
\ee
The operator $\Rtrois$ maps $C^0([0,T_1] ; L^2(\T))$ into itself, with 
\be\label{p215}
\| R_3 \|_{\Norm{T_1}{\mL(L^2)}} \leq C \mN_0.
\ee
The constant $T_1$ and the function $m$ satisfy
\[
\Big| \frac{T_1}{T}\, - 1 \Big| + \| m - 1 \|_{{C^0([0,T])}} 
\leq C \| c-1 \|_\Norm{T}{L^\infty}.
\]
The map $\Phi$ is the composition of three local transformations 
$\Phi = \ph_*^{-1} \psi_*^{-1} \Psi_1$, where 
\be\label{S5n15}
\begin{aligned}
(\Psi_1 h)(t,x) & := (1 + \pa_x \tilde\b_1(t,x))^{\frac12} h(t,x + \tilde\b_1(t,x)),
\\
(\psi_*^{-1} h)(t,x) & := h(\psi^{-1}(t),x), 
\qquad (\ph_*^{-1} h)(t,x) := h(t,x-p(t)).
\end{aligned}
\ee

\end{prop}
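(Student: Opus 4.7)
The plan is to build $\Phi = \ph_*^{-1}\psi_*^{-1}\Psi_1$ as three successive $L^2$-preserving changes of variable, each performing one single reduction. Step 1 (via $\Psi_1$) turns the coefficient of $L$ from $c(t,x)$ into a function $\tilde c(t)$ of time alone; Step 2 (via $\psi_*^{-1}$) rescales time so that this coefficient becomes exactly $1$, at the cost of the overall scalar factor $m(t)$; Step 3 (via $\ph_*^{-1}$) translates in $x$ in order to kill the $x$-mean of the resulting transport coefficient. The crucial feature is that each of these maps is $L^2$-isometric (up to the scalar $m$ produced by Step 2), so the conjugation of the order-$3/2$ operator $L^{\mez}(cL^{\mez}\cdot)$ produces no net term of order $1/2$ — only the principal term is changed and every sub-principal contribution falls into $\mL(L^2)$. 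This is precisely the cancellation alluded to after \eqref{defiQu}.

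The core is Step 1. Seek $\tilde\b_1(t,\cdot)$ such that $X(t,x):=x+\tilde\b_1(t,x)$ is a diffeomorphism of $\xT$ satisfying $c(t,x)\,(\partial_x X(t,x))^{3/2} = \tilde c(t)$. Imposing $\int_\xT \partial_x X\,dx = 2\pi$ fixes
\[
\tilde c(t) = \Big(\frac{1}{2\pi}\int_\xT c(t,x)^{-2/3}\,dx\Big)^{-3/2},
\]
and then $\tilde\b_1$ is recovered by integrating $\partial_x\tilde\b_1=(\tilde c/c)^{2/3}-1$, a well-posed problem since $\|c-1\|_{L^\infty}<\delta_0$. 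A direct symbolic calculation, exploiting that $\Psi_1$ is unitary on $L^2$, yields
\[
\Psi_1^{-1}\, L^{\mez}(cL^{\mez}\cdot)\,\Psi_1 = \tilde c(t)\,L + R_a,
\qquad \|R_a\|_\Norm{T}{\mL(L^2)}\les C\big(\|c-1\|_{\Norm{T}{H^r}}\big),
\]
while the conjugation of $\pa_t$ and $V\pa_x$ (the former producing an additional transport coefficient involving $\pa_t\tilde\b_1$, and hence $\pa_t c$) gives
\[
\Psi_1^{-1}\wide P\,\Psi_1 = \pa_t + V_1\pa_x + i\tilde c(t)L + R',
\]
with $V_1$ bounded in $\Norm{T}{H^2}$ by the right-hand side of \eqref{p214} and $\|R'\|_\Norm{T}{\mL(L^2)}\les\mN_0$, by nonlinear Sobolev estimates applied to the formula for $\tilde\b_1$.

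Step 2 uses the time change $\psi(t)=\int_0^t\tilde c(s)\,ds$, setting $T_1:=\psi(T)$ and $m(t):=\tilde c(t)$. Factoring $m$ to the left in $\psi_*^{-1}(\pa_t+\cdot)(\psi_*^{-1})^{-1}$ turns the coefficient of $iL$ into $1$ and rescales $V_1$ accordingly. Since $c-1$ is small in $C^0L^\infty$, so is $\tilde c-1$, which yields $|T_1/T-1|+\|m-1\|_{C^0([0,T])}\les \|c-1\|_\Norm{T}{L^\infty}$. Step 3 sets $p(t)=-\int_0^t\langle \tilde V(s,\cdot)\rangle\,ds$, where $\langle\cdot\rangle$ denotes the $x$-mean and $\tilde V$ is the transport coefficient coming from Step 2. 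The translation $\ph_*^{-1}$ sends $\pa_t$ to $\pa_t+\dot p\,\pa_x$ and sends $\tilde V(t,x)\pa_x$ to $\tilde V(t,x-p(t))\pa_x$, so the net drift becomes
\[
W(t,x) := \tilde V(t,x-p(t)) + \dot p(t),
\]
which has zero $x$-mean by choice of $p$ and inherits the Sobolev regularity of $\tilde V$, giving \eqref{p214}.

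The bound \eqref{p215} is obtained by collecting all the order-$0$ remainders produced along the three conjugations together with $\Psi_1^{-1}R_2\Psi_1$ (which is $L^2$-bounded by unitarity of $\Psi_1$), and by invoking the nonlinear Sobolev estimates on changes of variable from the appendix; these estimates also fix the universal integer $r\ge 2$. The main obstacle is the symbolic analysis in Step 1: one must verify that the would-be $1/2$-order term in the conjugation of $L^{\mez}(cL^{\mez}\cdot)$ by $\Psi_1$ really cancels — this is where the unitary normalization $(1+\pa_x\tilde\b_1)^{\mez}$ in the definition of $\Psi_1$ is essential — and that all sub-principal terms assemble into a genuine $\mL(L^2)$-remainder with the announced dependence on $\mN_0$.
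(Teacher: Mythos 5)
Your three-step plan coincides with the paper's proof (Appendix~\ref{S:AC1}): $\Psi_1$ reduces the coefficient of $L$ to the pure function of time $m(t)=\tilde c(t)=\big(\tfrac{1}{2\pi}\int_\T c^{-2/3}\,dx\big)^{-3/2}$, then $\psi_*$ rescales time (producing the scalar prefactor $m$), then $\ph_*$ translates in $x$ to kill the mean of the drift. One small slip: since $\Phi=\ph_*^{-1}\psi_*^{-1}\Psi_1$, the inner conjugation to compute is $\Psi_1\wide P\Psi_1^{-1}$, not $\Psi_1^{-1}\wide P\Psi_1$, so the defining relation should read $c(t,x)\,(1+\pa_x\b_1(t,x))^{3/2}=m(t)$ for the \emph{inverse} diffeomorphism, as in \eqref{cc1}.

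The genuine gap is exactly where you flag it: the cancellation of the order-$1/2$ term is asserted, not proved, and the ``unitarity'' heuristic is not sufficient by itself. Skew-adjointness of $\Psi_1\,iL^{\mez}(cL^{\mez}\cdot)\,\Psi_1^{-1}$ constrains only the \emph{real} part of its order-$1/2$ symbol (by $A+A^*=0$ it must equal $\tfrac12(\pa_\xi\ell)\pa_x a_4$, which does vanish when $a_4$ is $x$-independent), but does not on its own rule out an imaginary contribution of that order. Moreover, conjugating $L$ alone by $\Psi_1$ already produces a nonzero order-$1/2$ operator (the $\tfrac98(\pa_{xx}\tilde\b)(1+\pa_x\tilde\b)^{-5/2}|D_x|^{-1/2}\pa_x$ term in \eqref{conj L explicit}); the cancellation only emerges after combining it with the contribution from the subprincipal symbol of $L^{\mez}(cL^{\mez}\cdot)$ and the specific choice of $\tilde\b_1$. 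The paper closes this not by a brute ``direct symbolic calculation'' but by an Egorov-type flow argument: it interpolates $\Psi(\t)$ as the solution of $\pa_\t\Psi=F(\t)\Psi$ with $F(\t)=b_0\pa_y+\tfrac12\pa_y b_0$, derives a transport equation for the symbol of $\Psi(\t)L\Psi(\t)^{-1}$, and integrates it along Hamiltonian characteristics \eqref{Ham char}, obtaining the explicit formulas \eqref{sol p0}, \eqref{sol p1} and finally the identity $a_5=-\tfrac34\pa_x a_4$. That flow/transport computation is the technical heart of the proposition and is what your proposal would still need to supply before the conclusion follows.
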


\begin{proof}
This proposition is proved in Appendix \S\ref{S:AC1}.
\end{proof}

\begin{rema}
$i)$ The proof is based on computations similar to the ones used 
in \cite{AB}. However, the analysis in \cite{AB} used some special properties of the Hilbert transform which cannot be applied in the present setting. Instead, 
we shall rely on Egorov theorem. 
Moreover, we need in this paper to introduce a change of variables which preserves 
the skew-symmetric structure of the operator $iL^{\mez}(cL^\mez\cdot)$. This allows us to prove that 
some operator of order $1/2$ vanishes, which plays an essential role below. This in turn forces us to revisit the analysis of changes of variables, which explains why the proof is done 
in details in \S\ref{S:AC1}. 

$ii)$ In sharp contrast with other transformations that will be performed below, notice that a change of variable is a local transformation, thereby transforming a localized control in another localized control 
(this is used below to prove Lemma~\ref{P:51-w}).
\end{rema}

In addition to Proposition \ref{P:38}, 
higher regularity and stability estimates are given in Proposition \ref{P:38 App C}.

\subsection{Conjugation}\label{S:42}

To study the control problem for 
the new equation
$$
\partial_t +W\px +iL +\Rtrois
$$
we will use 
the HUM method. A key point is then to prove an observability 
inequality for solutions of the dual equation, which reads
$$
(-\partial_t -\px (\V \cdot)   -iL+\Rtrois^*)w=0.
$$
This equation can be written under the form $\mathcal{P}w=0$ with
$$
\mathcal{P}w\defn \partial_t w+W\px w+iL w+\Rquatre w,
$$
where
\be\label{p229}
\Rquatre w\defn -\Rtrois^*w+(\px \V)w.
\ee
The observability inequality will be proved later. 
As a preparation, in this section, 
we prove that $\mathcal{P}$ is conjugated to 
a simpler operator where $\partial_t w+W\px w$ 
is replaced by $\partial_t$. 
To do so, we use the analysis in \cite{AB}. For the sake of completeness, we recall the strategy and the main steps of the proof.

We often use below the following notation: given a function $f$ with zero mean, 
$\px^{-1}f$ is the zero-mean primitive of $f$, defined by
$$
\px^{-1}f=\sum_{j\neq 0}\frac{f_j}{ij}e^{ijx},\quad f(x)=\sum_{j\neq 0} f_je^{ijx}.
$$
We seek an operator $A$ such that
$$
\big(\partial_t +W\px+iL +\Rquatre\big) A  
=A   \big(   \partial_t +iL+\Rcinq\big)
$$
where $\Rcinq$ 
is a remainder term of order $0$. By definition
\be\label{n230}
\Rcinq\defn A^{-1}\Big(\left[ \partial_t,A\right] + \Rquatre A 
+W\px  A+i\left[ L,A \, \right] \Big).
\ee
Seeking $A$ as 
a pseudo-differential operator, and trying 
to cancel the leading order terms (that is 
$W\px A+i\left[L \,,A \right]$), it is natural to introduce 
$A$ as follows. Let 
\[
\phi(t,x,k) := k x + \beta(t,x)|k|^\mez,
\]
for some function $\beta$ to be determined. Consider also 
an amplitude $q(t,x,k)$ to be determined. Then 
define the operator $A(t)$ by setting
\be\label{n242}
A u(t,x) = \sum_{k \in \xZ} \hat u_k \, q(t,x,k) e^{i\phi(t,x,k)},
\ee
for periodic functions $u$, where $\hat u_k(t)$ are the Fourier coefficients of $u$, so that
$u(t,x) = \sum_{k \in \xZ} \hat u_k(t)e^{ikx}$. 

Below $t$ is seen as a parameter and we omit it in most expressions. 

\begin{assu}\label{pA:41}
Set
$$
\Nr\defn \lA V\rA_{\bo([0,T];H^{s_0})}+\lA c-1\rA_{\bo([0,T];H^{s_0})}+
\blA \partial_t c\brA_{\bo([0,T];H^1)}
+\lA \Rdeux\rA_{\bo([0,T];\mathcal{L}(L^2))},
$$
where $s_0$ is some fixed large enough integer. 
In this section, we always assume that $\Nr$ is small enough without recalling this assumption in all the statements. 
\end{assu}

Hereafter, $s_0$ always refers to an index large enough whose value may vary from one statement to another.

\begin{lemm}[from \cite{AB}]
\label{L:26}
There exists a universal constant $\delta > 0$ with the following properties. 

$i)$ Consider the case when the amplitude $q$ is of order zero in $k$ and is a perturbation of 1,  
\[
q(x,k) = 1 + b(x,k)\,.
\]
Denote $|b|_s :=\sup_{k \in \xZ} \| b(\cdot\,,k) \|_{H^s(\xT)}$. 
If 
\[
\| \beta \|_{H^3} + |b|_{3} \leq \delta\,,
\]
then $A$ and $A^*$ are invertible from $L^2(\xT)$ onto itself, with 
\[
\| A u \|_{L^2} + \| A^{-1} u \|_{L^2} + \| A^*\, u \|_{L^2} + 
\| (A^*)^{-1}u \|_{L^2} \leq C \, \| u \|_{L^2} \,,
\]
where $C>0$ is a universal constant. 

$ii)$ Consider the case when the amplitude $q$ is small.That is, assume that
\[
\| \beta \|_{H^3} + |q|_{3} \leq \delta\,,
\]
then
\[
\| A u \|_{L^2} \le  C\delta  \, \| u \|_{L^2} \,,
\]
where $C>0$ is a universal constant. \end{lemm}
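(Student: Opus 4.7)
I propose to reduce both parts to Schur's test applied to the matrix representation of $A$ in the Fourier basis $\{e^{ikx}\}_{k\in\xZ}$. Writing $u(x) = \sum_k \hat u_k e^{ikx}$ and expanding, one gets
\[
\|Au\|_{L^2(\xT)}^2 = \sum_{k,k'\in\xZ} \hat u_k\,\overline{\hat u_{k'}}\, K(k,k'),\qquad
K(k,k') = \int_\xT q(x,k)\overline{q(x,k')}\, e^{i\psi(x;k,k')}\, dx,
\]
with phase $\psi(x;k,k') = (k-k')x + \beta(x)(|k|^\mez - |k'|^\mez)$. The elementary inequality $||k|^\mez - |k'|^\mez| \le |k-k'|^\mez$ together with the Sobolev embedding $H^3(\xT)\hookrightarrow W^{2,\infty}(\xT)$ turns the smallness hypothesis $\|\beta\|_{H^3}\le\delta$ into the lower bound $|\psi'(x)|\ge \mez|k-k'|$ for $k\ne k'$ and $\delta$ small enough. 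Iterated integration by parts based on the identity $e^{i\psi}=(i\psi')^{-1}\px e^{i\psi}$ then yields, for example with two integrations,
\[
|K(k,k')| \le \frac{C\,\|q(\cdot,k)\|_{H^3}\,\|q(\cdot,k')\|_{H^3}}{(1+|k-k'|)^2},
\]
with $C$ depending only on $\|\beta\|_{H^3}$, while the diagonal is $K(k,k)=\|q(\cdot,k)\|_{L^2}^2$.

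For part $i)$ I write $q=1+b$ and split $A = E+B$, where $E$ carries amplitude $1$ and $B$ carries amplitude $b$. The kernel bound combined with $|b|_3\le\delta$ shows $\sup_k\sum_{k'}|K(k,k')|\le C$ uniformly, so Schur's test gives $\|A\|_{\Lr(L^2)}\le C$ for a universal constant, and an analogous computation handles $A^*$, whose matrix kernel is $\overline{K(k',k)}$. To upgrade boundedness to invertibility, I observe that when the amplitude is identically $1$ every nonzero term generated by the IBP must carry at least one derivative of $\beta$ (since $\px q\equiv 0$); hence $\sup_k\sum_{k'\ne k}|K_0(k,k')|\le C\|\beta\|_{H^3}\le C\delta$. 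Since $K_0(k,k)=2\pi$, the operator $E^*E$ acts on Fourier coefficients as the matrix $(2\pi)^{-1}\mathbf K_0 = I + R$ with $\|R\|_{\Lr(L^2)}\le C\delta$, which makes $E$ (and $E^*$) invertible on $L^2$ for $\delta$ small. A direct Schur estimate on $B$ then gives $\|B\|_{\Lr(L^2)}\le C|b|_3\le C\delta$, so writing $A = E(I+E^{-1}B)$ one inverts the parenthesis by a Neumann series; the resulting estimate for $\|A^{-1}\|$ is $O(1)$, and the argument for $(A^*)^{-1}$ is identical.

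For part $ii)$ the amplitude $q$ itself is small: under the assumption $|q|_3\le\delta$ both diagonal and off-diagonal contributions to $K$ carry two factors of $q$ (or its $x$-derivatives), so the previous bound becomes $|K(k,k')|\le C\delta^2/(1+|k-k'|)^2$ and Schur's test directly produces $\|A\|_{\Lr(L^2)}\le C\delta$. The main subtlety throughout is that the symbol $e^{i\beta(x)|k|^\mez}$ belongs to the exotic H\"ormander class $S^0_{1/2,1/2}$, at the borderline of Calder\'on-Vaillancourt; the hard part is that one cannot simply invoke a black-box $L^2$-continuity theorem. The above argument bypasses this because the explicit matrix expression of $A$ on the Fourier side makes the off-diagonal decay of $K$ quantitative in terms of Sobolev norms of $\beta$, and because the hypothesis $\beta\in H^3$ supplies precisely the extra regularity needed to close the Schur estimate by hand.
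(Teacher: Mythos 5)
Your reduction to Schur's test on the Gram matrix $K(k,k') = \langle A e_{k'}, A e_k \rangle$ is sound for the \emph{boundedness} assertions. The phase bound $||k|^{1/2}-|k'|^{1/2}| \le |k-k'|^{1/2}$ is correct (it holds for all signs of $k,k'$), and with $\|\partial_x\beta\|_{L^\infty} \le 1/2$ one indeed gets $|\partial_x\psi| \ge \tfrac12|k-k'|$, so the two integrations by parts (using $\partial_x^2\beta \in L^\infty$ and $\partial_x^3\beta \in L^1$, both covered by $\beta \in H^3$, and $q(\cdot,k) \in H^3 \hookrightarrow W^{2,\infty}$) yield $|K(k,k')| \lesssim (1+|k-k'|)^{-2}$, hence $\|A\|_{\Lr(L^2)}^2 = \|A^*A\|_{\Lr(L^2)} \le \sup_k\sum_{k'}|K(k,k')| \le C$. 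Part $ii)$ follows the same way once one observes that both amplitude factors in $K(k,k')$, including the diagonal $K(k,k) = \|q(\cdot,k)\|_{L^2}^2$, inherit a factor $|q|_3 \le \delta$, giving $\|A\| \le C\delta$.

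The gap is in the invertibility claim for part $i)$. From $(2\pi)^{-1}E^*E = I + R$ with $\|R\|_{\Lr(L^2)} \le C\delta$ you can only conclude that $E^*E$ is invertible, hence that $E$ is \emph{bounded below} (injective, closed range) and $E^*$ is \emph{surjective}. It does not follow that $E$ is surjective, i.e.\ that $E^*$ is injective: the right shift $S$ on $\ell^2(\xN)$ has $S^*S = I$ exactly, yet $S$ is not invertible. To close the argument you must separately show that $EE^*$ (or equivalently $A A^*$, once $B$ is adjoined) is also a small perturbation of $2\pi I$. This is a genuinely different computation: $EE^*$ is not governed by the Gram matrix $K(k,k')$ but by the integral kernel $(2\pi)^{-1}\sum_k e^{ik(x-y)}e^{i|k|^{1/2}(\beta(x)-\beta(y))}$, and one has to estimate the $L^2$-operator norm of this kernel minus $\delta(x-y)$ directly (it cannot simply be read off from the estimates you already have). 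Note also that a homotopy/index argument connecting $E$ to $E|_{\beta=0} = I$ does not rescue the step, because the map $\beta \mapsto E_\beta$ is not norm continuous into $\Lr(L^2)$: the Gram matrix of $E_\beta - E_{\beta'}$ has diagonal $\int|e^{i\beta|k|^{1/2}} - e^{i\beta'|k|^{1/2}}|^2\,dx$, which stays $O(1)$ as $|k|\to\infty$ for $\beta \ne \beta'$ (the paper itself records this lack of Lipschitz dependence in its introduction). So you need the $EE^*$ estimate, or an equivalent argument giving two-sided invertibility, before the factorization $A = E(I + E^{-1}B)$ and the Neumann series can be invoked.
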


\begin{prop}[from \cite{AB}]\label{L27}
Assume that $\|\beta\|_{W^{1,\infty}} \leq 1/4$ and $\| \beta \|_{H^2} \leq 1/2$. 
Let 
\[
r,m,s_0 \in \xR, \quad 
m \geq 0, \quad
s_0 > 1/2, \quad 
M \in \xN, \quad 
M \geq 2(m + r + 1) + s_0. 
\] 
Then 
\[
|D_x|^r A u = \sum_{\alpha=0}^{M-1} \Op \left( \frac{1}{i^\alpha \alpha!}
\left(\partial_\xi^\alpha |\xi|^r\right)\px^\alpha \left(q(x,k) e^{i |k|^\mez \beta(x)}\right)\right)
u
+ R_M u,
\]
where, for every $s \geq s_0$, the remainder satisfies
\begin{equation}\label{n250}
\| R_M \Dx^m u \|_{H^s} 
\leq 
C(s) \Big\{ \mathcal{K}_{2(m+r+s_0+1)} \, \| u \|_{H^s} 
+\mathcal{K}_{s + M + m + 2} \, \| u \|_{H^{s_0}} \Big\},
\end{equation}
where $\mathcal{K}_\mu := | q-1 |_{\mu} + | q |_1  \| \beta \|_{H^{\mu + 1}}$ and $| q |_\mu := \sup_t \sup_{k \in \xZ} \| q(t,\cdot,k) \|_{H^\mu}$. 
\end{prop}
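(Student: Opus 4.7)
The plan is to derive the expansion by Taylor-expanding the symbol $|\xi|^r$ around $\xi=k$, exploiting the fact that $A u = \sum_k \hat u_k F(x,k) e^{ikx}$ with $F(x,k) \defn q(x,k) e^{i|k|^{\mez}\beta(x)}$. First I compute $|D_x|^r A u$ mode by mode. For each $k$, expanding $F(\cdot,k)$ in Fourier series $F(x,k)=\sum_n \hat F(n,k) e^{inx}$, I get
\[
|D_x|^r A u = \sum_{k\in\xZ}\hat u_k \sum_{n\in\xZ}\hat F(n,k)\,|n+k|^r\, e^{i(n+k)x}.
\]
Applying Taylor's formula in the frequency variable,
\[
|n+k|^r = \sum_{\a=0}^{M-1}\frac{n^\a}{\a!}(\partial_\xi^\a|\xi|^r)(k) + r_M(n,k),
\qquad r_M(n,k)=\frac{n^M}{(M-1)!}\int_0^1 (1-s)^{M-1}(\partial_\xi^M|\xi|^r)(k+sn)\,ds,
\]
and recalling that $\sum_n n^\a \hat F(n,k) e^{inx} = \frac{1}{i^\a}\px^\a F(x,k)$, the main sum collapses to exactly the claimed principal part $\sum_{\a<M}\Op\bigl(\frac{1}{i^\a \a!}(\partial_\xi^\a|\xi|^r)\px^\a F(x,k)\bigr)u$. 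So the whole content is in the remainder estimate.

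For the remainder, the natural splitting is a Littlewood--Paley type dichotomy between $|n|\lesssim|k|$ and $|n|\gg|k|$, since this controls the behaviour of $(\partial_\xi^M|\xi|^r)(k+sn)\sim (|k|+|n|)^{r-M}$ along the integration path (away from the origin, which is harmless since the $k=0$ mode can be absorbed in $R_M$ by a smoothing term). In the first regime the kernel behaves like $|k|^{r-M}$ and one integrates by parts $M$ times in $x$: the factor $n^M$ becomes $(-i\px)^M$ acting on $F$, producing $\px^M F(x,k)$ whose $L^2_x$ norm is bounded by $\mathcal{K}_{M+\cdots}$. In the second regime the kernel behaves like $|n|^{r-M}$, and one trades smoothness of $F$ against the decay in $n$: $n^{M+s+m+O(1)}\hat F(n,k)$ is controlled by $\|F(\cdot,k)\|_{H^{s+M+m+O(1)}}\lesssim \mathcal{K}_{s+M+m+2}$. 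Controlling the $L^2$-norm of the resulting convolution by Schur's lemma (or Young's inequality) then yields the two contributions of \eqref{n250}.

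The key technical point, and the one that dictates the precise index counts, is the estimate
\[
\|F(\cdot,k)\|_{H^\mu} \les \mathcal K_\mu,
\]
which comes from the chain/Faà-di-Bruno rule: each $\px$ hitting $e^{i|k|^{\mez}\beta}$ produces at most one factor $|k|^{\mez}\px\beta$, and there are at most $\mu$ such factors, so the top-order term requires $\beta\in H^{\mu+1}$ (to control $\px\beta$ in $L^\infty$ through Sobolev embedding for the $\mu-1$ undifferentiated copies) multiplied by one copy of $q$, explaining the $|q|_1\|\beta\|_{H^{\mu+1}}$ piece; the remaining terms are absorbed in $|q-1|_\mu$ using the product rule, the smallness of $\|\beta\|_{W^{1,\infty}}$, and the fact that derivatives of $q$ already carry the appropriate regularity. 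The hypothesis $M\ge 2(m+r+1)+s_0$ and $s\ge s_0>1/2$ is exactly what is needed to make the convolution estimates summable: the $|k|^{r-M}$ decay in the low-$n$ regime absorbs the $\langle k\rangle^m$ from $\Dx^m$, while in the high-$n$ regime one must move $s+m+O(1)$ derivatives onto $F$, whence the appearance of $\mathcal{K}_{s+M+m+2}$.

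The main obstacle will be organizing this dyadic/integration-by-parts bookkeeping cleanly, in particular ensuring that the implicit constants depend only on $|q|_1$ and $\|\beta\|_{H^2}$ in the prefactor (so that the smallness assumption $\|\beta\|_{W^{1,\infty}}\le 1/4$, $\|\beta\|_{H^2}\le 1/2$ is genuinely used) and that the non-smoothness of $\xi\mapsto|\xi|^r$ at $\xi=0$ is handled either by isolating the $k=0$ contribution as a finite-rank (hence smoothing) remainder or by replacing $|\xi|^r$ with a smooth symbol agreeing with it for $|\xi|\ge 1$, the difference contributing a trivially bounded operator.
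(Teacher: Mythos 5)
Your overall strategy — Fourier-expand $F(x,k):=q(x,k)e^{i|k|^{1/2}\beta(x)}$, Taylor-expand $|n+k|^r$ about $\xi=k$ with Lagrange remainder, read off the main sum, then split the remainder by $|n|\lesssim|k|$ versus $|n|\gtrsim|k|$ — is the right one, and the algebraic identification of the principal part is correct. There are, however, two genuine gaps in the remainder estimate.

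The central issue is your quoted bound $\|F(\cdot,k)\|_{H^\mu}\lesssim\mathcal K_\mu$, which is \emph{false} uniformly in $k$. Your own Fa\`a di Bruno discussion shows why: each $\px$ that lands on $e^{i|k|^{1/2}\beta}$ produces a factor $|k|^{1/2}\px\beta$, so the top-order term of $\px^\mu F$ is comparable to $q\,(|k|^{1/2}\px\beta)^\mu e^{i|k|^{1/2}\beta}$, giving $\|F(\cdot,k)\|_{H^\mu}\sim\langle k\rangle^{\mu/2}\mathcal K_\mu$ rather than $\mathcal K_\mu$. This $\langle k\rangle^{\mu/2}$ growth is precisely what the factor $2$ in the hypothesis $M\ge 2(m+r+1)+s_0$ is there to offset: after trading $n^M$ for $M$ derivatives on $F$ in the low-$n$ regime you pick up $\langle k\rangle^{M/2}$, which combined with the $\langle k\rangle^{r-M}$ from the Taylor remainder kernel and the $\langle k\rangle^m$ from $\Dx^m$ gives a net $\langle k\rangle^{r+m-M/2}$, bounded iff $M\ge 2(r+m)$. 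Your accounting ("the $|k|^{r-M}$ decay absorbs the $\langle k\rangle^m$") would only require $M\ge r+m$, so the explanation you give for the hypothesis on $M$ is off by the crucial factor of $2$. More generally, the specific indices $2(m+r+s_0+1)$ and $s+M+m+2$ on $\mathcal K$ come out of a careful split of how many powers of $n$ are converted into derivatives on $F$ (paying $\langle k\rangle^{1/2}$ each) and how many are retained (paying $|n|$ each), and this distribution must be spelled out rather than asserted as "$\mathcal K_{M+\cdots}$".

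A secondary point: the Lagrange path $\xi=k+sn$, $s\in[0,1]$, passes arbitrarily close to $\xi=0$ whenever $|n|\gtrsim|k|$, so you cannot use the integral form of the remainder in the high-$n$ regime. There one must bound $|n+k|^r$ and each discarded Taylor coefficient $\frac{n^\alpha}{\alpha!}(\partial_\xi^\alpha|\xi|^r)(k)$ separately in absolute value, using $|n+k|\lesssim|n|$ and $|n|\ge|k|$, moving all the load onto $F$ and paying with the lower-regularity norm $\|u\|_{H^{s_0}}$; your write-up conflates this with the Taylor-remainder treatment of the low-$n$ regime.
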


We now deduce the following result (which is a variant of a result 
proved in \cite{AB} with a slightly different estimate for the remainder).

\begin{coro}\label{CoroR4}
There exists a universal constant $\delta > 0$ with the following property. 
Assume that 
\[
| q-1 |_{14} + \| \beta \|_{H^{14}} \leq \delta,
\]
and let $A$ be the operator $A\defn \Op\Big(q(x,\xi)e^{i|k|^\mez \beta(x)}\Big)$. 
For any $u$ in $L^2$, there holds
\begin{multline}\label{p251a}
i [ |D_x|^{\frac32}, A ] u \\ = \frac32 (\pa_x \beta) \pa_x (Au) 
%\\ & \quad 
+ \Op \Big( \Big( \frac32 \frac{\xi}{|\xi|}\pa_x q 
- \frac{9i}{8}(\px\beta)^2  q \Big) |\xi|^{\frac12} e^{i|\xi|^{\frac12} \beta} \Big) u 
+ R_A u
\end{multline}
where $R_A$ satisfies % , setting $N\defn \la q-1\ra_{14}+\lA \beta\rA_{H^{14}}$,
\[
\| R_A u \|_{L^2} \leq C \delta \| u \|_{L^2}.
\]
\end{coro}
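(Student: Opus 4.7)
My plan is to derive \e{p251a} by applying Proposition \ref{L27} with $r = \tdm$ and then isolating explicitly the first two terms of the resulting asymptotic expansion. With $m = 0$, an auxiliary exponent $s_0$ slightly greater than $1/2$, and $M$ chosen so that $M \ge 2(r + s_0 + 1)$ (and small enough relative to $14$ for the hypothesis $|q-1|_{14} + \|\beta\|_{H^{14}} \le \delta$ to make the constants $\mathcal{K}_\mu$ in \e{n250} of size $O(\delta)$), Proposition \ref{L27} gives
\[
|D_x|^\tdm A u = \sum_{\alpha=0}^{M-1} \Op\Bigl( \tfrac{1}{i^\alpha \alpha!} \, \partial_\xi^\alpha |\xi|^\tdm \, \partial_x^\alpha ( q e^{i|k|^\mez \beta} ) \Bigr) u + R_M u.
\]
The $\alpha = 0$ term cancels with $A |D_x|^\tdm u$ in the commutator, and because each $\partial_x$-derivative of the phase $e^{i|k|^\mez \beta}$ produces an extra factor $i |k|^\mez$, only $\alpha = 1$ and $\alpha = 2$ can carry symbols of order $> 0$ after multiplying by $i$.

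Computing these two terms with $\partial_\xi |\xi|^\tdm = \tdm \sign(\xi) |\xi|^\mez$ and $\partial_\xi^2 |\xi|^\tdm = \tfrac{3}{4} |\xi|^{-\mez}$, and using $\sign(\xi) |\xi|^\mez |k|^\mez = \xi$, the $\alpha = 1$ and $\alpha = 2$ contributions to $i[|D_x|^\tdm,A]u$ are
\begin{gather*}
\tdm \Op\bigl( \tfrac{\xi}{|\xi|} |\xi|^\mez (\partial_x q) e^{i|k|^\mez \beta} \bigr) u + \tfrac{3i}{2} (\partial_x \beta) \Op\bigl( \xi \, q \, e^{i|k|^\mez \beta} \bigr) u \quad (\alpha = 1), \\
\tfrac{3i}{8} \Op\bigl( (\partial_x \beta)^2 q \, |\xi|^\mez e^{i|k|^\mez \beta} \bigr) u + \text{(terms of order $\le 0$)} \quad (\alpha = 2).
\end{gather*}
On the other hand, direct differentiation gives
\[
\partial_x(Au) = \Op\bigl( (\partial_x q) e^{i|k|^\mez \beta} \bigr) u + i \Op\bigl( \xi q \, e^{i|k|^\mez \beta} \bigr) u + i (\partial_x \beta) \Op\bigl( |k|^\mez q \, e^{i|k|^\mez \beta} \bigr) u,
\]
so multiplying by $\tdm(\partial_x\beta)$ produces a middle term that exactly cancels the second $\alpha = 1$ contribution, and a last term $\tfrac{3i}{2}(\partial_x \beta)^2 \Op( |k|^\mez q \, e^{i|k|^\mez \beta})u$. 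Combining the $|\xi|^\mez (\partial_x \beta)^2 q$ contributions yields the coefficient $\tfrac{3i}{8} - \tfrac{3i}{2} = -\tfrac{9i}{8}$, matching the explicit symbol in \e{p251a}.

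The remainder $R_A$ then collects: the leftover order-zero term $-\tdm (\partial_x \beta) \Op\bigl( (\partial_x q) e^{i|k|^\mez \beta}\bigr) u$; the sub-leading pieces at $\alpha = 2$ (involving $|\xi|^{-\mez} (\partial_x^2 q)$, $(\partial_x \beta)(\partial_x q)$, and $(\partial_x^2 \beta) q$); the terms for $\alpha = 3, \dots, M-1$, each of symbol order $\tdm - \alpha/2 \le 0$; and the Proposition \ref{L27} remainder $R_M$. Every operator of the form $\Op(a(x,k) e^{i|k|^\mez \beta})$ with $a$ of order $\le 0$ and $|a|_3 = O(\delta)$ (which follows from the $H^{14}$-smallness hypothesis via Sobolev embedding and Leibniz) is bounded on $L^2$ with norm $\lesssim \delta$ by statement $(ii)$ of Lemma \ref{L:26}, while the pieces with non-small amplitude but small prefactor (such as $(\partial_x^2\beta)$) are handled by statement $(i)$ of the same lemma combined with multiplication by an $L^\infty$-small function. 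The operator $R_M$ is bounded on $H^{s_0}$ with norm $\lesssim \delta$ by \e{n250}; an $L^2$-bound is obtained by applying the analogous expansion to the adjoint, which yields an $H^{-s_0}$-bound by duality, and interpolating. The main obstacle is the careful bookkeeping forced by the exotic $S^0_{1/2,1/2}$-type of the symbol $q \, e^{i|k|^\mez \beta}$: each $x$-derivative drops the effective order by only $1/2$ rather than $1$, so both $\alpha = 1$ and $\alpha = 2$ must be tracked explicitly, and the precise coefficient $-9i/8$ arises from a delicate cancellation between the $\alpha = 2$ contribution and the iterated differentiation of $Au$ that has to be traced with care.
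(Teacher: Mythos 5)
Your overall strategy coincides with the paper's: expand $|D_x|^{3/2}A$ via Proposition~\ref{L27}, isolate the $\alpha=0,1,2$ terms (the $\alpha=0$ one cancels against $A|D_x|^{3/2}$), show the surviving $\alpha=1,2$ pieces reproduce the explicit symbols in \eqref{p251a} after moving $\tfrac32(\pa_x\beta)\pa_x(Au)$ to the other side, and dump everything else into $R_A$. Your explicit computation of the $\alpha=1,2$ contributions is correct (and in fact more detailed than the paper, which only asserts that "one can split this sum into two symbols"); in particular the cancellation $\tfrac{3i}{8}-\tfrac{3i}{2}=-\tfrac{9i}{8}$ is exactly right, and the disposal of the order-$\le 0$ leftovers via Lemma~\ref{L:26}$(ii)$ is the same mechanism the paper uses for $R_0$.

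The gap is in the $L^2$-bound for $R_M$. You propose to bound $R_M$ on $H^{s_0}$ by \eqref{n250}, then get an $H^{-s_0}$-bound "by applying the analogous expansion to the adjoint," and interpolate. But Proposition~\ref{L27} only gives an expansion for $|D_x|^r A$ with $A=\Op(p)$ in the standard (left) quantization; the adjoint $A^*$ is a co-quantization, not an $\Op$ of a simple symbol, so there is no "analogous" expansion available from the cited tools, and building one would require reproving a version of Proposition~\ref{L27} for the adjoint quantization. The paper sidesteps this entirely by applying \eqref{n250} with $m=1$, $s=s_0=1$, $M=8$, which gives
\[
\| R_M \Dx u \|_{H^1} \le C\,\mathcal{K}_{12}\, \| u \|_{H^1};
\]
then for $v\in L^2$ with zero mean one sets $u=\Dx^{-1}v$ and reads off
$\|R_M v\|_{L^2} \le \|R_M v\|_{H^1} = \|R_M\Dx u\|_{H^1} \lesssim \|u\|_{H^1} \approx \|v\|_{L^2}$
(constants contribute nothing since $R_M$ annihilates them). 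Note this requires taking $m=1$ in \eqref{n250}, not $m=0$ as in your proposal. I'd recommend adopting this substitution trick in place of the duality-interpolation step; the rest of your argument is sound.
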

\begin{proof}Denote by $p$ the symbol 
$p=q(x,\xi)e^{i|k|^\mez \beta(x)}$. 
Set $M=8$ and write
\be\label{p255a}
i\Dx^\tdm A=\Op \Big(\sum_{\alpha=0}^{2}\frac{i}{i^\alpha \alpha!} \partial_\xi^\alpha |\xi|^\tdm \, 
\partial_x^\alpha p \Big)+R_0+R_M,
\ee
where 
$$
R_0\defn \Op \Big(\sum_{\alpha=3}^{M-1}\frac{i}{i^\alpha \alpha!} \partial_\xi^\alpha |\xi|^\tdm \, 
\partial_x^\alpha p \Big).
$$
For any $3\le \alpha\le M-1$, the symbol $\partial_\xi^\alpha |\xi|^\tdm \, 
\partial_x^\alpha p$ is a linear combination of terms of the form $m(x,\xi)b(x) e^{i|k|^\mez \beta}$ 
where $m$ is of order $0$ (that is $\partial_\xi^l m(x,\xi)\les |\xi|^{-l}$) 
and $b(x)$ is of the form 
$(\px^{\alpha_0}q)(\px^{\alpha_1}\beta)\cdots (\px^{\alpha_m} \beta)$. 
It follows from statement $ii)$ in Lemma~\ref{L:26} that $R_0$ is an operator or order $0$, satisfying
$$
\lA R_0 u\rA_{L^2}\le C(\delta)\delta \lA u\rA_{L^2}.
$$

We now estimate the operator norm of $R_M$. By applying \e{n250} with $s=s_0=1$, $m=1$ and $M=8$, 
then $ \mathcal{K}_{2(m+r+s_0+1)} \le \mathcal{K}_{s + M + m + 2}=\mathcal{K}_{12}$ 
and the inequality simplifies to
$$
\forall u\in L^2(\xT),\quad 
\| R_M \Dx u \|_{H^1} 
\leq C (1)\mathcal{K}_{12} \, \| u \|_{H^1}.
$$
Now we estimate the $L^2$-norm of $R_M v$ for $v$ in $L^2$. 
We can assume without loss of generality that $v$ has zero mean (since $R_M C=0$ for any constant $C$) 
and then set $u=\Dx^{-1}v$. The previous inequality yields
$$
\| R_M v \|_{L^2}\le \| R_M v \|_{H^1} =\| R_M \Dx u\|_{H^1}
\leq 
C(\delta)\delta\, \| v \|_{L^2}.
$$
Therefore one has 
$$
\lA (R_0 +R_M)u\rA_{L^2}\le C(\delta)\delta \lA u\rA_{L^2}.
$$

It remains to study the sum for $0\le |\a|\le 2$ in the right-hand side of \e{p255a}. 
One can 
split this sum into two symbols such that the contribution of the first symbol 
is the two terms in the right-hand side of \e{p251a} while 
the other symbol is of the form 
$Q(x,\xi)e^{i|\xi|^\mez \beta}$ with $Q$ of order $0$. Therefore the contribution of the second symbol 
can be estimated by means of Lemma~\ref{L:26}, so it can be added to $R_0+R_M$ to obtain an operator 
$R_A$ satisfying the estimate in the statement of the lemma.
\end{proof}

\begin{nota}
Set
$$
\Nr'\defn \lA W\rA_{\bo([0,T];H^{s_0-d})} 
+\lA \Rtrois\rA_{\bo([0,T];\Lr(L^2))},
$$
where $s_0$ is the large enough integer which appears in the definition of $\Nr$ (see Assumption~\ref{pA:41}) and 
$d$ is an absolute number independent of $s_0$ (as in the statement of Proposition~\ref{P:38}). 
\end{nota}

We now chose $\beta$ under the form $\beta_0(t)+\beta_1(t,x)$ for some function coefficient $\beta_0(t)$ to be determined later and with $\beta_1=\frac{2}{3\sqrkappa}\px^{-1}\V$. Then $\beta$ is such that
$$
\sqrkappa\tdm\px\beta=\sqrkappa\tdm\px\beta_1=\V.
$$
Recall from \e{n230} that 
\be\label{n230bis}
\Rcinq\defn A^{-1}\Big(\left[ \partial_t,A\right] + \Rquatre A 
+W\px  A+i\left[ L,A \, \right] \Big).
\ee
Now we split the last term as 
$i\left[ L,A \, \right]=i\left[ \sqrkappa\Dx^\tdm \, ,A\right]+i\left[ L-\sqrkappa\Dx^\tdm \, ,A\right]$. Then it follows from the previous corollary that the remainder $\Rcinq$ 
(as defined by \e{n230}) satisfies
\be\label{n260}
\begin{aligned}
\Rcinq\defn A^{-1}\Big(& \left[ \partial_t,A\right] 
-\Op\Big(\Big(\sqrkappa\tdm\frac{\xi}{|\xi|}\px q -
\frac{9\sqrkappa i}{8}(\px\beta)^2  q\Big)|\xi|^\mez 
e^{i|\xi|^\mez\beta}\Big)
\\
&+ \Rquatre  A+i\left[ L-\sqrkappa\Dx^\tdm \, ,A\right]
-\sqrkappa R_A
\Big)
\end{aligned}
\ee
where $R_A$ is as given by Corollary~\ref{CoroR4}. 
Recall that $\Rquatre$ is an operator of order $0$. On the other hand 
$$
\left[ \partial_t,A\right]=\Op(\partial_t p)=\Op\Big(\big(\partial_t q +i|\xi|^\mez (\partial_t \beta)q\big)e^{i|\xi|^\mez \beta}\Big).
$$
So one can write $\Rcinq$ under the form 
$\Rcinq=\Rcinq^{(1/2)}+\Rcinq^{(0)}$ where $\Rcinq^{(1/2)}$ (resp.\ $\Rcinq^{(0)}$) is of order $1/2$ (resp.\ $0$),
\begin{align*}
\Rcinq^{(1/2)}&\defn A^{-1} \Op\Big( i|\xi|^\mez \big(\partial_t \beta+\frac{9\sqrkappa}{8}(\px\beta)^2\big) p-\tdm\sqrkappa\frac{\xi}{|\xi|}(\px q)
|\xi|^\mez 
e^{i|\xi|^\mez\beta}
\Big),\\
\Rcinq^{(0)}&\defn A^{-1}\Big(  \Rquatre  A-\sqrkappa R_A
+i\left[ L-\sqrkappa\Dx^\tdm \, ,A\right]+\Op\Big((\partial_t q)e^{i|\xi|^\mez \beta}\Big)\Big).
\end{align*}
We claim that
\be\label{p261}
\blA \Rcinq^{(0)}\brA_{\bo([0,T];\mathcal{L}(L^2))}\les 
\Nr'.
\ee
Indeed, $R_A$ has already been estimated and, directly from its definition (see \e{p229}), the Sobolev embedding 
$\lA \px \V\rA_{L^\infty}\le \lA \V\rA_{H^2}$ 
and \e{p215}, one has $\lA \Rquatre\rA_{\bo([0,T];\mathcal{L}(L^2))}\les 
\Nr'$. The last term is estimated by means of Lemma~\ref{L:26} 
and to estimate the commutator $\big[ A,L-\sqrkappa\Dx^\tdm \, \big]$ we notice that $L-\sqrkappa\Dx^\tdm$ is a smoothing operator. 

Now, in view of \e{p214}Ê
and \e{p215} one has $\Nr'\les \Nr$ and hence 
$\blA \Rcinq^{(0)}\brA_{\bo([0,T];\mathcal{L}(L^2))}\les 
\Nr$. 

It remains to prove that $\beta$ and $q$ can be so chosen that $\Rcinq^{(1/2)}=0$.
To do so, we first fix $\beta_0(t)$ such that 
\be\label{n265}
2\pi \partial_t\beta_0 = -\int_{\xT} \Big(\partial_t \beta_1+\frac{9\sqrkappa}{8}(\px\beta_1)^2\Big)(t,x) \, dx,
\ee
where recall that $\beta_1=-\frac{2}{3\sqrkappa}\px^{-1}\V$, 
so that
$$
\int_{\xT} \Big(\partial_t \beta+\frac{9\sqrkappa}{8}(\px\beta)^2\Big)(t,x) \, dx=0.
$$
Now define $q$ as $q=e^\gamma$ where $\gamma$ is such that
\be\label{n267}
\gamma=\frac{2}{3\sqrkappa} i\frac{\xi}{|\xi|}\px^{-1}\Big(\partial_t \beta+\frac{9\sqrkappa}{8}(\px\beta)^2\Big).
\ee
(Notice that the previous cancellation for the mean implies that $\gamma$ is periodic in~$x$.)  
With this choice one has $\Rcinq^{(1/2)}=0$.

By combining the previous results, we end up with the following proposition. 
\begin{prop}\label{P:39}
Assume that $s_0$ is large enough. Consider the operator
$$
A\defn \Op\big(q(t,x,\xi)e^{i\beta(t,x)|\xi|^\mez}\big)
$$
with
$$
\beta=\beta_0(t)+\frac{2}{3\sqrkappa}\px^{-1}\V
$$
where $\beta_0$ determined by \e{n265}, and $q=e^{\gamma}$ where $\gamma$ is given by \e{n267}. 
Then 
$$
\big(\partial_t +W\px+iL +\Rquatre\big) A  
=A   \big(   \partial_t +iL+\Rcinq\big)
$$
with
$$
\lA \Rcinq\rA_{\bo([0,T];\mathcal{L}(L^2))}\les 
\Nr
$$
where $\Nr$ is as defined in Assumption~\ref{pA:41}.\end{prop}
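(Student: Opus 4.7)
The plan is to combine the preparatory calculations \e{n260}--\e{n267} in the text into a single verification. By construction, the identity \e{n230bis} holds as a \emph{definition} of $\Rcinq$, so the conjugation identity in the statement is automatic; what must be proved is that the specific $\beta$ and $q$ declared turn $\Rcinq$ into an operator of order zero whose operator norm on $L^2$ is $O(\Nr)$.

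First I would insert Corollary~\ref{CoroR4} into $i[\sqrkappa\Dx^\tdm,A]$, split $i[L,A]=i[\sqrkappa\Dx^\tdm,A]+i[L-\sqrkappa\Dx^\tdm,A]$ (the second commutator is of order zero because $L-\sqrkappa\Dx^\tdm$ is a smoothing Fourier multiplier on periodic functions), and expand $[\partial_t,A]=\Op\bigl((\partial_t q+i|\xi|^\mez(\partial_t\beta)q)e^{i|\xi|^\mez\beta}\bigr)$. Collecting terms of order $1/2$ separately from those of order $0$ produces the splitting $\Rcinq=\Rcinq^{(1/2)}+\Rcinq^{(0)}$ displayed just above \e{p261}.

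The key step is to check that $\Rcinq^{(1/2)}\equiv 0$ for the choices in the statement. The symbol of $A\Rcinq^{(1/2)}$ equals $|\xi|^\mez e^{i|\xi|^\mez\beta}\bigl[iq(\partial_t\beta+\frac{9\sqrkappa}{8}(\px\beta)^2)-\tdm\sqrkappa\frac{\xi}{|\xi|}\px q\bigr]$. Choosing $\beta_0(t)$ by \e{n265} makes the zero Fourier mode in $x$ of $\partial_t\beta+\frac{9\sqrkappa}{8}(\px\beta)^2$ vanish, so the zero-mean antiderivative in \e{n267} defines a periodic $\gamma(t,x,\xi)$; with $q=e^{\gamma}$, a direct computation using $(\xi/|\xi|)^2=1$ yields
\[
\tdm\sqrkappa\frac{\xi}{|\xi|}\px q \;=\; iq\Bigl(\partial_t\beta+\frac{9\sqrkappa}{8}(\px\beta)^2\Bigr),
\]
so the bracket vanishes identically.

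For $\Rcinq^{(0)}$ I would estimate the four summands in \e{n260} individually: the $R_A$ piece via Corollary~\ref{CoroR4}; the $\Rquatre A$ piece via \e{p229}, \e{p215} and the Sobolev embedding $H^2\hookrightarrow W^{1,\infty}$; the commutator with $L-\sqrkappa\Dx^\tdm$ by smoothing; and $\Op\bigl((\partial_t q)e^{i|\xi|^\mez\beta}\bigr)$ via Lemma~\ref{L:26} once $\partial_t\gamma$ is bounded in terms of $\partial_t\V$ and $\partial_t c$. The prefactor $A^{-1}$ is controlled by statement $i)$ of Lemma~\ref{L:26}, whose smallness hypotheses on $\|\beta\|_{H^3}$ and $|q-1|_3$ follow from the smallness of $\Nr$ together with \e{p214}, provided $s_0$ is chosen large enough. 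The main obstacle is precisely this bookkeeping: verifying that the $H^{s_0-d}$-control of $W$ supplied by Proposition~\ref{P:38} suffices to bound all Sobolev norms of $\beta$ and $q$ (and of $\partial_t\beta$, $\partial_t q$) that enter Corollary~\ref{CoroR4} and Lemma~\ref{L:26}, so that one concludes $\lA\Rcinq\rA_{\bo([0,T];\mathcal{L}(L^2))}\les\Nr'\les\Nr$.
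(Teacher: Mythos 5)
Your proposal reproduces the paper's own argument: Proposition~\ref{P:39} is obtained precisely by combining \eqref{n230}, Corollary~\ref{CoroR4}, the splitting $\Rcinq=\Rcinq^{(1/2)}+\Rcinq^{(0)}$, the choice of $\beta_0$ in \eqref{n265}, and the choice $q=e^\gamma$ in \eqref{n267}, and you correctly carry out the key verification $\tdm\frac{\xi}{|\xi|}\px q = \tdm\cdot\tfrac23\cdot\bigl(\tfrac{\xi}{|\xi|}\bigr)^2\,iq\bigl(\partial_t\beta+\tfrac{9}{8}(\px\beta)^2\bigr)=iq\bigl(\partial_t\beta+\tfrac{9}{8}(\px\beta)^2\bigr)$ that annihilates $\Rcinq^{(1/2)}$, together with the term-by-term bound of $\Rcinq^{(0)}$ and the $A^{-1}$ bound from Lemma~\ref{L:26}. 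The only caveat worth recording is the sign of $\beta_1$: the displayed formula in the statement (and the sentence preceding \eqref{n230bis}) reads $\beta_1=\frac{2}{3}\px^{-1}W$, while the cancellation of $W\px A$ against the first term $\tdm(\px\beta)\px(A\cdot)$ of Corollary~\ref{CoroR4} requires $\tdm\px\beta_1=-W$, i.e.\ $\beta_1=-\frac{2}{3}\px^{-1}W$, as the text itself recalls just after \eqref{n265}; your argument implicitly uses the latter (correct) sign.
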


\section{Ingham type inequalities}\label{S:4}

\renewcommand{\th}{\vartheta}

As already mentioned, the controllability of the linearized equation around the null solution is based on 
(a modification of) the following Ingham's inequality: 
for every $T>0$ there exist two positive constants $C_1=C_1(T)$ and $C_2=C_2(T)$ such that, 
for all $(w_n)_{n \in \xZ} \in \ell^2(\xZ;\xC)$,
$$
C_1\sum_{n\in\xZ}|w_n|^2
\le \int_{0}^T \bigg| \sum_{n\in\xZ} w_n e^{i n |n|^{\frac12} t}\bigg|^2\, dt
\le C_2\sum_{n\in\xZ}|w_n|^2.
$$
Hereafter, $(w_n)_{n\in\xZ}$ always refers to an arbitrary complex-valued
sequence in $\ell^2(\xZ)$.

For our purposes, we need to consider more general phases that do not depend linearly on $t$. 
For some given real-valued function $\beta\in C^3(\xR)$, set
$$
\mu_n(t) 
= \sign(n) \left[ \ell(n) t + \beta(t) |n|^\mez \right], \quad 
\ell(n) = (g + n^2)^\mez |n|^\mez \tanh^\mez(b|n|),
$$
with $\mu_0=0$ and $\sign(n)=n/|n|$ for $n\neq 0$.
We recall that $\ell$ is the symbol of the linear operator $L=(g-\px^2)^\mez G(0)^\mez$ obtained by linearizing the water waves system around the null solutions, see Section \ref{P:S22}.
We begin by proving a lower bound which holds for any $T>0$ provided that the functions 
contain only large enough frequencies.

\begin{prop}[High frequencies]\label{P41}
Let $T > 0$. 
Let $| \pa_t \beta | \leq \frac12 \tanh^\mez(b)$ and $| \pa_t^2 \beta | \leq 1$ 
for all $t \in [0,T]$. 
Then there exists $N_0 \geq 0$ such that, for all $N \ge N_0$,
\be\label{n42}
\frac{T}{2}\sum_{\substack{n\in\xZ \\ |n| \geq N}} |w_n|^2\le \int_0^T 
\bigg\lvert \sum_{\substack{n\in\xZ \\ |n| \geq N}} w_n e^{i\mu_n(t)}\bigg\rvert^2\, dt.
\ee
\end{prop}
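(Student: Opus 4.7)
The plan is to expand the square in \eqref{n42} and reduce the inequality to an estimate on a bilinear matrix. Writing
\[
\int_0^T \Big\lvert \sum_{|n|\ge N} w_n e^{i\mu_n(t)}\Big\rvert^2 dt
= T\sum_{|n|\ge N}|w_n|^2 + \sum_{\substack{|n|,|m|\ge N\\ n\neq m}} w_n \overline{w_m}\, I_{n,m},
\qquad I_{n,m}\defn \int_0^T e^{i(\mu_n-\mu_m)(t)}\,dt,
\]
it suffices to show that the off-diagonal matrix $R=(I_{n,m})_{n\neq m}$ has operator norm at most $T/2$ on $\ell^2(\{|n|\ge N\})$ for $N$ sufficiently large. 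By the Schur test this reduces to proving that $\sup_{|n|\ge N}\sum_{|m|\ge N,\, m\neq n}|I_{n,m}|$ tends to $0$ as $N\to\infty$.

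Next I would establish pointwise lower bounds on the phase derivative $\phi'_{n,m}(t)\defn \mu'_n(t)-\mu'_m(t)$ for $|n|,|m|\ge N_0$, distinguishing by the signs of $n,m$. For opposite signs, the inequality $\ell(n)\ge \tanh^\mez(b)|n|^\tdm$ combined with $|\partial_t\beta|\le \tfrac12\tanh^\mez(b)$ yields
\[
|\phi'_{n,m}(t)|\ge c\,\max(|n|,|m|)^\tdm.
\]
For the same sign (say $n>m>0$), the elementary identity $n^\tdm - m^\tdm\ge (n-m)\max(n,m)^\mez$ applied to $\ell(n)-\ell(m)$, together with the fact that the perturbation $|\partial_t\beta|\,|n^\mez-m^\mez|\le \tfrac12\tanh^\mez(b)\,(n-m)/m^\mez$ is subdominant when $m\ge N_0$ is large, gives
\[
|\phi'_{n,m}(t)|\ge c\,(n-m)\,\max(|n|,|m|)^\mez.
\]
This is the place where the assumption on $\partial_t\beta$ enters in an essential way.

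Then I would integrate by parts once to obtain
\[
I_{n,m} = \Big[\tfrac{e^{i\phi_{n,m}(t)}}{i\phi'_{n,m}(t)}\Big]_0^T + \int_0^T \tfrac{\phi''_{n,m}(t)}{i(\phi'_{n,m}(t))^2}\, e^{i\phi_{n,m}(t)}\, dt,
\]
so $|I_{n,m}| \le 2/\min_t|\phi'_{n,m}| + T\,\max_t|\phi''_{n,m}|/\min_t|\phi'_{n,m}|^2$. Since $\phi''_{n,m}(t)=\partial_t^2\beta(t)\bigl(\sign(n)|n|^\mez-\sign(m)|m|^\mez\bigr)$ and $|\partial_t^2\beta|\le 1$, a direct check using the lower bounds above shows that the second term is dominated by the first as soon as $N_0$ is large. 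One concludes
\[
|I_{n,m}|\le \frac{C(1+T)}{|n-m|\max(|n|,|m|)^\mez}\ \text{(same sign)},\qquad
|I_{n,m}|\le \frac{C(1+T)}{\max(|n|,|m|)^\tdm}\ \text{(opposite signs)}.
\]

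Finally, I would carry out the Schur sum. For fixed $n$ with $|n|\ge N$, the opposite-sign contribution is $O(|n|^{-\mez})$. For the same-sign contribution, splitting the sum according to whether $|m-n|\le |n|$ or $|m-n|>|n|$ yields $O(\log|n|/|n|^\mez)$, the logarithm coming from the near-diagonal terms. Taking the supremum over $|n|\ge N$ produces a total bound of $O(\log N/N^\mez)$, which can be made $\le T/2$ by choosing $N\ge N_0(T)$ large enough, and \eqref{n42} then follows. The main obstacle is precisely the presence of the perturbation $\beta(t)|n|^\mez$: since $\beta(t)|n|^\mez\to\infty$ as $|n|\to\infty$, it cannot be treated by a soft perturbative argument, and the quantitative assumptions $|\partial_t\beta|\le \tfrac12\tanh^\mez(b)$ and $|\partial_t^2\beta|\le 1$ are used precisely to keep $\phi'_{n,m}$ bounded away from zero and $\phi''_{n,m}$ under control, which is what makes the integration-by-parts estimates effective and summable down to the $T/2$ threshold.
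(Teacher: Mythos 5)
Your proposal is correct and follows essentially the same route as the paper: expand the square, integrate by parts once in $t$, establish the phase-derivative lower bounds (these are precisely the content of Lemma~\ref{lemma:Ingham 1} and inequality \eqref{mu nm}), and control the off-diagonal sum. The only cosmetic difference is that you invoke the Schur test explicitly, whereas the paper runs the same estimate inline via the symmetry $\kappa(n,m)=\kappa(m,n)$ and $|w_n\overline{w_m}|\le\tfrac12(|w_n|^2+|w_m|^2)$; also, your phrase that the perturbation $|\pa_t\b|\,|n^{\mez}-m^{\mez}|$ is ``subdominant when $m\ge N_0$ is large'' is slightly misleading, since (as in the paper's chain of inequalities leading to \eqref{mu nm}) it is in fact uniformly subdominant for all $n\ge1$, $m\ge0$ under $|\pa_t\b|\le\tfrac12\tanh^{\mez}(b)$, but this does not affect the argument.
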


\begin{rema}
$i)$ For $T$ small, one can take $N_0 = C T^{-2-\eps}$ for some $\eps>0$. 
See \eqref{NT prec} for more details on this estimate. 
$ii)$ For $\| \pa_t^2 \b \|_{L^\infty}$ small enough and $T$ large enough, 
the result holds with $N_0 = 0$.
\end{rema} 
\begin{proof} 
Splitting the sum into $n=m$ and $n \neq m$, we write
$$
\int_0^T 
\bigg| \sum_{\substack{n\in\xZ \\ |n| \geq N}} w_n e^{i\mu_n(t)} \bigg|^2\, dt
\geq 
T \sum_{\substack{n\in\xZ \\ |n| \geq N}} |w_n|^2 
+ \sum_{\substack{n\neq m\\ |m|\ge N, |n|\ge N}} w_n \overline{w_m} \, \int_0^T e^{i(\mu_n(t)-\mu_m(t))}\, dt.
$$
We have to estimate
$$
K(n,m)\defn \int_0^T 
e^{i(\mu_n(t)-\mu_m(t))}\, dt.
$$
Integrating by parts, 
\[
K(n,m) = \bigg[ \frac{ e^{i(\mu_n(t)-\mu_m(t))} }{i (\mu_n'(t) - \mu_m'(t)) } \bigg]_{t=0}^{t=T}
+ \int_0^T e^{i(\mu_n(t)-\mu_m(t))} \frac{\mu_n'' - \mu_m''}{i(\mu_n' - \mu_m')^2} \, dt,
\]
and therefore 
\[
|K(n,m)| \leq \kappa(n,m) := 
\bigg\| \frac{2}{\mu_n' - \mu_m'} \bigg\|_{L^\infty([0,T])} 
+ \int_0^T \frac{\la \mu_n'' - \mu_m'' \ra}
{|\mu_n' - \mu_m' |^2}\, dt.
\]
Since $\kappa(n,m)=\kappa(m,n)$, we have
$$
\bigg| \sum_{n\neq m}w_n\overline{w_m} \, K(n,m) \bigg| 
\le \frac12 \sum_{n\neq m} \big( \la w_n\ra^2+\la w_m\ra^2 \big) \kappa(n,m)
\le \sum_{n\neq m}\la w_n\ra^2\kappa(n,m).
$$
Hence 
\begin{equation*}
\int_0^T
\bigg| \sum_{\substack{n\in\xZ \\ |n| \geq N}} w_n e^{i\mu_n(t)}\bigg|^2\, dt
\ge \sum_{\substack{n\in\xZ \\ |n| \geq N}} \Big( T - \sum_{\substack{m\in \xZ\setminus \{n\} \\ |m| \geq N}} \kappa(n,m) \Big) |w_n|^2.
\end{equation*}
We have to prove that $N$ can be so chosen that
\begin{equation}\label{n47}
T - \sum_{\substack{m\in \xZ\setminus \{n\} \\ |m| \geq N}} \kappa(n,m) \ge \frac{T}{2}.
\end{equation}
To do so, we use the following lemma.

\begin{lemm} \label{lemma:Ingham 1}
Assume that $\la \partial_t\beta(t)\ra \leq \frac12 \tanh^\mez(b)$ for all $t$. 
Let $\eps \in (0, \frac12)$. 
Then 

$i)$ There exists a positive constant $K_\eps$ such that, 
for all integers $N \geq 0$ and all $n \in \xZ$ with $|n| \geq N$, 
\begin{equation}\label{n48}
\sum_{\substack{m\in \xZ\setminus \{n\}\\ |m| \geq N}}
\bigg\| \frac{1}{\mu_n' - \mu_m'} \bigg\|_{L^\infty([0,T])} 
\le \frac{K_\eps}{(1+N)^{\mez-\eps}}\,.
\end{equation}

$ii)$ For all integers $n,m$ in $\xZ$ with $n\neq m$, and all $t$,
\begin{equation}\label{n49}
\frac{\la \mu_n''-\mu_m''\ra}{|\mu_n'-\mu_m'|}\le 2\tanh^{-\mez}(b)\, \la \partial_t^2\beta\ra.
\end{equation}
\end{lemm}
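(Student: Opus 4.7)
The plan is to reduce both statements to a single arithmetic lower bound involving the two "odd extensions" of $\ell$ and $|\cdot|^{1/2}$. Setting $\tilde\ell(n) := \sign(n)\ell(n)$ and $\nu(n) := \sign(n)|n|^{1/2}$, one reads off from the definition of $\mu_n$ that
\[
\mu_n'(t) - \mu_m'(t) = \bigl(\tilde\ell(n) - \tilde\ell(m)\bigr) + \partial_t\beta(t)\bigl(\nu(n) - \nu(m)\bigr),
\qquad
\mu_n''(t) - \mu_m''(t) = \partial_t^2\beta(t)\bigl(\nu(n) - \nu(m)\bigr).
\]
The key auxiliary estimate I would first establish is the deterministic inequality
\[
|\tilde\ell(n) - \tilde\ell(m)| \ge \tanh^{1/2}(b)\, |\nu(n) - \nu(m)|
\qquad (n \neq m \text{ in } \xZ). \quad (\ast)
\]
To prove $(\ast)$ I would split into the same-sign and opposite-sign cases. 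For the same-sign case (WLOG $n>m>0$), I parametrize $\ell(n) = F(n^{1/2})$ with $F(s) = s(g+s^4)^{1/2}\tanh^{1/2}(bs^2)$; then the first term of $F'(s)$ is $(g+s^4)^{1/2}\tanh^{1/2}(bs^2) \ge \tanh^{1/2}(b)$ for $s \ge 1$, and the mean value theorem gives $(\ast)$. The opposite-sign case reduces to the elementary inequality $\ell(k) \ge k^{1/2}\tanh^{1/2}(b)$ for $|k|\ge 1$, and the degenerate case $m=0$ is treated by direct computation.

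Granted $(\ast)$, part $ii)$ is essentially immediate: the hypothesis $|\partial_t\beta| \le \frac12 \tanh^{1/2}(b)$ forces the perturbation $\partial_t\beta(t)(\nu(n)-\nu(m))$ to be bounded in modulus by $\frac12|\tilde\ell(n)-\tilde\ell(m)|$, and the reverse triangle inequality yields the uniform pointwise lower bound
\[
|\mu_n'(t) - \mu_m'(t)| \ge \tfrac12\,|\tilde\ell(n) - \tilde\ell(m)| \ge \tfrac12\tanh^{1/2}(b)\,|\nu(n) - \nu(m)|.
\]
Dividing the identity $|\mu_n'' - \mu_m''| = |\partial_t^2\beta|\,|\nu(n) - \nu(m)|$ by this lower bound produces exactly \eqref{n49}.

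For part $i)$ I would reuse the same pointwise lower bound to reduce the claim to estimating $S_n(N) := \sum_{m \ne n,\, |m|\ge N} |\tilde\ell(n)-\tilde\ell(m)|^{-1}$, uniformly in $|n|\ge N$. Assuming $n>0$ and splitting into three dyadic-type ranges: the opposite-sign terms $m \le -N$ satisfy $|\tilde\ell(n)-\tilde\ell(m)| = \ell(n)+\ell(|m|) \gtrsim |m|^{3/2}$, contributing $O(N^{-1/2})$ by comparison with $\int_N^\infty m^{-3/2}dm$; the same-sign far-away terms ($m\ge 2n$ or $N\le m \le n/2$) are handled analogously by keeping whichever of $\ell(n)$ or $\ell(m)$ dominates, again contributing $O(n^{-1/2})\le O(N^{-1/2})$. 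The principal obstacle is the "resonant" range $n/2 < m < 2n,\ m\neq n$, where I would use the sharper bound $|\ell(n)-\ell(m)| \gtrsim n^{1/2}|n-m|$ (obtained from the explicit form of $\ell'$ on this interval), yielding
\[
\sum_{\substack{n/2 < m < 2n \\ m \ne n}} \frac{C}{n^{1/2}|n-m|} \;\lesssim\; \frac{\log n}{n^{1/2}} \;\le\; \frac{K_\eps}{n^{1/2-\eps}} \;\le\; \frac{K_\eps}{(1+N)^{1/2-\eps}}.
\]
The logarithm produced by the resonant regime is precisely what forces the $\eps$-loss in the exponent, and summing the three contributions gives \eqref{n48}.
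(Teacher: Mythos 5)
Your argument is correct and lands in essentially the same place as the paper's, but it is organized around a different key inequality. The paper works directly with $\sigma(n) := \sqrt{(g+n^2)n}$ and proves, by the elementary algebra $ab \le \frac12(a^2+b^2)$, the single sharp bound $|\sigma(m)-\sigma(n)| \ge \sqrt{\max\{n,m\}}\,|m-n|$ for all $m,n\ge 0$, from which it deduces $|\mu_n'-\mu_m'| \ge \frac12 \tanh^{\mez}(b)\sqrt{\max\{n,m\}}\,|n-m|$; this one inequality then serves both for the resonant sum in part $i)$ and for all of part $ii)$. You instead first isolate the structural inequality $(\ast)$: $|\tilde\ell(n)-\tilde\ell(m)| \ge \tanh^{\mez}(b)|\nu(n)-\nu(m)|$, proved by the mean value theorem applied to $F(s)=\ell(s^2)$, whose only role is to absorb the $\pa_t\beta$ perturbation via the reverse triangle inequality (note $(\ast)$ alone is much weaker than the paper's bound in the resonant regime, since $|\nu(n)-\nu(m)|\sim |n-m|/\sqrt{n}$ there). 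You then separately invoke the sharper estimate $|\ell(n)-\ell(m)|\gtrsim n^{1/2}|n-m|$ on $n/2<m<2n$, which is what the paper's bound already provides globally. The net effect is a more modular two-step reduction (decouple time, then sum), at the cost of having to prove two separate lower bounds where the paper proves one. Both routes produce the same logarithmic loss and hence the $\eps$-exponent. One small bookkeeping point you should make explicit: the MVT argument for $(\ast)$ needs $\xi \ge 1$ so that $F'(\xi)\ge\tanh^{\mez}(b)$, which fails on $(0,1)$; your "degenerate case" $m=0$ must therefore be handled by the direct inequality $\ell(n)\ge \tanh^{\mez}(b)n^{1/2}$ rather than by MVT, as you indeed indicate. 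Likewise the case $n=0$ (possible only when $N=0$) needs a word, but there the sum is dominated by $\sum_{m\ne 0}\ell(|m|)^{-1}<\infty$, consistent with the claimed bound.
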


\begin{proof} 
Let us prove statement $i)$. 
Since $\kappa(-n,m) = \kappa(n,-m)$, we can assume, without loss of generality, that $n \geq 0$. 
Let $|\pa_t \beta| \leq \frac12 \tanh^\mez(b)$, and note that $\tanh(b) < 1 \leq 1+g$. 
Then for all $n \geq 0$ 
\[
\tanh^\mez(b) n^{\frac32} \leq \ell(n) \leq (1+g)^{\frac12} n^\tdm, \quad 
\frac12\, \tanh^\mez(b) n^\tdm \leq \mu_n'(t) \leq \frac32\,(1+g)^\mez n^\tdm. 
\]
For $m \leq 0$, $m \neq n$, one has
\[
|\mu'_n-\mu_m'| 
= \mu_n' + \mu_{-m}' 
\geq \frac12 \tanh^\mez(b) \, (n^\tdm + |m|^{\frac32}) 
\geq \frac14 \tanh^\mez(b) \, (1 + |m|^\tdm)
\]
and therefore
\[
\sum_{m \leq -N} \bigg\| \frac{1}{\mu_n' - \mu_m'} \bigg\|_{L^\infty([0,T])} 
\leq \sum_{m \leq -N} \frac{C}{1 + |m|^{3/2}}
\leq \frac{C'}{\sqrt{1 + N}}
\] 
for some constant $C' > 0$.
We now consider the case $m > 0$ and split the sum into two pieces. 
For $m \ge A n$ with $A := (36 (1+g) / \tanh(b))^{1/3}$ one has 
$\mu_m' \geq 2 \mu_n'$, and
$$
\la \mu_n'-\mu_m'\ra 
\ge  
\mu_m' \Big| 1 - \frac{\mu_n'}{\mu_m'} \Big| 
\geq \frac12 \, \mu_m' 
\geq \frac14 \, \tanh^\mez(b) \, m^\tdm
$$
which again leads to a convergent series 
\[
\sum_{\substack{m > 0 \\ m \ge A n, \, m \geq N}} \bigg\| \frac{1}{\mu_n' - \mu_m'} \bigg\|_{L^\infty([0,T])} 
\le \frac{C}{\sqrt{1+N}}\,.
\]
It remains to consider the sum over all $m > 0$ such that $ N \leq m < An$. 
Denote $\s(n) := \sqrt{(g + n^2)n}$. Then
$$
\frac{ \s(m) - \s(n) }{ m - n } \, 
= \frac{\s(m)^2 - \s(n)^2 }{(m-n) (\s(m) + \s(n))} 
= \frac{m^2 + n^2 + nm + g}{ \s(m) + \s(n) }\,.
$$
Using the elementary inequality $ab \leq \frac12 (a^2 + b^2)$, one has
\[
\big( \s(n) + \s(m) \big) \sqrt{n} 
= \sqrt{n^2 + g} \, n + \sqrt{m^2 + g} \sqrt{nm}
\leq m^2 + n^2 + nm + g 
\]
for all $m,n \geq 0$. Therefore 
\begin{equation}  \label{sigma nm} 
|\s(m) - \s(n)| 
\geq \sqrt{\max\{n,m\}} \, |m-n|
\end{equation}
for all $m,n \geq 0$. 
Now suppose that $m \leq n$, with $m \geq 0$, $n \geq 1$. Then
\begin{align*}
\mu_n' - \mu_m' & = (\s(n) - \s(m)) \tanh^\mez(bn) 
+ \s(m) (\tanh^\mez(bn) - \tanh^\mez(bm)) 
\\ & \qquad + (n^\mez - m^\mez) \pa_t \b
\\ & \geq (\s(n) - \s(m)) \tanh^\mez(bn) - (n^\mez - m^\mez) |\pa_t \b|
\\ & \geq n^\mez (n-m) \tanh^\mez(b) \Big( 1 - \frac{|\pa_t \b|}{\sqrt{n} (\sqrt{n} + \sqrt{m}) \sqrt{\tanh(b)} }  \Big) 
\\ & \geq \frac12 \tanh^\mez(b) \sqrt{n} (n-m)
\end{align*}
if $|\pa_t \b| \leq \frac12 \sqrt{\tanh(b)}$. We deduce that 
\be \label{mu nm}
|\mu_n' - \mu_m'| \geq C \sqrt{\max\{n,m\}} \, |n-m| 
\ee
for all $m,n \geq 0$, with $C = \frac12 \tanh^\mez(b)$. 
Now, for $n \geq 1$, we obtain
\begin{equation}  \label{sum region 3}
\sum_{\substack{m > 0, \, m \neq n \\ N \leq m < An}} \bigg\| \frac{1}{\mu_n' - \mu_m'} \bigg\|_{L^\infty([0,T])} 
\le 
\frac{1}{C \sqrt{n}} 
\sum_{\substack{m > 0, \, m \neq n \\ N \leq m < An}} \frac{1}{|n-m|}
\leq \frac{c \log(cn)}{\sqrt{n}}
\end{equation} 
for some $c > 0$. 
For $n \geq 1$ and $n \geq N$, one has $n \geq \frac12(1+N)$, and 
\[
c \log(cn) n^{-\frac12} 
\leq C_\eps n^{-\frac12 + \eps}
\leq C_\eps 2^{\frac12 - \eps} (1+N)^{-\frac12 + \eps}
\]
for $\eps \in (0,\frac12)$, for some $C_\eps > 0$.  
On the other hand, for $n=0$ the first sum in \eqref{sum region 3} is zero because it has no terms. 
Thus the first sum in $\eqref{sum region 3}$ is $\leq C_\eps (1+N)^{-\mez + \eps}$ for any $n \geq 0$. 
This completes the proof of statement $i)$. 
Statement $ii)$ is proved using \eqref{mu nm}.
\end{proof}

The previous lemma and the definition of $\kappa(n,m)$ imply that
\[
\sum_{\substack{m \in \xZ\setminus \{n\} \\ |m| \geq N}} \kappa(n,m) 
\leq 
\frac{2 K_\eps}{(1+N)^{\frac12 - \eps}}\,(1 + T \| \pa_t^2 \b \|_{L^\infty}).
\]
Hence \eqref{n47} is satisfied provided that
\begin{equation} \label{NT prec}
\frac{4 K_\eps}{T} \big( 1 + T \| \pa_t^2 \b \|_{L^\infty} \big) 
\leq (1+N)^{\frac12 - \eps},
\end{equation}
and Proposition \ref{P41} is proved.
\end{proof}

From \e{mu nm}, applied for $\b=0$, we deduce that 
\be \label{ell nm}
|\ell(n) - \ell(m)| \geq C \sqrt{\max\{n,m\}} \, |n-m| 
\ee
for all $m,n \geq 0$, with $C = \frac12 \tanh^\mez(b)$. 

We now prove upper bounds. By contrast with the previous proposition, 
we shall see that these estimates hold for any function (not only for high frequencies). 
Also, a key point for later purpose is that 
one can add some amplitudes $\zeta_n$ depending on time (and whose derivatives 
in time of order $k$ can grow with $n$ as $|n|^{k/2}$). 

\begin{prop}\label{P43}
There exists $C > 0$ with the following property. 
Let $T > 0$. Let $| \pa_t \b| \leq \frac12 \tanh^\mez(b)$, 
and $|\pa_t^k \b| \leq 1$, $k = 2,3$ on $[0,T]$. 
Then, for all $(w_n) \in \ell^2(\xZ;\xC)$,
\be\label{n42b}
\int_0^T 
\bigg| \sum_{n\in\xZ} w_n\zeta_n(t) e^{i\mu_n(t)} \bigg|^2\, dt
\le C M(\zeta)^2 (1+T) \sum_{n\in\xZ}|w_n|^2
\ee
where
\be\label{n42c}
M(\zeta) \defn \sup_{n\in\xZ} \| \zeta_n \|_{L^\infty} 
+ \sup_{n \in \xZ} \frac{ \| \pa_t \zeta_n \|_{L^\infty} }{\sqrt{1 + |n|}} 
+ \sup_{n \in \xZ} \frac{ \| \pa_t^2 \zeta_n \|_{L^\infty} }{1 + |n|}\,.
\ee
\end{prop}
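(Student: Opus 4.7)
The plan is to expand the left-hand side of \e{n42b} as a double sum and reduce to a Schur-type bound on the off-diagonal coefficients
\[
A_{nm} := \int_0^T \zeta_n(t)\overline{\zeta_m(t)}\, e^{i(\mu_n(t)-\mu_m(t))}\, dt.
\]
The diagonal $n=m$ contributes at most $T M(\zeta)^2 \sum_n |w_n|^2$ directly from \e{n42c}. For the off-diagonal, combining $|w_n\overline{w_m}|\le \tfrac{1}{2}(|w_n|^2+|w_m|^2)$ with the symmetry $|A_{nm}|=|A_{mn}|$, it suffices to establish
\[
\sup_{n\in\xZ}\sum_{m\in\xZ\setminus\{n\}}|A_{nm}|\le CM(\zeta)^2(1+T).
\]

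To estimate $A_{nm}$ for $n\ne m$, I would integrate by parts twice using the identity $e^{i\lambda_{nm}}=(i\lambda'_{nm})^{-1}\pa_t e^{i\lambda_{nm}}$, where $\lambda_{nm}(t):=\mu_n(t)-\mu_m(t)$. Two inputs are available: the lower bound $|\lambda'_{nm}(t)|\ge C\sqrt{\max\{|n|,|m|\}}\,|n-m|$, which is \e{mu nm} for $n,m\ge 0$ and reduces to the same inequality for the other sign combinations by writing $\mu'_n-\mu'_m = \sign(n)(\ell(n)+\pa_t\b\,|n|^\mez) - \sign(m)(\ell(m)+\pa_t\b\,|m|^\mez)$; and the Lipschitz-type estimate \e{n49} controlling $(\mu''_n-\mu''_m)/\lambda'_{nm}$. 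After the first integration by parts, $A_{nm}$ splits into a boundary term of size $CM^2/|\lambda'_{nm}|$, an interior integral carrying $(\mu''_n-\mu''_m)/(\lambda'_{nm})^2$ which is bounded by $CTM^2\|\pa_t^2\b\|_{L^\infty}/|\lambda'_{nm}|$ thanks to \e{n49}, and an interior piece involving $\pa_t(\zeta_n\overline{\zeta_m})/\lambda'_{nm}$.

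The last piece is the crux and needs a second integration by parts. From \e{n42c} one reads off $|\pa_t^k(\zeta_n\overline{\zeta_m})|\le CM(\zeta)^2(1+\max\{|n|,|m|\})^{k/2}$ for $k=0,1,2$, and this growth is exactly absorbed by the two copies of $\lambda'_{nm}$ gained in the denominator. Assembling the estimates yields the pointwise bound
\[
|A_{nm}|\le CM(\zeta)^2(1+T)\Big(\tfrac{1}{\sqrt{\max\{|n|,|m|\}}\,|n-m|}+\tfrac{1}{|n-m|^2}\Big),\qquad n\ne m.
\]
A short computation, splitting the sum over $m$ into $|m|\le |n|$ and $|m|>|n|$ and using $\sum_{k=1}^{N}k^{-1}\lesssim \log N$, shows that both terms on the right are summable in $m$ uniformly in $n$ (the first is $O(\log|n|/\sqrt{|n|})+O(1)$, the second is $O(1)$), so Schur's test closes the argument.

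The main obstacle is the polynomial-in-$n$ growth allowed for the amplitudes in \e{n42c}: since $\pa_t\zeta_n$ may grow like $\sqrt{|n|}$ and $\pa_t^2\zeta_n$ like $|n|$, a single integration by parts would leave a term of size $T\|\pa_t\zeta_n\|_{L^\infty}/|\lambda'_{nm}|\sim T/|n-m|$, which is not summable over $m\ne n$. Two integrations are calibrated precisely to the growth rates built into the norm $M(\zeta)$, so that each gain of $|\lambda'_{nm}|^{-1}\sim (\sqrt{\max\{|n|,|m|\}}\,|n-m|)^{-1}$ offsets one $t$-derivative of $\zeta_n\overline{\zeta_m}$; balancing these factors is the whole content of the proof.
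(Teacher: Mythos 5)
Your proposal is correct and follows essentially the same strategy as the paper: expand into a double sum, absorb the diagonal trivially, and reduce the off-diagonal to a Schur-type bound $\sup_n\sum_{m\neq n}|A_{nm}|\lesssim M(\zeta)^2(1+T)$ obtained by integrating by parts twice against $e^{i(\mu_n-\mu_m)}$, calibrated precisely so that each gain of a factor $|\mu_n'-\mu_m'|^{-1}\gtrsim(\sqrt{\max\{|n|,|m|\}}\,|n-m|)^{-1}$ offsets one time-derivative of $\zeta_n\overline{\zeta_m}$.

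One small stylistic difference worth noting: the paper carries out the full two-fold integration by parts at once (so the $fh''p^2$ piece from the first step is itself differentiated again, producing the $fh'''p^3$, $fh''^2p^4$ terms that require the hypothesis $|\pa_t^3\beta|\le 1$), and then closes the off-diagonal sum by invoking the pre-established summation lemmas \eqref{n48} (for the $\sum\|p\|_{L^\infty}$ pieces) and \eqref{n48b} (for the $\sum\|(|n|+|m|)p^2\|_{L^\infty}$ pieces). You instead do the second integration by parts selectively, only on the $f'p$ piece, which avoids $\pa_t^3\beta$ entirely, and you close the sum by writing the explicit pointwise bound
$|A_{nm}|\lesssim M^2(1+T)\big((\sqrt{\max\{|n|,|m|\}}\,|n-m|)^{-1}+|n-m|^{-2}\big)$
and summing directly over $m$. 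These are equivalent at bottom (both rest on the same lower bound \eqref{mu nm} for the phase-derivative gap, and your argument that the gap bound extends to all sign combinations of $n,m$ is correct because the opposite-sign case gives $\mu_n'+\mu_{|m|}'\gtrsim\max^{3/2}\ge\tfrac12\max^{1/2}|n-m|$, and both-negative reduces to both-positive); the paper's route packages the summations into reusable lemmas, while yours is a self-contained Schur estimate, marginally more economical in the hypotheses on $\beta$.
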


\begin{proof}
Splitting the sum into $n=m$ and $n \neq m$, we write
\[ 
\int_0^T 
\bigg| \sum_{n\in\xZ} w_n \zeta_n(t) e^{i\mu_n(t)} \bigg|^2 \, dt
= \sum_{n\in\xZ} \Big(\int_{0}^{T} |\zeta_n(t)|^2\, dt \Big) |w_n|^2
+\sum_{n\neq m} w_n \overline{w_m} \, E(n,m)
\] 
with
$$
E(n,m) \defn \int_{0}^{T} \zeta_n(t) \overline{\zeta_m(t)} \, e^{i(\mu_n(t)-\mu_m(t))}\, dt.
$$
The first sum on the right-hand side is easily estimated. 
It remains to bound the sum for $n\neq m$.
Integrating by parts twice, one has 
\begin{align*}
E(n,m) = \int_0^T f e^{ih} dt 
& = \big[ e^{ih} (- i f p + f' p^2 - f h'' p^3 ) \big]_0^T
\\ & \quad 
+ \int_0^T e^{ih} (f'' p^2 - 3 f' h'' p^3 + 3 f h''^2 p^4 - f h''' p^3)\, dt
\end{align*} 
with 
\[
f := \zeta_n \overline{\zeta_m}, \qquad 
h := \mu_n - \mu_m, \qquad 
p := \frac{1}{\mu_n' - \mu_m'}\,. 
\]
Thus $|E(n,m)|\le e(n,m)$, where 
\begin{align} \label{n415}
e(n,m) & \defn 
2 \| fp \|_{L^\infty}
+ 2 \| f' p^2 \|_{L^\infty}
+ 2 \| f h'' p^3 \|_{L^\infty}
\\ & \quad \ 
+ T ( \| f'' p^2 \|_{L^\infty} 
	+ 3 \| f' h'' p^3 \|_{L^\infty}
	+ 3 \| f h''^2 p^4 \|_{L^\infty}
	+ \| f h''' p^3 \|_{L^\infty} ).
\notag 
\end{align}
We have to estimate the sum $\sum_{m \in \xZ \setminus \{ n \}} e(n,m)$, uniformly in $n$.
First, we note that
\[
\| \pa_t^k(\zeta_n \overline{\zeta_m}) \|_{L^\infty}
= \| \pa_t^k f \|_{L^\infty} \leq \{ (1+|n|)^\mez + (1+|m|)^\mez \}^k M(\zeta)^2, \quad k=0,1,2.
\]
We have already seen in \eqref{n49} that $| h'' p| \leq 2 |\pa_t^2 \b|$. 
Similarly, $| h''' p| \leq 2 |\pa_t^3 \b|$. 
Also, applying \eqref{n48} with $N = 0$, $\eps = \frac14$, 
we deduce that $\sum_{m \in \xZ \setminus \{n\}} \| p \|_{L^\infty} \leq C$ 
for some absolute constant $C$. 
Therefore the first, the third and the last two terms in \eqref{n415} (i.e.\ those with $f$)
are all bounded by $C M(\zeta)^2 ( 1 + T ) $. 
The remaining three terms of \eqref{n415} are also bounded by $C M(\zeta)^2 (1 + T)$ 
provided that 
\begin{equation}\label{n48b}
\sum_{m\in \xZ\setminus \{n\}} \bigg\| \frac{|n| + |m|}{(\mu_n' - \mu_m')^2} \bigg\|_{L^\infty} \, \leq C
\end{equation}
for all $n \in \xZ$, for some $C$ independent of $n$.
The bound \e{n48b} is proved using the same splitting and estimates as in the proof of Lemma 
\ref{lemma:Ingham 1}. 
\end{proof}

By combining the two previous propositions with an induction argument 
(following \cite{BallSlemrod,Haraux,MicuZuazua}), we now deduce the following result.

\begin{prop}[Sharp Ingham type inequality]\label{P44}
Let $T > 0$. Then there exist two positive constants $C(T)$ and $\delta(T)$ such that, if 
\be\label{n48c}
\lA \beta\rA_X\defn \sup_{t\in [0,T]}\la (\partial_t\beta,\partial_t^2\beta,\partial_t^3\beta)\ra\le \delta(T),
\ee
then, for all $(w_n) \in \ell^2(\xZ;\xC)$, 
$$
C(T)\sum_{n\in\xZ}|w_n|^2
\le \int_0^T \bigg| \sum_{n\in\xZ} w_n e^{i\mu_n(t)} \bigg|^2\, dt.
$$
\end{prop}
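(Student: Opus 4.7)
Fix $T>0$. The plan is to combine the high-frequency lower bound from Proposition~\ref{P41} with the upper bound from Proposition~\ref{P43} via a finite induction in the spirit of Ball--Slemrod, Haraux, and Micu--Zuazua, adding the finitely many low frequencies one at a time. First I would take $\delta(T)\le \min\{\tfrac12\tanh^{\mez}b,\,1\}$ so that Proposition~\ref{P41} applies on $[0,T]$ and produces an integer $N_0=N_0(T)$ for which
\[
\tfrac{T}{2} \sum_{|n|\ge N_0} |w_n|^2 \le \int_0^T \bigg|\sum_{|n|\ge N_0} w_n e^{i\mu_n(t)}\bigg|^2 dt.
\]
This is the base case; the induction is carried out on the cofinite index sets $J\subset\xZ$.

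For the inductive step, suppose that a constant $C_J>0$ is known for some cofinite $J\subset\xZ$, pick $n_0\notin J$, write $S(t)=\sum_{n\in J\cup\{n_0\}} w_n e^{i\mu_n(t)}$, and test $S$ against $e^{-i\mu_{n_0}(t)}$. Isolating the $n=n_0$ term yields
\[
w_{n_0}\,T \;=\; \int_0^T S(t)\, e^{-i\mu_{n_0}(t)}\,dt \;-\; \sum_{n\in J} w_n\,\epsilon_n, \qquad \epsilon_n:=\int_0^T e^{i(\mu_n-\mu_{n_0})(t)}\,dt.
\]
Integration by parts combined with the separation estimate $|\mu_n'-\mu_{n_0}'|\gtrsim \sqrt{\max(|n|,|n_0|)}\,|n-n_0|$ from Lemma~\ref{lemma:Ingham 1} gives $E^2:=\sum_{n\in J}|\epsilon_n|^2 <\infty$, and Cauchy--Schwarz then yields
\[
|w_{n_0}|\,T \le \sqrt{T}\,\|S\|_{L^2(0,T)} + E\,\Big(\sum_{n\in J}|w_n|^2\Big)^{\mez}.
\]
The inductive hypothesis applied to $S-w_{n_0}e^{i\mu_{n_0}(\cdot)}$ bounds $\sum_{n\in J}|w_n|^2$ in terms of $\|S\|_{L^2}^2$ and $T|w_{n_0}|^2$; substituting this in the previous estimate and rearranging absorbs $|w_{n_0}|^2$ into the left-hand side and produces a lower bound for $J\cup\{n_0\}$ with a new constant $C_{J\cup\{n_0\}}>0$. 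Iterating the step at most $2N_0+1$ times exhausts the low-frequency complement and yields the desired constant $C(T)>0$.

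The main obstacle is the control of the accumulated constants over the $O(N_0)$ inductive steps, which is what forces $\delta(T)$ to depend on $T$. At each step one must verify that the absorption of $|w_{n_0}|^2$ into the left-hand side is possible, which is a quantitative smallness condition relating $E$, $T$ and $C_J$. The delicate point is that the phases $\mu_n-\mu_{n_0}$ are \emph{not} linear in $t$, so the integration-by-parts bound for $\epsilon_n$ picks up contributions involving $\partial_t^2\beta$; the smallness assumption $\|\beta\|_X\le \delta(T)$ is used precisely to keep these corrections under control. Since $N_0(T)$ can grow like $T^{-2-\varepsilon}$ for small $T$ (see \eqref{NT prec}), the number of inductive steps and hence the deterioration of the constants is the principal source of the $T$-dependence in $C(T)$ and $\delta(T)$.
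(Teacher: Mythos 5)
Your inductive structure (start from Proposition~\ref{P41} on the cofinite set of high frequencies and adjoin the finitely many low frequencies one at a time) is the same as the paper's. The inductive step, however, is different, and as written it has a genuine gap.

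You extract $w_{n_0}$ by testing $S$ against $e^{-i\mu_{n_0}(t)}\mathbf{1}_{[0,T]}$ and then try to absorb the cross term $E\bigl(\sum_{n\in J}|w_n|^2\bigr)^{1/2}$, where $E^2 = \sum_{n\in J}|\epsilon_n|^2$. After substituting the inductive bound, the absorption condition becomes $E\lesssim(T\,C_J)^{1/2}$. The problem is that $E$ is \emph{not} controlled by $\|\beta\|_X$: the boundary term from the integration by parts,
\[
\epsilon_n = \Bigl[\frac{e^{i(\mu_n-\mu_{n_0})}}{i(\mu_n'-\mu_{n_0}')}\Bigr]_0^T + \int_0^T e^{i(\mu_n-\mu_{n_0})}\frac{\mu_n''-\mu_{n_0}''}{i(\mu_n'-\mu_{n_0}')^2}\,dt,
\]
contains the piece $\frac{2}{|\mu_n'-\mu_{n_0}'|}$, which at $\beta=0$ is $\frac{2}{|\ell(n)-\ell(n_0)|}$. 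For $n$ close to $n_0$ this is a fixed nonzero number; the only way to make it small is through the trivial bound $|\epsilon_n|\le T$. Optimizing over the cutoff between the two bounds gives $E\gtrsim T^{2/3}$ as $T\to 0$ (independently of $\beta$), while $C_J$ degrades by a fixed factor at each step — so $(T\,C_J)^{1/2}$ shrinks at least geometrically in the number of steps, which is of order $N_0(T)\sim T^{-2-\eps}$. Thus for small $T$ the absorption inequality $E<(T\,C_J)^{1/2}/2$ fails after at most a few steps; taking $\delta(T)$ small cannot fix this, since $\delta(T)$ only controls the $\partial_t^2\beta$ contribution, not the $\beta$-independent leading part of $\epsilon_n$.

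The paper's proof avoids this by using a different multiplier: instead of pairing $S$ with $e^{-i\mu_N}$, it considers $f_1(t)=e^{-i\mu_N(t)}S(t)$ and forms the \emph{shifted difference} $\int_0^\tau (f_1(t+\eta)-f_1(t))\,d\eta = e^{-i\mu_N(t)}\sum_{n\in\mathcal{A}}w_n e^{i\mu_n(t)}\theta_n(t)$ with $\tau=\frac12\min\{1,T\}$. The crucial point is that $\theta_N\equiv 0$ by construction — the $N$-mode is annihilated exactly, not approximately, so there is no analogue of your $E$ to absorb. The remaining coefficients are then split as $\theta_n=c_n+\zeta_n$, where $c_n$ (the value for $\beta=0$) satisfies the uniform lower bound $|c_n|^2\ge c\tau^4$ (using the separation $|\ell(n)-\ell(N)|\gtrsim 1$ together with the elementary inequality $|e^{i\th}-1-i\th|^2\ge c_0\min\{\th^2,\th^4\}$), while $\zeta_n$ is genuinely $O(\tau\|\beta\|_X)$. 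The only absorption required is of the $\zeta_n$-part, estimated via Proposition~\ref{P43}, and this can always be achieved by taking $\|\beta\|_X$ small — which is what forces $\delta(T)$ to depend on $T$. In short: the paper's multiplier is designed so that the quantity needing to be small is proportional to $\|\beta\|_X$, whereas your $E$ is bounded below independently of $\beta$, and this is precisely what breaks your argument for small $T$.
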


\begin{proof}
This proposition will be deduced from Proposition~\ref{P41}, 
the following claim and an immediate induction argument (with a finite number of steps).

\begin{claim}
Consider two subsets $\mathcal{A},\mathcal{A}'$ of $\xZ$ with $\mathcal{A}'=\mathcal{A} \cup \{N\}$ 
for some $N \in \xZ$, and with $|n| \ge |N|$ for all $n$ in $\mathcal{A}$. 
Assume that for every $T > 0$ there exist two positive constants $\delta(T)$ and $K(T)$ such that 
\be\label{n49bis}
\lA \beta\rA_{X}\le \delta(T) ~ \Rightarrow ~ K(T)\sum_{n\in\mathcal{A}}|w_n|^2
\le \int_0^T \bigg| \sum_{n\in\mathcal{A}} w_n e^{i\mu_n(t)} \bigg|^2\, dt.
\ee
Then for every $T > 0$ there exist two positive constants $\delta'(T)$ and $K'(T)$ such that 
\be\label{n49ter}
\lA \beta\rA_{X}\le \delta'(T) ~ \Rightarrow ~ K'(T)\sum_{n\in\mathcal{A}'}|w_n|^2\le \int_0^T 
\bigg| \sum_{n\in\mathcal{A}'} w_n e^{i\mu_n(t)} \bigg|^2\, dt.
\ee
\end{claim}

Let us prove the claim. Introduce 
$$
f(t)\defn \sum_{n\in\mathcal{A}} w_n e^{i\mu_n(t)}, \quad 
f'(t)\defn \sum_{n\in\mathcal{A}'} w_n e^{i\mu_n(t)}, \quad 
f_1(t)\defn \sum_{n\in\mathcal{A}'} w_n e^{i\mu_n(t)-i\mu_N(t)},
$$
so that $f' = f + w_N e^{i \mu_N}$, 
$f_1 = e^{-i\mu_N} f' = fe^{-i\mu_N}+w_N$, and
$$
\int_0^T |f_1(t)|^2\, dt=\int_0^T |f'(t)|^2\, dt
=\int_0^T 
\bigg| \sum_{n\in\mathcal{A}'} w_n e^{i\mu_n(t)}\bigg|^2\, dt.
$$ 
We prove that there exist two constants $C_1, C_2$ (both depending on $T$) such that
\begin{equation}  \label{C12 claim}
C_1 \sum_{n\in\mathcal{A}}|w_n|^2\le \int_0^T  |f'(t)|^2\, dt,
\quad 
C_2 |w_N|^2\le \int_0^T  |f'(t)|^2\, dt.
\end{equation}
Then \eqref{C12 claim} implies the second inequality of \eqref{n49ter} 
with $K'(T) := \frac12 \min\{ C_1, C_2 \}$. 
Let us begin with the first inequality of \eqref{C12 claim}. 
Let $\tau := \frac12 \min \{ 1, T \}$, 
and remark that
\be\label{n49c}
\int_0^\tau \left(f_1(t+\eta)-f_1(t)\right)\, d\eta=e^{-i\mu_N(t)}
\sum_{n\in\mathcal{A}} w_n e^{i\mu_n(t)} \theta_n(t),
\ee
(notice that the sum is over $\mathcal{A}$ and not $\mathcal{A}'$) with
$$
\theta_n(t)=\int_0^\tau \left(e^{i\left( \mu_n(t+\eta)-\mu_n(t)-\mu_N(t+\eta)+\mu_N(t)\right)}-1\right)\, d\eta.
$$
Assume that $n,N$ are positive. 
We split $\theta_n = c_n + \zeta_n$, 
where $c_n$ is a constant, independent of time 
(such that $c_n = \theta_n$ for $\b = 0$), 
and $\zeta_n$ is defined by difference, namely 
\begin{align*}
c_n & \defn 
\int_0^\tau \big( e^{i [ \ell(n) - \ell(N) ] \eta } - 1 \big)\, d\eta
= \frac{e^{i [ \ell(n) - \ell(N) ] \tau} - 1}{i [ \ell(n) - \ell(N) ]} - \tau, 
\\
\zeta_n & \defn 
\int_0^\tau e^{i [ \ell(n) - \ell(N) ] \eta} 
\Big( e^{i [ \beta(t+\eta)-\beta(t)] ( \sqrt{n} - \sqrt{N} )} - 1 \Big)\, d\eta.
\end{align*}
Now we use the following elementary inequality: 
there exists an absolute constant $c_0 > 0$ such that, for all $\th \in \R$,
\[
| e^{i \th} - 1 - i \th|^2 \geq 
c_0 \min \{ \th^2, \th^4 \}.
\]
This inequality holds because $|e^{i \th} - 1 - i \th|^2 
= (1 - \cos \th)^2 + (\th - \sin \th)^2$ is positive for all $\th \neq 0$ 
and it has asymptotic expansion
$\th^2 + o(\th^2)$ for $|\th| \to \infty$, and $\frac14 \th^4 + o(\th^4)$ for $\th \to 0$. 
We apply this inequality with $\th = [\ell(n) - \ell(N)] \tau$, and, using \e{ell nm}, we get 
\[
|c_n|^2 \geq c \t^4
\]
for some $c > 0$ (note that $\min \{ \t^2, \t^4 \} = \t^4$ because, by assumption, $\t < 1$). 
%%%%
%%%% OLD : 
%%%%Now we use the following elementary inequality: 
%%%%$$
%%%%\forall x\ge 1,~\forall \tau \le 1,\qquad \la e^{ix\tau}-1-ix\tau\ra^2\ge 
%%%%\left\{
%%%%\begin{aligned}
%%%%c x^4\tau^4\ge cx^2\tau^4 \quad \text{if }x\tau<\pi,\\
%%%%c x^2\tau^2\ge cx^2\tau^4 \quad \text{if }x\tau\ge \pi
%%%%\end{aligned}
%%%%\right.
%%%%$$
%%%%for some absolute $c>0$. 
%%%%This inequality can be verified writing $\cos(x\t) = 1 - 2 \sin^2(x\t/2)$, and 
%%%%$$
%%%%\la e^{ix\tau}-1-ix\tau\ra^2=4\sin^4\left(\frac{x\tau}{2}\right)+\left(\sin(x\tau)-x\tau\right)^2.
%%%%$$
%%%%Thus $\la e^{ix\tau}-1-ix\tau\ra^2\ge c x^2\tau^4$ for any $x\ge 1$ and $\tau\le 1$. 
%%%%Applying this inequality with $x = \ell(n) - \ell(N)$ (which is $\geq 1$), 
%%%%we find that $|c_n|^2 \ge c \tau^4$. 

It remains to estimate $\zeta_n$ and its derivatives. From the definition, 
\begin{align*}
&|\zeta_n|\le 2 \tau , \quad 
| \partial_t \zeta_n | 
\le 2 \| \partial_t\beta \|_{L^\infty} \tau \sqrt{n}, \quad 
| \partial_t^2 \zeta_n | 
\le 4 (\| \partial_t\beta \|_{L^\infty}^2 
+ \| \partial_t^2 \beta \|_{L^\infty}) \tau n.
\end{align*}
However, we need a sharper bound on $\zeta_n$ which shows 
that $\zeta_n$ is small when $\beta$ is small. 
Such a bound could be easily obtained by estimating $|e^{if}-1| \leq |f|$. 
However, this would make appear an extra factor $\sqrt{n}$. 
Instead, we integrate by parts to obtain 
\begin{align*}
\zeta_n(t) = {} &
\frac{e^{i [\ell(n) - \ell(N)] \tau}}{i [ \ell(n) - \ell(N) ] } \,  
\Big( e^{i [ \beta(t+\tau)-\beta(t)] ( \sqrt{n} - \sqrt{N} )} - 1 \Big)
\\
& - \int_0^\tau \frac{e^{i [\ell(n) - \ell(N)] \eta} }{i [ \ell(n) - \ell(N)] } \,
\partial_{\eta} \Big( e^{i [ \beta(t+\eta)-\beta(t)] ( \sqrt{n} - \sqrt{N} )} - 1 \Big)\, d\eta,
\end{align*}
and it is easily checked, using \eqref{ell nm}  and the bound 
$| \beta(t+\t) - \b(t)| \leq \t \| \pa_t \b \|_{L^\infty}$,  
that $|\zeta_n| \le C \t \| \partial_t \beta \|_{L^\infty}$. 
By combining the previous estimates, we have 
$M(\zeta)\le C \tau \| \beta \|_{X}$ where $M(\zeta)$ is given by \e{n42c}, 
and $C$ is independent on $T,\t$. 

Set $F(t)\defn \sum_{n\in\mathcal{A}} w_n e^{i\mu_n(t)} \theta_n(t)$ 
and split $F=F_1+F_2$ with 
$$
F_1(t)\defn\sum_{n\in\mathcal{A}} w_n e^{i\mu_n(t)} 
 c_n,\quad 
F_2(t)\defn  \sum_{n\in\mathcal{A}} w_n e^{i\mu_n(t)} 
 \zeta_n(t).
$$
Since $|c_n|^2 \ge c \tau^4$, the assumption \e{n49bis} implies that, 
if $\lA \beta\rA_{X}\le \delta(T-\tau)$, then
$$
c \tau^4 K(T-\tau) \sum_{n\in\mathcal{A}} |w_n|^2 
\leq K(T-\tau) \sum_{n\in\mathcal{A}} |w_n c_n|^2 
\leq \int_0^{T-\tau} |F_1(t)|^2\, dt.
$$
On the other hand, Proposition~\ref{P43} applied with 
$M(\zeta)\le C \tau \| \beta\|_{X}$ implies that, 
if $\| \beta \|_X \le \frac12 \tanh^\mez(b)$, then 
$$
\int_0^{T-\tau} |F_2(t)|^2\, dt
\le C_0 \tau^2 \| \beta \|_{X}^2 (1 + T - \tau) \sum_{n\in \mathcal{A}} |w_n|^2
$$
where $C_0$ is  independent of $T,\t$.  
Therefore, if 
\begin{equation} \label{smallness Ingham} 
4 C_0 \t^2 \| \beta \|_{X}^2 (1 + T - \tau) 
\le c \tau^4 K(T-\t), 
\end{equation}
then $\int_0^{T-\tau} |F_2|^2 dt \leq \frac14 \int_0^{T-\tau} |F_1|^2 dt$,   
whence 
$\int_0^{T-\tau} |F|^2 dt \geq \frac14 \int_0^{T-\tau} |F_1|^2 dt$. 
By \eqref{n49c}, this implies that 
\begin{align*}
\frac{1}{4} \, c \t^4 K(T-\tau) \sum_{n\in\mathcal{A}}|w_n|^2 
& \le \int_0^{T-\tau} |F(t)|^2\, dt 
\\
&\le \int_0^{T-\tau} \la \int_0^\tau \left(f_1(t+\eta)-f_1(t)\right)\, d\eta\ra^2\, dt.
\end{align*}
The condition \eqref{smallness Ingham}  holds if 
\begin{equation}  \label{bla}
\lA \beta\rA_X\le \frac{\tau \sqrt{ c \, K(T-\t)}}{2 \sqrt{C_0 (1+T)} }\,,
\end{equation}
and we set $\delta'(T)$ as the minumum among $\frac12 \tanh^\mez(b), \delta(T)$, 
and the constant on the right in \eqref{bla}.  Moreover,
\begin{align*}
&\int_0^{T-\tau} 
\la \int_0^\tau \left(f_1(t+\eta)-f_1(t)\right)\, d\eta\ra^2\, dt\\
&\qquad\qquad\le \int_0^{T-\tau} 
\tau \int_0^\tau \la  \left(f_1(t+\eta)-f_1(t)\right)\ra^2 \, d\eta dt\\
&\qquad\qquad\le  
2 \tau \int_0^{T-\tau} \int_0^\tau \la f_1(t+\eta)\ra^2 \, d\eta dt 
+ 2 \tau \int_0^{T-\tau} \int_0^\tau \la f_1(t) \ra^2 \, d\eta dt \\
&\qquad\qquad  \le 2 T \tau \int_0^T|f_1(t)|^2\, dt 
= 2 T \tau \int_0^T \la f'(t)\ra^2\, dt, 
\end{align*}
and we infer that  the first inequality in \e{C12 claim} holds 
with $C_1 = \frac18 c \t^3 K(T-\t) T^{-1}$. 

Now we prove the second inequality in \e{C12 claim}.
We have $|w_N|^2=\la f'(t)-f(t)\ra^2$ for any $t$, and so 
$$
|w_N|^2=\frac{1}{T}\int_0^T \la f'(t)-f(t)\ra^2 \, dt
\le \frac{2}{T}\Big( \int_0^T |f'(t)|^2\, dt +\int_0^T|f(t)|^2\, dt \Big).
$$
It follows from Proposition~\ref{P43} (applied with $\zeta_n = 1$) that
$$
\int_0^T|f(t)|^2\, dt\le (1+T) C \sum_{n\in \mathcal{A}}|w_n|^2.
$$
Using  the first inequality in \e{C12 claim},  we deduce that
$$
\int_0^T|f(t)|^2\, dt\le  \frac{(1+T)C}{C_1}  \, \int_0^T \la f'(t)\ra^2\, dt,
$$
where $C$ is the constant of Proposition \ref{P43} 
and $C_1$ has been found above.  
Consequently  the second inequality in \eqref{C12 claim} holds 
with $C_2 = \frac12 T C_1 [C_1 + (1+T)C]^{-1}$. 
We set $K'(T) = \frac12 \min \{ C_1, C_2 \}$ and obtain \eqref{n49ter}.
This completes the proof of the claim in the case of $n,N$ positive. 
The other cases are analogous.
\end{proof}

\section{Observability} \label{P:S6}

We now use the previous inequalities for sums of oscillatory functions 
in order to prove an observability property. 
In particular, we prove that it is sufficient to control the real part of the solution 
to bound the initial data.

\begin{prop}[Observability]\label{P45}
Let $T > 0$. Consider an open subset $\omega \subset \T$ and a constant $0 < c \leq 1$. 
Then there exist positive constants $ K, \eps_1$ 
such that the following property holds. 
Consider a pseudo-differential $A_0$ with symbol $\exp\big(i\beta(t,x)|\xi|^\mez\big)$ 
for some function $\beta$ satisfying
$$
\sup_{t\in[0,T]}\sup_{x\in [0,2\pi]} 
\la (\partial_t\beta(t,x),\partial_t^2 \beta(t,x),\partial_t^3\beta(t,x))\ra 
\le \delta(T), 
$$
where $\delta(T)$ is the constant in Proposition \ref{P44}. 
Then for every 
initial data $v_0 \in L^2(\T)$ whose mean value 
$\langle v_0 \rangle = \frac{1}{2\pi} \int_{\xT} v_0(x)\, dx$ satisfies 
\be\label{n420i}
\la \RE \langle v_0 \rangle \ra \ge c \la \langle v_0\rangle \ra
-\eps_1\lA v_0\rA_{L^2},
\ee
the solution $v$ of 
\be\label{n421}
\partial_t v+iL v=0,\quad v(0)=v_0,
\ee
satisfies
\be\label{n422}
\int_0^T\int_\omega  \la \RE (A_0 v)(t,x)\ra^2 \, dxdt\ge K \int_0^{2\pi} \la v_0(x)\ra^2 \, dx.
\ee
\end{prop}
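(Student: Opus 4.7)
The plan is to reduce to a pointwise-in-$x$ application of the Ingham-type inequality of Proposition \ref{P44}, then integrate over $\omega$, and finally recover the zero Fourier mode via hypothesis \eqref{n420i}.

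Expanding $v(t,x) = \sum_{n \in \xZ} \hat v_0(n) e^{i(nx - \ell(|n|)t)}$ and observing that the symbol $e^{i\beta(t,x)|\xi|^{\mez}}$ equals $1$ at $\xi = 0$, the pseudo-differential action is exact on the Fourier basis:
\[
A_0 v(t,x) = \sum_{n \in \xZ} \hat v_0(n)\, e^{i[nx - \ell(|n|)t + \beta(t,x)|n|^{\mez}]}.
\]
Adding $\overline{A_0 v}$, relabeling $n \mapsto -n$ in the conjugate sum, and setting $\tilde\beta := -\beta$, one rewrites
\[
2\RE(A_0 v)(t,x) = \sum_{k \in \xZ} W_k(x)\, e^{i \mu^{\tilde\beta}_k(t,x)},\qquad \mu^{\tilde\beta}_k(t,x) := \sign(k)\bigl[\ell(|k|)t + \tilde\beta(t,x)|k|^{\mez}\bigr],
\]
where $W_0(x) = 2\RE \hat v_0(0)$, and for $m > 0$, $W_{-m}(x) = \alpha_m(x) := \hat v_0(m) e^{imx} + \hat v_0(-m) e^{-imx}$ and $W_m = \overline{\alpha_m}$. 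Since $\tilde\beta$ satisfies the same pointwise bounds on $\partial_t^j$ ($j=1,2,3$) as $\beta$, Proposition \ref{P44} applied to the sequence $(W_k(x))_k$ for each fixed $x \in \omega$ yields
\[
C(T)\Bigl(4|\RE \hat v_0(0)|^2 + 2 \sum_{m > 0}|\alpha_m(x)|^2\Bigr) \leq \int_0^T |2\RE(A_0 v)(t,x)|^2 dt.
\]

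Integrating in $x$ over $\omega$, it remains to bound $\int_\omega |\alpha_m|^2 dx$ from below by a positive multiple of $|\hat v_0(m)|^2 + |\hat v_0(-m)|^2$ uniformly in $m \neq 0$. A direct computation gives
\[
\int_\omega |\alpha_m(x)|^2 dx \geq (|\omega| - |I_m|)\bigl(|\hat v_0(m)|^2 + |\hat v_0(-m)|^2\bigr), \qquad I_m := \int_\omega e^{2imx} dx.
\]
Because $\omega$ is a nonempty open subset of $\xT$, one has $|I_m| < |\omega|$ strictly for every $m \neq 0$, and $I_m \to 0$ as $|m| \to \infty$ by Riemann-Lebesgue. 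Hence $c_\omega := \inf_{m \neq 0}(|\omega| - |I_m|) > 0$, and combining with the pointwise Ingham bound produces a constant $C_1 = C_1(T, \omega) > 0$ such that
\[
\int_0^T\!\!\int_\omega |\RE(A_0 v)|^2 dx\,dt \geq C_1\Bigl(|\RE \hat v_0(0)|^2 + \sum_{n \neq 0} |\hat v_0(n)|^2\Bigr).
\]
Finally, hypothesis \eqref{n420i} implies $|\RE \hat v_0(0)|^2 \geq \frac{c^2}{2}|\hat v_0(0)|^2 - \eps_1^2 \|v_0\|_{L^2}^2$, so choosing $\eps_1$ small enough in terms of $c$ and $C_1$ yields \eqref{n422} with $K = K(T, \omega, c) > 0$.

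The principal point requiring care is the uniform-in-$m$ strict positivity of $|\omega| - |I_m|$: the qualitative strict inequality $|I_m| < |\omega|$ at each fixed $m \neq 0$ is not enough per se, and one needs the decay $I_m \to 0$ provided by Riemann-Lebesgue to secure a positive infimum $c_\omega$. The necessity of \eqref{n420i} reflects the elementary fact that $\RE(A_0 v)$ cannot detect $\IM \hat v_0(0)$, so the imaginary part of the mean mode is fundamentally unobservable and must be assumed small relative to $\|v_0\|_{L^2}$.
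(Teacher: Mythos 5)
Your proof is correct and follows essentially the same route as the paper: write $2\RE(A_0 v)$ as a sum $\sum_k W_k(x)e^{i\mu_k(t,x)}$ with the phases matching Proposition \ref{P44}, apply that Ingham inequality pointwise in $x$, integrate over the control region, and recover the zero mode from hypothesis \eqref{n420i}. The only (harmless) difference is in bounding the cross-terms $\int_\omega |\alpha_m|^2\,dx$ from below: you work directly on $\omega$ and invoke Riemann--Lebesgue plus the strict inequality $|I_m|<|\omega|$ to get a uniform lower bound, whereas the paper restricts to a subinterval $\omega_0=[a,b]\subset\omega$ and uses the explicit computation $|\int_{\omega_0}e^{2imx}dx|=|\sin(m(b-a))/m|\leq\sin(b-a)$, giving the same conclusion.
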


\begin{rema}
The condition \eqref{n420i} cannot be eliminated. 
To see it, consider the simplest case $\beta=0$, so $A_0 = I$, 
and consider a constant solution $v(t,x)=C$ of \eqref{n421}. 
Then \eqref{n422} holds for some $K$ if and only if 
the real part of $C$ is non zero. This suggests to assume that
\be\label{n420f}
\la \RE \langle  v_0\rangle \ra\ge c \la  \langle v_0\rangle \ra.
\ee
In fact, it is sufficient to consider the weaker assumption \eqref{n420i}. 
The advantage of assuming \e{n420i} instead of \e{n420f} is used below (see \e{n430}). 
\end{rema}

\begin{proof}
Write
$$
v(t,x) = \sum_{n\in\xZ} a_n e^{inx} e^{i \ell(n) t}, \quad 
a_n = \frac{1}{2\pi} \int_0^{2\pi} e^{-inx}v_0(x)\, dx,
$$
where $\ell(n) = (g + n^2)^{\frac12} (|n|\tanh(b|n|))^\mez$ is the symbol of $L$. 
Then set $w=A_0 v$, given by
$$
w(t,x)=\sum_{n\in\xZ} a_n e^{inx} e^{i(\ell(n) t + \beta(t,x) |n|^\mez)}.
$$
For $n\in \xZ$, set
$$
\lambda_n = \ell(n) t + \beta(t,x)|n|^\mez, \quad 
\mu_n = \sign (n) \lambda_n, \quad 
c_n(x) = a_ne^{inx}.
$$
Since $\mu_n=\sign(n)\lambda_n$ and $\mu_{-n}=-\mu_n$, we write
\begin{align*}
2\RE w &= 2\RE a_0+\sum_{n>0}c_n e^{i\lambda_n }+ \sum_{n>0}\overline{c_{n}}e^{-i\lambda_n }
+ \sum_{n<0}c_n e^{i\lambda_n }+ \sum_{n<0}\overline{c_n}e^{-i\lambda_n }\\
&= 2\RE c_0+\sum_{n>0}c_n e^{i\mu_n}+\sum_{n<0}\overline{c_{-n}}e^{i\mu_n}
+\sum_{n>0}c_{-n}e^{i\mu_n}+\sum_{n<0}\overline{c_n}e^{i\mu_n}
\end{align*}
to obtain
$$
2\RE w=\sum_{n\in\xZ}\gamma_n e^{i\mu_n}
\quad \text{with}\quad 
\gamma_n=
\left\{
\begin{aligned}
&c_n+c_{-n} \quad&&\text{for }n>0,\\
&2\RE c_0\quad&&\text{for }n=0,\\
&\overline{c_n}+\overline{c_{-n}}\quad &&\text{for }n<0.
\end{aligned}
\right.
$$
Consider an interval $\omega_0 = [a,b] \subset \omega$. 
By Proposition \ref{P44}, 
\begin{align}
\int_0^T\int_\omega | \RE (w(t,x))|^2\, dxdt
& \ge \int_{\omega_0} \int_0^T| \RE (w(t,x))|^2\, dtdx
\notag \\
\label{n424}
& \ge \frac{C(T)}{4} 
\int_{\omega_0}\sum_{n\in\xZ}|\gamma_n(x)|^2\, dx,
\end{align}
where 
$C(T)$ is the constant given in Proposition \ref{P44}. 
For $n\neq 0$ we write
$$
|\gamma_n(x)|^2 = |a_n|^2 + |a_{-n}|^2 + a_n \overline{a_{-n}}\, e^{2inx}
+ \overline{a_n} \, a_{-n} e^{-2inx},
$$
so that 
\begin{align*}
\int_{\omega_0}|\gamma_n(x)|^2\, dx
&\ge |\omega_{0}| \big\{ |a_n|^2+|a_{-n}|^2\big\} 
\\& \quad 
-|a_n|\, |a_{-n}|\bigg( \la \int_{\omega_0}e^{2inx}\, dx\ra+\la \int_{\omega_0}e^{-2inx}\, dx\ra\bigg).
\end{align*}
Now 
$$
\la \int_{\omega_0}e^{2inx}\, dx\ra=\la \int_{\omega_0}e^{-2inx}\, dx\ra=
\la \frac{\sin (n(b-a))}{n}\ra.
$$
Moreover there is a small universal constant $\delta_0 > 0$ such that, 
for all $\delta \in (0, \delta_0)$, 
$$
\forall |x|\ge \delta,\quad 
\la \frac{\sin(x)}{x}\ra \le \frac{\sin(\delta)}{\delta}.
$$
We can assume that $0 < b-a \le \delta_0$, so that
$$
\forall n\in\xZ^*,\quad (b-a)- \la \frac{ \sin (n(b-a)) }{n} \ra 
\ge (b-a) - \sin (b-a).
$$
As a consequence, for all $n \neq 0$, 
$$
\int_{\omega_0}|\gamma_n(x)|^2\, dx \ge c' \big( |a_n|^2+|a_{-n}|^2\big), 
$$
where $c' := (b-a)-\sin (b-a) > 0$. 
Then, recalling that $\g_0 = 2 \RE a_0$, it follows from \e{n424} that
$$
\int_0^T\int_\omega | \RE (w(t,x))|^2\, dxdt
\ge 
C(T) \Big[ (b-a)|\RE a_0|^2 + \frac{c'}{2}
\sum_{n \in \xZ \setminus  \{ 0 \} } |a_n|^2 \Big]. 
$$
Now, using $(x+y)^2 \geq \frac12 x^2 - y^2$ and \e{n420i}, one has 
$\la \RE \langle  v_0 \rangle \ra^2 
\ge \frac{1}{2}\, c^2 \la \langle v_0\rangle \ra^2
- \eps_1^2 \lA v_0\rA_{L^2}^2$, namely 
$$
|\RE a_0|^2\ge \frac{c^2}{2} \, |a_0|^2 - 2 \pi \eps_1^2 \sum_{n\in \xZ}\la a_n\ra^2, 
$$
and therefore 
$$  
\int_0^T\int_\omega | \RE (w(t,x))|^2\, dxdt \ge K \sum_{n\in\xZ} |a_n|^2, 
$$
with 
$K = C(T) \min \{ (b-a) (\frac12 c^2 - 2\pi \eps_1^2 ) , \, 
\frac12 c' - (b-a) 2 \pi \eps_1^2 \}$.
If $\eps_1$ is small enough, then $K > 0$, which completes the proof. 
\end{proof}

\begin{coro}\label{P45c}
Let $T > 0$, let $\omega \subset \T$ be an open subset and let $0 < c \leq 1$. 
Then there exist positive constants 
$\eps_0,\eps_1,r,K$ such that the following property holds. 
Assume that 
$$
\langle \V(t)\rangle=0
$$
for all $t\in [0,T]$ and 
$$
\sup_{t\in[0,T]}\sum_{1\le k\le 3} \lA \partial_t^k \V(t)\rA_{H^1}
+\sup_{t\in [0,T]} \lA \V(t)\rA_{H^r}\le \eps_0,
$$
and consider the pseudo-differential 
operator $A$, given by Proposition~\ref{P:39}, with symbol $q(t,x,\xi)\exp ( i\beta(t,x)\la \xi\ra^\mez )$. 
Then for every initial data $v_0 \in L^2(\T)$ whose mean value $\langle v_0\rangle 
= \frac{1}{2\pi} \int_{\xT}v_0(x)\, dx$ satisfies 
\be\label{n420ib}
\la \RE \langle  v_0\rangle \ra\ge c \la  \langle v_0\rangle \ra
-\eps_1\lA v_0\rA_{L^2},
\ee
the solution $v$ of 
\be\label{n421b}
\partial_t v+iL v=0,\quad v(0)=v_0,
\ee
satisfies
\be\label{n422b}
\int_0^T\int_\omega  \la \RE (A v)(t,x)\ra^2 \, dxdt\ge K \int_0^{2\pi} \la v_0(x)\ra^2 \, dx.
\ee
(The constants $\eps_0, \eps_1, K$ depend on $T,c$, while $r$ is a universal constant.) 
\end{coro}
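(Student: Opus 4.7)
The strategy is to reduce Corollary~\ref{P45c} to Proposition~\ref{P45}. The operator $A$ produced by Proposition~\ref{P:39} differs from the simpler operator $A_0 \defn \Op(e^{i\beta(t,x)|\xi|^\mez})$ by the remainder
$R \defn \Op\big((q-1)e^{i\beta(t,x)|\xi|^\mez}\big)$,
which will be shown to be $L^2$-small. The plan is therefore: $(a)$ verify that $\beta$ fulfils the smallness hypothesis of Proposition~\ref{P45}; $(b)$ apply Proposition~\ref{P45} to $A_0$; $(c)$ treat $R$ as a small perturbation.

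For step $(a)$, recall from Proposition~\ref{P:39} that $\beta(t,x) = \beta_0(t) + \tfrac{2}{3} \px^{-1} W(t,x)$. The hypothesis $\langle W(t)\rangle = 0$ ensures $\px^{-1} W$ is well-defined, and \eqref{n265} writes $\partial_t \beta_0$ as an average in $x$ of an expression involving $W$ and $\px W$. Differentiating \eqref{n265} in $t$, together with the Sobolev embedding $H^1(\T) \hookrightarrow L^\infty(\T)$, yields
$$
\sup_{t\in[0,T]}\sup_{x} \la (\partial_t \beta, \partial_t^2\beta, \partial_t^3\beta)\ra
\le C \Big( \sup_{t}\lA W\rA_{H^r} + \sum_{k=1}^{3} \sup_{t}\lA \partial_t^k W\rA_{H^1}\Big) \le C \eps_0,
$$
for a universal $r$. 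Choosing $\eps_0$ small enough (depending on $T$) makes this bounded by $\delta(T)$, so the hypothesis of Proposition~\ref{P45} is satisfied by $A_0$. Applying that proposition under \eqref{n420ib} yields $K_0 = K_0(T,c) > 0$ and $\eps_1 > 0$ such that
\be\label{planA0}
\int_0^T\int_\omega \la \RE(A_0 v)(t,x)\ra^2 \, dxdt \ge K_0 \lA v_0\rA_{L^2}^2.
\ee

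For step $(c)$, write $q = e^\gamma$ with $\gamma$ given by \eqref{n267}; a direct inspection of the formula shows $|\gamma|_r \le C \eps_0$, and hence $|q-1|_r = |e^\gamma - 1|_r \le C \eps_0$. Statement $ii)$ of Lemma~\ref{L:26}, applied with $q-1$ playing the role of the amplitude, gives
$$
\lA R u\rA_{L^2} \le C \eps_0 \lA u\rA_{L^2}, \qquad \forall u \in L^2(\T).
$$
Since $e^{-itL}$ is an $L^2$-isometry, $\lA v(t)\rA_{L^2} = \lA v_0\rA_{L^2}$, whence
$$
\int_0^T\int_\omega |R v|^2 \, dx dt \le \int_0^T \lA Rv(t)\rA_{L^2}^2 \, dt \le T(C\eps_0)^2 \lA v_0\rA_{L^2}^2.
$$
Inserting the decomposition $\RE(Av) = \RE(A_0 v) + \RE(Rv)$ into \eqref{planA0} via the elementary inequality $(a+b)^2 \ge \tfrac{1}{2}a^2 - b^2$ gives
$$
\int_0^T\int_\omega \la \RE(Av)\ra^2 \, dxdt \ge \Big( \tfrac{K_0}{2} - T(C\eps_0)^2 \Big) \lA v_0\rA_{L^2}^2,
$$
and the corollary follows with $K \defn K_0/4$ upon shrinking $\eps_0$ further if necessary.

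The only delicate point is the bookkeeping in step $(a)$: obtaining bounds on $\partial_t^k \beta$ requires iterated time-differentiation of the averaged identity~\eqref{n265}, and one must verify that no spatial-derivative loss creeps in. This is routine because the integrand in \eqref{n265} is algebraic in $\beta_1$ and its $x$-derivative, so each time-derivative produces only a bounded number of copies of $W$ and $\px W$ whose $L^\infty$-norms are controlled by $\lA W\rA_{H^r}$ and $\lA \partial_t^k W\rA_{H^1}$.
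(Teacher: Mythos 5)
Your proof is correct and follows essentially the same route as the paper: split $A = A_0 + A_1$ (your $R$) where $A_0$ has amplitude $1$; verify that $\beta$ satisfies the smallness hypothesis of Proposition~\ref{P45} and apply it to $A_0$; bound $A_1$ as a small $\Lr(L^2)$ operator via statement~$ii)$ of Lemma~\ref{L:26}; and conclude by a triangle/Young-type inequality. The paper's proof is just terser on the bookkeeping for $\partial_t^k \beta$.
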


\begin{proof}
Split $A$ as $A_0+A_1$ with
$$
A_0\defn \Op\big(\exp\big(i\beta(t,x)|\xi|^\mez\big)\big),\quad 
A_1\defn \Op\big((q(t,x,\xi)-1)\exp\big(i\beta(t,x)|\xi|^\mez\big)\big).
$$
The contribution due to $A_0$ is estimated by Proposition \ref{P45}. 
Notice that, for $\eps_0$ small enough, 
the smallness assumption on $\beta$ of Proposition \ref{P45} is satisfied because 
$$
\sup_{t\in[0,T]}\sup_{x\in [0,2\pi]} 
\la (\partial_t\beta(t,x),\partial_t^2 \beta(t,x),\partial_t^3\beta(t,x))\ra
\les \sup_{t\in[0,T]}\sum_{1\le k\le 3} \blA \partial_t^k \V(t)\brA_{H^1}
\les \eps_0.
$$
On the other hand, 
it follows from the definition of $q$ and $\beta$ and the estimate given by statement $ii)$ in Lemma~\ref{L:26}  that $A_1$ is bounded from $L^2$ onto itself, with an operator norm of size 
$O(\lA \V\rA_{H^r})=O(\eps_0)$. Then
$$
\int_0^T\int_\omega  \la \RE (A_1 v)(t,x)\ra^2 \, dxdt \le \int_0^T\lA A_1 v(t)\rA_{L^2}^2 \, dt\les \int_0^T \eps_0^2\lA v(t)\rA_{L^2}^2\, dt.
$$
Since $\lA v(t)\rA_{L^2}=\lA v(0)\rA_{L^2}$, by taking $\eps_0$ small enough, 
the desired estimate follows from the triangle inequality.
\end{proof}

We now want to deduce an observability result for equations of the form
$$
\partial_t w +W\px w+iL w+\mathcal{R} w=0,
$$
where $\mathcal{R}$ is an operator of order $0$. 
In the appendix we prove that the Cauchy problem for this equation is well-posed 
(see Lemma~\ref{L2}). 

\begin{coro}\label{C:54}
Let $T > 0$, $\omega \subset \T$ be a non-empty open domain 
and let $0 < c \leq 1$. 
Then there exist positive constants $\eps_2,\eps_3,r,K$ 
such that the following property holds. Assume that 
$$
\langle \V(t)\rangle=0
$$
for all $t\in [0,T]$ and 
\be \label{WR eps2}
\sup_{t\in[0,T]}\sum_{1\le k\le 3} \lA \partial_t^k \V(t)\rA_{H^1}
+\sup_{t\in [0,T]} \lA \V(t)\rA_{H^r}+\sup_{t\in [0,T]}\lA \mathcal{R}(t)\rA_{\Lr(L^2)}\le \eps_2.
\ee
Then for every initial data $w_0\in L^2(\xT)$ whose mean value 
$\langle  w_0\rangle = \frac{1}{2\pi} \int_{\xT} w_0(x)\,dx$ satisfies 
\be\label{n426}
\la \RE \langle  w_0\rangle \ra 
\ge c \la  \langle w_0\rangle \ra
- \eps_3 \lA w_0\rA_{L^2}, 
\ee
the solution $w$ of 
\be\label{n427}
\partial_t w +W\px w+iL w+\mathcal{R} w=0, \quad 
w(0)=w_0
\ee
satisfies 
\be\label{n428}²
\int_0^T\int_\omega  \la \RE w\ra^2 \, dxdt\ge K \int_0^{2\pi} \la w_0(x)\ra^2 \, dx.
\ee
\end{coro}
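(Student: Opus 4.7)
The plan is to reduce this to Corollary~\ref{P45c} via the conjugation of Proposition~\ref{P:39}. Although that proposition is stated for the specific $\Rquatre$ defined in \e{p229}, its proof uses only the $\Lr(L^2)$-norm of $\Rquatre$, so it applies verbatim with $\Rquatre$ replaced by $\mathcal{R}$. This yields a pseudo-differential operator $A=\Op(q(t,x,\xi)e^{i\beta(t,x)|\xi|^\mez})$, bounded and boundedly invertible on $L^2$ by Lemma~\ref{L:26}, with
\[
(\partial_t+W\px+iL+\mathcal{R})A = A(\partial_t+iL+\Rcinq),\qquad \lA \Rcinq\rA_{\Norm{T}{\Lr(L^2)}}\les \eps_2.
\]
Setting $v\defn A^{-1}w$, the function $v$ solves $\partial_t v+iLv+\Rcinq v=0$ with $v(0)=v_0\defn A(0)^{-1}w_0$, and $\lA v_0\rA_{L^2}\sim \lA w_0\rA_{L^2}$.

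I next split $v=v_1+v_2$, where $v_1$ is the free solution (i.e.\ $\partial_t v_1+iLv_1=0$, $v_1(0)=v_0$) and $v_2=v-v_1$ satisfies $\partial_t v_2+iLv_2=-\Rcinq v$ with $v_2(0)=0$. Since $iL$ generates a unitary group on $L^2$, Gronwall and Duhamel yield $\lA v_2\rA_{\Norm{T}{L^2}}\les T\eps_2\lA v_0\rA_{L^2}$, so $v_2$ can be made an arbitrarily small perturbation.

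The decisive step is to transfer the mean-value condition \e{n426} on $w_0$ to the corresponding hypothesis \e{n420ib} of Corollary~\ref{P45c} on $v_0$. Writing $\langle w_0\rangle-\langle v_0\rangle=\frac{1}{2\pi}\langle v_0,(A(0)^*-I)\mathbf{1}\rangle_{L^2}$ and using Cauchy--Schwarz, it suffices to prove $\lA (A^*-I)\mathbf{1}\rA_{L^2}\les \eps_2$. An explicit computation gives $(A^*\mathbf{1})(y)=\sum_k e^{iky}\widehat{g_k}(k)$ with $g_k(x)=\overline{q(x,k)}\,e^{-i\beta(x)|k|^\mez}$; the key point is that the half-integer phase makes $\px^n g_k=O(|k|^{n/2})$ rather than $O(|k|^n)$, so $n$ integrations by parts yield $|\widehat{g_k}(k)|\les \eps_2|k|^{-n/2}$ for $k\neq 0$, together with $|\widehat{g_0}(0)-1|\les \eps_2$. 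Combining these gives the claim, and hence $v_0$ satisfies \e{n420ib} (with $c$ unchanged and a modified $\eps_1=O(\eps_2+\eps_3)$). Corollary~\ref{P45c} applied to $v_1$ thus provides
\[
\int_0^T\!\int_\omega|\RE(Av_1)|^2\,dxdt\ \ge\ K\lA v_0\rA_{L^2}^2.
\]

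Finally, writing $\RE w=\RE(Av_1)+\RE(Av_2)$ and using $(a+b)^2\ge \mez a^2-b^2$, the bound $\lA Av_2\rA_{L^2}\les \lA v_2\rA_{L^2}\les T\eps_2\lA v_0\rA_{L^2}$ gives
\[
\int_0^T\!\int_\omega|\RE w|^2\,dxdt\ge \bigl(\mez K-CT^3\eps_2^2\bigr)\lA v_0\rA_{L^2}^2,
\]
which is bounded below by a constant multiple of $\lA w_0\rA_{L^2}^2$ once $\eps_2$ is small enough. The main obstacle is the mean-value transfer in the third paragraph: since the phase $|\xi|^\mez$ keeps $A-I$ from being small in operator norm, the bound for $\langle w_0\rangle-\langle v_0\rangle$ cannot come from a naive perturbation argument, and must rely on the slow $|k|^{n/2}$ growth of derivatives of the symbol to recover decay through integration by parts.
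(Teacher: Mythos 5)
Your proof is correct and follows essentially the same route as the paper: conjugate by the operator $A$ from Proposition~\ref{P:39}, split $v=v_1+v_2$ into a free part plus a small Duhamel perturbation, transfer the mean-value hypothesis from $w_0$ to $v_0$, apply Corollary~\ref{P45c} to $v_1$, and absorb $v_2$. The only cosmetic difference is that where you re-derive the bound $|\langle (A-I)v_0\rangle|\les\eps_2\|v_0\|_{L^2}$ in dual form by estimating $\|(A^*-I)\mathbf{1}\|_{L^2}$ via repeated integration by parts in the $|k|^{1/2}$ phase, the paper isolates exactly that estimate as Lemma~\ref{T:65}, obtaining $|c_n|\les\|\beta\|_{H^2}|n|^{-3/2}$ in a single non-stationary-phase step; the mechanism is identical.
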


\begin{rema}\label{R:75}
Corollary \ref{C:54} also holds for data at time $T$, that is:   
If $w_0 \in L^2(\T)$ satisfies \eqref{n426}, 
then the solution $w$ of 
\be\label{n427-T}
\partial_t w +W\px w+iL w+\mathcal{R} w=0,
\quad w(T) = w_0
\ee
also satisfies \eqref{n428}. Note that the datum in \eqref{n427-T} is at time $T$ instead of $0$. 
To prove it, notice that the function $\wide w(t,x):= w(T-t,x)$ satisfies 
$$
-\partial_t \wide w +\wide W\px \wide w+iL \wide w+\wide{\mathcal{R}} \wide w=0,
$$
where $\wide W(t), \wide{\mR}(t)$ stands for $W(T-t), \mR(T-t)$. 
Since $\wide \V$ and 
$\wide{\mathcal{R}}$ satisfy the same assumptions as $\V,\mathcal{R}$, one can apply \e{n428} with $w$ replaced by $\wide w$, noticing that 
$$
\int_0^T\int_\omega  \la \RE w\ra^2 \, dxdt=\int_0^T\int_\omega  \la \RE \wide w\ra^2 \, dxdt.
$$
\end{rema}

\begin{proof}[Proof of Corollary \ref{C:54}]
It follows from Proposition~\ref{P:39} that there is a change of unknown $w=Av$ such that 
$v$ satisfies an equation of the form
$$
\partial_t v +iL v+\mathfrak{R} v=0,
$$
for some operator $\mathfrak{R}$ of order $0$, satisfying 
$\lA \mathfrak{R}(t) v \rA_{L^2} \le C\eps_2 \lA v\rA_{L^2}$ for all $t \in [0,T]$. 
By a perturbative argument, we shall deduce observability for this equation from observability 
for the equation without $\mathfrak{R}$. To do so, split $v$ as $v=v_1+v_2$ where $v_1$ and $v_2$ are given by the Cauchy problems
\[ 
\begin{cases} 
\partial_t v_1 +iL v_1=0 \\ 
v_1(0) = v_0 
\end{cases}
\qquad 
\begin{cases} 
\partial_t v_2 +iL v_2+\mathfrak{R} v_2 = - \mathfrak{R} v_1 \\
v_2(0) = 0
\end{cases}
\]
and $v_0 := v(0) = (A^{-1} w)(0)$. 
We begin by estimating $v_1$, claiming that its initial datum $v_0$ satisfies the hypothesis \eqref{n420ib} of Corollary \ref{P45c}, which is 
\be\label{n430}
\la \RE \langle  v_0 \rangle \ra 
\ge c \la \langle v_0 \rangle \ra
- \eps_1 \lA v_0 \rA_{L^2}
\ee
(where $\eps_1$ is given in Corollary \ref{P45c}). 
To prove \eqref{n430}, we write 
$v = w + (I - A)v$ to obtain, at time $t=0$,
\[
\la \RE \langle  v_0 \rangle \ra 
= \la \RE\langle w_0 \rangle + \RE \langle (I - A) v_0 \rangle\ra 
\ge 
\la \RE \langle w_0 \rangle \ra - \la \langle (I-A) v_0 \rangle\ra.
\]
Thus, using the assumption \e{n426}, 
\[
\la \RE \langle  v_0 \rangle \ra 
\geq c | \langle w_0 \rangle | - \eps_3 \| w_0 \|_{L^2} - \la \langle (I-A) v_0 \rangle\ra.
\]
Since $w = v + (A-I)v$, we have 
$\langle w_0 \rangle = \langle v_0 \rangle + \langle (A-I)v_0 \rangle$, and 
\[
\la \RE \langle  v_0 \rangle \ra 
\geq c | \langle v_0 \rangle | - (c+1) | \langle (A-I)v_0 \rangle | 
- \eps_3 \| w_0 \|_{L^2}.
\]
By \eqref{WR eps2}, 
$| \langle (A-I) v_0 \rangle | \leq C \eps_2 \| v_0 \|_{L^2}$ (see Lemma~\ref{T:65} below). 
Also, $\| w_0 \|_{L^2} \leq C \| v_0 \|_{L^2}$ 
because $A$ is bounded on $L^2$ (see Lemma \ref{L:26}). 
Thus 
\[
\la \RE \langle  v_0 \rangle \ra 
\geq c | \langle v_0 \rangle | - \big( (c+1) C \eps_2 + C \eps_3 \big) \| v_0 \|_{L^2},
\]
and the claim is satisfied if $\eps_2, \eps_3$ are small enough.
As a consequence, from Corollary~\ref{P45c} we deduce that 
\be\label{n432}
\int_0^T\int_\omega  \la \RE (A v_1 ) \ra^2 \, dxdt 
\ge K \int_0^{2\pi} \la v_0(x)\ra^2 \, dx.
\ee
On the other hand, it follows from \e{n26} (applied with $V=0$, $c=1$ and $R=\mathfrak{R}$) that
$$
\lA v_2\rA_{\bo([0,T];L^2)}\le C \lA \mathfrak{R} v_1\rA_{L^1([0,T];L^2)}.
$$
Since $\| \mathfrak{R}(t) v \|_{L^2}  \leq  C \eps_2 \| v \|_{L^2}$, by using  \eqref{WR eps2}, we find that the last quantity is bounded by 
$C \eps_2 T \| v_0 \|_{L^2}$. 
Since $A$ is bounded on $L^2$, we deduce that 
\be\label{n435}
\begin{aligned}
\int_0^T\int_\omega  \la \RE ( A v_2 )\ra^2 \, dxdt
& \le \int_0^T \lA A v_2(t)\rA_{L^2}^2 \, dt\le T \sup_{[0,T]} \lA Av_2(t) \rA_{L^2}^2 
\\
& \le C T \lA v_2\rA_{\bo([0,T];L^2)}^2 
\le C T^3 \eps_2^2 \| v_0 \|_{L^2}^2. 
\end{aligned}
\ee
Using the elementary inequality $(x+y)^2 \geq \frac12 x^2 - y^2$, 
for $\eps_2$ small enough we get 
\begin{align*}
\int_0^T\int_\omega \la \RE ( A v ) \ra^2 \, dxdt
&\ge \frac{K}{4} \int_0^{2\pi} \la v_0(x)\ra^2 \, dx.
\end{align*}
Since $Av=w$ and $\| w_0 \|_{L^2} = \| Av_0 \|_{L^2} \leq C \| v_0 \|_{L^2}$, 
we obtain 
$$
\int_0^T\int_\omega  \la \RE w\ra^2 \, dxdt
\ge \frac{K}{4} \int_0^{2\pi} \la v_0(x)\ra^2 \, dx
\ge K' \int_0^{2\pi} \la w_0(x)\ra^2 \, dx,
$$
which completes the proof.
\end{proof}

Now we prove a technical result used in the proof above.

\begin{lemm}\label{T:65}
Consider a pseudo-differential $A$ with symbol $q(x,\xi)\exp ( i \beta(x)|\xi|^\mez )$. 
There exist universal positive constants $\delta,C$ such that, 
if $\lA \beta\rA_{H^3}+\la q-1 \ra_{3}\le \delta$, then 
$| \langle (A-I) u \rangle | \leq C \delta \| u \|_{L^2}$ for all $u \in L^2(\T)$. 
\end{lemm}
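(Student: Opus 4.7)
The plan is to work on the Fourier side and exploit oscillations in $e^{ikx}$ to gain decay in $k$, writing the mean as a sum over Fourier modes and treating the $k=0$ mode separately from the $k\neq 0$ modes.

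First I would expand $u=\sum_k \hat u_k e^{ikx}$, so that by definition of $A$ (see \eqref{n242} or Proposition~\ref{L27})
\[
\langle (A-I)u\rangle = \sum_{k\in\xZ} \hat u_k J_k - \hat u_0,
\qquad J_k := \frac{1}{2\pi}\int_{\xT} q(x,k)\, e^{ikx}\, e^{i\beta(x)|k|^{\mez}}\,dx.
\]
For $k=0$ the phase is trivial and $J_0-1=\frac{1}{2\pi}\int_{\xT}(q(x,0)-1)\,dx=\langle q(\cdot,0)-1\rangle$, which by Sobolev embedding $H^3(\xT)\hookrightarrow L^\infty(\xT)$ is bounded by $C\,|q-1|_3\le C\delta$.

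Next, for $k\neq 0$, I would integrate by parts once against the full oscillating factor $e^{i\Phi_k(x)}$ where $\Phi_k(x):=kx+\beta(x)|k|^{\mez}$. The key elementary fact is the uniform non-degeneracy $|\Phi_k'(x)|\ge |k|/2$ for every $k\neq 0$, which holds for $\delta$ small enough: indeed $|\beta'(x)|\,|k|^{\mez}/|k|\le \|\beta'\|_{L^\infty}/|k|^{\mez}\le C\|\beta\|_{H^3}\le C\delta$, hence $|\Phi_k'(x)|\ge |k|(1-C\delta)\ge |k|/2$. Writing $q\,e^{i\Phi_k}=q(i\Phi_k')^{-1}\partial_x(e^{i\Phi_k})$ and integrating by parts gives
\[
J_k \;=\; -\frac{1}{2\pi}\int_{\xT} \left(\frac{\partial_x q(x,k)}{i\Phi_k'(x)} - \frac{q(x,k)\,\beta''(x)|k|^{\mez}}{i\Phi_k'(x)^2}\right) e^{i\Phi_k(x)}\,dx,
\]
whose integrand is pointwise bounded, using $\|\partial_x b(\cdot,k)\|_{L^\infty}+\|\beta''\|_{L^\infty}\le C\delta$ (again Sobolev) and the hypothesis $|q-1|_3\le\delta$, by
\[
\frac{2\,|\partial_x b(x,k)|}{|k|}+\frac{4(1+|b(x,k)|)|\beta''(x)|\,|k|^{\mez}}{|k|^2}\;\le\; \frac{C\delta}{|k|}+\frac{C\delta}{|k|^{3/2}}\;\le\;\frac{C\delta}{|k|}.
\]
Hence $|J_k|\le C\delta/|k|$ for every $k\neq 0$.

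Finally I would assemble the pieces via Cauchy--Schwarz:
\[
|\langle (A-I)u\rangle|\;\le\; |J_0-1|\,|\hat u_0|+\Big(\sum_{k\neq 0}|J_k|^2\Big)^{\mez}\Big(\sum_{k\neq 0}|\hat u_k|^2\Big)^{\mez}\;\le\; C\delta\,\|u\|_{L^2},
\]
using $\sum_{k\neq 0}|k|^{-2}<\infty$. There is no real obstacle; the only point that deserves care is justifying the uniform bound $|\Phi_k'|\ge|k|/2$ down to $|k|=1$ (which is why the smallness of $\b$ in $H^3$, hence in $W^{1,\infty}$, is used), and ensuring that after one integration by parts the remaining symbol is a genuine $O(\delta/|k|)$ so that summing the squares yields a geometric series factor independent of $\delta$.
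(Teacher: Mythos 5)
Your proof is correct, and it takes a mildly different route from the paper's. The paper splits $A = A_0 + A_1$ with $A_0 = \Op(e^{i\beta(x)|\xi|^{1/2}})$ and $A_1 = \Op((q-1)e^{i\beta(x)|\xi|^{1/2}})$, bounds $\|A_1\|_{\mathcal{L}(L^2)} \leq C\delta$ by invoking statement $ii)$ of Lemma~\ref{L:26} as a black box, and then handles $A_0 - I$ by a single integration by parts on the pure oscillatory integral $c_n = \int_\T e^{i(nx + |n|^{1/2}\beta(x))}\,dx$, which has no amplitude factor and therefore decays like $|n|^{-3/2}$. You instead keep the full symbol $q(x,k)e^{i\Phi_k(x)}$ together, treat the $k=0$ mode by hand, and perform one integration by parts against $\partial_x(e^{i\Phi_k})/(i\Phi_k')$ on each $k\neq 0$ mode; because $\partial_x q$ survives in the boundary-free integrand, your decay is only $|k|^{-1}$, which is weaker than the paper's $|n|^{-3/2}$ but still $\ell^2$-summable, so Cauchy--Schwarz closes the estimate just as well. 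What you gain is a self-contained argument that does not rely on Lemma~\ref{L:26}; what the paper gains is a cleaner separation of amplitude and phase that reuses an operator bound already established elsewhere in the paper. One small point worth stating explicitly in your write-up: the estimate $|J_k| \leq C\delta/|k|$ uses $\sup_k\|\partial_x q(\cdot,k)\|_{L^\infty} \leq C|q-1|_3$ via Sobolev embedding, uniformly in $k$, which your hypotheses indeed provide.
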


\begin{proof}
Like in the proof of Corollary \ref{P45c}, we split $A = A_0 + A_1$, with 
$$
A_0 \defn \Op\big(\exp\big(i\beta(x)|\xi|^\mez\big)\big),\quad 
A_1 \defn \Op\big((q(x,\xi)-1)\exp\big(i\beta(x)|\xi|^\mez\big)\big).
$$
Directly from statement $ii)$ in Lemma~\ref{L:26} we have $\| A_1 \|_{\Lr(L^2)} \leq C \delta$, 
whence $| \langle A_1 u \rangle | \leq C \delta \| u \|_{L^2}$. 
To estimate $(A_0 - I)$, let $u(x) = \sum_{n \in \xZ} u_n e^{inx}$, and calculate 
\[
\int_\T (A_0 - I)u \, dx = \sum_{n \neq 0} u_n c_n, \quad 
c_n = \int_\T e^{i(nx+|n|^\mez \b(x))} \, dx.
\]
Integrating by parts, one has 
\[
c_n 
= \int_\T \frac{\pa_x \{ e^{i(nx+|n|^\mez \b(x))} \} }{i (n + |n|^\mez \pa_x \b(x))} \, dx 
= \int_\T \frac{- i \pa_{xx}\b(x) \, e^{i(nx+|n|^\mez \b(x))}}{|n|^\tdm ( 1 + |n|^\mez n^{-1} \pa_x \b(x))^2} \, dx 
\]
so that, for $|\pa_x \b| \leq 1/2$, 
\[
|c_n| \leq C \| \b \|_{H^2} |n|^{-\tdm} \quad \forall n \in \xZ \setminus \{ 0 \}.
\]
Thus $(\sum |c_n|^2)^\mez \leq C \| \b \|_{H^2}$, and by H\"older's inequality the lemma follows.
\end{proof}

\section{Controllability}\label{P:S7}

\def\widePtrois{Q}
\def\QR{R}
\def\QRquatre{\mathcal{R}}

Consider an operator of the form
$$
\widePtrois\defn \partial_t  +\V\px +iL +\QR,
$$
where $\V$ is a real-valued function and $\QR$ is an operator of order $0$. 
In this section we study the following control problem: 
given a time $T>0$, a subset $\omega \subset \T$ 
and an initial data $w_{in}\in L^2(\xT)$, 
find a (possibly) complex-valued function $f\in C^0([0,T];L^2)$ 
such that the unique solution $w\in C^0([0,T];L^2)$ of
\be\label{p510a}
\widePtrois w=\chi_\omega \RE f, \quad w(0)=w_{in}
\ee
satisfies $w(T)=0$. 
We study this control problem by means of an adaptation of the classical HUM method. 
We need to adapt the standard argument since we want to prove the existence of a real-valued control, while the unknown is complex-valued. 
In particular, for this reason, one cannot obtain $w(T)=0$. 
We prove instead that, for any real-valued function $M$ such that the $L^\infty$-norm of $M-1$ is small enough, one can find a control such that $w(T,x)=ibM(x)$ for some constant $b\in \xR$.
We remark that, given $f$ and $w_{in}$, the existence of a unique solution $w$ to \e{p510a}Ê
is proved in the appendix, see Lemma~\ref{L2}. 

We prove not only a control result 
but also a {\em contraction estimate}, which is the main technical result of this section. This means that we estimate the difference of two controls $f$ and $f'$ associated with different functions $W,W'$ or remainders $R,R'$. This contraction estimate is the key estimate to prove later that the nonlinear scheme converges (using a Cauchy sequence argument). To prove this contraction estimate we introduce an auxiliary control problem which, loosely speaking, interpolates the two control problems. Since the original nonlinear problem is quasi-linear, a loss of derivative appears. 
This means that to estimate the $C^0([0,T];L^2)$-norm of $f-f'$ we need to have a bound for the $C^0([0,T];H^1)$-norms of $f$ and $f'$. This is why we prove and use a regularity property of the control, namely the fact that the control is in $C^0([0,T];H^\mu(\xT))$ whenever the initial data $w_{in}$ is in $H^\mu(\xT)$. 
This regularity result is proved by an adaptation of an argument used by Dehman-Lebeau~(\cite{DehmanLebeau}) 
and Laurent (\cite{Laurent}).
Before stating the result, we recall the definition of the adjoint operator 
$\widePtrois^*$, namely 
\be \label{def cal Q}
\widePtrois^* = - \mathcal{Q}, \quad 
\mathcal{Q} := \partial_t + W \px + i L + \QRquatre, \quad 
\QRquatre \defn -\QR^* + (\px \V).
\ee

\begin{prop}\label{P:51}
Consider an open domain $\omega\subset \xT$.  
There exist $r$ and six increasing functions 
$\mathcal{F}_j\colon\xR_+^*\rightarrow \xR_+^*$ ($0\le j\le 5$), 
satisfying $\lim_{T\rightarrow 0}\mathcal{F}_j(T)=0$, 
such that, for any $T > 0$, for any real-valued function $M \in H^{3/2}(\T)$ with $ \| M-1 \|_{L^\infty} \leq \mathcal{F}_0(T)$, the following results hold.

$i)$ {\em Existence.} Consider $\QR\in C^0([0,T];\Lr(L^2))$ and a function  
$\V$ satisfying
$$
\int_\T \V(t,x) \, dx = 0
$$
for any $t\in [0,T]$. Assume that 
the norm 
$$ 
\| (W,R) \|_{r,T} 
\defn  
\sum_{1\le k\le 3} \| \partial_t^k \V \|_{C^0([0,T];H^1)}
+ \lA \V \rA_{C^0([0,T];H^r)} + \lA \QR \rA_{C^0([0,T] ; \Lr(L^2))} 
$$ 
satisfies
\be\label{n510}
\lA (W,R)\rA_{r,T}\le \mathcal{F}_1(T).
\ee
Then there exists an operator 
$\controltroisT\colon L^2\rightarrow C^0([0,T];L^2)$ 
such that for any $w_{in}\in L^2$, setting 
$f\defn\controltroisT(w_{in})$, 
the unique solution $w\in C^0([0,T];L^2)$ of
\be \label{Qw=control}
\widePtrois w=\chi_\omega \RE f, \quad w(0)=w_{in}
\ee
satisfies
\be\label{p510c}
w(T,x)=ibM(x)
\ee
for some constant $b\in\xR$, and  
\be\label{n510a}
\lA f\rA_{\bo([0,T];L^2)}\le \frac{\lA w_{in}\rA_{L^2}}{\mathcal{F}_2(T)} \,.
\ee

$ii)$ {\em Uniqueness.} For any $w_{in}\in L^2(\xT)$ and any $T > 0$, 
$\Theta_{M,T}(w_{in})$ is determined as the unique 
function $f\in C^0([0,T];L^2(\xT))$ satisfying the two following conditions:
\begin{enumerate}
\item There holds $\widePtrois^* f = 0$ and $\IM \int_\xT M(x)f(T,x)\, dx =0$. 
\item The solution $w$ of \eqref{Qw=control} satisfies \eqref{p510c} 
for some constant $b\in\xR$.
\end{enumerate}

$iii)$ {\em Regularity}. Let $\mu\in [0,3/2]$ and 
consider $w_{in}\in H^{\mu}(\xT)$. 
If
\be\label{n510-mu}
\| (W,R) \|_{r,T} + \lA \QR \rA_{C^0([0,T] ; \Lr(H^\mu))}  \le 
\mathcal{F}_1(T),
\ee
then $\controltroisT(w_{in})$ is in $C^0([0,T];H^{\mu}(\xT))$ and
\be\label{p51a}
\lA \controltroisT(w_{in})\rA_{\bo([0,T];H^\mu)}\le \frac{\lA w_{in}\rA_{H^\mu}}{\mathcal{F}_3(T)}.
\ee

$iv)$ {\em Stability}. Consider two pairs $(W,\QR)$ and $(W',\QR')$, 
where $(W,\QR)$ is defined for $t \in [0,T]$ and satisfies \eqref{n510-mu} with $\mu=3/2$, 
and $(W',\QR')$ is defined for $t \in [0,T']$ and satisfies \eqref{n510-mu} (with $\mu=3/2$ and $T'$ instead of $T$). 
Denote by $\Theta_{M,T}$ and $\Theta'_{M,T'}$ the operators associated to these two pairs. 
Consider the time-rescaling operator $\mT$ defined by 
\be \label{trop}
(\mT h)(t) := h(\lm t), \quad \lm := \frac{T}{T'}\,,
\ee  
and let $\tilde W := \mT W$, $\tilde R := \mT R$, namely $\tilde R(t) = R(\lm t)$. 
Then, given any $w_{in} \in L^2(\T)$, 
\begin{multline} \label{z534a}
\| \Theta'_{M,T'}(w_{in}) - \mT \Theta_{M,T}(w_{in})  \|_{C^0([0,T'];L^2)} 
\\ \le \frac{\| w_{in} \|_{H^{3/2}}}{\mathcal{F}_4(T)}
\Big( |1-\lm| + \| W' - \tilde W \|_{C^0([0,T'];H^2)} 
+ \| R' - \tilde R \|_{C^0([0,T'];\Lr(L^2))}\Big).
\end{multline}

$v)$ {\em Dependence in $M$}. Consider $M,M'$ 
in $H^{3/2}(\xT)$ with $\lA M\rA_{L^\infty}+\lA M'\rA_{L^\infty}
\le \mathcal{F}_0(T)$. 
If $\lA (W,R) \rA_{r,T} \le 
\mathcal{F}_1(T)$, then, for all $w_{in}\in H^1(\xT)$,
\be\label{z539M}
\lA (\Theta_{M,T}-\Theta_{M',T})(w_{in})\rA_{\bo([0,T];L^2)}\\
\le 
\frac{1}{\mathcal{F}_5(T)}\lA M-M'\rA_{L^\infty}\lA w_{in}\rA_{H^1}.
\ee
\end{prop}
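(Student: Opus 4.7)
The plan is to apply HUM in the real-linear setting dictated by two unusual features: a real-valued control $\chi_\omega \RE f$ and a final state constrained to $i\xR M$. The central observation is that requiring $w(T)\in i\xR M$ is equivalent to $w(T)$ being orthogonal to the real-linear hyperplane $L^2_M \defn \{\varphi \in L^2(\xT;\xC) : \IM \int_\xT M\varphi\,dx = 0\}$ for the scalar product $(\varphi,\psi)\mapsto \RE\int\varphi\bar\psi\,dx$. I would consider the backward dual Cauchy problem $\mathcal{Q}z = 0$, $z(T)=z_{in}\in L^2_M$ (well-posed by Lemma~\ref{L2} in reverse time), take $f=z$ as candidate control, and compute by integration by parts -- using self-adjointness of $L$ and the cancellation $\mathcal{R}_4 + R^* = \px W$ built into \eqref{def cal Q} -- the duality identity
\[
\Lambda(z_{in},z'_{in}) \defn \int_0^T \int_\xT \chi_\omega \,\RE z\,\RE z'\,dxdt
= \RE\int_\xT w(T)\overline{z'_{in}}\,dx - \RE\int_\xT w_{in}\overline{z'(0)}\,dx
\]
for any $z'_{in}\in L^2_M$, where $w$ solves \eqref{Qw=control} with $f=z$. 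When $w(T)=ibM$, the first term on the right equals $b\IM\int_\xT Mz'_{in}\,dx = 0$, so the equation reduces to the variational problem $\Lambda(z_{in},\cdot) = -\RE\int w_{in}\,\overline{z'(0)}\,dx$ on $L^2_M$.

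The coercivity of $\Lambda$ on $L^2_M$ is the heart of the argument. If $\|M-1\|_{L^\infty}\le \mathcal{F}_0(T)$ is small enough, the constraint $\IM\int Mz_{in}=0$ forces $|\IM\langle z_{in}\rangle|\le C\mathcal{F}_0(T)\|z_{in}\|_{L^2}$, hence $|\RE\langle z_{in}\rangle|\ge |\langle z_{in}\rangle| - C\mathcal{F}_0(T)\|z_{in}\|_{L^2}$, which is exactly the hypothesis \eqref{n426} of Corollary~\ref{C:54}, applied to the backward equation via Remark~\ref{R:75}. The smallness condition \eqref{WR eps2} is absorbed into the choice of $\mathcal{F}_1(T)$. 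Riesz's theorem on the real Hilbert space $L^2_M$ then yields a unique $z_{in}$ solving the variational problem, and the associated $f = z$ is the desired control; testing $\Lambda$ diagonally and combining with the $L^2$-energy estimate for $\mathcal{Q}$ produces \eqref{n510a}. Uniqueness (ii) is the standard variational consequence. For (iii), I would follow the strategy of Dehman--Lebeau and Laurent: under the stronger coefficient assumption \eqref{n510-mu}, the dual equation propagates $H^\mu$-regularity, the observability inequality upgrades to the $H^\mu$-level, and bootstrapping the Riesz equation $\Lambda z_{in}=\ell_{w_{in}}$ gives an $H^\mu$-control which, by (ii), must coincide with the $L^2$-one.

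The main obstacle is the stability estimate (iv): the rescaling $\mT$ converts $\widePtrois$ on $[0,T]$ into $\partial_t + \lm(\widetilde W\px + iL + \widetilde R)$ on $[0,T']$, whose $iL$-term carries the unwanted factor $\lm$ and thus no longer fits the framework. My plan is to introduce the intermediate control operator $\Theta^{(\lm)}_{M,T'}$ associated to $(\widetilde W,\widetilde R)$ on $[0,T']$ (which does fit the framework) and split
\[
\Theta'_{M,T'}(w_{in}) - \mT\Theta_{M,T}(w_{in})
= \big[\Theta'_{M,T'} - \Theta^{(\lm)}_{M,T'}\big](w_{in})
+ \big[\Theta^{(\lm)}_{M,T'}(w_{in}) - \mT\Theta_{M,T}(w_{in})\big].
\]
The first bracket is handled by subtracting the two HUM variational problems and bounding the difference of the associated $\Lambda$-forms by $\|W'-\widetilde W\|_{C^0H^2} + \|R'-\widetilde R\|_{C^0\Lr(L^2)}$; the loss of one derivative intrinsic to the $W\px$ term is compensated by the $H^{3/2}$-regularity (iii) applied to $w_{in}$, which is precisely why $\|w_{in}\|_{H^{3/2}}$ appears on the right-hand side of \eqref{z534a}. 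The second bracket contributes $O(|1-\lm|)$ through the rescaling of the $iL$-term. Part (v) is analogous: $L^2_M$ and $L^2_{M'}$ differ by $O(\|M-M'\|_{L^\infty})$ as subspaces of $H^1$, and a Neumann perturbation of the Riesz representer combined with (iii) gives \eqref{z539M}. Throughout, the delicate part is tracking how the six constants $\mathcal{F}_j(T)$ degenerate as $T\to 0$ and ensuring that the smallness thresholds propagate consistently across all five statements.
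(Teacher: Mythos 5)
Your plan matches the paper's architecture for parts $(i)$, $(ii)$, $(iii)$, $(v)$: HUM on the real Hilbert hyperplane $L^2_M$, coercivity from Corollary~\ref{C:54} (via Remark~\ref{R:75}) after observing that $\IM\int M\varphi = 0$ forces $|\RE\langle\varphi\rangle|\ge|\langle\varphi\rangle| - O(\|M-1\|_{L^\infty})\|\varphi\|_{L^2}$, the standard variational uniqueness for $(ii)$, and the Dehman--Lebeau/Laurent commutation argument for $(iii)$. For $(iv)$, however, your decomposition is genuinely different from the paper's. You introduce the intermediate control operator $\Theta^{(\lm)}_{M,T'}$ associated with the \emph{unrescaled} operator $\pa_t + \tilde W\pa_x + iL + \tilde R$ on $[0,T']$, separating the coefficient change $(W',R')\leadsto(\tilde W,\tilde R)$ from the time-rescaling of $L$. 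The paper instead introduces the function $f''$ solving the \emph{first} dual equation $\mathcal{Q}f''=0$ on $[0,T]$ with the \emph{second} Riesz datum $f''(T)=f_1'$, then identifies $f''=\Theta_{M,T}(w''_{in})$ by the uniqueness characterization $(ii)$, so that $\|f-f''\|$ is bounded by the continuity estimate $(i)$, while $f'-\tilde f''$ is a direct comparison of two evolutions with the \emph{same} terminal datum $f_1'$ via the $L^2$-energy estimate. The paper's split thereby avoids comparing two Riesz representatives; your split requires a perturbation-of-the-bilinear-form argument (like the one you use in $(v)$) in \emph{both} brackets, since $g_1^{(\lm)}$ and $f_1'$ (resp.\ $g_1^{(\lm)}$ and $f_1$) differ. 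Both routes close with the same $H^{3/2}$-regularity input, but the paper's is tidier. One minor imprecision: in $(iii)$ there is no ``upgrade of the observability inequality to the $H^\mu$-level''. The Dehman--Lebeau argument uses only $L^2$-observability (through the invertibility of the map $S\colon f_1\mapsto w(0)$) and then commutes $S$ with $\Lambda^\mu$, absorbing commutator errors using the smallness of $W$ and $R$; this is why the hypothesis is a smallness condition on coefficient norms, not a new $H^\mu$-observability statement.
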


In this section we often use the notation $A\les B$ to say that $A\le C B$ for some constant $C$ depending only on $T$. 
The key result is the following lemma. 

\begin{lemm}\label{L3}
Introduce the space
$$
L^2_{M}\defn \bigg\{\varphi\in L^2(\xT;\xC)\,;\, \IM \int_\xT M(x)\varphi(x)\, dx =0\bigg\}. 
$$
For any $w_{in}\in L^2(\xT)$, there exists a unique $f_1\in L^2_{M}$ such that,
$$
\forall \phi_1\in L^2_{M},\quad 
\RE \int_0^T \big(  \chi_\omega \RE f(t), \phi(t)\big) dt
=- \RE ( w_{in}, \phi(0)),
$$
where $f$ and $\phi$ are the unique functions in $C^0([0,T];L^2(\xT))$ satisfying
\be \label{p8n1} 
\begin{cases}
\mathcal{Q}f=0 \\ f(T) = f_1 
\end{cases}
\qquad 
\begin{cases} 
\mathcal{Q}\phi=0 \\ \phi(T) = \phi_1,
\end{cases}
\ee
where $\mathcal{Q}$ is given by \e{def cal Q} 
(the existence of $f$ and $\phi$ follows from Lemma~\ref{L2}.) 
We set
$$
\controltroisT (w_{in}) \defn f.
$$
Moreover \e{n510a} holds.
\end{lemm}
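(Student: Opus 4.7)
The plan is to apply the HUM method in the real Hilbert space $L^2_M$, viewed as the kernel of the continuous $\xR$-linear form $\varphi\mapsto \IM\int_\xT M(x)\varphi(x)\,dx$ on $L^2(\xT;\xC)$. First I would check that any $\phi_1\in L^2_M$ automatically satisfies the mean-value hypothesis of Corollary~\ref{C:54}. Writing $M=1+(M-1)$ with $\lA M-1\rA_{L^\infty}\le \mathcal{F}_0(T)$, one has $|\IM\langle\phi_1\rangle|=\tfrac{1}{2\pi}|\IM\int_\xT(1-M)\phi_1\,dx|\le C\,\mathcal{F}_0(T)\lA\phi_1\rA_{L^2}$, and since $|\RE\langle\phi_1\rangle|\ge |\langle\phi_1\rangle|-|\IM\langle\phi_1\rangle|$, condition \e{n426} will hold as soon as $\mathcal{F}_0(T)$ is chosen small enough relative to the $\eps_3$ of Corollary~\ref{C:54}.

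Next I would introduce the symmetric bilinear form $a\colon L^2_M\times L^2_M\to\xR$,
$$
a(\phi_1,\psi_1)\defn \int_0^T\!\int_\omega \RE\phi(t,x)\,\RE\psi(t,x)\,dx\,dt,
$$
where $\phi,\psi$ are the backward solutions of $\mathcal{Q}\phi=0,\ \phi(T)=\phi_1$ and $\mathcal{Q}\psi=0,\ \psi(T)=\psi_1$ (well-defined by Lemma~\ref{L2}). Symmetry follows from $\RE(\chi_\omega\RE\phi,\psi)=\int\chi_\omega\RE\phi\,\RE\psi\,dx$. Continuity, $|a(\phi_1,\psi_1)|\le C(T)\lA\phi_1\rA_{L^2}\lA\psi_1\rA_{L^2}$, is an immediate consequence of the $L^2$ energy estimate for $\mathcal{Q}$, since $\QRquatre=-\QR^*+\px W$ inherits the required $\Lr(L^2)$-bound from the smallness hypothesis \e{n510} on $(W,\QR)$. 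The crucial point is coercivity: $a(\phi_1,\phi_1)\ge K\lA\phi_1\rA_{L^2}^2$ for all $\phi_1\in L^2_M$, which follows from the observability inequality of Corollary~\ref{C:54}, applied to the $\mathcal{Q}$-equation with data at time $T$ as in Remark~\ref{R:75}.

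Finally, I would define the continuous $\xR$-linear form $\ell\colon L^2_M\to\xR$ by $\ell(\phi_1)\defn -\RE(w_{in},\phi(0))$. The same energy estimate gives $|\ell(\phi_1)|\le C(T)\lA w_{in}\rA_{L^2}\lA\phi_1\rA_{L^2}$. Since $a$ is continuous, symmetric and coercive on the real Hilbert space $L^2_M$, the Lax--Milgram theorem (equivalently, Riesz representation for the scalar product $a$) produces a unique $f_1\in L^2_M$ with $a(f_1,\phi_1)=\ell(\phi_1)$ for every $\phi_1\in L^2_M$, which is precisely the identity stated in the lemma and also ensures uniqueness of $f_1$. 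Testing with $\phi_1=f_1$ yields $K\lA f_1\rA_{L^2}^2\le C(T)\lA w_{in}\rA_{L^2}\lA f_1\rA_{L^2}$, hence $\lA f_1\rA_{L^2}\les \lA w_{in}\rA_{L^2}$; one last application of the energy estimate for $\mathcal{Q}$ propagates this to $\lA f\rA_{\bo([0,T];L^2)}$, yielding \e{n510a} for an appropriate $\mathcal{F}_2(T)$.

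The main obstacle is the coercivity step: one has to match the $\mathcal{Q}$-equation to the framework of Corollary~\ref{C:54}, verifying that $\QRquatre=-\QR^*+\px W$ inherits the $\Lr(L^2)$-smallness from the hypotheses on $W$ and $\QR$, and calibrating $\mathcal{F}_0(T)$ and $\mathcal{F}_1(T)$ so that both the mean-value condition \e{n426} and the smallness hypothesis \eqref{WR eps2} are simultaneously fulfilled for every $\phi_1\in L^2_M$.
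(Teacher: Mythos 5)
Your proposal follows essentially the same HUM strategy as the paper: define an $\xR$-bilinear form $a$ on the real Hilbert space $L^2_M$, prove continuity via the $L^2$ energy estimate of Lemma~\ref{L2}, prove coercivity via the observability inequality of Corollary~\ref{C:54} (with Remark~\ref{R:75} to handle data prescribed at time $T$), verify that every $\phi_1\in L^2_M$ satisfies the mean-value hypothesis \eqref{n426}, and then invoke Riesz/Lax--Milgram. The argument is correct in outline, with one slip worth flagging: the bilinear form must carry the cut-off weight $\chi_\omega$, namely $a(f_1,\phi_1)=\int_0^T\int_\xT\chi_\omega\,\RE f\,\RE\phi\,dx\,dt$, because it is this form (not $\int_0^T\int_\omega\RE f\,\RE\phi$) whose representation identity $a(f_1,\phi_1)=-\RE(w_{in},\phi(0))$ is exactly the one asserted in the lemma. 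With this correct form, coercivity does \emph{not} follow by applying Corollary~\ref{C:54} on $\omega$ directly, since $0\le\chi_\omega\le 1$ only gives the wrong-way bound $a(f_1,f_1)\le\int_0^T\int_\omega(\RE f)^2$. Instead one uses $\chi_\omega\ge\mathbf{1}_{\omega_1}$ on the sub-interval $\omega_1\subset\omega$ where $\chi_\omega\equiv 1$, obtaining $a(f_1,f_1)\ge\int_0^T\int_{\omega_1}(\RE f)^2\,dx\,dt$, and then applies Corollary~\ref{C:54} with $\omega_1$ in place of $\omega$ (the corollary holds for any non-empty open set). This is precisely how the paper argues; the rest of your proof matches it step for step.
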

\begin{proof}
The space $L^2_{M}$ 
is an $\xR$-vector space. 
Introduce the $\xR$-bilinear symmetric map $a(\cdot,\cdot)$ defined by
\begin{align}
a(f_1,\phi_1)
\defn {} & \RE \int_0^T \int_\xT \chi_\omega(x)  \RE (f(t,x))\, \overline{\phi(t,x)}\, dx dt \label{p8n2}
\\
= {} & \int_0^T \int_\xT \chi_\omega(x)  \RE (f(t,x))\, \RE \phi(t,x)\, dx dt.\notag
\end{align}
This application is well defined and continuous. Indeed, 
it follows from the $L^2$-energy estimate (see~\e{n26}) that  
\begin{align}
|a(f_1,\phi_1)| 
& \le \int_0^T\int_\xT  \la f \ra \la \phi \ra \, dxdt \notag 
\\
& \le T \lA f \rA_{\bo([0,T];L^2)}\lA \phi \rA_{\bo([0,T];L^2)} 
\le C(T) \lA f_1 \rA_{L^2} \lA \phi_1 \rA_{L^2}.
\label{p510.5z} 
\end{align}
Since $\chi_\omega(x)=1$ for $x$ in an open subset $\omega_1\subset \omega$, one has
$$
a(f_1,f_1)\ge \int_0^T\int_{\omega_1}  ( \RE f )^2 \, dxdt.
$$
If $f_1\in L^2_{M}$ then $\IM \int_{\xT} M f_1 \, dx=0$ and we have
$$
\la \IM \int_\xT f_1(x)\,dx\ra=\la\IM\int_\xT (1-M(x))f_1(x)\, dx\ra\le  
\lA M-1\rA_{L^\infty}\sqrt{2\pi}\lA f_1\rA_{L^2}
$$
from which (using $| \RE z| \geq |z| - |\IM z|$) we deduce that
$$
\la \RE \langle  f_1\rangle \ra\ge  \la  \langle f_1\rangle \ra
-\lA M-1\rA_{L^\infty}\sqrt{2\pi}\lA f_1\rA_{L^2}.
$$
For $\lA M-1\rA_{L^\infty}$ small enough, one can apply the observability inequality proved in the previous section (see Corollary~\ref{C:54} and Remark~\ref{R:75}) to conclude that 
\be\label{p519a}
C_1(T)\lA f_1\rA_{L^2}^2\le a(f_1,f_1).
\ee
On the other hand, \e{p510.5z} implies that 
$a(f_1,f_1)\le C(T)\lA f_1\rA_{L^2}^2$. 
Hence $a(\cdot, \cdot)$ is a real scalar product on $L^2_M$ which induces the norm 
$N(f_1) = \sqrt{a(f_1,f_1)}$, which is equivalent to the norm  
$\lA \cdot \rA_{L^2(\xT,\xC)}$ on $L^2_{M}$. 
Now,  
Lemma~\ref{L2} implies that the mapping $\phi_1\mapsto \phi(0)$ is $\xR$-linear and bounded from 
$L^2_{M}$ into $L^2$ and hence $\phi_1\mapsto \Lambda(\phi_1)\defn -\RE ( w_{in}, \phi(0))$ is a bounded $\xR$-linear form on $L^2_{M}$. Therefore, the Riesz theorem implies that, for any $\xR$-linear form $\Lambda$ on $L^2_{M}$, there is a unique $f_1\in L^2_{M}$ such that 
$a(f_1,\phi_1)=\Lambda(\phi_1)$ for all $\phi_1\in L^2_{M}$, together with 
\be\label{n510c}
\lA f_1\rA_{L^2}\le \frac{\lA \Lambda\rA}{C_1(T)}.
\ee
Moreover \e{n510a} 
follows from \e{n510c} and the bound $\lA f\rA_{\bo([0,T];L^2)}\les \lA f_1\rA_{L^2}$ already used. 
\end{proof}

\begin{proof}[Proof of Proposition \ref{P:51}]
\emph{Proof of statement $i)$}.
We begin by proving that if $M\in H^{3/2}(\xT)$ then 
$H^{3/2}(\xT)\cap L^2_M$ is dense in $(L^2_M, \| \cdot \|_{L^2})$.  
To see this, 
let $\Pi_N$ be the Fourier truncation operator defined by 
$\Pi_N h(x) = \sum_{|j| \leq N} h_j e^{ijx}$ 
where $h(x) = \sum_{j \in \xZ} h_j e^{ijx}$. 
Given $u\in L^2_M$, define $u_N\defn \frac{1}{M} \, \Pi_N (Mu)$. 
Since the operator $\Pi_N$ preserves the mean, one has that $u_N \in L^2_M$.  
Moreover, since $u \in L^2$, one has $Mu \in L^2(\xT)$, $\Pi_N(Mu) \in C^\infty(\xT)$, and hence 
$M^{-1} \Pi_N(Mu) \in H^{3/2}(\xT)$ since $M^{-1}\in H^{3/2}(\xT)$. 
Since $(u_N)$ converges to $u$, this proves that
$H^{3/2}(\xT)\cap L^2_M$ is dense in $L^2_M$.

Now let $f$ be as given by the previous lemma. 
It is proved in the appendix that 
there is a unique solution $w$ in $C^0([0,T];L^2(\xT))$ of \eqref{Qw=control}. 
Our goal is to prove that $w(T)$ satisfies \e{p510c}. To do so we first check 
that \e{p510c}Ê will be proved if $\RE (w(T),\phi_1)=0$ for all $\phi_1$ in $L^2_{M}$. 
Indeed, one has
$$
\RE (w(T),\phi_1)=\int (\RE w(T,x))\RE \phi_1(x)\, dx
+\int (\IM w(T,x))\IM \phi_1(x)\, dx=0
$$
for all  $\phi_1\in L^2_M$. Therefore 
we obtain $\int (\RE w(T,x))f(x)\, dx=0$ for any 
real-valued function $f$ and 
$\int (\IM M(x)^{-1}w(T,x))g(x)\, dx=0$ for any real-valued function $g$ with 
$\int g(x)\, dx=0$. This implies that \e{p510c} holds.

We now have to prove that 
$\RE (w(T),\phi_1)=0$ for any $\phi_1$ in $L^2_{M}$. By the density argument 
proved above, it is enough to assume that $\phi_1\in L^2_M\cap H^{3/2}(\xT)$. 
Given $\phi_1\in L^2_{M}\cap H^{3/2}(\xT)$, 
let $\phi\in C^0([0,T];H^{3/2}(\xT))$ be such that
\be \label{cal Q phi}
\mathcal{Q}\phi=0, \qquad \phi(T) = \phi_1.
\ee
Since $\mathcal{Q}=-\widePtrois^*$, multiplying the equation \eqref{Qw=control} 
by $\overline{\phi}$ and integrating by parts, we find that
\be\label{p75a}
\big( w(T),\phi_1\big) = \big( w(0),\phi(0)\big) + \int_0^T \big( \chi_\omega \RE f\, ,\, \phi\big)\, dt+\int_0^T \big( w \, ,\, \mathcal{Q}\phi\big)\, dt.
\ee
Notice that the integration by parts is justified since 
$\phi\in C^1([0,T];L^2(\xT))$. 
By definition of $\phi$ the last term in the right-hand side vanishes and, by definition of $f$, 
the real part of the 
sum of the first and second terms vanishes. 
This proves 
that $\RE \big( w(T),\phi_1\big)=0$, which 
concludes 
the proof of statement $i)$.

\medskip

\emph{Proof of statement $ii)$.} 
Recall that $\mathcal{Q}=-Q^*$ is given by \e{def cal Q}. 
Consider $\phi_1\in L^2_M$ and denote by $\phi$ 
the unique function in $C^0([0,T];L^2(\xT))$ satisfying \eqref{cal Q phi}.  
As in \e{p75a}, multiplying both sides of the equation $\widePtrois w=\chi_\omega \RE f$ 
by $\phi$, integrating by parts 
one obtains \eqref{p75a}. 
Since $\phi_1\in L^2_M$ and $w(T,x)=ibM(x)$ for some constant $b\in\xR$, one has 
$\RE \big( w(T),\phi_1\big)=0$. Therefore, since $\mathcal{Q}\phi=0$,  
$$
\RE \int_0^T \big( \chi_\omega \RE f\, ,\, \phi\big)\, dt = -\RE \big( w_{in},\phi(0)\big).
$$
Since $\mathcal{Q}f=0$ and $f(T) \in L^2_M$ by assumption, 
and since the function $f_1$ whose existence is given by Lemma~\ref{L3} is unique, 
one deduces that $f(T) = f_1$. 
Hence $f=\Theta_{M,T}(w_{in})$ by uniqueness of the solution to the Cauchy problem~\e{p8n1}. 

\medskip

\emph{Proof of statement $iii)$.}
We prove \e{p51a}. 
In view of the energy estimate \e{pn27b}, it is sufficient to prove that 
$\lA f_1\rA_{H^\mu}$ is controlled by $\lA w_{in}\rA_{H^\mu}$. 
We prove only an {\em a priori} estimate, assuming that $f_1$ belongs to $H^\mu(\xT)$. 
To estimate $\lA f_1\rA_{H^\mu}$, we adapt to our setting 
an argument used by 
Dehman-Lebeau~(see \cite[Lemma 4.2]{DehmanLebeau})
and Laurent (see \cite[Lemma 3.1]{Laurent}). 

Consider the mapping
$$
S \colon L^2_M \to L^2(\xT), \quad 
S \colon f_1 \in L^2_M \mapsto f\mapsto w \mapsto w(0) \in L^2(\xT)
$$ 
where $f$ and $w$ are the unique functions in $C^0([0,T];L^2(\xT))$ 
successively determined by the backward Cauchy problems with data at time $T$
\[ 
\begin{cases}
\mathcal{Q}f=0 \\ f(T) = f_1, \end{cases}
\qquad 
\begin{cases} 
\widePtrois w=\chi_\omega \RE f \\ 
w(T) = 0. \end{cases}
\]
It follows from statements $i)$ and $ii)$ that $S$ is an isomorphism of 
$L^2_M$ onto $L^2(\xT)$. 
As a result, with $\Lambda^\mu=(I-\px^2)^{\mu/2}$, one can write
\be\label{p72e}
\lA f_1 \rA_{H^\mu}=\lA \Lambda^\mu f_1\rA_{L^2}\les \lA S \Lambda^\mu f_1\rA_{L^2}.
\ee
Now we have to commute $S$ and $\Lambda^\mu$. 
This amounts to compare $(\Lambda^\mu f,\Lambda^\mu w)$ with $(f',w')$ defined by 
\[
\begin{cases}
\mathcal{Q}f'=0 \\ f'(T) = \Lambda^\mu f_1, \end{cases} 
\qquad 
\begin{cases} 
\widePtrois w' = \chi_\omega \RE f' \\ w'(T) = 0. 
\end{cases}
\]
We first estimate $f'-\Lambda^\mu f$ 
and then deduce an estimate for $w'-\Lambda^\mu w$.  Write
$$
\mathcal{Q}(f'-\Lambda^\mu f)
=[\Lambda^\mu,\QRquatre]f+[ \Lambda^\mu, W]\px f,  \qquad 
(f'-\Lambda^\mu f)_{\arrowvert t=T}=0,
$$
and use the energy estimate \e{n26b} to find that
\be\label{p78e}
\lA f'-\Lambda^\mu f\rA_{C^0([0,T];L^2)}\les  \lA [\Lambda^\mu,\QRquatre]f + [\Lambda^\mu, W]\px f\rA_{L^1([0,T];L ^2)}.
\ee
Similarly,
\begin{align}
&\lA w'-\Lambda^\mu w \rA_{C^0([0,T];L^2)}\les  \lA \mathcal{F}\rA_{L^1([0,T];L ^2)}
\quad \text{where }\label{p79e}\\
&\mathcal{F}\defn  
\chi_\omega \RE (f'-\Lambda^\mu f) +
[\Lambda^\mu,\QR]w+[\Lambda^\mu,W]\px w
-[\Lambda^\mu,\chi_\omega]\RE f. \notag
\end{align}
By \e{p78e} and the obvious embedding $C^0([0,T];L^2)\subset 
L^1([0,T];L^2)$, we deduce that
\begin{align*}
\lA \mathcal{F}\rA_{L^1([0,T];L ^2)}
&\les \lA [\Lambda^\mu,\chi_\omega]\RE f\rA_{C^0([0,T];L^2)}
\\&\quad
+ \lA [\Lambda^\mu,\QRquatre]f\rA_{C^0([0,T];L^2)}
+ \lA [\Lambda^\mu,\QR]w \rA_{C^0([0,T];L^2)}
\\
& \quad \ + \lA [\Lambda^\mu,W]\px f\rA_{C^0([0,T];L^2)}
+\lA [\Lambda^\mu,W]\px w\rA_{C^0([0,T];L^2)}.
\end{align*}

To estimate the commutators $[\Lambda^\mu,\chi_\omega]$ and $[\Lambda^\mu,W]$, we use the classical estimate
%Let~$s>1+\frac{d}{2}$ and~$\sigma \in \xR$ be such that~$\sigma \leq s$. 
%Then there exists~$K>0$ such that for all~$V \in   H^s({\mathbf{R}}^d)$  and ~$u \in H^{\sigma- 1}({\mathbf{R}}^d)$  one has
$$
s>\tdm,~ 0\le \mu\le s~\Rightarrow ~\Vert [\Lambda^\mu, W] u \Vert_{L^2} \leq K \Vert W \Vert_{H^s}  \Vert u \Vert_{H^{\mu- 1}}.
$$
On the other hand, to estimate the commutator 
$[\Lambda^\mu,\QRquatre]$ (or $[\Lambda^\mu,\QR]$) we estimate separately 
$\Lambda^\mu \QRquatre$ and $\QRquatre \Lambda^\mu$. 
Recalling that $\QRquatre = -\QR^*+(\px \V)$, we conclude that
$$
\lA \mathcal{F}\rA_{L^1([0,T];L ^2)} 
\les \| f \|_{C^0([0,T];H^{\mu-1})} 
+ a \lA (f,w)\rA_{C^0([0,T];H^\mu)}
$$
where
$$
a \defn \lA W \rA_{C^0([0,T]; H^3)}
+ \lA \QR \rA_{C^0([0,T];\mathcal{L}(H^\mu)\cap \Lr(L^2))}.
$$
Notice that 
$a\leq \lA (W,R)\rA_{r,T}+ \lA \QR \rA_{C^0([0,T];\mathcal{L}(H^\mu))}$ 
where $\lA (W,R)\rA_{r,T}$ is as defined above \eqref{n510}. 

Now, using the energy estimate \e{n26-H1}, we have
$\lA f\rA_{C^0([0,T];H^\mu)}\les  \lA f_1\rA_{H^\mu}$. 
Using again \e{n26-H1} and the equation satisfied by $w$, 
we deduce that $\lA w\rA_{C^0([0,T];H^\mu)}\les  \lA f_1\rA_{H^\mu}$. 
Thus, by \e{p79e}, we find 
$$
\lA w'-\Lambda^\mu w \rA_{C^0([0,T];L^2)} 
\les \| f_1 \|_{H^{\mu-1}} + a \lA f_1\rA_{H^\mu}.
$$
In particular, at $t=0$, we get  
$
\lA w'(0)-\Lambda^\mu w(0) \rA_{L^2}
\les \| f_1 \|_{H^{\mu-1}} + a \lA f_1\rA_{H^\mu}.
$
Now, by definition, $w'(0)=S\Lambda^\mu f_1$ while 
$\Lambda^\mu w(0)=\Lambda^\mu w_{in}$. 
Therefore, by triangle inequality, 
\be \label{ppp 1}
\lA S \Lambda^\mu f_1\rA_{L^2} \les  
\lA w_{in}\rA_{H^\mu} + \| f_1 \|_{H^{\mu-1}} + a \lA f_1\rA_{H^\mu}.
\ee
For $\mu \in [0,1]$, one has $\| f_1 \|_{H^{\mu-1}} \leq \| f_1 \|_{L^2} \les \| w_{in} \|_{L^2}$, 
and therefore 
\be \label{ppp 2}
\lA S \Lambda^\mu f_1\rA_{L^2} \les  
\lA w_{in}\rA_{H^\mu} + a \lA f_1\rA_{H^\mu}. 
\ee
Plugging this bound into \e{p72e}Ê yields 
$\lA f_1\rA_{H^\mu}\les  \lA w_{in}\rA_{H^\mu} +a \lA f_1\rA_{H^\mu}$.
By taking $a$ small enough, 
we conclude that
\be\label{ppp 3}
\lA f_1\rA_{H^\mu}\les  \lA w_{in}\rA_{H^\mu}.
\ee

For $\mu \in (1,\frac32]$ we go back to \e{ppp 1} and note that
$\| f_1 \|_{H^{\mu-1}} \les \| w_{in} \|_{H^{\mu-1}}$ because $\mu-1 \in [0,1]$. 
Hence \e{ppp 2} holds, and we reach the same conclusion as above. 
This completes the proof of statement $iii)$.

\medskip

\emph{Proof of statement $iv)$.} 
Given $w_{in}$, let $f_1$ and $f_1'$ in $L^2_M$  
be as given by Lemma~\ref{L3}, so that $f := \Theta_{M,T}(w_{in})$ and 
$f' := \Theta'_{M,T'}(w_{in})$ are determined by the Cauchy problems
\[
\begin{cases} \mathcal{Q}f=0 \ \text{ on } [0,T] \\ 
f(T) = f_1 
\end{cases}
\qquad 
\begin{cases} \mathcal{Q}'f'=0 \ \text{ on } [0,T'] \\ 
f'(T') = f'_1 
\end{cases}
\]
where 
\[
\mathcal{Q}' := \partial_t + i L + W' \px + \QRquatre', \quad  
\QRquatre' \defn -(\QR')^* + (\px \V').
\] 
Similarly, we denote $Q' = - (\mathcal{Q}')^* = \pa_t + i L + W' \pa_x + R'$. 
%(namely $Q', \mathcal{Q}', \mathcal{R}'$ are given by replacing $W,\QR$ with $W',\QR'$ in \eqref{def cal Q}).
By definition of $f, f'$, the unique solutions $w \in C^0([0,T];L^2(\xT))$ and 
$w' \in C^0([0,T'];L^2(\xT))$ of the two Cauchy problems
\be \label{two Cp}
\begin{cases} 
\widePtrois w = \chi_\omega \RE f \ \text{ on } [0,T] \\ 
w(0) = w_{in} 
\end{cases}
\qquad 
\begin{cases} 
\widePtrois' w' = \chi_\omega \RE f' \ \text{ on } [0,T'] \\ 
w'(0) = w_{in} 
\end{cases}
\ee
satisfy $w(T)=ibM$, $w'(T') = ib' M$ for some $b, b'\in\xR$.
The idea now is to introduce an auxiliary control problem. 
Let $f''\in C^0([0,T];L^2(\xT))$ be the unique solution of 
\be \label{reno 3}
\mathcal{Q}  f'' =0  \ \text{ on } [0,T], \qquad  f''(T) = f'_1,
\ee
so that $f''$ solves the same equation as $f$ and it has the same Cauchy data as $f'$.
Then introduce $w''$ as the unique solution to 
\be \label{reno 4}
\widePtrois w'' = \chi_\omega \RE  f'' \ \text{ on } [0,T], \qquad w''(T) = i b' M % w'(T)
\ee
and set $w''_{in}\defn w''(0)$. 
By uniqueness (see statement $ii)$) we deduce  
that $f''$ is the control for the operator $Q$ associated to $w''_{in}$, that is
$$
f''=\controltroisT (w''_{in}).
$$
Then, by continuity (see statement $i)$) one has
$$
\lA f-f''\rA_{C^0([0,T];L^2)} =\lA \controltroisT (w_{in}-w''_{in})\rA_{C^0([0,T];L^2)}
\les \lA w_{in}-w''_{in}\rA_{L^2}.
$$
Let $\tilde f := \mT f$ and $\tilde f'' := \mT f''$. % be the time-rescaled versions of $f, f''$. 
Then
\be \label{reno 1}
\| \tilde f - \tilde f'' \|_{C^0([0,T'];L^2)} = \| f - f'' \|_{C^0([0,T];L^2)}
\ee
because 
\be \label{reno 2}
\forall h \in C^0([0,T];L^2), \quad \| \mT h \|_{C^0([0,T'];L^2)} = \| h \|_{C^0([0,T];L^2)}.
\ee 
It remains to estimate $\| f' - \tilde f'' \|_{C^0([0,T'];L^2)}$ 
and $\lA w_{in}-w''_{in}\rA_{L^2}$. We begin with $f'-\tilde f''$. 
Since $f''$ solves \e{reno 3}, %its rescaled version 
$\tilde f''$ satisfies
\[
\tilde \mQ \tilde f'' = 0 \ \text{ on } [0,T'], \qquad 
\tilde f''(T') = f'_1 
\]
where 
\[
\tilde \mQ := \partial_t + i \lm L + \lm \tilde W \px + \lm \tilde \mR,
\] 
and $\tilde W := \mT W$, $\tilde \mR := \mT \mR$ (namely $\tilde \mR(t) := \mR(\lm t)$). 
By difference, one has 
$$
\tilde \mQ( f' - \tilde f'') = F_0, \quad (f' - \tilde f'')(T') = 0, 
$$
where
\[
F_0 := (\lm-1) (i L + \tilde W \pa_x + \tilde \mR) f' + (\tilde W - W') \pa_x f' + (\tilde \mR - \mR') f'.
\] 
In order to apply the $L^2$-energy bound \e{n26b}, we estimate $F_0$. 
Using the regularity property of the control operator $\Theta_{M,T}$ (see statement $iii)$) we have 
\begin{align} \label{reno 6} 
\| L f' \|_{C^0([0,T'];L^2)} 
& \les \| f' \|_{C^0([0,T'];H^{\frac32})} 
\les \| w_{in} \|_{H^{\frac32}},
\\ 
\| (\tilde W - W')\px f' \|_{C^0([0,T'];L^2)}
& \les \| \tilde W - W' \|_{C^0([0,T'];H^1)} \| f' \|_{C^0([0,T'];H^1)}\\
&\les \| \tilde W - W' \|_{C^0([0,T'];H^1)} \| w_{in} \|_{H^1}.
\notag 
\end{align}
Similarly
\begin{align*}
&\| (\tilde \mR - \mR') f' \|_{C^0([0,T'];L^2)} \\
&\qquad \les \| \tilde \mR - \mR' \|_{C^0([0,T'];\Lr(L^2))} \| f' \|_{C^0([0,T'];L^2)} 
\notag \\
&\qquad \les \big( \| \tilde R - R' \|_{C^0([0,T'];\Lr(L^2))} + \| \tilde W - W' \|_{C^0([0,T'];H^2)} \big) 
\| w_{in} \|_{L^2}
\end{align*}
where 
$\tilde R := \mT R$ (namely $\tilde R(t) = R(\lm t)$). 
Using \e{n26b} we conclude that 
\begin{multline} \label{z100}
\| f'-\tilde f'' \|_{C^0([0,T'];L^2)} \\
\les  \| w_{in} \|_{H^{\frac32}}
\big( |\lm-1| + \| \tilde W - W' \|_{C^0([0,T'];H^2)} + \| \tilde R - R' \|_{C^0([0,T'];\Lr(L^2))} \big).
\end{multline}
It remains to estimate $\| w_{in}-w''_{in} \|_{L^2}$. 
Let $\tilde w'' := \mT w''$. % be the time-rescaled version of $w''$. 
At $t=0$ one has $w'(0) - \tilde w''(0) = w_{in} - w''_{in}$, hence we study the difference $w' - \tilde w''$. 
Since $w''$ solves \e{reno 4}, $\tilde w''$ satisfies
\be \label{reno 5}
\tilde Q \tilde w'' = \lm \chi_\omega \RE \tilde f'' \ \text{ on } [0,T'], \qquad 
\tilde w''(T') = i b' M,
\ee
where
\[
\tilde Q := \partial_t + i \lm L + \lm \tilde W \px + \lm \tilde R.
\] 
By difference, 
\[
\tilde Q (w'-\tilde w'') = F, \quad (w' - \tilde w'')(T') = 0, 
\]
where 
\[
F \defn \chi_\omega \RE ( f'- \lm \tilde f'') + (\lm-1) ( i L + \tilde W \pa_x + \tilde R) w' 
+ (\tilde W - W') \pa_x w' + (\tilde R - R') w'.
\]
In order to apply the $L^2$-energy bound \e{n26b}, we estimate $F$. 
First, $f' - \lm \tilde f'' = \lm (f' - \tilde f'') + (1-\lm) f'$, 
and we have already estimated both $f'-\tilde f''$ (see \e{z100}) and $f'$.
For the other terms in $F$ we proceed as above,  
recalling that $\| w' \|_{C^0([0,T'];L^2)} \les \| w_{in} \|_{L^2}$. 
Also, since $w'$ solves the Cauchy problem \eqref{two Cp}, 
we deduce from \e{n26-H1} and the second inequality in \e{reno 6} that 
$\blA w' \brA_{C^0([0,T'];H^{3/2})} \les \lA w_{in}\rA_{H^{3/2}}$.
As a consequence, also $\| F \|_{C^0([0,T'];L^2)}$ is bounded by the term on the right-hand side of \eqref{z100}. 
Then, applying the energy inequality \e{n26b}, we deduce 
that $\| w' - \tilde w'' \|_{C^0([0,T'];L^2)}$ satisfies the same bound. 
In particular, at time $ t = 0$, this yields the desired bound 
for $w_{in} - w''_{in} = (w'- \tilde w'')(0)$.

\emph{Proof of statement $v)$.} 
We begin by introducing some notations which are used in the proofs of statements 
$v)$ and $vi)$. 
As already mentioned, 
it follows from Lemma~\ref{L2} that there exists an operator 
$E_T\colon L^2(\xT)\rightarrow C^0([0,T];L^2(\xT))$ such that 
$v=E_T(v_1)$ is the unique solution to the Cauchy problem 
$\mathcal{Q}v=0$ with data $v(T)=v_1$. Moreover
\be\label{z102}
\lA E_T(v_1)\rA_{C^0([0,T];L^2(\xT))}\les \lA v_1\rA_{L^2}.
\ee
Now recall that by definition
\be\label{z103}
a_T(f_1,\phi_1)
\defn {}  \RE \int_0^T \int_\xT \chi_\omega(x)  \RE (E_T(f_1))\, \overline{E_T(\phi_1)}\, dx dt .
\ee
Also introduce the mapping $\Lambda\colon L^2(\xT)\rightarrow \xR$ defined by $\Lambda(v_1)=-\RE (w_{in},E_T(v_1)(0))$ where 
$E_T(v_1)(0)=E_T(v_1)\arrowvert_{t=0}$. It follows from Lemma~\ref{L3} that 
there exist two functions $f_1 \in L^2_M$ and $f_1' \in L^2_{M'}$ such that
\begin{align*}
&\forall \phi_1\in L^2_M,\quad a_T(f_1,\phi_1)=\Lambda(\phi_1) ,\\
&\forall \phi_1\in L^2_{M'},\quad a_T(f_1',\phi_1)=\Lambda(\phi_1).
\end{align*}
Then $\Theta_{M,T}(w_{in})-\Theta_{M',T} (w_{in})=E_T(f_1-f_1')$. In view 
of \e{z102}, to prove 
statement $v)$ it is sufficient to estimate $f_1-f_1'$. To do so, 
we need to compare elements in $L^2_M$ and elements in 
$L^2_{M'}$. Observe that, by definition of $L^2_M$, if 
$\varphi\in L^2_{M'}$ then $(M'/M)\varphi\in L^2_{M}$. 
Therefore $\varphi_1\defn f_1-\frac{M'}{M}f_1'$ 
belongs to $L^2_M$ and we can use \e{p519a} to deduce that 
\be\label{z103z}
\lA f_1-\frac{M'}{M}f_1'\rA_{L^2}^2\les 
a_T\left(f_1-\frac{M'}{M}f_1',f_1-\frac{M'}{M}f_1'\right)=
a_T\left(f_1-\frac{M'}{M}f_1',\varphi_1\right).
\ee
Now write the last term as the sum $(I) + (II) + (III)$, where 
\begin{align*}
%a_T\left(f_1-\frac{M'}{M}f_1',\varphi_1\right)&=(I)+(II)+(III),\\
(I)&=a_T(f_1,\varphi_1)-a_T\left(f_1',\frac{M}{M'}\varphi_1\right),\\
(II)&=a_T\left(f_1',\frac{M}{M'}\varphi_1\right)-a_T(f_1',\varphi_1),\\
(III)&=a_T(f_1',\varphi_1)-a_T\left(\frac{M'}{M}f_1',\varphi_1\right).
\end{align*}
(Notice that both $M/M'$ and $M'/M$ appear.) 
Since $(M/M')\varphi_1$ belongs to $L^2_{M'}$, we can write 
$a_T\left(f_1',(M/M')\varphi_1\right)=\Lambda((M/M')\varphi_1)$ to deduce that
$$
(I)=\Lambda(\varphi_1)-\Lambda\left(\frac{M}{M'}\varphi_1\right)=
\Lambda\left(\frac{M'-M}{M'}\varphi_1\right),
$$
so that $\la (I)\ra\les \lA M'-M\rA_{L^\infty}\lA w_{in}\rA_{L^2}\lA\varphi_1\rA_{L^2}$. On the other hand, it follows from the easy estimates \e{p510.5z} and \e{z102} 
that
\begin{align*}
\la (II)\ra +\la (III)\ra&\les 
\lA M-M'\rA_{L^\infty}\lA f_1'\rA_{L^2}
\lA\varphi_1\rA_{L^2}\\
&\les \lA M-M'\rA_{L^\infty}\lA w_{in}\rA_{L^2}
\lA\varphi_1\rA_{L^2}.
\end{align*}
By combining \e{z103z} with the previous estimates we conclude that 
$$
\lA f_1-\frac{M'}{M}f_1'\rA_{L^2}\les 
\lA M-M'\rA_{L^\infty}\lA w_{in}\rA_{L^2}.
$$
Now write
$$
\lA f_1-f_1'\rA_{L^2}\les \lA M-M'\rA_{L^\infty} \lA f_1'\rA_{L^2}
+\lA f_1-\frac{M}{M'}f_1'\rA_{L^2}
$$
and $\lA f_1'\rA_{L^2}\les \lA w_{in}\rA_{L^2}$ 
to complete the proof of statement $v)$.
\end{proof}

\section{Controllability for the paradifferential equation}\label{section9}

\newcommand{\om}{\omega}

We now deduce from the results proved in the previous sections that 
the original equation introduced in Section~\ref{SN:3} is controllable, together with Sobolev estimates for the control. 

Consider a paradifferential operator of the form
\be \label{def P in sec 9}
P = \partial_t  +T_V\partial_x +iL^\mez \big( T_c L^\mez \cdot\big)+ R
\ee
where $R$ is an operator of order $0$. Assume that $P$ satisfies Assumption~\ref{T31A}, 
so that as above $V$ and $c$ are real-valued, $c-1$ is small enough and $P$ satisfies the following structural property: 
\be\label{p517z}
Pu \text{ real-valued }Ê~\Rightarrow ~\frac{d}{dt}\int_\xT \IM u (t,x)\, dx=0.
\ee
%Recall this short notation: $\| \cdot \|_\Norm{T}{H^s}$ denotes the norm of $C^0([0,T];H^s)$, etc. 
Introduce the norm
\begin{multline} \label{norm Xs0s}
\| (c-1, V, R) \|_{X^{s_0,s}(T)}
: = \| (c-1,\pa_t c,V) \|_\Norm{T}{H^{s_0}} 
%+ \| \pa_t c \|_\Norm{T}{H^{s_0}} 
+ \sum_{k=2,3,4} \| \pa_t^k c \|_\Norm{T}{H^1} 
\\  
%+ \| V \|_\Norm{T}{H^{s_0}} 
+ \sum_{k=1,2,3} \| \pa_t^k V \|_\Norm{T}{H^1} + \| R \|_\Norm{T}{\mL(H^s)} 
+ \| R \|_\Norm{T}{\mL(H^{s + \frac32})}.
\end{multline}
We recall that $p$ is the symbol given by 
$p\defn c^{-\frac{1}{3}}+\frac{5}{18i}\frac{\chi(\xi)\partial_\xi \ell(\xi)}{\ell(\xi)}
c^{-\frac{4}{3}}\px c$ (see \e{p34b}).

\begin{prop}\label{PP10-b}
Consider an open domain $\omega\subset \xT$. There exists $s_0$ large enough and for any 
$s\ge s_0$  
there exist three increasing functions $\mathcal{F}_j\colon\xR_+^*\rightarrow \xR_+^*$ ($1\le j\le3$), 
with $\lim_{T\rightarrow 0}\mathcal{F}_j(T)=0$, 
such that, for any $T\in (0,1]$, 
the following properties hold.

$i)$ If 
%There exist two positive constants 
%$\delta=\delta(T)$ and $K=K(T)$ 
%such that, if
\be\label{pn200-b}
\lA (c-1,V,R)\rA_{X^{s_0,s}(T)}\le \mathcal{F}_1(T),
\ee
then there exists a bounded operator
$$
\Theta_{s,T}[(V,c,R)]\colon H^{s+\tdm}(\xT)\rightarrow C^0([0,T];H^{s+\tdm}(\xT))
$$
such that, for any $v_{in}\in H^{s+\tdm}(\xT)$ satisfying
$$
\IM \int_\xT v_{in}(x)\, dx=0,
$$
setting $f\defn \Theta_{s,T}[(V,c,R)](v_{in})$ one has
\be\label{n532}
\lA f\rA_{\bo([0,T];H^{s+\tdm})}
\le \frac{\lA v_{in}\rA_{H^{s+\tdm}}}{\mathcal{F}_2(T)}
\ee
and the unique solution $v$ to $Pv= T_{p}\chi_\omega \RE f,~v_{\arrowvert t=0}=v_{in}$ satisfies
$$
v(T)=0.
$$

$ii)$ Assume that the triple $(c,V,R)$ satisfies \e{pn200-b} and 
\be \label{fur con R}
\| \pa_t^2 c \|_\Norm{T}{H^{s_0}} + \| \pa_t V \|_\Norm{T}{H^{s_0}} + \| \pa_t R \|_\Norm{T}{\mL(H^s)} \leq 1. 
\ee 
Let $(c',V',R')$ be another triple also satisfying the same (corresponding) bounds 
\e{pn200-b} and \e{fur con R}. Then 
\begin{align} \label{n534}
& \| \Theta_{s,T}[(V,c,R)](v_{in})-\Theta_{s,T}[(V',c',R')](v_{in}) \|_\Norm{T}{H^{s}} \\
& \le \frac{\lA v_{in}\rA_{H^{s+\tdm}}}{\mathcal{F}_3(T)} 
\Big\{ \| (c-c', \pa_t(c-c'), V-V') \|_\Norm{T}{H^{s_0}} + \| R-R' \|_\Norm{T}{\Lr(H^s)} \Big\}. \notag 
\end{align}
\end{prop}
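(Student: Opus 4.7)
The idea is to obtain (i) as a direct application of Proposition~\ref{PP10} at Sobolev index $s+\tdm$, and to obtain (ii) by unwinding the reduction chain of Sections~\ref{P:S3}--\ref{P:S7} in order to transfer the contraction estimates iv)--v) of Proposition~\ref{P:51}, which live at the constant-coefficient level, back up to the paradifferential equation.

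\textbf{Existence and regularity.} For (i), I would choose $\mathcal{F}_1(T)$ smaller than the smallness constant $\wide\delta(T,s+\tdm)$ produced by Proposition~\ref{PP10} at regularity $s+\tdm$. By definition \e{norm Xs0s}, the hypothesis \e{pn200-b} controls every quantity appearing on the left-hand side of \e{pn200}: the $\bo([0,T];H^{s_0})$-norms of $V$ and $c-1$, the $\bo([0,T];H^1)$-norms of $\partial_t^k V$ and $\partial_t^k c$ for $1\le k\le 3$, and the operator norm of $R$ on $H^{s+\tdm}$. Proposition~\ref{PP10} then produces a control $f=\Theta_{s,T}[(V,c,R)](v_{in})$ in $C^0([0,T];H^{s+\tdm}(\xT))$ satisfying \e{n532} and driving the solution of $Pv=T_p\chi_\omega\RE f$ with data $v_{in}$ to $0$ at time $T$; condition ii) of Assumption~\ref{T31A} is invoked exactly as in the proof of Proposition~\ref{PP10} to upgrade $v(T)\in i\xR$ to $v(T)=0$.

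\textbf{Stability.} For (ii), I would introduce an auxiliary interpolating control problem, in the spirit of statement iv) of Proposition~\ref{P:51}. Set $f := \Theta_{s,T}[(V,c,R)](v_{in})$ and $f' := \Theta_{s,T}[(V',c',R')](v_{in})$, and unwind the chain of reductions: the semi-classical operator $\Lambda_{h,s}$ of Section~\ref{P:S3} conjugates $P$ to a classical operator $\widetilde{P}_h$ whose principal coefficients remain $(V,c)$ modulo a zero-order remainder of size $O(h^{-s}\|R\|_{\mL(H^s)})$; the three local transformations of Proposition~\ref{P:38} together with the pseudo-differential conjugation of Proposition~\ref{P:39} then reduce $\widetilde{P}_h$ to the constant-coefficient model $\partial_t+iL+\Rcinq$ on a rescaled time interval $[0,T_1]$, with $W,\Rcinq$ and the effective target $M=\Phi(V,c,R)(T,\cdot)(1)$ depending Lipschitz-continuously on $(V,c,R)$ (this uses the higher-regularity refinement Proposition~\ref{P:38 App C} and the explicit formulas \e{n265}--\e{n267} defining $\beta$ and $q$). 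Statement iv) of Proposition~\ref{P:51} then bounds the $L^2$-difference of the two HUM controls for the reduced problems by
\[
\big(|1-T/T'|+\|W-W'\|_{\bo([0,T'];H^2)}+\|\Rcinq-\Rcinq'\|_{\bo([0,T'];\mL(L^2))}\big)\,\|v_{in}\|_{H^\tdm},
\]
statement v) accounts for the dependence of the target $M$ on the triple, and pulling these estimates back through the local conjugations preserves the bound. Finally, the Lipschitz dependence of $(W,\Rcinq,M)$ on $(V,c,R)$ is dominated by $\|(c-c',\partial_t(c-c'),V-V')\|_{\bo([0,T];H^{s_0})}+\|R-R'\|_{\bo([0,T];\mL(H^s))}$, which yields \e{n534}.

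\textbf{Main obstacle.} The hard part will be the $\tdm$-derivative loss in \e{n534}: the right-hand side requires $v_{in}\in H^{s+\tdm}$ in order to bound $f-f'$ only in $H^s$. This loss is intrinsic and already appears in statement iv) of Proposition~\ref{P:51}: the interpolating problem contains a term $(\lm-1)\,iL\,f'$ coming from the time-rescaling $t\mapsto\lm t$, and $Lf'$ being of order $\tdm$ can only be estimated using the $H^\tdm$-norm of $f'$, which in turn requires $H^\tdm$-data by the regularity statement iii) of Proposition~\ref{P:51}. Propagating this through the Sobolev scale produces the stated $\tdm$-derivative shift. The additional hypothesis \e{fur con R} is needed precisely to keep uniform control on one extra time derivative of the coefficients as one traverses the change of variables of Proposition~\ref{P:38}, where $\partial_t c$ enters both in the definition of $W$ and in the zero-order remainder $\Rtrois$.
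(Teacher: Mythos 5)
The overall approach is essentially the one the paper takes: define the control operator by unwinding the chain of conjugations $P \mapsto \widetilde{P}_h = \Lambda_{h,s}P\Lambda_{h,s}^{-1} \mapsto \wide P_3 = \Phi m^{-1}\widetilde{P}_h\Phi^{-1}$, apply Proposition~\ref{P:51} at the level of $\wide P_3$, and obtain the Sobolev estimate \eqref{n532} from the regularity result iii) of Proposition~\ref{P:51} and the stability estimate \eqref{n534} from statements iv)--v). You have also correctly diagnosed the source of the $\tfrac32$-derivative loss (the term $(\lm-1)iL f'$ in the interpolating control problem) and the reason for hypothesis \eqref{fur con R} (one extra time derivative of $c,V,R$ is consumed by the change of variables of Proposition~\ref{P:38}). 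So the plan matches the paper's proof in its major steps.

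There is, however, one step that cannot be skipped and that your plan glosses over. You present statement $i)$ as ``a direct application of Proposition~\ref{PP10}'' and then treat $\Theta_{s,T}$ as if it were automatically defined by that existence statement. But Proposition~\ref{PP10} is an existence result, not an operator; statement $ii)$ is a comparison of two values of $\Theta_{s,T}$, which only makes sense once $\Theta_{s,T}$ has a canonical definition. The paper builds this operator explicitly as
$\Theta_{s,T}[(V,c,R)] = \Lambda_{h,s}^{-1}\widecontrol{T}(I+\opk)^{-1}\Lambda_{h,s}$,
where $\widecontrol{T}$ is the HUM control operator for $\wide P$ (Lemma~\ref{P:51-w}) and $\opk$ is the re-localization correction from \eqref{n211}. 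The $(I+\opk)^{-1}$ factor is not cosmetic: it is what resolves the fact that $\Lambda_{h,s}^{-1}$ destroys compact supports, so without it the conjugated control is not of the form $T_p\chi_\omega \RE f$. Your plan never mentions $\opk$, and the stability estimate for the block $B_3 = (\Lambda_{h,s}')^{-1}\wide\Theta'_T\{(I+\opk)^{-1}-(I+\opk')^{-1}\}\Lambda_{h,s}$ is a genuine piece of the argument that does not follow from Proposition~\ref{P:51} alone.

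Two smaller inaccuracies. First, you write that Propositions~\ref{P:38} and~\ref{P:39} together ``reduce $\widetilde{P}_h$ to the constant-coefficient model $\partial_t + iL + \Rcinq$''. That is not the structure of the proof: the control problem is solved directly for $\wide P_3 = \partial_t + W\partial_x + iL + R_3$ by Proposition~\ref{P:51}; the further conjugation by $A=\Op(q e^{i\beta|\xi|^{1/2}})$ to $\partial_t + iL + \Rcinq$ occurs only inside the observability argument (Section~\ref{P:S6}), not in the control chain. Second, the correct target is $M(x) = \{1+\pa_x\tilde\beta_1(T, x - p(T_1))\}^{1/2}$ from \eqref{p515z}; calling it $\Phi(V,c,R)(T,\cdot)(1)$ is a reasonable heuristic but one should say that $M$ is determined by requiring $\Phi^{-1}|_{t=T}(ibM) \in i\xR$, which forces that particular formula. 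These do not invalidate the plan, but they would have to be corrected before turning it into a proof.
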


\begin{proof}
Let $P$ be given by \e{def P in sec 9}, with $V,c,R$ satisfying \e{pn200-b}. 
We begin by recalling how the various linear operators 
have been defined in the previous sections starting from $P$:
\begin{align*}
& \widetilde{P} 
:= \Lambda_{h,s} P \Lambda_{h,s}^{-1} 
= \partial_t  +T_V\partial_x +iL^\mez ( T_c L^\mez \cdot )+ \Run 
= \partial_t  + V \partial_x +i L^\mez ( c L^\mez \cdot ) + R_2, 
\\
& \wide P_3 := \Phi m^{-1} \widetilde P \Phi^{-1} 
= \pa_t + W \pa_x + i L + \Rtrois,
\\
& \mathcal{P} := - (\widetilde P_3)^* 
= \partial_t +W\px +iL +\Rquatre,
\end{align*}
where $\Phi, m, W$ are given in Proposition~\ref{P:38}, 
\begin{align}
\Run & \defn \Lambda_{h,s} R \Lambda_{h,s}^{-1}
+ [ \Lambda_{h,s},\partial_t ] \Lambda_{h,s}^{-1}
+ [ \Lambda_{h,s}, T_V \px ] \Lambda_{h,s}^{-1} 
+ i [ \Lambda_{h,s}, L^\mez ( T_c L^\mez \cdot ) ] \Lambda_{h,s}^{-1},\notag
\\
\Rdeux u & := \Run u + T_V \px u - V \px u + i (L^\mez T_c L^\mez u - L^\mez (c L^\mez u )),\notag
\\
\Rquatre w & \defn -\Rtrois^*w + (\px \V) w,\label{n534-z}
\end{align}
and $R_3$ has a more involved expression, obtained in Appendix \ref{S:AC1}. 
Moreover $ \wide P = m \Phi^{-1} \wide P_3 \Phi$. 
As a first step in the proof of Proposition \ref{PP10-b}, 
we study the control problem for $\wide P$.

\begin{lemm}\label{P:51-w}
There exist $s_0$ large enough and 
increasing functions $\mathcal{F}_j\colon\xR_+^*\rightarrow \xR_+^*$ ($j=1,2,3$), satisfying $\lim_{T\rightarrow 0}\mathcal{F}_j(T)=0$, 
such that for any $T > 0$ the following result holds. 

$i)$ If 
\begin{multline} \label{n510z} 
\| (c-1,\pa_t c,V) \|_\Norm{T}{H^{s_0}} %+ \| \pa_t c \|_\Norm{T}{H^{s_0}} 
+ \sum_{k=2,3,4} \| \pa_t^k c \|_\Norm{T}{H^1} 
%+ \| V \|_\Norm{T}{H^{s_0}} 
\\  
+ \sum_{k=1,2,3} \| \pa_t^k V \|_\Norm{T}{H^1} + \| \Rdeux \|_\Norm{T}{\mL(L^2)} 
\le \mathcal{F}_1(T), 
\end{multline}
then there exists an operator ${\wide \Theta}_T\colon L^2\rightarrow C^0([0,T];L^2)$ 
such that for any $u_{in}\in L^2$, setting 
$f\defn{\wide \Theta}_T(u_{in})$ one has
\be\label{n510az}
\lA f\rA_{\bo([0,T];L^2)}\le \frac{\lA u_{in}\rA_{L^2}}{\mathcal{F}_2(T)}
\ee
and the unique solution $u$ of
$$
\wide P u=\chi_\omega \RE f, \quad u(0)=u_{in}
$$
satisfies $u(T,x)=ib$ for some $b\in\xR$ and all $x\in \xT$.  
If, in addition, 
\be \label{R2 32}
\| R_2 \|_\Norm{T}{\mL(H^{\tdm})} \le \mathcal{F}_1(T),
\ee
then 
\be\label{n510az-tdm}
\lA f\rA_{\bo([0,T];H^\tdm)}\le \frac{\lA u_{in}\rA_{H^\tdm}}{\mathcal{F}_2(T)}\,.
\ee

$ii)$ Assume that $(V,c,\Rdeux)$ satisfies
\e{n510z}, \e{R2 32} and
\be \label{fur con}
\| \pa_t V \|_\Norm{T}{H^2} + \| R_2 \|_\Norm{T}{\mL(H^1)} + \| \pa_t R_2 \|_\Norm{T}{\mL(L^2)} \leq 1,
\ee
and consider another triple $(V',c',\Rdeux')$ also satisfying the same (corresponding) bounds 
\e{n510z}, \e{R2 32} and \e{fur con}. Then  
\be\label{z534a sec 9} 
\begin{aligned}
&\| (\wide \Theta_T-\wide \Theta'_T)(u_{in})\|_\Norm{T}{L^2}  \\
&\qquad\le \frac{\lA u_{in}\rA_{H^{\tdm}}}{\mathcal{F}_3(T)} 
\Big\{ \| c - c' \|_\Norm{T}{H^{r+1}} + \| \pa_t c - \pa_t c' \|_\Norm{T}{H^1} \\
&\qquad\qquad \qquad \qquad 
+ \| V - V' \|_\Norm{T}{H^2} + \| R_2 - R'_2 \|_\Norm{T}{\mL(L^2)}\Big\}. 
\end{aligned}
\ee 
\end{lemm}

\begin{proof}
Recall that the cut-off function $\chi_\omega(x)$ is supported on $\omega$ and $\chi_\omega = 1$ on the open interval $\omega_1 \subset \omega$. 
Consider another open interval $\omega_2$ and a cut-off function $\chi_2(x)$ such that
\be \label{supp 1}
\begin{cases} (i) \ \text{supp}(\chi_2) \subseteq \omega_2 \, ; \\
(ii) \  \text{if supp}(h) \subseteq \omega_2 \ \Rightarrow \ \text{supp}(\Phi^{-1} h) \subseteq \omega_1 
\quad \forall t \in [0,T], \ \forall h \in L^2(\T).
\end{cases}
\ee
We want to apply Proposition \ref{P:51} for $Q = \wide P_3$. 
The hypothesis \e{n510} of Proposition \ref{P:51}, namely the inequality  $\| (W, R_3) \|_{r,T} \leq \mathcal{F}_1(T)$, follows from the assumption \e{n510z}, 
by using \e{p215} and \e{p214 high} with $\s = 3/2$. 
Hence, by the statement $i)$ of Proposition \ref{P:51}
(applied with $T_1$ instead of $T$ and $\chi_2$ instead of $\chi_\om$), given $w_{in} \in L^2(\T)$, the unique solution $w$ of the Cauchy problem 
\be \label{1912 1}
\widetilde P_3 w = \chi_2 \RE (f_2) \quad \forall t \in [0,T_1], \qquad 
w(0) = w_{in} 
\ee
satisfies $w(T_1) = i b M$ for some real constant $b$ 
if we choose $f_2 = \Theta_{M,T_1}(w_{in})$, 
where $\Theta_{M,T_1}$ is the operator given by Proposition~\ref{P:51},  
and the function $M$ will be fixed below in this proof. 
Also, by \e{n510a}, $f_2$ satisfies 
\be \label{jan 1}
\| f_2 \|_{C^0([0,T_1];L^2)} \le \frac{ \| w_{in} \|_{L^2}}{\mathcal{F}_2(T_1)}\,.
\ee
Moreover, if \e{R2 32} also holds, then, using \e{p215 high} with $\s = 3/2$, 
we deduce the bound \e{n510-mu} for $W,R_3$ with $\mu = 3/2$. 
Therefore, by the statement $iii)$ of Proposition \ref{P:51}, 
\be\label{jan 2} 
\| f_2 \|_{\bo([0,T_1];H^\tdm)} \le \frac{\| w_{in} \|_{H^\tdm}}{\mathcal{F}_3(T_1)}\,.
\ee 
Now let $u_{in} \in L^2(\xT)$ be given 
and define $w_{in}\in L^2(\xT)$ by 
$w_{in}:=\Phi\arrowvert_{t=0} u_{in}$. 
We apply the previous argument and obtain 
a function $w$ satisfying \e{1912 1} and  $w(T_1) = i b M$. Set 
$u := \Phi^{-1} w$. %, and $u_{in} := \Phi^{-1}_{|t=0} w_{in}$. 
Since $\wide P u = m \Phi^{-1} \wide P_3 \Phi u$, it follows from \e{1912 1} that
\be \label{1912 2}
\wide P u = m \Phi^{-1} (\chi_2 \RE (f_2)) \quad \forall t \in [0,T], \qquad 
u(0) = u_{in},
\ee
and $u(T) = \Phi^{-1}_{|t = T} (i b M)$. 
Then we set 
$f := m \Phi^{-1}(\chi_2  f_2)$, namely we define 
\be\label{p511z}
f = {\wide \Theta}_T (u_{in}) \defn 
 m \Phi^{-1} (\chi_2  \Theta_{M,T_1}(\Phi u_{in})),
\ee
where $\Phi u_{in} = \Phi_{|t=0} u_{in}$. 
By the assumption $(ii)$ in \e{supp 1}, $f$ is supported in $\om_1$, and therefore $f = \chi_\omega f$. Then, since $m \Phi^{-1} (\chi_2 \RE (f_2)) 
= \RE( m \Phi^{-1} (\chi_2 f_2)) = \RE(f)$, 
$$
\wide P u = \chi_\omega \RE f,
\quad u(0)=u_{in},
$$
and we have to choose $M$ so that $u(T)=ib$. 
By definition of $\Phi$, recall that $w=\Phi u$ means that
\[ 
w(t,x) = \big\{ 1+\px\tilde\beta_1 \big( \psi^{-1}(t),x - p(t) \big) \big\}^\mez \, 
u \Big( \psi^{-1}(t), x - p(t)  + \tilde\beta_1 \big( \psi^{-1}(t), x - p(t) \big) \Big)
\]
for $t \in [0,T_1]$, $x \in \T$.
Since $\psi^{-1}(T_1)=T$, we see that $u(T)=ib$ provided that $w(T_1,x)=ibM(x)$ with
\be\label{p515z}
M(x)=\big\{ 1 + \px\tilde\beta_1 (T, x -  p(T_1)) \big\}^\mez,
\ee
and $p(T_1)$ is given in \e{pT}. 
Now the estimates \e{n510az} and \e{n510az-tdm} 
follow from \e{jan 1}, \e{jan 2} % \e{n510a}, \e{p51a} 
and Proposition~\ref{P:38}. This completes the proof of statement $i)$.

$ii)$ 
In what follows, we add the exponent $'$ to denote the objects associated to $(V',c',\Rdeux')$. 
Let $f = \wide \Theta_T(u_{in})$ be defined by \e{p511z}, 
and let $f' = \wide \Theta'_T(u_{in})$ be the corresponding function obtained by taking 
$(V',c', R_2')$ instead of $(V,c, R_2)$. We have to estimate the difference $f - f'$. 
If the constant $\mathcal{F}_1(T)$ in \e{n510z} is sufficiently small, 
% If $\delta$ is small enough, 
then $\om_2, \chi_2$ can be chosen so that \eqref{supp 1} holds both for $\Phi$ and for $\Phi'$. Hence
\[
f - f' = m \Phi^{-1} (\chi_2 \Theta_{M,T_1} (\Phi u_{in}))
- m' \Phi'^{-1} (\chi_2 \Theta'_{M',T_1'} (\Phi' u_{in})).
\]
We split this difference into the sum $f - f' = A_1 + \ldots + A_6$, where 
\begin{align*}
A_1 & := (m - m') \Phi^{-1} (\chi_2 \Theta_{M,T_1} (\Phi u_{in})) 
\\
A_2 & := m' \Phi^{-1} [\chi_2 \Theta_{M,T_1} (\Phi u_{in} - \Phi' u_{in})]
\\
A_3 & := m' \Phi^{-1} [\chi_2 (\Theta_{M,T_1} - \Theta_{M',T_1})(\Phi' u_{in})]
\\
A_4 & := m' (\Psi_1^{-1} - \Psi_1'^{-1}) \psi_* \ph_* [\chi_2 \Theta_{M',T_1}(\Phi' u_{in})]
\\
A_5 & := m' \Psi_1'^{-1} (\psi_* \ph_* - \psi'_* \ph'_* \mathcal{T}) 
[\chi_2 \Theta_{M',T_1}(\Phi' u_{in})]
\\
A_6 & := m' \Phi'^{-1} [\chi_2 \{ \mathcal{T} \Theta_{M',T_1}(\Phi' u_{in}) - \Theta'_{M',T_1'}(\Phi' u_{in}) \}]
\end{align*}
and $\mathcal{T}$ is the time-rescaling operator defined above, namely $(\mathcal{T} h)(t,x) := h(\lm t, x)$, with $\lm := T_1 / T_1'$. Let us estimate each $A_i$.  

\emph{Estimate for $A_1$}. 
Apply \e{wwn 3}. 
\emph{Estimate for $A_2$}.
By construction (see Appendix \ref{S:AC1}), $\psi^{-1}(0) = 0$, $p(0) = 0$, and therefore $\Phi u_{in} = \Phi_{|t = 0} u_{in} = (\Psi_1^{-1})_{|t=0} (u_{in})$. Hence the estimate for $A_2$ follows by \e{wwn 1} 
and \e{n510a}. 
\emph{Estimate for $A_3$}. Apply \e{wwn 8}. 
\emph{Estimate for $A_4$}. Apply \e{wwn 1} and \e{p51a} with $\mu = 1$.
\emph{Estimate for $A_5$}. Apply \e{wwn 2}. To estimate $\pa_t f_2$, use that $f_2$ solves $\wide P_3^* f_2 = 0$ (statement $ii)$ of Proposition \ref{P:51}), and similarly for $f_2'$. 
\emph{Estimate for $A_6$.} The assumptions \e{n510z} and \e{R2 32} imply that $W,R_3$ and $W',R_3'$ satisfy 
\e{n510-mu} with $\mu = 3/2$, which is the hypothesis of statement $iv)$ of Proposition \ref{P:51}. 
Then \e{z534a} holds, namely 
\begin{multline*}
\| \mathcal{T} \Theta_{M',T_1}(\Phi' u_{in}) - \Theta'_{M',T_1'}(\Phi' u_{in}) 
\|_{C^0([0,T_1'];L^2)} 
\\ 
\les \| \Phi' u_{in} \|_{H^{3/2}} \big( |1-\lm| + \| W' - (\mT W) \|_{C^0([0,T'_1];H^2)} 
+ \| R_3' - (\mT R_3) \|_{C^0([0,T'_1]; \mL(L^2))} \big).
\end{multline*} 
Now $\| \Phi' u_{in} \|_{H^{3/2}} \les  \| u_{in} \|_{H^{3/2}}$, 
and the bounds for the last three differences are given in \e{wwn 3}, \e{wwn 5} (with $\s = 2$) and \e{wwn 7}.  
Note that assumptions \e{n510z}, \e{R2 32} and \e{fur con} imply \e{wwn 4}, \e{wwn 6}, 
which imply \e{wwn 5} and \e{wwn 7}. 
\end{proof}

\begin{rema} \label{rem:34pat} 
The function $W$ contains the terms $\pa_t c$ and $V$: see Appendix \ref{S:AC1} 
(see also the bound \e{p214}).
For this reason we assume $\pa_t^4 c$ and $\pa_t^3 V$ to be bounded in \e{n510z} 
in order to get a bound for $\pa_t^3 W$, as required by Proposition \ref{P:51}. 
\end{rema}

\begin{lemm}\label{PPLL'}
If the $\eC{\tdm}$-norms of $c-1$ and $c'-1$ are small enough, then
\[ %\be \label{g1g2}
\| \Lambda_{h,s} - \Lambda_{h,s}' \|_{\mathcal{L}(H^s,L^2)} 
+ \| (\Lambda_{h,s})^{-1}-(\Lambda_{h,s}')^{-1} \|_{\mathcal{L}(L^2,H^s)}
\les \| c-c'\|_{H^1}.
\]
\end{lemm}

\begin{proof}
By definition~\e{pf1} of $\Lambda_{h,s}$ one has 
$$
\Lambda_{h,s}-\Lambda_{h,s}'
=h^s T_{c^{(2s)/3}-c'^{(2s)/3}}L^{(2s)/3}.
$$
So the bound for $\Lambda_{h,s} - \Lambda_{h,s}'$ follows from the paradifferential rule \e{esti:quant0} and the Sobolev embedding $H^1(\xT)\subset L^\infty(\xT)$. 
To prove the other bound, % for $(\Lambda_{h,s})^{-1}-(\Lambda_{h,s}')^{-1}$ 
we use the identity \e{pna19} to obtain that
$$
\Lambda_{h,s}^{-1}-(\Lambda_{h,s}')^{-1}=
(I+h^{s}L^{\frac{2s}{3}})^{-1} [(I+B)^{-1}-(I+B')^{-1}].
$$
Recall that 
$\lA B\rA_{\mathcal{L}(L^2)}\le \frac{1}{2}$ and 
$\lA B'\rA_{\mathcal{L}(L^2)}\le \frac{1}{2}$ so the identity
$$
(I+B)^{-1}-(I+B')^{-1}
=(I+B)^{-1}(B'-B)(I+B')^{-1}
$$
implies that
\be\label{g10}
\| (I+B)^{-1}-(I+B')^{-1} \|_{\mathcal{L}(L^2)} 
\le 4 \| B-B' \|_{\mathcal{L}(L^2)},
\ee
and the bound follows from the definition of $B,B'$ and the paradifferential rule \e{esti:quant0} as above.
\end{proof}

\emph{End of the proof of Proposition \ref{PP10-b}.}
We recall that $\widecontrol{T}$ is the control operator as given by Lemma~\ref{P:51-w}, 
and the operator $\opk$ is introduced in~\e{n211}, with $\| (I+\opk)^{-1} \|_{\mathcal{L}(L^2)}\le 2$ 
if $(c,V,R_2)$ satisfy \e{n510z}, \e{R2 32} and $\| c-1 \|_\Norm{T}{H^2}$ is small enough. 
Set
\be\label{p519z}
\Theta_{s,T}[(V,c,R)] \defn \Lambda_{h,s}^{-1}\widecontrol{T} (I+\opk)^{-1}\Lambda_{h,s},
\ee
and let $f := \Theta_{s,T}[(V,c,R)](v_{in})$. 
Then it follows from the previous construction 
(see Section~\ref{P:S3}, 
in particular the \emph{Proof of Proposition~\ref{PP10} given Proposition~\ref{PP11}}) 
that the unique solution $v$ to $Pv=T_p\chi_\omega f,~v_{\arrowvert t=0}=v_{in}$ 
satisfies $v(T)=ib$ for some constant $b\in\xR$. 
Since $\IM \int_\xT v_{in}(x)\, dx=0$ by assumption, by \e{p517z} we deduce that
$$
\IM \int_\xT v(T,x)\, dx=0.
$$
Therefore $b=0$ and $v(T)=0$. Thus it remains to prove \e{n532}. 
Following the same argument used in Section \ref{P:S3} to prove \e{n217}, one proves that  
$\| \opk \|_{\mL(H^{3/2})} \leq 1/2$, whence $\| (I + \opk)^{-1} \|_{\mL(H^{3/2})} \leq 2$. 
%First, we prove that 
%\be \label{opk 32}
%\| \opk y \|_{H^{\frac32}} \leq \frac12 \| y \|_{H^{\frac32}} \quad \forall y \in H^{\frac32}(\T).
%\ee
%Recall that $\opk y := u_2(0)$, where $u_2(t)$ solves 
%$\wide P u_2 = F$ and $u_2(T) = 0$, with
%\[
%F := (\Lambda_{h,s} T_p \chi_\omega \Lambda_{h,s}^{-1} - \chi_\omega) \RE \tilde f, \quad 
%\tilde f := \wide \Theta_T(y).
%\]
%If $(c,V,R_2)$ satisfy \e{n510z} and \e{R2 32}, then 
%$\| V \|_\Norm{T}{H^2} + \| c-1 \|_\Norm{T}{H^3} + \| R_2 \|_{T, \mL(H^{\frac32})} \les 1$. 
%Therefore the statement $ii)$ of Lemma \ref{L2} applies to $\wide P$, and \e{n26-H1} gives
%$\| u_2 \|_{T, H^{3/2}} \les \| F \|_{T, H^{3/2}}$. 
%Following the same argument used in Section \ref{P:S3} to prove \e{n217} 
%one has $\| F \|_{T, H^{3/2}} \leq \eps \| \tilde f \|_{T, H^{3/2}}$ 
%where $\eps$ can be arbitrarily small if $\| c-1 \|_\Norm{T}{H^2}$ and $h$ are sufficiently small.
%If $c,V,R_2$ satisfy \e{n510z} and \e{R2 32}, then Lemma \ref{P:51-w} applies, and \e{n510az-tdm} gives
%$\| \tilde f \|_{T, H^{3/2}} \les \| y \|_{H^{3/2}}$. 
%Hence \e{opk 32} holds provided that $\| c-1 \|_\Norm{T}{H^2}$ and $h$ are sufficiently small. 
%As a consequence, $\| (I + \opk)^{-1} \|_{\mL(H^{3/2})} \leq 2$. 
By combining this estimate with \e{n510az-tdm}, we have 
\begin{align*}
\| f \|_\Norm{T}{H^{s+\tdm}} 
& \les \| \widecontrol{T} (I+\opk)^{-1}\Lambda_{h,s} v_{in} \|_\Norm{T}{H^\tdm} 
% \\ & \les \| (I+\opk)^{-1} \Lambda_{h,s} v_{in} \|_{H^\tdm}
\les \| \Lambda_{h,s} v_{in} \|_{H^\tdm} 
\les \| v_{in} \|_{H^{s+\tdm}},
\end{align*}
which is \e{n532}. Finally, we observe that
\begin{multline} \label{1201 1}
\| R_2 \|_\Norm{T}{\mL(L^2)} + \| R_2 \|_\Norm{T}{\mL(H^{\frac32})} \\
\les \| c-1, \pa_t c, V \|_\Norm{T}{H^{s_0}} 
+ \| R \|_\Norm{T}{\mL(H^s)} + \| R \|_\Norm{T}{\mL(H^{s+\frac32})}.
\end{multline}
This bound for $R_2$ follows easily from the arguments used in the proof of Lemma~\ref{PP10.5deux}Ê 
and Lemma~\ref{PPLL'}. 
Hence, if $(c,V,R)$ satisfy \e{pn200-b}, then $(c,V,R_2)$ satisfy \e{n510z}, \e{R2 32}.  
This completes the proof of statement $i)$. 

$ii)$ % We now have to prove statement $ii)$. Write
Given $y \in H^{3/2}(\T)$, we have to estimate the difference $\Theta_{s,T}[(V,c,R)](v_{in}) -\Theta_{s,T}[(V',c',R')](v_{in})$, which is, by definition, 
\[ 
\Lambda_{h,s}^{-1}\wide \Theta_{T} (I+\opk)^{-1}\Lambda_{h,s} v_{in}
-(\Lambda_{h,s}')^{-1}\wide \Theta'_{T} (I+\opk')^{-1}\Lambda_{h,s}' v_{in}.
\]

We write it as the sum $B_1 + \ldots + B_4$, with 
\begin{align*}
B_1 & := \{ \Lambda_{h,s}^{-1} - (\Lambda_{h,s}')^{-1} \} \wide \Theta_{T} (I+\opk)^{-1}\Lambda_{h,s} v_{in}, \\
B_2 & := (\Lambda_{h,s}')^{-1} (\wide \Theta_{T} - \wide \Theta'_{T}) (I+\opk)^{-1} \Lambda_{h,s} v_{in}, \\
B_3 & := (\Lambda_{h,s}')^{-1} \wide \Theta'_{T} \{ (I+\opk)^{-1} - (I+\opk')^{-1} \} \Lambda_{h,s} v_{in}, \\
B_4 & := (\Lambda_{h,s}')^{-1} \wide \Theta'_{T} (I+\opk')^{-1} (\Lambda_{h,s} - \Lambda_{h,s}') v_{in}.
\end{align*}
If $(c,V,R)$ satisfy \e{pn200-b}, then $(c,V,R_2)$ satisfy \e{n510z} and \e{R2 32}, and $\| \opk \|_{\mL(L^2)} \leq \frac12$, see \e{n217}. Then, using Lemma \ref{PPLL'} and \e{n510az}, we bound the 
$C^0([0,T];H^s)$-norm of $B_1$ and $B_4$ by $\| c - c' \|_{H^1} \| v_{in} \|_{H^s}$. 
To estimate $B_2$, we want to use \e{z534a sec 9}, which holds provided that $(c,V,R_2)$ and $(c',V',R_2')$ satisfy \e{fur con}. One proves that, if $\| c-1 \|_\Norm{T}{H^3}$ is small enough, then 
\begin{align} \label{pna15zb}
\| \Rdeux - \Rdeux' \|_\Norm{T}{\mL(L^2)} 
& \les \| (c-c', \partial_t (c-c'), V-V') \|_\Norm{T}{H^{s_0}} \notag\\
&\quad+ \| R-R' \|_\Norm{T}{\Lr(H^s)}
\\
\label{1201 2}
\| \pa_t R_2 \|_\Norm{T}{\mL(L^2)} 
& \les \| (c-1, \pa_t c, \pa_t^2 c, V, \pa_t V) \|_\Norm{T}{H^{s_0}} \notag\\
&\quad + \| (R, \pa_t R) \|_\Norm{T}{\mL(H^s)}.
\end{align}
These bounds follow from the arguments used in the proof of Lemma~\ref{PP10.5deux}Ê and Lemma~\ref{PPLL'}. 
Hence assumptions \e{norm Xs0s} and \e{fur con R} imply \e{fur con}, which implies \e{z534a sec 9}.
We have $\| (I+\opk)^{-1} \Lambda_{h,s} v_{in} \|_{H^{3/2}} 
\leq 2 \| \Lambda_{h,s} v_{in} \|_{H^{3/2}} \les \| v_{in} \|_{H^{s + 3/2}}$. 
Using \e{pna15zb} to estimate the last term in \e{z534a sec 9}, we deduce that 
\begin{align*}
\| B_2 \|_\Norm{T}{H^s} \les \| v_{in} \|_{H^{s + \frac32}} 
\{ & \| (c-c', \partial_t (c-c'), V-V') \|_\Norm{T}{H^{s_0}} \\
&\quad+ \| R-R' \|_\Norm{T}{\Lr(H^s)} \} .
\end{align*}
It remains to estimate $B_3$. The difference $\opk - \opk'$ satisfies 
\[ 
\| (\opk - \opk') y \|_{L^2} \les \| y \|_{H^{\frac32}} 
\{ \| (c - c', V - V') \|_\Norm{T}{H^1} + \| R_2 - R_2' \|_\Norm{T}{\mL(L^2)} \}.
\]
To prove this bound, recall that $\mathcal{K}$ is defined by solving an evolution equation, 
%(see the lines after \e{opk 32}). Then, 
and then, as above, use the energy estimates proved in the appendix to bound the difference 
of two solutions satisfying evolutions equations. 
Since $\| (I+\opk)^{-1} \|_{\mL(L^2)} \leq 2$, 
$\| (I + \opk')^{-1} \|_{\mL(H^{3/2})} \leq 2$, and 
\[
(I+\opk)^{-1} - (I+\opk')^{-1} = (I+\opk)^{-1} (\opk'-\opk) (I + \opk')^{-1},
\]
we deduce that $B_3$ satisfies the same bound as $B_2$. 
%for $\lA (V,c-1,R)\rA_{X^{s_0,s}(T)}$ small enough, 
The proof of Proposition \ref{PP10-b} is complete.
\end{proof}

\section{Iterative scheme}\label{P:S8}

\newcommand{\bcr}{\begin{color}{red}}
\newcommand{\bcb}{\begin{color}{blue}}
\newcommand{\ec}{\end{color}}

In this section we 
conclude the proof of Theorem~\ref{T1}. 
As explained in Remark~\ref{R:1}, it is sufficient to prove this result with $(\eta_{final},\psi_{final})=(0,0)$. Also, as explained in the introduction, 
we seek $P_{ext}$ as the real part of the limit of solutions to approximate control problems with variable coefficients. 

Consider the unknown $u=T_p\omega-iT_q \eta$ as introduced by Proposition~\ref{T24}. As proved in \S\ref{P:S23} (see also Section \ref{SN:3}), this 
new unknown $u$ solves an equation of the form
\be\label{n60a}
\partial_t u +T_{V(u)}\px u +iL^\mez\big(T_{c(u)}L^\mez u\big) u +R(u)u = T_{p(u)} P_{ext},
\ee
where, with a little abuse of notation, %allowing ourself some slight abuses of notations, 
we write $V(u)$, $c(u)$, ... as shorthand notations for 
$V(\eta)\psi$ (see \e{p24z}), $c=(1+(\px\eta)^2)^{-\tq}$, ... 
where $(\eta,\psi)$ is expressed in terms of $u$ by means of Lemma~\ref{T27}.

Fix $T>0$. We claim that there is $\eps>0$ such that, for all initial data whose $H^s(\xT)$-norm 
(with $s$ large enough) is smaller than $\eps$, and all 
source term $P_{ext}$ whose $L^1([0,T];H^s(\xT))$-norm is smaller than $\eps$, 
the Cauchy problem for \e{n60a}Ê
has a unique solution in $C^0([0,T];H^s(\xT))$. 
The existence of a solution follows from the analysis given below. The uniqueness is obtained 
by estimating the difference of two solutions (as in \cite{ABZ1}) and we omit its proof.

Recall that $\tilde H^\mu(\xT;\xC)$ denotes the space of $H^\mu$-functions whose imaginary part have zero mean (see Notation \ref{N:26}). 
\begin{prop}\label{Tfinal}
Let $T>0$. For all $u_{in}\in \tilde H^{\sigma}(\xT;\xC)$ 
for some $\sigma$ large enough 
such that $\lA u_{in}\rA_{H^{\sigma}}$ is small enough, there exists
a real-valued function
$$
P_{ext}\in C^0([0,T];H^{\sigma}(\xT)),
$$ 
such that the unique solution 
$u\in C^0([0,T];H^\sigma(\xT))$ to \e{n60a} with initial data $u_{in}$ satisfies $u(T)=0$.
\end{prop}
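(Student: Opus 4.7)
The strategy is to construct $P_{ext}$ as the real part of the limit of a sequence $(\chi_\omega \RE f_n)$ produced by a quasi-linear iteration. I initialize $u_0 \equiv 0$ and, given $u_n \in C^0([0,T];\tilde H^\sigma(\xT;\xC))$, I apply Proposition~\ref{PP10-b} to the linear paradifferential operator
\[
P(u_n) = \partial_t + T_{V(u_n)}\px + iL^\mez\bigl(T_{c(u_n)} L^\mez\cdot\bigr) + R(u_n)
\]
with data $v_{in} = u_{in}$, which produces a control $f_{n+1} = \Theta_{s,T}[(V(u_n),c(u_n),R(u_n))](u_{in})$ such that the unique solution $u_{n+1}$ of
\[
P(u_n) u_{n+1} = T_{p(u_n)}\chi_\omega \RE f_{n+1}, \qquad u_{n+1}(0) = u_{in}
\]
satisfies $u_{n+1}(T) = 0$. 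The condition $\int_\xT \IM u_{in}\,dx = 0$ propagates through each step thanks to Assumption~\ref{T31A}\eqref{T31A-cond3}, which was verified for $P(\underline{u})$ in Section~\ref{SN:3}, so the $u_n$ remain in $\tilde H^\sigma(\xT;\xC)$.

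The first task is to establish uniform high-regularity bounds. Fixing $\sigma$ large enough and assuming $\lA u_{in}\rA_{H^\sigma}\le \eps$ for a small parameter $\eps$ depending on $T$, I argue by induction that $\lA u_n\rA_{\Norm{T}{H^\sigma}} \le C_0 \eps$ for some $C_0 > 0$. Indeed, the formulas of Section~\ref{P:S2} together with Lemma~\ref{T27} and tame nonlinear estimates in Sobolev spaces show that this bound forces $(V(u_n), c(u_n), R(u_n))$ to satisfy the smallness hypothesis \e{pn200-b} as well as the additional time-derivative bounds (obtained by differentiating the equation \e{n60a} in time); hence Proposition~\ref{PP10-b}\,$i)$ applies and yields $\lA f_{n+1}\rA_{\Norm{T}{H^{s+\tdm}}}\le \mathcal{F}_2(T)^{-1}\eps$. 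The energy estimate for $P(u_n)$ from the appendix (Proposition~\ref{P:10}) then closes the induction provided $\eps$ is small enough compared to $\mathcal{F}_2(T)^{-1}$.

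The second task, which is the main difficulty, is to show that $(u_n)$ and $(f_n)$ are Cauchy sequences in a weaker norm. Setting $w_n := u_{n+1} - u_n$ and $g_n := f_{n+1} - f_n$, the difference $w_n$ satisfies
\[
P(u_n) w_n = T_{p(u_n)}\chi_\omega \RE g_n + \bigl(P(u_{n-1})-P(u_n)\bigr)u_n + \bigl(T_{p(u_n)} - T_{p(u_{n-1})}\bigr)\chi_\omega \RE f_n.
\]
The stability statement~$ii)$ of Proposition~\ref{PP10-b}, applied with $(V,c,R) = (V(u_n),c(u_n),R(u_n))$ and $(V',c',R') = (V(u_{n-1}),c(u_{n-1}),R(u_{n-1}))$, yields a bound
\[
\lA g_n\rA_{\Norm{T}{H^s}}\le \frac{\lA u_{in}\rA_{H^{s+\tdm}}}{\mathcal{F}_3(T)} \lA w_{n-1}\rA_{\Norm{T}{H^{s_0}}},
\]
since the coefficient differences are controlled by $\lA w_{n-1}\rA_{\Norm{T}{H^{s_0}}}$ via the formulas of Section~\ref{P:S2}. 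Combining this with an energy estimate applied to $w_n$ (at the regularity level $H^{\sigma-\tdm}$, taking $s=\sigma-\tdm$ and $s_0 = \sigma - \tdm$) and using the uniform bound $\lA u_n\rA_{\Norm{T}{H^\sigma}}\les \eps$ to absorb the forcing terms $(P(u_{n-1})-P(u_n))u_n$ and the commutator of paraproducts, I obtain $\lA w_n\rA_{\Norm{T}{H^{\sigma-\tdm}}} \le \mez \lA w_{n-1}\rA_{\Norm{T}{H^{\sigma-\tdm}}}$ for $\eps$ small enough. The key point overcoming the apparent loss of derivatives is exactly the regularity statement $ii)$ of Proposition~\ref{PP10-b} (which is the $H^{s+\tdm}$ bound in \e{n532}), combined with the fact that the stability bound \e{n534} is measured in $H^{s_0}$ norm of the coefficient difference rather than in a stronger norm.

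Finally, $(u_n)$ and $(f_n)$ converge in $C^0([0,T];H^{\sigma-\tdm}(\xT))$; by interpolation with the uniform bound in $C^0([0,T];H^\sigma)$, convergence holds in $C^0([0,T];H^{\sigma'}(\xT))$ for every $\sigma' < \sigma$. Passing to the limit in the equation and using that $u_n(T) = 0$ for all $n\ge 1$, the limit $u$ satisfies \e{n60a} with $P_{ext} = \chi_\omega \RE f \in C^0([0,T];H^\sigma(\xT))$ and $u(T) = 0$. A standard weak-strong compactness argument (Fatou in the Fourier side, or Bona--Smith regularization) then upgrades $u$ to a solution in $C^0([0,T];H^\sigma(\xT))$, completing the proof. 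The main obstacle throughout is the quasi-linear loss of one-and-a-half derivatives between the norm in which the scheme is bounded and the norm in which it contracts; this is precisely what the regularity estimate \e{n532} is designed to compensate.
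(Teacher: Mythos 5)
Your iteration is the same as the paper's: same quasi-linear scheme, same use of Proposition~\ref{PP10-b}, same structure (uniform $H^\sigma$ bounds, Cauchy property in a weaker norm, interpolation, passage to the limit). However, two points need repair before the argument closes.

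\textbf{A genuine gap in the contraction step.} You claim the stability estimate gives
\[
\lA g_n\rA_{\Norm{T}{H^s}}\le \frac{\lA u_{in}\rA_{H^{s+\tdm}}}{\mathcal{F}_3(T)} \lA w_{n-1}\rA_{\Norm{T}{H^{s_0}}},
\]
``since the coefficient differences are controlled by $\lA w_{n-1}\rA_{\Norm{T}{H^{s_0}}}$''. But the right-hand side of \eqref{n534} involves $\| (c-c', \pa_t(c-c'), V-V') \|_\Norm{T}{H^{s_0}}$, and with $c=c(u_n)$, $c'=c(u_{n-1})$ the middle term $\pa_t\bigl(c(u_n)-c(u_{n-1})\bigr)$ is \emph{not} controlled by $\lA u_n-u_{n-1}\rA_{\Norm{T}{H^{s_0}}}$ alone: it requires a bound on $\pa_t(u_n-u_{n-1})$, which has its own derivative loss. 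To make the induction close you must track a second quantity $\lA \pa_t(u_{n+1}-u_n)\rA_{\Norm{T}{H^{s-\tdm}}}$ alongside $\lA u_{n+1}-u_n\rA_{\Norm{T}{H^s}}$, with $\pa_t(u_{n+1}-u_n)$ read off from the equation \eqref{n63} in terms of spatial operators applied to $u_{n+1}-u_n$ plus the source; the two geometric bounds are then established by a coupled induction. This is precisely the role of \eqref{n67dt} in the paper's Lemma~\ref{P:71}, and your argument is incomplete without it.

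\textbf{An index slip that breaks the hierarchy.} You set ``$s=\sigma-\tdm$ and $s_0=\sigma-\tdm$'', i.e.\ $s_0=s$. The whole point of the paradifferential framing is that the smallness of the coefficients in \eqref{pn200-b} and the differences in \eqref{n534} are measured in $H^{s_0}$ with $s_0$ \emph{strictly} smaller than $s$; since $V(u),c(u)$ lose derivatives relative to $u$, the gap $s-s_0$ is exactly what absorbs the loss. The paper takes $s\ge s_0+6$ (Lemma~\ref{P:71}). With $s_0=s$ the coefficient bound cannot be verified from $\lA u_n\rA_{H^s}$ alone.

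Finally, a smaller point: the assertion $P_{ext}=\chi_\omega\RE f\in C^0([0,T];H^\sigma)$ needs its own justification. The Cauchy argument gives $f\in C^0([0,T];H^s)$ only, and the uniform bound gives $L^\infty_t H^{s+\tdm}$; strong continuity in $H^{s+\tdm}=H^\sigma$ does not follow from interpolation. The paper closes this by identifying $f$ with $f_\infty:=\Theta_{s,T}[V(u),c(u),R(u)](u_{in})$ through the stability estimate and then using that $f_\infty\in C^0([0,T];H^{s+\tdm})$ by statement $(i)$ of Proposition~\ref{PP10-b}; you should do the same rather than appeal only to a Bona--Smith type argument, which addresses the continuity of $u$ but not of $f$.
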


Before proving this proposition, let us explain how to deduce Theorem~\ref{T1} from it. 
Recall that it is sufficient to consider the case where $(\eta_{final},\psi_{final})=(0,0)$. 
Once $P_{ext}$ is defined by means of Proposition \ref{Tfinal} applied with 
$u_{in} = T_{p_{in}}\omega_{in}-iT_{q_{in}} \eta_{in}$, we solve the water waves system~\e{system} for $(\eta,\psi)$ with data $(\eta_{in},\psi_{in})$ with this pressure seen as a source term. Then $u=T_p\omega-iT_q \eta$ solves \e{n60a}, so $u(T)=0$ which in turn implies that $(\eta,\psi)(T)=0$ in view of Lemma~\ref{T27}. 

\begin{proof}[Proof of Proposition~\ref{Tfinal}]
%We may assume with no loss of generality that $T\le 1$. Indeed, if $T>1$, we first apply 
%Proposition~\ref{Tfinal} with $T$ replaced by $1$ to obtain 
%$P_{ext}\in L^\infty([0,1];H^{\sigma}(\xT))$ such that $u(1)=0$, and 
%then extends $P_{ext}$ by $0$ for 
%$1\le t\le T$. 
%
Set $s=\sigma-3/2$. 
Given $u_{in}\in \tilde H^{s+\tdm}(\xT;\xC)$ % for some $s$ large enough, 
and $T>0$, introduce the following % quasi-linear 
scheme: % (whose well-posedness will be discussed below): 
define $(u_0, f_0) := (0,0)$, and then, for $n \geq 0$, $(u_{n+1},f_{n+1})$ 
are defined by induction in this way: 
$f_{n+1}$ is determined by asking that the unique solution $u_{n+1}$ to the Cauchy problem 
\begin{align}
&\partial_t u_{n+1} +T_{V(u_{n})}\partial_x u_{n+1}
+iL^\mez T_{c(u_n)}L^\mez u_{n+1} 
+R(u_n)u_{n+1}
=T_{p(u_n)}\chi_\omega \RE f_{n+1}\notag\\[1ex]
&u_{n+1}\arrowvert_{t=0}=u_{in},\label{n63}
\end{align}
satisfies $u_{n+1}(T)=0$. 

Our goal is to prove that this scheme converges. Then we define $P_{ext}$ as the limit 
of $(\RE f_n)$ when $n$ goes to $+\infty$. 
Using the operator $\Theta_{s,T}$ defined by Proposition~\ref{PP10-b}, the scheme corresponds to % we 
define $(u_n)$ and $(f_n)$ as follows: 
\be\label{n64}
f_{n+1}\defn \Theta_{s,T}[X_n]\big(u_{in}\big)\quad\text{where}\quad 
X_n\defn (V(u_n),c(u_n),R(u_n))
\ee
and $u_{n+1}$ is defined as the unique solution to the Cauchy problem \e{n63}; by definition of 
$f_{n+1}$ we then have $u_{n+1}(T)=0$. 
Our goal is to prove that, for any $T > 0$, % $T\in (0,1]$, 
if $u_{in}$ is small enough, 
then this scheme is well-defined and 
$(u_n, f_n)$ converges to a solution $(u,f)$ of the desired nonlinear control problem.
This will be a consequence of the following result.

\begin{lemm}\label{P:71}
Consider $T > 0$. There exists $s_0$ large enough and for any 
$s\ge s_0+6$ there exist $\eps_0>0$ and positive constants $K_1,\ldots,K_7$ such that, 
for any $\eps\in (0,\eps_0]$, if 
$$
\lA u_{in}\rA_{H^{s+\tdm}(\xT)}\le \eps
$$ 
then, for any $n \geq 0$, there holds
\begin{align}
&\lA u_{n}\rA_{\bo([0,T];H^{s+\tdm})}\le K_1\eps, \label{n65} \\
&\blA \partial_t^k u_{n}\brA_{\bo([0,T];H^{s_0})}\le K_2\eps \qquad\text{for }1\le k\le 4,\label{n66}
\end{align}
Moreover, for any $n \geq 0$, 
\begin{align}
&\lA u_{n+1}-u_n\rA_{C^0([0,T];H^{s})}\le K_3 \eps 2^{-n},\label{n67}\\
&\lA \partial_t(u_{n+1}-u_n)\rA_{C^0([0,T];H^{s-\tdm})}\le K_4 \eps 2^{-n} ; \label{n67dt}
\end{align}
and for any $n \geq 1$,
\begin{align}
&\lA f_{n}\rA_{\bo([0,T];H^{s+\tdm})}\le K_5\eps,\label{n68a}\\[1ex]
&
\lA \partial_t^k f_{n}\rA_{\bo([0,T];H^{s_0})}\le K_6\eps\label{n68}\qquad 
\text{for }1\le k\le 3,\\
&\lA f_{n+1}-f_n\rA_{C^0([0,T];H^{s})}\le K_7\eps^2 2^{-n}.  \label{n69}
\end{align}
\end{lemm}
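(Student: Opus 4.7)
The plan is to prove all seven bounds simultaneously by induction on $n$, with $(u_0,f_0) = (0,0)$ providing the base case. The inductive step rests on Proposition~\ref{PP10-b} applied to the coefficient triple $X_n = (V(u_n), c(u_n), R(u_n))$: part~(i) produces $f_{n+1}$ with a bound in $H^{s+\tdm}$ of size $\lesssim \eps$, and part~(ii) controls the difference $f_{n+1} - f_n$ in $H^s$ by the $H^{s_0}$-difference of coefficients, which is itself controlled by $\lA u_n - u_{n-1}\rA_{H^{s_0}}$ through the tame nonlinear estimates from Section~\ref{P:S2} (and Lemma~\ref{T27} to invert the change of unknowns). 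The companion bounds on $u_{n+1}$ come from Sobolev energy estimates for paradifferential evolution equations (cf.\ the appendix) applied to the Cauchy problem \e{n63}.

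Under the induction hypothesis \e{n65}--\e{n66}, the spatial bounds on $V(u_n)$, $c(u_n)-1$, $R(u_n)$ in $H^{s_0}$ follow from tame nonlinear estimates, as do the time-derivative bounds required by the norm \e{norm Xs0s}: one uses the equation \e{n63} at level $n$ to express $\partial_t u_n$ in terms of spatial operators applied to $u_n$ and $f_n$, and iterates this differentiation to obtain bounds on $\partial_t^k u_n$ up to $k=4$, which by the chain rule yield control of $\partial_t^k V(u_n), \partial_t^k c(u_n), \partial_t^k R(u_n)$. Provided $\eps_0$ is chosen small relative to $\mathcal{F}_1(T)$, the assumption of Proposition~\ref{PP10-b}~(i) is satisfied and yields \e{n68a}; the bounds \e{n68} on $\partial_t^k f_{n+1}$ follow from an analogous commutator argument applied to the backward evolution equation satisfied by the HUM control introduced in Section~\ref{P:S7}. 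The Sobolev energy estimates for \e{n63} then give \e{n65}--\e{n66} at level $n+1$, with constants independent of $n$ thanks to the uniform smallness of the coefficients.

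The heart of the argument is the contraction step \e{n67}. The difference $w_{n+1} := u_{n+1} - u_n$ satisfies $w_{n+1}\arrowvert_{t=0} = 0$ and
\begin{align*}
P(u_n) w_{n+1} = {} & T_{p(u_n)} \chi_\omega \RE(f_{n+1}-f_n) \\
& + \bigl(P(u_{n-1}) - P(u_n)\bigr) u_n + \bigl(T_{p(u_n)} - T_{p(u_{n-1})}\bigr) \chi_\omega \RE f_n.
\end{align*}
The second and third source terms are controlled in $H^s$ by $C\eps \lA u_n - u_{n-1}\rA_{H^s}$ using the uniform bounds on $u_n, f_n$ and the tame Lipschitz dependence of the coefficients on $u$; the first source is controlled by the stability estimate \e{n534}, which yields
$$
\lA f_{n+1} - f_n\rA_{C^0([0,T];H^s)} \le C \eps \, \lA u_n - u_{n-1}\rA_{C^0([0,T];H^{s_0})}.
$$
Applying the $H^s$-energy estimate to the equation above then gives $\lA w_{n+1}\rA_{C^0([0,T];H^s)} \le C\eps \lA w_n\rA_{C^0([0,T];H^s)}$, which for $\eps$ small enough produces the geometric decay \e{n67}. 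The bound \e{n67dt} follows by differentiating once in time and repeating the argument at regularity $H^{s-\tdm}$, while \e{n69} is a direct consequence of \e{n534} combined with \e{n67} at level $n$.

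The main obstacle is the loss of $3/2$ derivatives inherent in Proposition~\ref{PP10-b}~(ii): the stability estimate controls $H^s$-norms of the output only after paying $H^{s+\tdm}$ of the initial data, so the uniform $H^{s+\tdm}$-bound \e{n65} must be preserved throughout the induction. The margin $s \ge s_0 + 6$ is tailored so that the Lipschitz loss in the nonlinear coefficient map $u \mapsto (V(u), c(u), R(u))$ can be absorbed while retaining convergence. A secondary difficulty is verifying the auxiliary time-regularity condition \e{fur con R} needed for each application of the stability statement; this is handled by iteratively bootstrapping \e{n63} to gain bounds on successive time derivatives of $u_n$ and $f_n$ at the intermediate regularity required.
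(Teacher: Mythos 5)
Your overall blueprint is the correct one and matches the paper's strategy: establish the uniform bounds~\eqref{n65}--\eqref{n66}, \eqref{n68a}--\eqref{n68} by induction using Proposition~\ref{PP10-b}(i) and energy estimates, then prove the Cauchy estimates by writing the equation for the difference $\delta_n = u_{n+1}-u_n$ and invoking the stability estimate of Proposition~\ref{PP10-b}(ii). The decomposition of the source term in your equation for $w_{n+1}$ is the same as the paper's $G_n$ in \eqref{n628}.

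However, there is a genuine gap in your contraction step, precisely at the displayed inequality
\[
\lA f_{n+1} - f_n\rA_{C^0([0,T];H^s)} \le C \eps \, \lA u_n - u_{n-1}\rA_{C^0([0,T];H^{s_0})}.
\]
This is \emph{not} what the stability estimate \eqref{n534} yields. The right-hand side of \eqref{n534} contains the term $\lA \pa_t(c-c')\rA_{H^{s_0}}$, so upon substituting $c=c(u_n)$, $c'=c(u_{n-1})$ you must also pay $\lA \pa_t(u_n - u_{n-1})\rA_{H^{s_0}}$. The correct estimate (the paper's \eqref{grande labello faccio bella vita}) is
\[
\lA f_{n+1} - f_n\rA_{C^0([0,T];H^s)} \lesssim \lA u_{in}\rA_{H^{s+\tdm}}
\Big\{ \lA u_n - u_{n-1}\rA_{H^{s_0}} + \lA \pa_t(u_n - u_{n-1})\rA_{H^{s_0}} \Big\}.
\]
Consequently your conclusion $\lA w_{n+1}\rA_{H^s} \le C\eps \lA w_n\rA_{H^s}$ does not follow from the energy estimate alone, and the contraction does not close as you have written it. The remedy, and what the paper does, is to prove \eqref{n67} and \eqref{n67dt} \emph{simultaneously} in a coupled induction: assuming both at rank $n-1$, one first estimates $G_n$ (via \eqref{n629}) using $\lA \delta_{n-1}\rA_{H^s}$ \emph{and} $\lA \pa_t\delta_{n-1}\rA_{H^{s-\tdm}}$, then deduces \eqref{n67} at rank $n$ from the energy estimate, and finally deduces \eqref{n67dt} at rank $n$ by expressing $\pa_t\delta_n$ from the evolution equation \eqref{eq:pat deltan}. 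You instead present \eqref{n67dt} as a corollary obtained by "differentiating once in time and repeating the argument," which obscures that it is a logically necessary ingredient of the preceding step, not a byproduct. Relatedly, your claim that "\eqref{n69} is a direct consequence of \eqref{n534} combined with \eqref{n67} at level $n$" is both off by one in indexing (it uses rank $n-1$) and incomplete (it also requires \eqref{n67dt} at rank $n-1$). These are repairable, but as stated the induction is not correctly structured.
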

\begin{proof} For this proof we denote by $C$ various constants 
depending only on $T$, $s,s_0$ or $\omega$. Also we denote by $\mathcal{F}$ 
various increasing functions $\mathcal{F}\colon \xR_+\rightarrow \xR_+$ depending on parameters that are considered fixed. 
%\bcr Moreover, we denote by \e{n65}$_n$, \ldots, \e{n69}$_n$ the inequalities as they appear in the statement, while we denote by \e{n65}$_{n+1}$, \ldots, \e{n69}$_{n+1}$ the corresponding inequalities where $n$ is replaced by $n+1$. \ec

\begin{center}
{\em Step 1}\,: proof of \eqref{n65}, \eqref{n66}, \eqref{n68a} and \eqref{n68}. 
\end{center}

We prove these estimates by induction. They hold for $n=0$ 
since $(u_0, f_0) = (0,0)$. We now assume that they hold at rank $n$ and prove that they hold at rank $n+1$.

We begin by checking that the fact that 
the properties \e{n65}--\e{n66} hold at rank $n$ implies that 
one can apply Proposition~\ref{PP10-b} to prove that the scheme is well-defined. 
This means that we have to prove that the smallness assumption \e{pn200-b} is satisfied. 
To do so, we first recall that (see \e{p25})
$\lA V(u_n)\rA_{H^{s_0}}\le 
\mathcal{F}\big(\lA \eta_n\rA_{H^{s_0+1}}\big)\lA \psi_n\rA_{H^{s_0+1}}$. 
Then the estimate \e{p44a}Ê (applied with $s$ replaced by $s_0+1$)
implies that 
$\lA V(u_n)\rA_{H^{s_0}}\les \lA u_n\rA_{H^{s_0+1}}$. 
Similarly, the estimates \e{p35b} and \e{p44a} yield
$$
\lA R(u_n)\rA_{\mathcal{L}(H^{s+\tdm})}\le \mathcal{F}
\big(\lA \eta_n\rA_{H^{s+\tdm}}
\big)\lA \eta_n\rA_{H^{s+\tdm}}\le \mathcal{F}\left(\lA u_n\rA_{H^{s+\tdm}}\right)\lA u_n\rA_{H^{s+\tdm}},
$$
and, directly from the definition $c=(1+(\px\eta)^2)^{-3/4}$, one has
$$
\lA c(u_n)-1\rA_{H^{s_0}}\le \mathcal{F}\big(\lA \eta_n\rA_{H^{s_0+1}}\big)\lA \eta_n\rA_{H^{s_0+1}}
\les 
\lA u_n\rA_{H^{s_0+\mez}}.
$$
Gathering these estimates and recalling that $s_0+1\le s$, we conclude that
\be \label{gathering VcR}
\lA V(u_n)\rA_{H^{s_0}}+\lA c(u_n)-1\rA_{H^{s_0}}+\lA R(u_n)\rA_{\mathcal{L}(H^{s+\tdm})}\les \lA u_n\rA_{H^{s+\tdm}}.
\ee
Consequently, the property \e{n65} at rank $n$ implies that the part of the smallness condition 
\e{pn200-b} concerning $V,c,R$ is satisfied. Concerning the estimates of the time derivatives $\partial_t^k V$ and $\partial_t^k c$, we use 
%similar estimates and the fact that, 
%using 
the equations \eqref{system} and the rule (see \cite{LannesJAMS})
$$
\partial_t G(\eta)\psi=G(\eta)\big\{ 
\partial_t \psi-(B(\eta)\psi)\partial_t \eta\big\}
-\px \big((V(\eta)\psi)\eta\big)
$$
(where $B(\eta)\psi$ and $V(\eta)\psi$ are given by \e{p24z}) 
to express time derivatives $\partial_t^k V$ and $\partial_t^k c$ in terms of spatial derivatives and in terms of the operators $B(\eta),V(\eta)$ 
(see Appendix A.3 in \cite{AlDe} or \cite{LannesLivre,Mesognon}). 
Then, as above, the desired estimates then follow from \e{p25} and the usual nonlinear estimates in Sobolev spaces.

We now prove \eqref{n65} and \e{n68a} at rank $n+1$. 
By \e{pn26} we obtain that
\be \label{1401 1}
\lA u_{n+1}\rA_{C^0([0,T];H^{s+\tdm})}
\le C \big( \lA u_{in}\rA_{H^{s+\tdm}} + 
\lA T_{p_n} \chi_\omega \RE  f_{n+1}\rA_{C^0([0,T];H^{s+\tdm})} \big),
\ee
where the constant $C$ depends on $s,T$ 
(by \e{gathering VcR} and \e{n65} at rank $n$, 
the constant $M$ in Proposition \ref{P:10} is bounded by 1 if $K_1$ is large enough and $\eps_0$ is small enough).
%$C' \| u_n \|_{C^0([0,T];H^s)}$, which is bounded by $C' K_1 \eps$ thanks to the assumption that \e{n65} holds at rank $n$. 
%Hence $M \leq 1$ provided that $C' K_1 \eps_0 \leq 1$, and therefore the constant $C$ in \e{1401 1} depends only on $s,T$).
Now observe that, since $T_{p_n}$ acts on any Sobolev space with operator norm 
bounded by $M^0_0(p_n) \le \mathcal{F}(\lA u_n\rA_{H^{s}}) 
\le \mathcal{F}(1)$, one has
$$
\lA T_{p_n} \chi_\omega \RE  f_{n+1}\rA_{C^0([0,T];H^{s+\tdm})}
\le C \lA f_{n+1}\rA_{C^0([0,T];H^{s+\tdm})}.
$$
Moreover, by \e{n532}, 
$\lA f_{n+1}\rA_{C^0([0,T];H^{s+\tdm})} \leq K_0 \lA u_{in}\rA_{H^{s+\tdm}}$ 
for some $K_0$ depending only on $T$. 
We conclude that, choosing $K_1$ large enough and $\eps_0$ small enough, 
\e{n65} holds at rank $n+1$.
%\be\label{n617}
%\lA u_{n+1}\rA_{C^0([0,T];H^{s+\tdm})} \leq C \| u_{in} \|_{H^{s+ \tdm}} 
%%\leq C \eps 
%\leq K_1 \eps,
%\ee
%which is \e{n65} at rank $n+1$, 
%provided that $K_1$ is large enough (depending on $s,T$), 
%and then $\eps_0$ is chosen small enough (depending on $s,T,K_1$, namely on $s,T$).
Also \e{n68a} at rank $n+1$ follows by the same argument. 

%\hrule 
%
%\medskip
%
%\emph{\bcb In ABH9 this red part above was: \ec}
%
%\bcb 
%To estimate the $C^0([0,T];H^{s+\tdm})$-norm of 
%$u_{n+1}$, we use \e{pn26} 
%to obtain that
%$$
%\lA u_{n+1}\rA_{C^0([0,T];H^{s+\tdm})}
%\le e^{ C} 
%\left( \lA u_{in}\rA_{H^{s+\tdm}}+\int_0^T
%\lA T_{p_n} \chi_\omega \RE  f_{n+1}\rA_{H^{s+\tdm}}\, dt\right),
%$$
%here the constant 
%$C$ depends on $T$ and $m_n(T)$ but one has $T\le 1$ by assumption and also $m_n(T)\le 1$ provided that $\eps_0$ is small enough, so that 
%$R_1\eps_0\le 1$. 
%Now observe that, since $T_{p_n}$ acts on any Sobolev spaces 
%with operator norm bounded by $M^0_0(p_n)\le \mathcal{F}(\lA u_n\rA_{H^{s}})$ and since $T\le 1$, one has
%$$
%\int_0^T
%\lA T_{p_n} \chi_\omega \RE  f_{n+1}\rA_{H^{s+\tdm}}\, dt
%\le C \lA f_{n+1}\rA_{C^0([0,T];H^{s+\tdm})}.
%$$
%Moreover, \e{n532} implies that 
%there exists a constant $K_0$ depending only on $T$ such
%that
%\be\label{n615}
%\lA f_{n+1}\rA_{C^0([0,T];H^{s+\tdm})}\le K_0\lA u_{in}\rA_{H^{s+\tdm}}.
%\ee
%By combining the previous inequalities we conclude that, if 
%$\eps_0$ is small enough, then 
%\be\label{n617}
%\lA u_{n+1}\rA_{C^0([0,T];H^{s+\tdm})}\le e^{C} (1+C K_0)\lA u_{in}\rA_{H^{s+\tdm}}.
%\ee
%This proves \e{n65} at rank $n+1$ provided that $K_1,\eps_0$ satisfy $K_1\ge e^{C} (1+C K_0)$ and 
%$K_1\eps_0\le 1$ where $C,K_0$ are absolute constants depending only on $s$ or $T$. 
%Also \e{n68a} at rank $n+1$ follows from \e{n615}. \ec
%
%\hrule 
%
%\medskip

It remains to prove \eqref{n66} and \e{n68}. Directly from the equation \e{n63}, 
expressing $\partial_t u_{n+1}$ in terms of $u_n,u_{n+1}$ and $f_{n+1}$  and using 
the operator norm estimate \e{esti:quant0} for paradifferential operators, 
one deduces \e{n66} for $k=1$ from the bounds \e{n65} and \e{n68a}. 
We next prove \e{n68} for $k=1$. 
To do so, the key point is to make explicit the equation satisfied by $f_{n+1}$. 
We recall from \e{n64}, \e{p519z} and \e{p511z} that 
\[
f_{n+1} \defn ( \Lambda_{h,s}^n )^{-1} 
\big( m^n ( \Phi^n )^{-1} ( \chi_{2}\wide f_{n+1} ) \big), \quad 
\wide f_{n+1} 
\defn \Theta_{M_n,T_1^n} \big(\Phi^n (I+\mathcal{K}_n)^{-1}\Lambda_{h,s}^n u_{in}\big),
\]
where $\Lambda_{h,s}^n,\Phi^n,m^n,M_n,\mathcal{K}_n,T_1^n$ are given 
by replacing $(V,c)$ with $(V(u_n),c(u_n))$ in the definition of $\Lambda_{h,s},\Phi,m,M,\mathcal{K},T_1$. By definition of $\Theta_{M,T}$ 
(Lemma~\ref{L3}) one has
\begin{equation}\label{n618}
\left\{
\begin{aligned}
&\partial_t \widetilde{f}_{n+1}+ W(u_n) \px \widetilde{f}_{n+1}+ i L\widetilde{f}_{n+1} + 
\Rquatre(u_n)\widetilde{f}_{n+1}=0,  \\ 
&\widetilde{f}_{n+1}
\arrowvert_{t=T_1^n}=\widetilde{f}_{n+1}^1,
\end{aligned}
\right.
\end{equation}
where $\Rquatre(u_n)$ is given by \e{n534-z} and  
the initial data $\widetilde{f}_{n+1}^1$ is given by Lemma~\ref{L3}. It follows from \e{p51a}Ê
that 
$$
\blA \widetilde{f}_{n+1}\brA_{C^0([0,T];H^\tdm)}\le K 
\blA \Phi^n (I+\mathcal{K}_n)^{-1}\Lambda_{h,s}^n u_{in}\brA_{H^\tdm}
\le K \lA u_{in}\rA_{H^{s+\tdm}}.
$$
Using the equation \e{n618} we thus estimate the 
$C^0([0,T];L^2)$-norm of $\partial_t\widetilde{f}_{n+1}$ from 
which we estimate $\partial_t f_{n+1}$ in $C^0([0,T];H^{s})$. This gives 
\e{n68}Ê
for $k=1$ since $s\ge s_0$. Now we obtain \e{n66} for $k=2,3,4$ as well as \e{n68} for $k=2,3$ by differentiating in time 
the equations satisfied by $u_{n+1}$ and $f_{n+1}$.

\begin{center}
{\em Step 2}\,: proof of \eqref{n67}, \eqref{n67dt}, \eqref{n69}.
\end{center}

%\bcr First, we note that \eqref{n67},\eqref{n67dt},\eqref{n69} for $n = 0$ hold because $(u_0,f_0) = (0,0)$ and \e{n65},\e{n66},\e{n68a} hold with $n=1$. \ec

The estimate \eqref{n69} will be deduced from 
\e{n67} and \e{n67dt}. To prove \e{n67} and \e{n67dt} 
we proceed by induction.  We assume that 
they hold at rank $n-1$ and prove that they hold at rank $n$. 
%\be \label{n67-i} 
%\lA u_{n}-u_{n-1}\rA_{C^0([0,T];H^{s})}\le K_3 \eps 2^{-n+1},
%\ee
%which is \e{n67} at rank $n-1$, and similarly for \e{n67dt}. 
%Namely, assume that
%\begin{align}
%&\lA u_{n}-u_{n-1}\rA_{C^0([0,T];H^{s})}\le K_3 \eps 2^{-n+1},\label{n67-i}\\
%&\lA \partial_t(u_{n}-u_{n-1})\rA_{C^0([0,T];H^{s-\tdm})}\le K_4 \eps 2^{-n+1},\label{n67dt-i}\\
%&\lA f_{n}-f_{n-1}\rA_{C^0([0,T];H^{s})}\le K_7\eps^2 2^{-n+1}. \label{n69-i}
%\end{align}

%Let us prove \eqref{n67}. To estimate the $C^0([0,T];H^s)$-norm of 
The key point is to estimate $\delta_{n} := u_{n+1}-u_n$. Write
\be \label{eq:deltan}
\partial_t \delta_{n} +T_{V(u_{n})}\partial_x \delta_{n}
+iL^\mez T_{c(u_n)}L^\mez \delta_{n}
+R(u_n)\delta_{n}
=G_n
\ee
with 
\begin{align} \label{n628}
G_n & := (T_{V(u_{n-1})}-T_{V(u_n)})\px u_n 
+ i L^\mez (T_{c(u_{n-1})} - T_{c(u_n)} ) L^\mez u_n 
\\ & \quad 
+ \big(R(u_{n-1})-R(u_n)\big)u_n
+ T_{p(u_n)}\chi_\omega(f_{n+1}-f_n)
+ (T_{p(u_n)}-T_{p(u_{n-1})})\chi_\omega f_n. \notag 
\end{align}
As in the previous step, it follows from Proposition~\ref{P:10} (noticing that $\delta_{n+1}(0)=0$) 
that $\| \delta_n \|_{C^0([0,T];H^s)} \leq C_0 \| G_n \|_{C^0([0,T];H^s)}$ for some $C_0$ depending on $s,T$.

\smallbreak

\noindent{\em Estimate for} $G_n$.  We claim that 
\be\label{n629}
\lA G_n\rA_{C^0([0,T];H^s)} \le \eps K(T) 
\lA \delta_{n-1}\rA_{C^0([0,T];H^s)}
+\eps K(T) \lA \partial_t\delta_{n-1}\rA_{C^0([0,T];H^{s-\tdm})}.
\ee
Let us prove this claim. At each $t \in [0,T]$, 
using \e{esti:quant0} one has
$$
\lA (T_{V(u_{n-1})}-T_{V(u_n)})\px u_n\rA_{H^s}\les \lA V(u_{n-1})-V(u_n)\rA_{L^\infty}\lA \px u_n\rA_{H^{s}}.
$$
It follows from \e{n65}Ê
that $\lA \px u_n\rA_{H^{s}}\le K_1\eps $. 
To estimate $V(u_{n-1})-V(u_n)$ we use the following consequence of Lemma~5.3 in \cite{ABZ-ul}: 
Assume $s>3/2$ and consider $(\eta_1,\eta_2)$ such that 
$\lA \eta_1\rA_{H^s}+\lA \eta_2\rA_{H^s}\le 1$. Then 
$$
\lA G(\eta_1)f_1-G(\eta_2)f_2\rA_{H^{s-\tdm}}\le K 
\lA \eta_1-\eta_2\rA_{H^{s-\mez}}\lA f_1\rA_{H^s}+K\lA f_1-f_2\rA_{H^{s-\mez}}.
$$
Then, directly from the definition of $V(\eta)\psi$ one deduces that
$$
\lA V(\eta_1)\psi_1-V(\eta_2)\psi_2\rA_{H^1}\le K 
\lA \eta_1-\eta_2\rA_{H^{2}}\lA \psi_1\rA_{H^{5/2}}+K\lA \psi_1-\psi_2\rA_{H^{2}}.
$$
Since $H^1(\xT)\subset L^\infty(\xT)$, we then conclude that 
$$
\lA V(u_{n-1})-V(u_n)\rA_{L^\infty}\les \lA \eta_n-\eta_{n-1}\rA_{H^s}+\lA \psi_n-\psi_{n-1}\rA_{H^s}\les
\lA u_n-u_{n-1}\rA_{H^{s}}.
$$
The estimate of the % $C^0([0,T];H^s)$ 
$H^s$ norm of $L^\mez (T_{c(u_{n-1})} - T_{c(u_n)} ) L^\mez u_n$ is similar. 
To estimate $\big(R(u_{n-1})-R(u_n)\big)u_n$ recall that $R(\underline{u})u$ is as given by 
Proposition~\ref{T24}. This operator is defined by means of the remainder $F(\eta)\psi$ in \e{p38} and 
also in terms of explicit expressions involving symbolic calculus or the paralinearization of products. The only delicate point is to estimate $F(\eta_n)\psi_n-F(\eta_{n-1})\psi_{n-1}$. To do so one uses Lemma~6.8 in \cite{ABZ1}.

It remains to estimate the last two terms in the right-hand side of \e{n628}. Directly from \e{esti:quant1} we find that
$$
\lA (T_{p(u_n)}-T_{p(u_{n-1})})\chi_\omega f_n\rA_{H^s}
\les M^0_0(p(u_n)-p(u_{n-1})) \lA \chi_\omega f_n\rA_{H^s}.
$$
Now $\lA \chi_\omega f_n\rA_{H^s}\les \lA \chi_\omega \rA_{H^s}\lA f_n\rA_{H^s}\les \eps$ 
\bcr by \e{n68a}, \ec and 
$M^0_0(p(u_n)-p(u_{n-1}))$ is bounded by $K\lA u_n-u_{n-1}\rA_{H^{s}}$. 
Eventually, to estimate the $H^s$-norm of $T_{p(u_n)}\chi_\omega(f_{n+1}-f_n)$ 
we use again \e{esti:quant1} to bound this expression in terms of $\lA f_{n+1}-f_n\rA_{H^s}$. 
We use \e{n534} to obtain 
\begin{align}%\label{n534}
&\| f_{n+1}-f_n \|_{C^0([0,T];H^{s})} \notag\\
&\qquad\qquad\les \lA u_{in}\rA_{H^{s+\tdm}}
\big\{ \| (c_{n}-c_{n-1}, \pa_t(c_{n}-c_{n-1}), V_{n}-V_{n-1}) \|_{C^0([0,T];H^{s_0})} \notag\\
&\qquad\qquad\qquad\qquad\qquad\quad + \| R_{n}-R_{n-1}\|_{C^0([0,T];\Lr(H^s))} \big\}\notag\\
&\qquad\qquad\les \lA u_{in}\rA_{H^{s+\tdm}}\big\{ \lA u_{n}-u_{n-1}\rA_{H^{s_0}}
+\lA \partial_t(u_{n}-u_{n-1})\rA_{H^{s_0}}\big\},\label{grande labello faccio bella vita}
\end{align}
%As already seen, 
%$$
%\| (c_{n}-c_{n-1},  V_{n}-V_{n-1}) \|_{H^{s_0}} 
%+ \| R_{n}-R_{n-1}\|_{\Lr(H^s)}\les \lA u_{n}-u_{n-1}\rA_{H^s},
%$$
%while the estimate for $\pa_t(c_{n}-c_{n-1})$ follows from the estimate for 
%$\lA \partial_t(u_{n}-u_{n-1})\rA_{H^{s_0}}$, see \e{n67dt} \bcr at rank $n-1$. \ec  
and then we use \e{n67} and \e{n67dt} at rank $n-1$.

\smallbreak

\noindent{\em Estimate for} $u_{n+1}-u_n$. 
For $\eps_0 K(T) C_0 \le 1/2$, it follows from \e{n67} at rank $n-1$ and \e{n629} that the desired result \e{n67} at rank $n$ holds. 
%$$
%\lA \delta_n\rA_{C^0([0,T];H^s)}\le K_3 \eps 2^{-n},
%$$
%which is the desired result \e{n67}. 

\smallbreak

\noindent{\em Estimate for} $f_{n+1}-f_n$. 
The estimate \e{n69} follows from \e{grande labello faccio bella vita} and the assumptions \e{n67}--\e{n67dt} at rank $n-1$. 

\smallbreak

\noindent{\em Estimate for} $\partial_t(u_{n+1}-u_n)$. 
%Now we prove \e{n67dt}. 
By \e{eq:deltan}, 
\be \label{eq:pat deltan}
\partial_t \delta_{n}=-T_{V(u_{n})}\partial_x \delta_{n}
-iL^\mez T_{c(u_n)}L^\mez \delta_{n}
-R(u_n)\delta_{n}
+G_n.
\ee
As above, one has
$$
\| T_{V(u_n)} \|_{\Lr(H^{s},H^{s-1})}
+ \| L^\mez T_{c(u_n)}L^\mez\|_{\Lr(H^{s},H^{s-\tdm})}
+ \| R(u_n)\|_{\Lr(H^s,H^s)} \le C \| u_n \|_{H^{s}}.
$$
Therefore one can use \e{n67} and \e{n65} to estimate the first three terms 
in the right-hand side \bcr of \e{eq:pat deltan}. \ec 
The last term $G_n$ is estimated by means of \e{n629} and the induction assumptions. 
Consequently, we get $\| \partial_t \delta_n \|_{C^0([0,T];H^{s-\tdm})} \le C \eps^2 \, 2^{-n}$, and for $\eps \le \eps_0$, with $\eps_0$ small enough, we deduce \e{n67dt}.
\end{proof}

We can now conclude the proof of Proposition~\ref{Tfinal}. 

Recall that $s=\sigma-3/2$ by notation. 
By~\eqref{n67} and~\eqref{n69}, we deduce that $(u_n)_{n \in \mathbb{N}}$ and $(f_n)_{n \in \mathbb{N}}$ are Cauchy sequences in $C^0([0,T]; H^{s})$ 
and therefore converge to some limits $u$ and $f$ in $C^0([0,T];H^{s})$. 
Using the uniform bounds~\eqref{n65} and~\eqref{n68a} and the interpolation inequality in Sobolev spaces, 
we infer that $(u_n)_{n \in \mathbb{N}}$ and $(f_n)_{n \in \mathbb{N}}$ converge in $C^0([0,T]; H^{s'+\tdm})$ for all $s'<s$. Furthermore, we get that $u$ and $f$ belong to $C^0([0,T]; H^{s'+\tdm}) \cap L^\infty([0,T]; H^{s+\tdm})$ for all $s'<s$. 
Passing to the limit in~\eqref{n63}, we conclude that $u$ and $f$ satisfy~\eqref{n60a} 
and $u(T)=0$. 
Eventually, using Lemma~\ref{P:10} (seeing~\eqref{n60a} as a linear equation of the type~\eqref{pn25} 
with unknown $u$ and coefficients in $L^\infty([0,T]; H^s)$), we deduce $u \in C^0([0,T]; H^{s+\tdm})$. 

It remains to prove that $f \in C^0([0,T];H^{s + \tdm})$.  
We know that $u_n \to u$ in $C^0([0,T];H^s)$ $\subset C^0([0,T];H^{s_0 + 6})$. 
As a consequence, $V(u_n) \to V(u)$, $c(u_n) \to c(u)$, $\pa_t c(u_n) \to \pa_t c(u)$, 
$p(u_n) \to p(u)$ in $C^0([0,T];H^{s_0})$, and 
$R(u_n) \to R(u)$ in $C^0([0,T];\mL(H^s))$. 
Now consider $f_\infty := \Theta_{s,T}[V(u), c(u), R(u)](u_{in})$, 
and recall the definition \e{n64}. 
By \e{n534}, $\| f_n - f_\infty \|_{C^0([0,T];H^s)} \to 0$ as $n \to \infty$. 
On the other hand, $f = \lim f_n$ in $C^0([0,T];H^s)$, and therefore $f = f_\infty$. 
By statement $(i)$ of Proposition \ref{PP10-b}, $f_\infty \in C^0([0,T];H^{s+\tdm})$, 
with estimate \e{n532}.

This concludes the proof of Proposition~\ref{Tfinal} and hence the proof of Theorem~\ref{T1}.
\end{proof}

\appendix

\section{Paradifferential operators}

\begin{nota}
For $\rho\in\xN$, we denote 
by $W^{\rho,\infty}(\xT)$ the Sobolev spaces of $L^\infty$ functions 
whose derivatives of order $\rho$ are in $L^\infty$. 
For $\rho\in ]0,+\infty[\setminus \xN$, we denote 
by $W^{\rho,\infty}(\xT)$ the 
space functions in $W^{[\rho],\infty}(\xT)$ whose derivatives of order $[\rho]$  are uniformly H\"older continuous with 
exponent $\rho- [\rho]$.
\end{nota}
\begin{defi}\label{defi Gamma}
Given real numbers $\rho\ge 0$ and $m\in\xR$, 
$\Gamma_{\rho}^{m}$ denotes the space of functions $a(x,\xi)$
on $\xT\times \xR$ which are $C^\infty$ with respect to $\xi$, and
such that, for all $\alpha\in\xN$ and all $\xi$, the function
$x\mapsto \partial_\xi^\alpha a(x,\xi)$ belongs to $W^{\rho,\infty}(\xT)$ and
\begin{equation*}
\lA \partial_\xi^\alpha a(\cdot,\xi)\rA_{W^{\rho,\infty}}\le C_\alpha 
(1+\la\xi\ra)^{m-\la\alpha\ra}.
\end{equation*}
\end{defi}
\begin{defi}\label{defiGmrho}
For~$m\in\xR$,~$\rho\in [0,1]$ and~$a\in \Gamma^m_{\rho}({\mathbf{R}}^d)$, we set
\begin{equation}\label{defi:norms}
M_{\rho}^{m}(a)= 
\sup_{\la\alpha\ra\le 6 +\rho ~}\sup_{\xi\in\xR}
\lA (1+\la\xi\ra)^{\la\alpha\ra-m}\partial_\xi^\alpha a(\cdot,\xi)\rA_{W^{\rho,\infty}(\xT)}.
\end{equation}
\end{defi}

Now consider 
a $C^\infty$ function $\chi$ homogeneous of degree $0$ and satisfying, 
for $0<\eps_1<\eps_2$ small enough,
$$
\chi(\theta,\eta)=1 \quad \text{if}\quad \la\theta\ra\le \eps_1\la \eta\ra,\qquad
\chi(\theta,\eta)=0 \quad \text{if}\quad \la\theta\ra\geq \eps_2\la\eta\ra.
$$ 
Given a symbol $a$, we define
the paradifferential operator $T_a$ by
\begin{equation}\label{eq.para}
\widehat{T_a u}(\xi)=(2\pi)^{-1}\sum_{\eta\in\xZ}
\chi(\xi-\eta,\eta)\widehat{a}(\xi-\eta,\eta)\widehat{u}(\eta),
\end{equation}
where
$\widehat{a}(\theta,\xi)=\int e^{-ix\cdot\theta}a(x,\xi)\, dx$
is the Fourier transform of $a$ with respect to the first variable.

The main features of symbolic calculus for paradifferential operators 
are given by the following theorem.
\begin{defi}\label{defi:order}
Let~$m$ in $\xR$.
An operator~$T$ is said of order~$m$ if, for any~$\mu\in\xR$,
it is bounded from~$H^{\mu}(\xT)$ to~$H^{\mu-m}(\xT)$.
\end{defi}
\begin{theo}\label{theo:sc0}
Let~$m\in\xR$. 

$(i)$ If~$a \in \Gamma^m_0$, 
then~$T_a$ is of order~$m$. 
Moreover, for any~$\mu\in\xR$ there exists~$K>0$ such that
\begin{equation}\label{esti:quant1}
\lA T_a \rA_{\Fl{H^{\mu}}{H^{\mu-m}}}\le K M_{0}^{m}(a).
\end{equation}
$(ii)$ Let~$(m,m')\in\xR^2$ and~$\rho\in (0,+\infty)$. 
If~$a\in \Gamma^{m}_{\rho}, b\in \Gamma^{m'}_{\rho}$ then 
$T_a T_b -T_{a\sharp b}$ is of order~$  m+m'-\rho$ where
\be\label{defi:sharp}
a\sharp b=
\sum_{\la \alpha\ra < \rho} \frac{1}{i^{\la\alpha\ra} \alpha !} \partial_\xi^{\alpha} a \partial_{x}^\alpha b.
\ee
Furthermore, for any~$\mu\in\xR$ there exists $K>0$ such that
\begin{equation}\label{esti:quant2sharp}
\lA T_a T_b  - T_{a\sharp b}   \rA_{\Fl{H^{\mu}}{H^{\mu-m-m'+\rho}}}\le 
K M_{\rho}^{m}(a)M_{\rho}^{m'}(b).
\end{equation}

In particular, if~$\rho\in ( 0,1]$, 
$a\in \Gamma^{m}_{\rho}, b\in \Gamma^{m'}_{\rho}$ then
\begin{equation}\label{esti:quant2}
\lA T_a T_b  - T_{a b}   \rA_{\Fl{H^{\mu}}{H^{\mu-m-m'+\rho}}}\le 
K M_{\rho}^{m}(a)M_{\rho}^{m'}(b).
\end{equation}
$(iii)$ Let $m\in\xR$, $\rho>0$ and $a\in \Gamma^{m}_{\rho}(\xR^d)$. Denote by 
$(T_a)^*$ the adjoint operator of $T_a$ and by $\overline{a}$ the complex-conjugated of $a$. Then 
$(T_a)^* -T_{a^*}$ is of order $m-\rho$ where
$$
a^*=
\sum_{\la \alpha\ra < \rho} \frac{1}{i^{\la\alpha\ra} \alpha !} \partial_\xi^\alpha \partial_x^{\alpha} \overline{a} .
$$
Moreover, for all $\mu$ there exists a constant $K$ such that
\begin{equation}\label{esti:quant3}
\lA (T_a)^*   - T_{a^*}   \rA_{H^{\mu}\rightarrow H^{\mu-m+\rho}}\le 
K M_{\rho}^{m}(a).
\end{equation}
\end{theo}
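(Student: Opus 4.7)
The plan is that Theorem~\ref{theo:sc0} is a classical result of paradifferential calculus, essentially due to Bony and Meyer, and I would prove it by exploiting the precise frequency localization built into the cutoff $\chi$ in \eqref{eq.para}. The key qualitative input is that on the support of $(\theta,\eta)\mapsto\chi(\theta,\eta)$ one has $|\theta|\leq\varepsilon_2|\eta|$, so if $T_a$ sends an input frequency $\eta$ to an output frequency $\xi=\theta+\eta$, then $|\xi|\sim|\eta|$; in particular $T_a$ respects a Littlewood--Paley-type partition of unity up to bounded overlap. A second structural input is that, thanks to the smoothness of $a(\cdot,\xi)$ in $\xi$ and the $W^{\rho,\infty}$ regularity in $x$, the partial Fourier transform $\widehat{a}(\theta,\xi)$ decays fast enough in $\theta$ to make the non-diagonal part of the matrix of $T_a$ summable.

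For statement $(i)$, my strategy is to view $T_a$ through its matrix coefficients $\langle T_a e_\eta,e_\xi\rangle=(2\pi)^{-1}\chi(\xi-\eta,\eta)\widehat{a}(\xi-\eta,\eta)$. Using $a\in\Gamma^m_0$, one has $\|a(\cdot,\xi)\|_{L^\infty}\leq C M_0^m(a)\langle\xi\rangle^m$, and then a standard smoothing argument (convolving $a(\cdot,\xi)$ in $x$ with an approximation of identity adapted to the scale $\langle\xi\rangle$, using the admissibility of $\chi$) shows that the kernel $\chi(\xi-\eta,\eta)\widehat{a}(\xi-\eta,\eta)\langle\xi\rangle^{\mu-m}\langle\eta\rangle^{-\mu}$ is bounded, uniformly in $(\xi,\eta)$, with rapid decay in $|\xi-\eta|$. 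A Schur-lemma bound then yields \eqref{esti:quant1}.

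For statement $(ii)$, the plan is to compose the two matrix representations,
\[
\widehat{T_aT_bu}(\xi)=(2\pi)^{-2}\sum_{\eta,\zeta}\chi(\xi-\eta,\eta)\widehat{a}(\xi-\eta,\eta)\chi(\eta-\zeta,\zeta)\widehat{b}(\eta-\zeta,\zeta)\widehat{u}(\zeta),
\]
and, at fixed $(\xi,\zeta)$, to Taylor-expand $\eta\mapsto\chi(\xi-\eta,\eta)\widehat{a}(\xi-\eta,\eta)$ around $\eta=\zeta$ up to order $[\rho]$. Each monomial $(\eta-\zeta)^\alpha$ paired with $\widehat{b}(\eta-\zeta,\zeta)$ produces, by the Fourier correspondence $(\eta-\zeta)^\alpha\widehat{b}(\eta-\zeta,\zeta)\leftrightarrow \frac{1}{i^{|\alpha|}}\widehat{\partial_x^\alpha b}(\cdot,\zeta)$, the $\partial_x^\alpha b$ factor, while $\partial_\eta^\alpha$ falling on $\widehat{a}$ reproduces $\widehat{\partial_\xi^\alpha a}$ (derivatives hitting $\chi$ contribute a smoothing error, since they are supported in the transition region where $|\theta|\sim|\eta|$, hence off the diagonal of the resulting operator). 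This reconstructs exactly $T_{a\sharp b}$ with $a\sharp b$ as in \eqref{defi:sharp}, and the remainder is bounded by Schur's lemma, giving \eqref{esti:quant2sharp}; \eqref{esti:quant2} then follows by keeping only the leading term.

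For statement $(iii)$, I would pair $\langle T_au,v\rangle$ in $L^2$ and read off $(T_a)^*$ as the operator whose matrix coefficients are $\overline{\chi(\xi-\eta,\eta)\widehat{a}(\xi-\eta,\eta)}$ with the roles of $\xi,\eta$ swapped; expanding this kernel to order $[\rho]$ in $\eta$ around $\eta=\xi$ recovers the symbol $a^*$ plus a remainder of order $m-\rho$, yielding \eqref{esti:quant3}. The main technical obstacle, common to $(ii)$ and $(iii)$, is the fractional Taylor step when $\rho\notin\mathbb{N}$: here one must combine an integer-order Taylor remainder with a Zygmund-type estimate on the $W^{\rho-[\rho],\infty}$ seminorm of the highest available derivative, and bound it by $M^m_\rho(a)$, respectively $M^m_\rho(a)M^{m'}_\rho(b)$. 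For this step I would follow the treatment in Metivier's monograph rather than reproduce it in full, since the argument is standard and independent of the water waves setting.
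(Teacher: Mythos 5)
The paper does not give a from-scratch proof of Theorem~\ref{theo:sc0} at all: it states the result without proof and, in the remark that follows, justifies the periodic version by citing the uniformly local Sobolev theory of \cite{ABZ-ul} (where the same estimates are established for $H^s_{ul}(\xR)$) and then transferring, via the observations that $\lA u\rA_{H^s_{ul}}\les \lA u\rA_{H^s(\xT)}$ and that $(T_aT_b-T_{ab})u$ is periodic whenever $a,b,u$ are. Your proposal is therefore a genuinely different route --- a direct, self-contained proof in the spirit of Bony--Meyer --- rather than a rewording of the paper's argument. That is a reasonable alternative, and your Taylor-expansion strategy for $(ii)$ and the duality strategy for $(iii)$ are indeed the standard way these statements are proved, with the fractional-$\rho$ case handled exactly as you describe (integer Taylor step plus a H\"older/Zygmund estimate on the top remainder).

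However, your argument for statement $(i)$ contains a real gap. For $a\in\Gamma^m_0$ the $x$-regularity is only $L^\infty$, so the Fourier coefficients $\widehat a(\theta,\eta)$ are merely bounded in $\theta$; they have \emph{no} decay. The cutoff $\chi$ restricts the support to $|\xi-\eta|\le\eps_2|\eta|$ and convolving with a mollifier at scale $\bn$ cannot create decay inside that support. Consequently the weighted kernel $\chi(\xi-\eta,\eta)\widehat a(\xi-\eta,\eta)\langle\xi\rangle^{\mu-m}\langle\eta\rangle^{-\mu}$ is $O(1)$ on a band of width $\sim|\xi|$ around the diagonal, and $\sup_\xi\sum_\eta|\cdot|$ diverges: Schur's test does not apply. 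The classical proof of $(i)$ instead decomposes $T_au=\sum_j (S_{j-N}a)\,\Delta_j u$ with Littlewood--Paley blocks $\Delta_j$ and low-pass filters $S_{j-N}$; each summand has Fourier support in an annulus $\{|\xi|\sim 2^j\}$, so the pieces are almost orthogonal in $H^{\mu-m}$, and one bounds $\| (S_{j-N}a)\Delta_j u\|_{L^2}\les M^m_0(a)\,2^{jm}\|\Delta_j u\|_{L^2}$. It is this almost-orthogonality of annular frequency blocks, not a Schur bound on the matrix entries, that is the mechanism behind $(i)$; you should replace the Schur step accordingly (or invoke Cotlar--Stein).
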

\begin{rema}
These properties are well-known when 
Sobolev spaces of periodic functions are replaced by 
Sobolev spaces 
on the real line. To prove these results for periodic functions, one can 
use the results proved in \cite{ABZ-ul} about the general case of uniformly local Sobolev spaces $H^s_{ul}(\xR)$. Namely, 
in \cite{ABZ-ul}, the above results are proved to hold when $H^s(\xT)$ is replaced by $H^s_{ul}(\xR)$. In particular it is proved that
$$
\lA (T_a T_b  - T_{a b})u\rA_{H^{\mu-m-m'+\rho}_{ul}}\le 
K M_{\rho}^{m}(a)M_{\rho}^{m'}(b)\lA u\rA_{H^\mu_{ul}}.
$$
Since $\lA u\rA_{H^s_{ul}}\les \lA u\rA_{H^s(\xT)}$, it follows that
$$
\lA (T_a T_b  - T_{a b})u\rA_{H^{\mu-m-m'+\rho}_{ul}(\xR)}\le 
K M_{\rho}^{m}(a)M_{\rho}^{m'}(b)\lA u\rA_{H^\mu(\xT)}.
$$
Now, if $u$ is a periodic function and $a$ and $b$ are periodic in $x$, 
so is  $(T_a T_b  - T_{a b})u$ and we deduce that
$$
\lA (T_a T_b  - T_{a b})u\rA_{H^{\mu-m-m'+\rho}(\xT)}\les \lA (T_a T_b  - T_{a b})u\rA_{H^{\mu-m-m'+\rho}_{ul}(\xR)}.
$$
By combining the previous estimates we obtain \e{esti:quant2}. 
The other estimates are proved in a similar way. 
\end{rema}

It follows from \e{esti:quant2} applied with $\rho=1$ that, 
if $a\in \Gamma^{m}_{1}, b\in \Gamma^{m'}_{1}$ then
\begin{equation}\label{esti:quant4}
\lA [ T_a, T_b]  \rA_{\Fl{H^{\mu}}{H^{\mu-m-m'+1}}}\le 
K M_{1}^{m}(a)M_{1}^{m'}(b).
\end{equation}

If~$a=a(x)$ is a function of~$x$ only, then $T_a$ is called a paraproduct. 
We often use that the following consequence of \eqref{esti:quant1}: 
if~$a\in L^\infty(\xT)$ then~$T_a$ is an operator of order 
$0$, together with the estimate
\begin{equation}\label{esti:quant0}
\forall\sigma\in \xR,\quad \lA T_a u\rA_{H^\sigma}\les \lA a\rA_{L^\infty}\lA u\rA_{H^\sigma}.
\end{equation}
If $a=a(x)$ and $b=b(x)$ then \e{defi:sharp} simplifies to 
$a\sharp b=ab$ and hence \e{esti:quant2sharp} implies that, for any $\rho>0$,
\begin{equation}\label{esti:quant2-func}
\lA T_a T_b  - T_{a b}   \rA_{\Fl{H^{\mu}}{H^{\mu-m-m'+\rho}}}\le 
K \lA a \rA_{\eC{\rho}}\lA b\rA_{\eC{\rho}},
\end{equation}
provided that $a$ and $b$ are in to $\eC{\rho}(\xT)$.

\begin{theo}
$i)$
Given two functions~$a,b$ defined on~$\xR$ we define the remainder
\begin{equation}\label{defi:RBony}
\RBony(a,u)=au-T_a u-T_u a.
\end{equation}
Let~$\alpha\in \xR_+$ and $\beta\in \xR$ be such that~$\alpha+\beta>0$. Then
\be
\lA \RBony(a,u) \rA _{H^{\alpha + \beta-\frac{1}{2}}} 
\leq K \lA a \rA _{H^{\alpha}}\lA u\rA _{H^{\beta}}.\label{Bony}
\ee
$ii)$ Let $\alpha>1/2$. For all $C^\infty$ function $F$ with $F(0)=0$, if $a \in H^{\alpha}(\xT)$ then 
\begin{equation}\label{FBony}
\lA F( a)-T_{F'(a)}a\rA_{H^{2\alpha-\frac{1}{2}}}\le C\left( \lA a\rA_{H^s}\right) \lA a\rA_{H^s}.
\end{equation}
\end{theo}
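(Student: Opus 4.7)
The plan is to establish both estimates via a dyadic Littlewood--Paley decomposition, using Bony's paraproduct decomposition as the organizing principle. For statement $i)$, I would write $a = \sum_p \Delta_p a$ and $u = \sum_q \Delta_q u$ with $\Delta_p$ the standard frequency annulus localizer, so that the product $au$ splits, according to the relative sizes of $p$ and $q$, into three pieces: the two ``low--high'' pieces reproduce $T_a u + T_u a$ (once the admissible cut-off $\chi$ entering \eqref{eq.para} is used), and the diagonal remainder $\RBony(a,u)=\sum_p \Delta_p a\cdot \widetilde\Delta_p u$ with $\widetilde\Delta_p = \sum_{|k-p|\le N_0}\Delta_k$. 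Each summand is spectrally supported in a ball of radius $\lesssim 2^{p+1}$, so by the standard almost-orthogonality of such sums the $H^{s}$-norm squared of the remainder is controlled by $\sum_p 4^{sp}\|\Delta_p a\,\widetilde\Delta_p u\|_{L^2}^2$. Using Bernstein's inequality in one dimension, $\|\widetilde\Delta_p u\|_{L^\infty}\lesssim 2^{p/2}\|\widetilde\Delta_p u\|_{L^2}$, together with H\"older, one bounds each term by $c_p\, d_p\, 2^{p(1/2-\alpha-\beta)}\|a\|_{H^\alpha}\|u\|_{H^\beta}$ with $(c_p),(d_p)\in\ell^2$. Summing in $p$ with $s=\alpha+\beta-\tfrac12$ yields the claim, provided the geometric exponent $\alpha+\beta>0$ ensures convergence.

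For statement $ii)$, I would exploit the telescoping identity
\[
F(a) = \sum_{j\ge -1} \big( F(S_{j+1}a)-F(S_j a)\big) = \sum_{j} m_j\, \Delta_j a, \qquad m_j := \int_0^1 F'(S_j a + t\Delta_j a)\, dt,
\]
valid because $F(0)=0$ and $S_j a\to 0$ as $j\to-\infty$ (using $\alpha>1/2$ and the embedding $H^\alpha\hookrightarrow L^\infty$). After subtracting $T_{F'(a)}a = \sum_j S_{j-N}F'(a)\cdot \Delta_j a$, I would write
\[
F(a)-T_{F'(a)}a = \sum_j \big(m_j - S_{j-N}F'(a)\big)\Delta_j a,
\]
and split the coefficient as $(m_j - F'(S_j a)) + (F'(S_j a)-S_{j-N}F'(a))$. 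The first piece is $O(\|\Delta_j a\|_{L^\infty})$ by a Taylor expansion of $F'$; the second, by telescoping and Bernstein, is dominated by $\sum_{k\ge j-N}\|\Delta_k F'(a)\|_{L^\infty}$. Since $F'(a)\in H^\alpha$ (Moser-type composition estimate, again using $\alpha>1/2$), both pieces are $\le C(\|a\|_{H^\alpha})\, c_j\, 2^{-j(\alpha-1/2)}$ with $(c_j)\in\ell^2$. Pairing with $\Delta_j a$, whose $L^2$-norm is $\lesssim c_j' 2^{-j\alpha}\|a\|_{H^\alpha}$, and summing the spectrally localized pieces, one gains $\alpha - 1/2$ additional derivatives on top of the $H^\alpha$ regularity of $a$, giving the $H^{2\alpha-1/2}$ bound.

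\textbf{Main obstacle.} The delicate point is in $ii)$: quantifying the cancellation between the ``nonlinear'' coefficient $m_j$ and the paraproduct truncation $S_{j-N}F'(a)$ so that the gain of exactly $\alpha-1/2$ derivatives emerges. The comparison requires combining a Taylor expansion of $F'$ with a composition estimate on $F'(a)$ in Besov/Sobolev scales, and both ingredients hinge on the embedding $H^\alpha\hookrightarrow L^\infty$, which is the real reason for the hypothesis $\alpha>1/2$. By contrast, statement $i)$ is largely bookkeeping once the dyadic decomposition is in place. A minor but necessary detail is the choice of the parameter $N$ (depending on $\eps_1,\eps_2$ in the admissible cut-off $\chi$) to guarantee compatibility with the paraproduct spectral support conventions; this is fixed once and for all.
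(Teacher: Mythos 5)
The paper does not include a proof of this theorem; it is stated as a standard result of paradifferential calculus, so your proposal can only be judged on its own terms.

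\textbf{Part $i)$: a genuine gap for $0<\alpha+\beta\le 1/2$.}
The almost-orthogonality lemma you invoke for ball-supported dyadic pieces, namely
\[
\Big\lVert \sum_p f_p \Big\rVert_{H^s}^2 \;\lesssim\; \sum_p 4^{sp}\,\lVert f_p\rVert_{L^2}^2
\quad\text{when } \mathrm{supp}\,\widehat{f_p}\subset B(0,C2^p),
\]
is valid only for $s>0$ (this is the standard ``ball'' lemma, e.g.\ Bahouri--Chemin--Danchin, Lemma 2.84; a counterexample for $s\le 0$ is obtained by taking all $f_p$ equal). With $s=\alpha+\beta-\tfrac12$, your hypotheses permit $s\in(-\tfrac12,0]$ whenever $0<\alpha+\beta\le\tfrac12$, and in that range the step fails. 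Notice moreover that once you plug your per-term bound into the quadratic sum, $\sum_p 4^{sp}\lVert f_p\rVert_{L^2}^2$ becomes $\sum_p c_p^2 d_p^2$, which converges regardless of the sign of $\alpha+\beta$; so the hypothesis ``$\alpha+\beta>0$ ensures convergence'' is not used anywhere in your argument, a sign that the decisive point has been missed. The condition $\alpha+\beta>0$ must enter precisely to compensate for the failure of the ball lemma at negative $s$.

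\textbf{How to repair it.} Estimate $f_p=\Delta_p a\,\widetilde\Delta_p u$ in $L^1$ rather than $L^2$:
\[
\lVert f_p\rVert_{L^1}\le \lVert \Delta_p a\rVert_{L^2}\lVert\widetilde\Delta_p u\rVert_{L^2}
\lesssim \gamma_p\,2^{-(\alpha+\beta)p}\lVert a\rVert_{H^\alpha}\lVert u\rVert_{H^\beta},\qquad \gamma_p:=c_pd_p\in\ell^1.
\]
Now apply Bernstein at the \emph{output} frequency scale $2^q$, not $2^p$: since $\Delta_q f_p$ has spectrum in an annulus of size $\sim 2^q$,
\[
\lVert \Delta_q f_p\rVert_{L^2}\lesssim 2^{q/2}\lVert \Delta_q f_p\rVert_{L^1}\le 2^{q/2}\lVert f_p\rVert_{L^1}.
\]
As $\Delta_q \sum_p f_p=\sum_{p\ge q-N_0}\Delta_q f_p$, one obtains, with $s=\alpha+\beta-\tfrac12$,
\[
2^{sq}\Big\lVert\Delta_q\sum_p f_p\Big\rVert_{L^2}
\lesssim \sum_{p\ge q-N_0} 2^{-(\alpha+\beta)(p-q)}\gamma_p\,\lVert a\rVert_{H^\alpha}\lVert u\rVert_{H^\beta}.
\]
The kernel $K(q,p)=2^{-(\alpha+\beta)(p-q)}\mathbf{1}_{p\ge q-N_0}$ is row- and column-summable \emph{because} $\alpha+\beta>0$, and the Schur test yields the $\ell^2$ bound in $q$, i.e.\ the $H^{\alpha+\beta-1/2}$ estimate. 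This is the point at which the hypothesis $\alpha+\beta>0$ actually does its work; your version applies Bernstein at scale $2^p$ instead, which is why the hypothesis drops out.

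\textbf{Part $ii)$.} Your telescoping decomposition and the splitting of $m_j-S_{j-N}F'(a)$ into a Taylor piece and a tail of $F'(a)$ (you will want to split the latter further, comparing $F'(S_ja)$ with $F'(a)$ and then $F'(a)$ with $S_{j-N}F'(a)$) is the standard Bony paralinearization argument and gives the right exponent $\alpha-\tfrac12$ for the coefficient, hence total $2\alpha-\tfrac12$ after pairing with $\Delta_j a$. One point you should address when writing it out: the pieces $(m_j-S_{j-N}F'(a))\Delta_j a$ are \emph{not} spectrally localized, because $m_j$ carries all frequencies of $a$; to conclude you either need to further decompose $m_j-S_{j-N}F'(a)$ in frequency and control the interactions, or invoke a product lemma for sums $\sum_j g_j\Delta_j a$ with $g_j$ only controlled in $L^\infty$. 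This is routine but is exactly the bookkeeping that cannot be dismissed, so it deserves a sentence. Note also that since $2\alpha-\tfrac12>0$ here (from $\alpha>\tfrac12$), the output regularity is always positive and the ball-lemma issue of part $i)$ does not recur.
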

\begin{prop}\label{lemPa}
Let~$r,\mu\in \xR$ be such that~$r+\mu>0$. If~$\gamma\in\xR$ satisfies 
$$
\gamma\le r  \quad\text{and}\quad \gamma < r+\mu-\frac{1}{2},
$$
then there exists a constant~$K$ such that, for all 
$a\in H^{r}(\xT)$ and all~$u\in H^{\mu}(\xT)$, 
%we have
\be\label{pAz1}
\lA au - T_a u\rA_{H^{\gamma}}\le K \lA a\rA_{H^{r}}\lA u\rA_{H^\mu}.
\ee
\end{prop}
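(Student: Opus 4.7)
The plan is to rely on the Bony paraproduct decomposition
$$
au = T_a u + T_u a + \RBony(a,u),
$$
which rewrites the quantity $au - T_a u$ as $T_u a + \RBony(a,u)$. It then suffices to bound the $H^\gamma$-norms of these two pieces separately.

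First I would handle the remainder. Since $r + \mu > 0$, one of $r, \mu$ must be nonnegative, and the symmetry $\RBony(a,u) = \RBony(u,a)$ allows us to invoke the estimate \eqref{Bony} in the appropriate order to obtain
$$
\lA \RBony(a,u)\rA_{H^{r+\mu-\mez}} \le K \lA a\rA_{H^r} \lA u \rA_{H^\mu}.
$$
The Sobolev embedding $H^{r+\mu-\mez}(\xT)\hookrightarrow H^\gamma(\xT)$, a consequence of the hypothesis $\gamma < r+\mu-\mez$, then controls the remainder in $H^\gamma$.

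Next I would bound $T_u a$ via a Littlewood--Paley analysis. Writing $T_u a = \sum_j (S_{j-N}u)\,\Delta_j a$ for a fixed sufficiently large $N$ (where $\Delta_j, S_j$ are the standard dyadic blocks associated with the cutoff $\chi$ in \eqref{eq.para}), Bernstein's inequality on $\xT$ yields $\lA \Delta_k u\rA_{L^\infty} \les 2^{k(\mez-\mu)} \lA u\rA_{H^\mu}$, hence
$$
\lA S_{j-N}u\rA_{L^\infty} \les \lA u\rA_{H^\mu} \sum_{k < j - N} 2^{k(\mez-\mu)}.
$$
Three cases must be distinguished. When $\mu > \mez$, the geometric series converges uniformly in $j$, giving $\lA T_u a\rA_{H^r} \les \lA u\rA_{H^\mu}\lA a\rA_{H^r}$; combined with $\gamma \le r$ this settles the bound. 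When $\mu < \mez$, the series is dominated by its last term, producing $\lA T_u a\rA_{H^{r+\mu-\mez}} \les \lA u\rA_{H^\mu}\lA a\rA_{H^r}$, and the hypothesis $\gamma \le r + \mu - \mez$ suffices.

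The main obstacle will be the borderline case $\mu = \mez$. There the series contributes a logarithmic factor $j$, giving only
$$
\lA \Delta_j(T_u a)\rA_{L^2} \les j\, \lA u\rA_{H^{1/2}}\, \lA \Delta_j a\rA_{L^2},
$$
so an almost-logarithmic loss appears. To absorb this polynomial factor one must exploit the strict inequality in the hypothesis: since $r + \mu - \mez = r$ when $\mu = \mez$, the assumption $\gamma < r + \mu - \mez$ reads $\gamma < r$, producing an exponential gap $2^{2j(r-\gamma)}$ that dominates $j^2$ and makes the series $\sum_j j^2\, 2^{2j\gamma}\lA \Delta_j a\rA_{L^2}^2$ summable when $a \in H^r(\xT)$. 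This is precisely why the statement requires $\gamma < r + \mu - \mez$ strictly rather than merely $\gamma \le r + \mu - \mez$.
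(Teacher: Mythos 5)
Your decomposition $au - T_a u = T_u a + \RBony(a,u)$ is the right starting point, and your case analysis for $T_u a$ (the three regimes $\mu > \tfrac12$, $\mu < \tfrac12$, $\mu = \tfrac12$) correctly isolates the borderline case and explains why the strict inequality $\gamma < r + \mu - \tfrac12$ appears in the hypotheses. The paper states Proposition~\ref{lemPa} without proof (it is a standard fact from paradifferential theory, collected alongside Theorem~\ref{theo:sc0}, \eqref{Bony}, \eqref{prtame}, etc.), so there is no paper proof to compare against; your argument is a sound verification.

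Two minor remarks. First, the embedding $H^{r+\mu-\frac12}(\xT) \hookrightarrow H^\gamma(\xT)$ used for $\RBony(a,u)$ only needs $\gamma \le r + \mu - \tfrac12$, not the strict inequality; the strict inequality is genuinely consumed only at $\mu = \tfrac12$, exactly as you observe at the end. Second, at the critical exponent $\mu = \tfrac12$ your bound $\lA S_{j-N}u\rA_{L^\infty} \les j\, \lA u \rA_{H^{1/2}}$ can be sharpened to $j^{1/2}$ by Cauchy--Schwarz on the telescoping sum $\sum_{k<j-N} 2^{k/2}\lA \Delta_k u\rA_{L^2}$, but since any polynomial factor $j^m$ is absorbed by the exponential gap $2^{j(\gamma - r)}$ when $\gamma < r$, the cruder estimate suffices. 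You should also note, for full rigor, that the paraproduct $T_a$ in \eqref{eq.para} and the truncated sum $\sum_j (S_{j-N}u)\Delta_j a$ agree only up to a smoothing correction (depending on the choice of $\chi$), which is an operator of arbitrarily negative order and therefore harmless here.
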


We also recall two well-known nonlinear properties. Firstly, 
If~$u_1,u_2 \in H^{s}(\xT)\cap L^{\infty}(\xT)$ and $s\ge 0$ then
\begin{equation}\label{prtame}
\lA u_1 u_2 \rA_{H^{s}}\le K \lA u_1\rA_{L^\infty}\lA u_2\rA_{H^{s}}+K \lA u_2\rA_{L^\infty}\lA u_1\rA_{H^s},
\end{equation}
and hence, for $s>1/2$, 
\begin{equation}\label{prtame2}
\lA u_1 u_2 \rA_{H^{s}}\le K \lA u_1\rA_{H^s}\lA u_2\rA_{H^{s}}.
\end{equation}
Similarly, for~$s>0$ and $F\in C^\infty(\xC^N)$ such that~$F(0)=0$, 
there exists a non-decreasing function~$C\colon\xR_+\rightarrow\xR_+$ 
such that
\begin{equation}\label{esti:F(u)}
\lA F(U)\rA_{H^s}\le C\bigl(\lA U\rA_{L^\infty}\bigr)\lA U\rA_{H^s},
\end{equation}
for any~$U\in (H^s(\xT)\cap L^\infty(\xT))^N$.

\section{Energy estimates and well-posedness of some linear equations}

Recall the linearized equation % reads 
$\partial_t u+i\Lk u=0$, 
%$$
%\partial_t u+i\Lk u=0,
%$$
%%Introduce the Fourier multiplier (of order one)
%$$
%\Lk\defn \big( (g-\kappamuet\px^2) |D_x|\big)^{\frac{1}{2}},
%$$
%and the new unknown
%$$
%u=\Lk |D_x|^{-1}\eta+i\psi.
%$$
%Then
where $\Lk\defn \big( (g-\kappamuet\px^2) |D_x|\big)^{\frac{1}{2}}$. 
%$$
%\Lk\defn \big( (g-\kappamuet\px^2) |D_x|\big)^{\frac{1}{2}}.
%$$

We gather in this section Sobolev energy estimates for linear equations of the form 
$$
\partial_t \varphi +V\px \varphi+ iL^\mez \big( c L^\mez \varphi\big)+R\varphi=F,
$$
where $V=V(t,x)$ is a real-valued coefficient, $c=c(t,x)$ is a real-valued coefficient bounded from below by $1/2$, $F=F(t,x)$ 
is a given complex-valued source term and $R$ is a time dependent operator of 
order $0$ which means that $R\varphi$ is defined by 
$(R\varphi)(t)=R(t)\varphi(t)$ and $R$ belongs to 
$C^0(\xR_+; \mathcal{L}(H^\mu))$ (for some $\mu$) 
where $\mathcal{L}(H^\mu)$ 
denotes the set of bounded 
operator on $H^\mu(\xT)$. Below we consider various equations 
of this form where, for instance, $R$ is either a multiplication operator 
by some function or the commutator between $V\px$ and a Fourier multiplier.

We also consider paradifferential equations of the form
$$
\partial_t \varphi +T_V\px \varphi+i\mathcal{L} 
\varphi+ R\varphi =F,
$$
where $\mathcal{L}=L^\mez \big(T_c L^\mez \cdot\big)$ 
and $V,c,R$ are as above.

\begin{prop}\label{P:10}Let $T>0$ and $\mu\in [0,+\infty)$. 
Consider $R\in C^0([0,T]; \mathcal{L}(H^\mu))$ and 
real-valued coefficients $V,c$ 
satisfying
$$
V\in C^0([0,T];\eC{1}(\xT)), \quad 
c\in C^0([0,T];\eC{\tdm}(\xT)),
$$
with the $L^\infty_{t,x}$-norm of $c-1$ small enough. 

For any $\varphi_{in}\in H^\mu(\xT)$ and any 
$F\in L^1([0,T];H^\mu(\xT))$, 
there exists a unique $\varphi\in C^0([0,T];H^\mu(\xT))$ such that
\be\label{pn25}
\partial_t \varphi +T_V\px \varphi+ R \varphi+i\mathcal{L}\varphi =F, 
\qquad \varphi_{\arrowvert t=0}=\varphi_{in}.
\ee
Moreover, for any $t\ge 0$,
\be\label{pn26}
\lA \varphi(t)\rA_{H^\mu}\le e^{Ct}\left( \lA \varphi_{in}\rA_{H^\mu}+\lA F\rA_{L^1([0,t];H^\mu)}\right),
\ee
for some constant $C=C(\mu,M)$ depending only on $\mu$ and 
$$
M=\sup_{t\in [0,T]}\Big\{\lA \px V(t)\rA_{L^\infty}+\lA c(t)\rA_{\eC{\tdm}}+\lA R(t)\rA_{\mathcal{L}(H^\mu)}\Big\}.
$$
\end{prop}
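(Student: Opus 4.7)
The plan is to establish an a priori $H^\mu$-energy estimate of the form \e{pn26} first, and then deduce both uniqueness and existence from it in the standard way.

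For the a priori estimate, I would apply $\Lambda^\mu=(I-\px^2)^{\mu/2}$ to the equation and set $u_\mu\defn \Lambda^\mu \varphi$. Using that $\Lambda^\mu$ is a Fourier multiplier that commutes with $L^\mez$, the commutator $[\Lambda^\mu,\mathcal{L}]$ equals $L^\mez [\Lambda^\mu,T_c]L^\mez$. Since $c\in W^{\tdm,\infty}$, the estimate \e{esti:quant2sharp} gives $[\Lambda^\mu,T_c]\in \Lr(H^{\mu-3/2},L^2)$, so $[\Lambda^\mu,\mathcal{L}]\in \Lr(H^\mu,L^2)$ with operator norm controlled by $M_{3/2}^1(c\ell)\les \lA c\rA_{\eC{3/2}}$. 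Similarly $[\Lambda^\mu,T_V\px]\in \Lr(H^\mu,L^2)$ by \e{esti:quant4} (with norm $\les \lA V\rA_{\eC 1}$), and $[\Lambda^\mu,R]\Lambda^{-\mu}\in \Lr(L^2)$ since $R\in\Lr(H^\mu)$. Thus $u_\mu$ satisfies an $L^2$ equation of the form $\partial_t u_\mu+T_V\px u_\mu+i\mathcal{L} u_\mu+R_\mu u_\mu=G_\mu$ with $\lA G_\mu(t)\rA_{L^2}\le C(\lA \varphi(t)\rA_{H^\mu}+\lA F(t)\rA_{H^\mu})$ and $R_\mu\in C^0([0,T];\Lr(L^2))$ of norm $\le C$.

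Taking the real part of the $L^2$ scalar product with $u_\mu$ yields
$$
\tfrac{1}{2}\tfrac{d}{dt}\lA u_\mu\rA_{L^2}^2
=-\RE\langle T_V\px u_\mu,u_\mu\rangle-\RE\langle i\mathcal{L}u_\mu,u_\mu\rangle
-\RE\langle R_\mu u_\mu,u_\mu\rangle+\RE\langle G_\mu,u_\mu\rangle.
$$
For the transport term, $(T_V\px)^*=-\px T_V^*=-T_V\px-T_{\px V}+$ order $0$ (using $V\in\eC{1}$ and \e{esti:quant3}), hence $\RE\langle T_V\px u_\mu,u_\mu\rangle$ is bounded by $C(\lA V\rA_{\eC 1})\lA u_\mu\rA_{L^2}^2$. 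For the principal term, write
$$
\RE\langle iL^\mez T_c L^\mez u_\mu,u_\mu\rangle=-\IM\langle T_c L^\mez u_\mu,L^\mez u_\mu\rangle
=\tfrac{1}{2i}\langle(T_c-T_c^*)L^\mez u_\mu,L^\mez u_\mu\rangle,
$$
and since $c$ is real, \e{esti:quant3} applied with $\rho=3/2$ shows that $T_c-T_c^*$ has order $-3/2$. Because $L^\mez$ has order $3/4$, the operator $L^\mez(T_c-T_c^*)L^\mez$ has order $0$, and this contribution is bounded by $C(\lA c\rA_{\eC{3/2}})\lA u_\mu\rA_{L^2}^2$. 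Combined with the bound for $R_\mu$ and Gronwall's lemma, this produces the estimate~\e{pn26}.

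For existence and uniqueness, uniqueness is immediate from the linear estimate applied to the difference of two solutions with zero data and source. For existence, I would use a Friedrichs regularization: let $J_\eps\defn \chi(\eps D_x)$ be a smooth Fourier truncation and consider
$$
\partial_t \varphi_\eps+J_\eps T_V\px J_\eps\varphi_\eps+iJ_\eps\mathcal{L}J_\eps\varphi_\eps+R\varphi_\eps=F,\quad \varphi_\eps\arrowvert_{t=0}=\varphi_{in}.
$$
Since $J_\eps$ is smoothing, this is an ODE in $H^\mu(\xT)$ with globally Lipschitz right-hand side, so it admits a unique global solution $\varphi_\eps\in C^1([0,T];H^\mu)$ by Cauchy--Lipschitz. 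The same computation as above (using that $J_\eps$ is self-adjoint and commutes with Fourier multipliers) yields the estimate~\e{pn26} for $\varphi_\eps$ \emph{uniformly in $\eps$}. Then $(\varphi_\eps)$ is bounded in $L^\infty([0,T];H^\mu)$ and in $W^{1,\infty}([0,T];H^{\mu-3/2})$, so by a standard compactness/weak-limit argument one extracts a subsequence converging to some $\varphi\in C^0([0,T];H^{\mu-3/2})\cap L^\infty([0,T];H^\mu)$ solving~\e{pn25}. Continuity in the strong $H^\mu$ topology follows by approximating the initial data with smoother ones and invoking the uniqueness estimate in the lower-norm space.

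The main obstacle will be the second step of the energy computation: ensuring that the $H^\mu$-regularity assumption on $c$ (namely $\eC{3/2}$) is strong enough to make $L^\mez(T_c-T_c^*)L^\mez$ of order $0$, and keeping track of the small constant ensuring $c\ge 1/2$ so that no positivity is lost. The other commutators are handled cleanly by the paradifferential calculus recalled in the appendix.
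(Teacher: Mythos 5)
Your overall strategy (commute with an elliptic operator of order $\mu$, prove an $L^2$ energy estimate via the skew-symmetric structure, and pass to the limit from Friedrichs-regularized problems) is the same as the paper's. However there is a genuine gap in the commutator step, which is precisely the point where the paper is forced to do something more delicate than you propose.

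You claim that $[\Lambda^\mu,T_c]$ is in $\mathcal{L}(H^{\mu-3/2},L^2)$, invoking~\eqref{esti:quant2sharp}. But that estimate controls $T_aT_b-T_{a\sharp b}$, not $[T_a,T_b]$. Taking the antisymmetrized difference, one finds
\[
[\Lambda^\mu,T_c]=T_{\frac{1}{i}\{\langle\xi\rangle^\mu,\,c\}}+(\text{order }\mu-\tfrac32),
\qquad
\{\langle\xi\rangle^\mu,c\}=\mu\,\langle\xi\rangle^{\mu-2}\xi\,\partial_x c,
\]
and the Poisson-bracket term has order $\mu-1$, not $\mu-\tfrac32$. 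Consequently $[\Lambda^\mu,\mathcal{L}]=L^{\frac12}[\Lambda^\mu,T_c]L^{\frac12}$ is of order $\mu+\tfrac12$, so it maps $H^\mu$ into $H^{-1/2}$ only, and the source term $G_\mu$ of your $L^2$ equation is \emph{not} in $L^2$. Moreover, this cannot be repaired by taking the real part: the operator $i[\Lambda^\mu,\mathcal{L}]\Lambda^{-\mu}$ has \emph{real} principal symbol $\mu\,\langle\xi\rangle^{-2}\xi\,\ell(\xi)\,\partial_x c$, so it is essentially self-adjoint, and $\RE\langle A u_\mu,u_\mu\rangle=\tfrac12\langle (A+A^*)u_\mu,u_\mu\rangle$ gives no gain of derivative. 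You lose exactly half a derivative.

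The paper avoids this by commuting with the $x$-dependent paradifferential operator $\Lambda_\mu$ of symbol $1+(c(t,x)\ell(\xi))^{2\mu/3}$ (and an analogously adapted mollifier $J_\eps$ with symbol built from $\exp(-\eps\,c\,\ell)$). The crucial point, recorded in~\eqref{pn30}, is that the Poisson brackets $\{(c\ell)^{2\mu/3},c\ell\}$ and $\{(c\ell)^{2\mu/3},e^{-\eps c\ell}\}$ vanish identically, since all these symbols are functions of the single real Hamiltonian $c(t,x)\ell(\xi)$. This is what makes the commutators of order $0$ on $L^2$ while keeping all constants controlled by $\|c\|_{W^{3/2,\infty}}$ rather than a high Sobolev norm of $c$ — and this last point is essential for the rest of the paper, since the smallness assumption must involve only low norms of the coefficients. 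So your energy-estimate step, as written, would fail; the remedy is not a better bound on $[\Lambda^\mu,T_c]$ but the replacement of $\langle D_x\rangle^\mu$ by the $c$-adapted $\Lambda_\mu$ and of $\chi(\eps D_x)$ by the $c$-adapted $J_\eps$.
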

\begin{rema}
We often use energy estimates for backward Cauchy problems, 
that is for Cauchy problems on time intervals $[0,T]$ with a data prescribed at time $T$. 
Then the energy estimates read
\be\label{pn27b}
\lA \varphi(t)\rA_{H^\mu}\le e^{CT}\left( \lA \varphi(T)\rA_{H^\mu}+\lA F\rA_{L^1([0,T];H^\mu)}\right).
\ee
%with $M(t')=\lA \px V(t')\rA_{L^\infty}+\lA c(t')\rA_{\eC{\tdm}}+\lA R(t')\rA_{\mathcal{L}(H^\mu)}$ and $K$ depends only on $\mu$.
\end{rema}
\begin{proof}
As already seen in \e{pb41}, 
$\Lr=L^\mez T_c L^\mez =T_\gamma+R'$ where $R'$ is of order $0$ and 
$$
\gamma=c\ell +\frac{1}{i}\big(\partial_\xi \sqrt{\ell}\big)\sqrt{\ell} \px c.
$$
Up to replacing in \e{pn25}Ê
the remainder $R$ by $R+iR'$, we prove the existence of the solution as limits of approximate problems of the form
 \be\label{pn27}
\partial_t \varphi +T_V\px J_\eps \varphi
+iT_\gamma J_\eps \varphi+ R \varphi=F, 
\qquad \varphi_{\arrowvert t=0}=J_\eps \varphi_{in},
\ee
where $J_\eps$ are smoothing operators. Then \e{pn27} is an ODE 
in Banach spaces and admits a global in time solution denoted by $\varphi_\eps$. 

Set $\gamma^{(3/2)}(t,x,\xi)=c(t,x)\lk(\xi)$, 
which is the principal symbol of $\gamma$. 
As in \cite{ABZ1}, consider the paradifferential 
operator 
$\Lambda_{\mu}$ with symbol $1+(c(t,x)\ell(\xi))^{2\mu/3}$ and, 
given $\eps\in [0,1]$, define $J_\eps$ as the 
paradifferential operator with symbol $\jmath_\eps=\jmath_\eps(t,x,\xi)$ given by
$$
\jmath_\eps=\jmath_\eps^{(0)}+\jmath_\eps^{(-1)}= \exp \big( -\eps \gamma^{(3/2)}\big) -\frac{i}{2}(\partial_x\partial_\xi)\exp \big( -\eps \gamma^{(3/2)}\big).
$$
Recall that the Poisson bracket of two symbols is 
$\{a,b\}=(\pa_x a)(\pa_\xi b)-(\pa_\xi a)(\pa_x b)$. 
Then
\be\label{pn30}
\{\jmath_\eps^{(0)}, \gamma^{(3/2)}\}=0, \quad 
\left\{\jmath_\eps^{(0)},(c\ell)^{2\mu/3}\right\}=0,
\left\{\gamma^{(3/2)},(c\ell)^{2\mu/3}\right\}=0,
%=\frac{1}{i}\Bigl( (\partial_\xi \jmath_\eps^{(0)})
%(\px \gamma^{(3/2)}) - (\partial_x \jmath_\eps^{(0)})
%(\partial_\xi \gamma^{(3/2)})\Big)=0,
\ee
and
$$
\IM \jmath_\eps^{(-1)} = -\frac{1}{2}(\partial_x\partial_\xi) \jmath_\eps^{(0)}.
$$
Of course, for any $\eps >0$, 
$\jmath_\eps \in C^{0}([0,T];\Gamma^m_{3/2}(\xR^d))$ 
for all $m\le 0$, so that $T_{\jmath_\eps}u\in C^{0}([0,T];H^\infty(\xT))$ 
for any 
$u\in C^{0}([0,T];H^{-\infty}(\xT))$. 
Also 
$\jmath_\eps$ is uniformly bounded 
in $C^{0}([0,T];\Gamma^0_{3/2}(\xR^d))$ for all $\eps\in [0,1]$. 
Hence, using \e{esti:quant2sharp} with 
$\rho=3/2$ or \e{esti:quant2} with $\rho=1$, 
we have the following estimates (uniformly in $\eps$): 
\be\label{B6-B8}
\begin{aligned}
&\lA [J_\eps, T_\gamma]u\rA_{H^\mu} \le 
C\lA u\rA_{H^\mu}, \qquad &
&\lA (J_\eps)^*u -J_\eps u\rA_{H^{\mu+\tdm}} \le 
C\lA u\rA_{H^\mu},\\
& \lA \left[\Lambda_\mu,L^\mez(T_cL^\mez \cdot)\right]u\rA_{L^{2}}
\le 
C\lA u\rA_{H^\mu},
&&\lA \left[\Lambda_\mu,J_\eps \right]u\rA_{H^{\tdm}}\le 
C\lA u\rA_{H^\mu},\\
&\lA\left[\Lambda_\mu,T_{V}\partial_x J_\eps \right]u\rA_{L^{2}}\le 
C\lA V\rA_{\eC{1}}\lA u\rA_{H^\mu},
&&\lA [J_\eps, T_V\px]u\rA_{H^\mu} \le 
C\lA V\rA_{\eC{1}}\lA u\rA_{H^\mu},
\end{aligned}
\ee
for some constant $C$ depending only on $\lA c\rA_{W^{3/2,\infty}}$ and 
uniform in $\eps\in [0,1]$. 
%On the other hand, directly from 
%\be\label{pn30c}
%
%\ee

%\noindent\textbf{Derivatives.} 

Recall that, by notation, $\varphi_\eps$ is the unique solution to \e{pn27} and introduce 
$\dot{\varphi}_\eps \defn \Lambda_\mu\varphi_\eps$. 
Using the fact that $\Lambda_\mu$ is invertible (for $c-1$ small enough) 
and the preceding estimates, we deduce that
\be\label{pn31}
\partial_t \dot{\varphi}_\eps +T_V\px J_\eps \dot{\varphi}_\eps
+ \Lambda_\mu R \Lambda_\mu^{-1}\dot{\varphi}_\eps+iT_\gamma J_\eps \dot{\varphi}_\eps=F_\eps, 
\qquad \dot{\varphi}_\eps {\arrowvert}_{t=0}=\Lambda_\mu J_\eps \varphi_{in},
\ee
where 
\be\label{pn31b}
\lA F_\eps\rA_{L^1([0,T];L^2)}
\le C(M)\Big\{\lA \varphi_\eps\rA_{L^1([0,T];H^\mu)}+\lA F\rA_{L^1([0,T];H^\mu)}\Big\}.
\ee
Write 
$\frac{d}{dt}\lA \dot{\varphi}_\eps\rA_{L^2}^2 
= 2 \RE \left\langle \partial_{t}\dot{\varphi}_\eps,\dot{\varphi}_\eps\right\rangle$,  
where $\langle\cdot,\cdot\rangle$ denotes the scalar product in $L^{2}(\xT)$, and hence
\begin{align*}
&\frac{d}{dt}\lA \dot{\varphi}_\eps\rA_{L^2}^2 
= - \left\langle (P+P^*)\dot{\varphi}_\eps,\dot{\varphi}_\eps\right\rangle \quad\text{with}\\
&P=T_V\px J_\eps + \Lambda_\mu R \Lambda_\mu^{-1}+iT_\gamma J_\eps.
\end{align*}
To estimate the operator norm of 
$P+P^*$, there are two ingredients. Firstly, we replace $J_\eps^*$ by $J_\eps+(J_\eps^*-J_\eps)$ and commute $J_\eps$ with $T_V\px$ and $T_\gamma$. This produces remainder terms that are estimated by means of \e{B6-B8}. 
The proof is then reduced to the case without $J_\eps$ and it suffices to estimate 
the operator norm of $\tilde{P}+\tilde{P}^*$ where 
$\wide P=T_V\px + \Lambda_\mu R \Lambda_s^{-1}+iT_\gamma$. Since $\Lambda_\mu R \Lambda_\mu^{-1}$ is bounded from $L^\infty([0,T];L^2(\xT))$ into itself with an operator norm estimated by $M$, it remains only to estimate 
$T_V\px +iT_\gamma+\big(T_V\px +iT_\gamma)^*$, which can be done directly by means of the paradifferential rule \e{esti:quant3}. 
We conclude that 
\begin{equation}\label{pn612}
\frac{d}{dt}\lA \dot\varphi_\eps\rA_{L^2}^2 
\le C(M) \lA \dot\varphi_\eps\rA_{L^2}^2 + \la \left\langle  2F_\eps,
\dot\varphi_\eps\right\rangle\ra.
\end{equation}

We thus 
obtain a uniform estimate for the $L^\infty([0,T];L^2)$-norm of 
$\dot\varphi_\eps$ (from Gronwall's inequality and \e{pn31b}) which gives a uniform estimate for the $L^\infty([0,T];H^\mu)$-norm of $\varphi_\eps$. From this uniform estimate and classical arguments (see \cite{MePise}), one deduce the existence of a solution in $L^\infty([0,T];H^\mu(\xT))$. 
The uniqueness is obtained by considering the equation satisfied by the difference of two solutions and performing an $L^2$-energy inequality (using similar arguments to those used above). The continuity in time of the solution is proved as in \cite[\S 6.4]{ABZ1}. 
\end{proof}

\begin{lemm}\label{L2}
Consider real-valued coefficients $V,c$ 
satisfying
$$
V\in C^0([0,T];\eC{1}(\xT)), \quad 
c\in C^0([0,T];\eC{\tdm}(\xT)),
$$
with the $L^\infty_{t,x}$-norm of $c-1$ small enough. 
Consider also $R\in C^0([0,T]; \mathcal{L}(L^2))$. 

$i)$ For any $\varphi_{in}\in L^2(\xT)$ and any 
$F\in L^1([0,T];L^2(\xT))$, 
there exists a unique $\varphi\in C^0([0,T];L^2(\xT))$ such that
\be\label{n25}
\partial_t \varphi +V\px \varphi+ R \varphi+iL^\mez \big( cL^\mez \varphi\big)=F, 
\qquad \varphi_{\arrowvert t=0}=\varphi_{in}.
\ee
Moreover, for any $t\ge 0$,
\be\label{n26}
\lA \varphi(t)\rA_{L^2}\le \exp\left( \int_0^t M(t')\, dt'\right)
\left( \lA \varphi_{in}\rA_{L^2}+\lA F\rA_{L^1([0,t];L^2)}\right),
\ee
with $M(t')=\lA \px V(t')\rA_{L^\infty}+\lA R(t')\rA_{\mathcal{L}(L^2)}$.

$ii)$ Let $\mu\in [0,3/2]$. 
Assume that 
$V\in C^0([0,T];H^2(\xT))$, $c\in C^0([0,T];H^3(\xT))$ and $R\in C^0([0,T]; \mathcal{L}(H^\mu))$. 
If $\varphi_{in}\in H^\mu(\xT)$ 
and $F\in L^1([0,T];H^\mu(\xT))$, then, 
for any $t\ge 0$,
\be\label{n26-H1}
\lA \varphi(t)\rA_{H^\mu}\le \exp\left( \int_0^t M(t')\, dt'\right)
\left( \lA \varphi_{in}\rA_{H^\mu}+\lA F\rA_{L^1([0,t];H^\mu)}\right),
\ee
with $M(t')=\lA V(t')\rA_{H^2}+\lA c(t')\rA_{H^3}+\lA R(t')\rA_{\mathcal{L}(H^\mu)}$.
\end{lemm}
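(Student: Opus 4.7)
For part $i)$, my plan is to mimic the scheme of Proposition~\ref{P:10}, but exploiting the fact that the operators are now \emph{classical} rather than paradifferential, so that the skew-symmetry of the principal part is exact (not modulo paradifferential remainders). First I would regularize by introducing Fourier cut-offs $J_\eps$ (or the smoothers used in the proof of Proposition~\ref{P:10}) and consider the ODE
\[
\partial_t \varphi_\eps + V\px J_\eps \varphi_\eps + R\varphi_\eps + i L^\mez \bigl( c L^\mez J_\eps \varphi_\eps\bigr)=F,\qquad \varphi_\eps\arrowvert_{t=0}=J_\eps \varphi_{in},
\]
which admits a unique global solution in $L^2(\xT)$ for each $\eps>0$. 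Then I would compute $\frac{d}{dt}\lA \varphi_\eps\rA_{L^2}^2 = 2\RE\langle \partial_t\varphi_\eps,\varphi_\eps\rangle$ and use three cancellations: $(a)$ since $c$ is real, $L^\mez$ is self-adjoint, so $A\defn L^\mez(cL^\mez\cdot)$ is self-adjoint and $\RE\langle iA\varphi_\eps,\varphi_\eps\rangle=0$; $(b)$ integration by parts gives $2\RE\langle V\px \varphi_\eps,\varphi_\eps\rangle=-\int(\px V)|\varphi_\eps|^2\, dx$; $(c)$ $|\langle R\varphi_\eps,\varphi_\eps\rangle|\le \lA R\rA_{\Lr(L^2)}\lA\varphi_\eps\rA_{L^2}^2$. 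The error introduced by $J_\eps$ is controlled uniformly in $\eps$ using analogues of \e{B6-B8} for classical operators (commutators with $V\px$ and $A$ are bounded uniformly on $L^2$ as $\eps\to 0$ because $V\in\eC{1}$ and $c\in\eC{\tdm}$). Gronwall then yields \e{n26} uniformly in $\eps$. Existence follows by weak-$*$ compactness, uniqueness by applying \e{n26} to the difference of two solutions (which satisfies an equation of the same form with $F=0$), and continuity in time as in \cite[\S 6.4]{ABZ1}.

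For part $ii)$, my plan is to conjugate the equation by $\Lambda^\mu\defn (I-\px^2)^{\mu/2}$. Setting $\tilde\varphi\defn \Lambda^\mu\varphi$, one obtains
\[
\partial_t \tilde\varphi + V\px \tilde\varphi + iL^\mez\bigl(cL^\mez\tilde\varphi\bigr)+\tilde R\tilde \varphi = \Lambda^\mu F,
\]
with
\[
\tilde R\defn \Lambda^\mu R\Lambda^{-\mu} + [\Lambda^\mu,V\px]\Lambda^{-\mu}+i[\Lambda^\mu, L^\mez (c L^\mez\cdot )]\Lambda^{-\mu}.
\]
The first summand has $\Lr(L^2)$-norm equal to $\lA R\rA_{\Lr(H^\mu)}$ (up to the fixed equivalence between $\Lambda^\mu$ and an isometry $H^\mu\to L^2$). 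For the second, the classical Kato--Ponce type commutator estimate gives $\blA [\Lambda^\mu, V\px]\Lambda^{-\mu}\brA_{\Lr(L^2)}\le C\lA V\rA_{H^2}$ as soon as $\mu\in[0,3/2]$. For the third, I would rely on symbolic calculus: since $L^\mez(cL^\mez\cdot)$ is of order $3/2$ with real principal symbol $c(x)\ell(\xi)$, the commutator with the Fourier multiplier $\Lambda^\mu$ is of order $\mu+\mez$, so {\em a priori} of order $\mez$ after composition with $\Lambda^{-\mu}$. Writing $L^\mez c L^\mez = cL + [L^\mez,c]L^\mez$ and computing the Poisson bracket $\{(1+\xi^2)^{\mu/2},c\ell\}$ shows that the top-order contribution is of the schematic form $(\px c)\, m(D_x)$ with $m$ of order $\mu+\mez$, hence $[\Lambda^\mu, L^\mez(cL^\mez\cdot)]\Lambda^{-\mu}$ is bounded on $L^2$ by $C\lA c\rA_{H^3}$ provided $\mu\le 3/2$. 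Once these three bounds are in hand, $\tilde R\in \Lr(L^2)$ with norm dominated by $M(t)$, and part~$i)$ applied to $\tilde\varphi$ yields the estimate \e{n26-H1}.

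The main obstacle is the commutator $[\Lambda^\mu, L^\mez(cL^\mez\cdot)]\Lambda^{-\mu}$: the naive order count loses $\mez$ derivative, and one must carry out the symbolic computation carefully to see that the bound depends only on $\lA c\rA_{H^3}$ in the stated range $\mu\in[0,3/2]$. In particular, the fact that one does not need more regularity on $c$ than $H^3$ reflects that $\mu+\mez\le 2$, which is exactly what one gains from two derivatives falling on $c$ in the symbolic expansion; pushing the result beyond $\mu=3/2$ would require a corresponding increase of regularity on $(V,c)$.
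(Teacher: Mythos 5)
For part $i)$ your plan is sound and is in the same spirit as the paper's argument: the paper first absorbs the differences $V\px-T_V\px$ and $iL^\mez\big((c-T_c)L^\mez\cdot\big)$ into the order-zero remainder and invokes Proposition~\ref{P:10} to get existence, and then obtains the sharp constant (independent of $\lA c\rA$) by exactly the three-step cancellation $\RE\langle iL^\mez(cL^\mez\varphi),\varphi\rangle=0$, integration by parts for $V\px$, and a crude bound on $R$. Your variant, which regularizes directly with $J_\eps$ and redoes the energy identity on the mollified system, is equivalent and slightly more self-contained.

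Part $ii)$ has a genuine gap. You propose to conjugate by the \emph{fixed} elliptic Fourier multiplier $\Lambda^\mu=(I-\px^2)^{\mu/2}$ and to show that the resulting commutator $[\Lambda^\mu,L^\mez(cL^\mez\cdot)]\Lambda^{-\mu}$ is bounded on $L^2$. It is not, no matter how regular $c$ is. The symbol of $\Lambda^\mu$ Poisson-commutes with $\ell(\xi)$ but not with $c(x)\ell(\xi)$, and the leading term of the commutator has symbol $\tfrac{1}{i}\big(\partial_\xi(1+\xi^2)^{\mu/2}\big)(\px c)\,\ell(\xi)(1+\xi^2)^{-\mu/2}$, which is a symbol of order $1/2$ with nonzero leading coefficient proportional to $\px c$. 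Pushing one more term of the expansion gains one more $\px$ on $c$ but does not eliminate the order-$1/2$ leading term, so the conjugated operator is not of order zero; correspondingly, the energy pairing $\IM\langle cL^\mez\varphi,\,\Lambda^{2\mu}L^\mez\varphi\rangle=-\tfrac{1}{2i}\langle w,[\Lambda^{2\mu},c]w\rangle$ with $w=L^\mez\varphi\in H^{\mu-3/4}$ requires $w\in H^{\mu-1/2}$ and so is not controlled, leaving a $1/2$-derivative deficit. The statement ``the bound depends only on $\|c\|_{H^3}$ because $\mu+\tfrac12\le 2$'' conflates the \emph{number of derivatives falling on $c$} with the \emph{order of the operator acting on $u$}; the former can be made large, the latter is $1/2$ and fixed.

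The way the paper avoids this is precisely the point of the elliptic operator used in Proposition~\ref{P:10}: one conjugates not by $(I-\px^2)^{\mu/2}$ but by $\Lambda_\mu$ with symbol $1+(c(t,x)\,\ell(\xi))^{2\mu/3}$ (plus a sub-principal correction), which satisfies $\{(c\ell)^{2\mu/3},c\ell\}=0$ identically. That cancellation kills the would-be order-$(\mu+\tfrac12)$ term in $[\Lambda_\mu,T_{c\ell}]$, so the conjugated remainder really is of order zero. Part $ii)$ of the paper is then a one-line reduction: write the classical operator as the paradifferential one plus $R'=R+(V\px-T_V\px)+iL^\mez((c-T_c)L^\mez\cdot)$, check via Proposition~\ref{lemPa} (with $r=2$ for $V$ and $r=3$ for $c$, whence the hypotheses $V\in H^2$, $c\in H^3$) that $R'\in C^0([0,T];\mathcal{L}(H^\mu))$ for $\mu\in[0,3/2]$, and apply \e{pn26}. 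If you want to keep a conjugation argument, you must use a $c$-dependent $\Lambda_\mu$ of this type rather than the Bessel potential.
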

\begin{rema}
Consider a backward Cauchy problem, 
that is a Cauchy problem  
with a data prescribed at time $T$. Then \e{n26} implies that
\be\label{n26b}
\lA \varphi(t)\rA_{L^2}\le \exp\left( \int_0^T M(t')\, dt'\right)
\left( \lA \varphi(T)\rA_{L^2}+\lA F\rA_{L^1([0,T];L^2)}\right),
\ee
with $M(t')=\lA \px V(t')\rA_{L^\infty}+\lA R(t')\rA_{\mathcal{L}(L^2)}$. 
\end{rema}
\begin{proof}
$i)$ The existence of the solution can be deduced from the previous proposition, writing  
$$
\partial_t \varphi +V\px \varphi+ R \varphi+iL^\mez \big( cL^\mez \varphi\big)
$$
under the form
$$
\partial_t \varphi +T_V\px \varphi+ R'\varphi
+iL^\mez \big( T_c L^\mez \varphi\big)
$$
where 
\be\label{pAn10}
R'\varphi=R\varphi+(V\px\varphi-T_V\px \varphi)+
i L^\mez\big( (c-T_c)L^\mez \varphi\big).
\ee
Indeed, $R'$ belongs to  $C^0([0,T]; \mathcal{L}(L^2))$ in view of \e{Bony} and \e{pAz1}. 

In order to see that the energy estimate does not depend on the norm of $c$, start from
$\frac{d}{dt}\lA \varphi\rA_{L^2}^2 
= 2 \RE \left\langle \partial_{t}\varphi,\varphi\right\rangle$. 
Since $\RE \big\langle i L^\mez ( c L^\mez \varphi),\varphi\big\rangle=0$, 
we obtain that
\begin{equation*}
\frac{d}{dt}\lA \varphi\rA_{L^2}^2 
=2\RE \left\langle - V\px \varphi -R\varphi-F,\varphi\right\rangle.
\end{equation*}
Hence, integrating by parts, 
\begin{equation}\label{n612}
\frac{d}{dt}\lA \varphi\rA_{L^2}^2 
= \RE\left\langle  ( (\px V )-2R)\varphi -2F,\varphi \right\rangle,
\end{equation}
and the result easily follows from Gronwall's inequality. 

$ii)$ This follows from \e{pn26} and the fact that the remainder $R'$ in \e{pAn10} belongs to 
$C^0([0,T]; \mathcal{L}(H^\mu))$ in view of \e{Bony} and \e{pAz1}. 
\end{proof}

\section{Changes of variables}\label{S:AC1}

\newcommand{\bb}{b_0} %{{\mathtt b}}% {{\mathfrak b}}

Recall the operator 
\begin{equation}  \label{P from Thomas}
\wide P := \pa_t + V \pa_x + i L^{\frac12} \big( c L^{\frac12} \cdot \big)+ R_2, 
\end{equation}
where $L := (g - \kappamuet \pa_{xx})^{\frac12}G(0)^{\frac12}$, 
the operator $R_2$ is of order zero, and $c(t,x)$, $V(t,x)$ are real-valued functions.
Consider a time-depending change of the space variable (namely a diffeomorphism of $\T$) 
and its inverse,
\[
x = y + \tilde \b_1(t,y) \quad 
\Leftrightarrow \quad 
y = x + \b_1(t,x), 
\]
$x,y \in \T$, $t \in \R$, 
with $\| \pa_y \tilde \b_1 \|_{L^\infty}, \| \pa_x \b_1 \|_{L^\infty} \leq 1/2$. 
Introduce a self-adjoint variant of the pull-back operators, defined by
\begin{align} \label{def Psi1}
(\Psi_1 h)(t,y) & := (1 + \pa_y \tilde\b_1(t,y))^{\frac12} h(t, y + \tilde \b_1(t,y)),\quad 
\\ 
\label{def Psi1 inv}
(\Psi_1^{-1}h)(t,x) & := (1 + \pa_x \b_1(t,x))^{\frac12} h(t, x + \b_1(t,x)),
\end{align}
and note that $\Psi_1, \Psi_1^{-1}$ are self-adjoint with respect to the standard $L^2(\T)$ scalar product in space, for any $t$.
We want to compute $\Psi_1 Q_0 \Psi_1^{-1}$ when $Q_0$ is a Fourier multiplier 
(the analysis below applies more generally assuming only that $Q_0$ is a pseudo-differential operator).

\subsection{Change of variable as a flow map} 
Introduce a parameter $\tau\in [0,1]$  
and consider a diffeomorphism of $\T$ (depending on $(\t,t)$) and its inverse,
\[
x = y + \tilde \b(\t,t,y) \quad 
\Leftrightarrow \quad 
y = x + \b(\t,t,x), 
\]
$x,y \in \T$, $\t \in [0,1]$, $t \in \R$, 
where $\b$ and $\tilde \b$ are such that 
$\| \pa_y \tilde \b \|_{L^\infty}, \| \pa_x \b \|_{L^\infty} \leq 1/2$ and
\[
\tilde \b\arrowvert_{\tau=0} = 0, \quad \b\arrowvert_{\tau=0}= 0,\quad 
\tilde \b\arrowvert_{\tau=1} = \tilde \b_1, 
\quad \b\arrowvert_{\tau=1} = \b_1.
\]
We denote 
\begin{align}
(\Psi(\t) h)(t,y) := 
(1 + \pa_y \tilde\b(\t,t,y))^{\frac12} h(t, y + \tilde \b(\t, t,y)); 
\label{Psi} 
\\
(\Psi(\t)^{-1} h)(t,x) := (1 + \pa_x \b(\t,t,x))^{\frac12}  h(t, x + \b(\t, t,x)).
\label{Psi -1}
\end{align}
Then $\Psi_1 = \Psi(1)$. 
The reason to introduce the parameter $\tau$ is that $\Psi(\t)$ satisfies an equation of the form
\begin{equation} \label{flow Psi}
\pa_\t \Psi(\t) = F(\t) \Psi(\t), \quad \Psi(0) = I, 
\end{equation}
namely $\pa_\t (\Psi(\t) h) = F(\t) (\Psi(\t)h)$, $\Psi(0)h = h$ for all $h$, 
where 
\begin{equation} \label{F f pax}
F(\t) = \bb(\t,t,y) \pa_y + \frac12 (\pa_y \bb)(\t,t,y), \quad 
\bb(\t,t,y) := \frac{\pa_\t \tilde \b(\t, t,y)}{1 + \pa_y \tilde \b(\t, t, y)}\,.
\end{equation}
Assume that $Q_0$ is a Fourier multiplier with symbol $q_0(\xi)$ of order $m\le 3/2$. 
We seek a pseudo-differential operator $Q(\tau)$ of order $m$ such that the difference
\begin{equation}  \label{def R dritto}
R(\t)\defn Q(\tau)\Psi(\t)-\Psi(\t)Q_0
\end{equation} 
is an operator of order $0$. 
Commuting $Q(\t)$ with the equation $\pa_\t \Psi(\t) = F(\t) \Psi(\t)$ one obtains
\begin{align*}
\pa_\t (Q(\t)\Psi(\t)) 
& = Q(\t) F(\t) \Psi(\t) + (\pa_\t Q(\t)) \Psi(\t)
\\
& = F(\t) Q(\t) \Psi(\t)
+ \big( [ Q(\t),F(\t) ] \Psi(\t) + (\pa_\t Q(\t))\Psi(\t)\big).
\end{align*}
On the other hand $\pa_\t (\Psi(\t)Q_0) = F(\t) \Psi(\t)Q_0$. By combining both equations we 
obtain that $R$ satisfies 
\begin{equation}  \label{def R storto}
\pa_\t R(\t)=F(\t)R(\t)+\mathcal{R}_1(\t)\Psi(\t),\quad 
\mathcal{R}_1(\t):= [ Q(\t),F(\t) ] + \pa_\t Q(\t).
\end{equation}
The analysis is then in two steps. 
The main step consists in proving that $Q(\t)$ can be so chosen that 
$Q(\tau=0)=Q_0$ (then $R(0)=0$) and $\mathcal{R}_1(\t)$ is of order $0$. 
Then, by using an $L^2$-energy estimate for the hyperbolic equation $\pa_\t u = \bb \pa_y u+f$, 
one deduces an estimate for the operator norm 
of $R(\t)$ uniform in $\t$ (and hence the desired estimate for $\t=1$).
Here we describe in details only the main step, 
as the $L^2$-energy estimate is a standard argument.

\subsection{Expansion of the symbol}

Let $p(\t,t,x,\xi)$ be the symbol of $Q(\t)$. 
To obtain $\mathcal{R}_1$ of order zero amounts to seek $p$ such that 
$\pa_\t p - \s_{[F,Q]}$ has order zero (where $\s_{[F,Q]}$ is the symbol of $[F,Q]$), 
and $p|_{\t = 0} = q_0$.
The asymptotic expansion of $\s_{[F,Q]}$ is
\begin{equation}  \label{asymptotic expansion}
\s_{[F,Q]} \sim \sum_{\a = 1}^\infty \frac{1}{i^\a \, \a!} \, 
\big\{ (\pa_\xi^\a f) (\pa_x^\a p) 
- (\pa_\xi^\a p) (\pa_x^\a f) \big\},
\end{equation}
where $f(\t,t,x,\xi) := i \bb(\t,t,x) \xi + \frac12 (\pa_x \bb)(\t,t,x)$ is the symbol of $F(\t)$ 
(we rename $x$ the space variable).
Since $m\le 3/2\le 2$ by assumption, 
it is enough to determine the principal and the sub-principal symbols of $p$. 
Thus we write $p = p_0 + p_1$, where $p_0$ has order $m$ and $p_1$ has order $m-1$.
The equations for $p_0, p_1$ are 
\begin{alignat}{2}  \label{eq p0}
\pa_\t p_0 & = \bb \pa_x p_0 - \xi (\pa_x \bb) \pa_\xi p_0, \quad 
& p_0|_{\t = 0} & = q_0,
\\
\label{eq p1}
\pa_\t p_1 & = \bb \pa_x p_1 - \xi (\pa_x \bb) \pa_\xi p_1 + z, 
\quad \ 
& p_1|_{\t = 0} & = 0,
\end{alignat}
where 
\begin{equation} \label{def z}
z := \frac{i}{2} (\pa_{xx} \bb) (\pa_\xi p_0 + \xi \pa_{\xi\xi} p_0).
\end{equation}
If $p_0, p_1$ satisfy \eqref{eq p0},\eqref{eq p1}, 
then it follows from standard symbolic calculus for pseudo-differential operators (similar to 
\e{esti:quant2sharp}) that $\mathcal{R}_1(\t)$, defined in \eqref{def R storto}, 
is an operator of order $0$ satisfying
\begin{equation}  \label{remainder estimate appendix}
\lA \mathcal{R}_1(\t)\rA_{\Lr(L^2)} 
+ \lA \mathcal{R}_1(\t)\rA_{\Lr(H^\tdm)}
\les \big(M^{m}_r(p_0)+M^{m-1}_r(p_1)\big)\lA \bb(\t)\rA_{\eC{r}}
\end{equation}
with $r$ large enough 
(here the semi-norms $M^m_\rho$ are as defined by \e{defi:norms}; one has to consider $r$ large enough 
because we are here considering pseudo-differential operators instead of paradifferential ones).

Equation \eqref{eq p0} can be solved by the characteristics method: 
if $x(\t), \xi(\t)$ solve
\begin{equation}  \label{Ham char}
\frac{d}{d\t} x(\t) = - \bb(\t,t,x(\t)), \quad 
\frac{d}{d\t} \xi(\t) = \xi(\t) (\pa_x \bb)(\t,t,x(\t)),
\end{equation}
then
\begin{equation}  \label{along}
p_0(\t,t,x(\t),\xi(\t)) = p_0(0,t,x(0),\xi(0)) \quad \forall \t.
\end{equation}
Now, by \eqref{F f pax}, the first equation in \eqref{Ham char} is 
\[
0 = \{ 1 + (\pa_x \tilde \b)(\t,t,x(\t)) \} \, x'(\t) + (\pa_\t \tilde \b)(\t,t,x(\t)) 
= \frac{d}{d\t} \big\{ x(\t) + \tilde \b(\t,t,x(\t)) \big\},
\]
whence 
\begin{equation}  \label{flow x(tau)}
x(\t) + \tilde \b(\t,t,x(\t)) = x(0) + \tilde \b(0,t,x(0)) 
= x(0).
\end{equation}
Applying the inverse diffeomorphism, we get $x(\t) = x(0) + \b(\t,t,x(0))$. 
This is the solution $x(\t)$ of the first equation in \eqref{Ham char} with initial datum $x(0)$. 
Also, one verifies that 
\begin{equation}  \label{flow xi(tau)}
\xi(\t) = \xi(0) \big( 1 + (\pa_x \tilde\b)(\t,t,x(\t)) \big)
\end{equation}
satisfies the second equation in \eqref{Ham char}, 
because $x(\t)$ satisfies the first equation in \eqref{Ham char}, 
$\bb$ is given by \eqref{F f pax}, and 
\[
\pa_x \bb(\t,t,x) = \frac{\pa_{\t x} \tilde\b(\t,t,x)}{1 + \pa_x \tilde\b (\t,t,x)}
\, - \frac{\pa_\t \tilde\b(\t,t,x) \pa_{xx} \tilde\b(\t,t,x)}{[1 + \pa_x \tilde\b (\t,t,x)]^2} \,.
\]
Hence we deduce a formula for the backward flow of \eqref{Ham char}: 
fixed any $\t_1 \in [0,1]$, and given any $(x_1, \xi_1)$, 
the solution $(x(\t), \xi(\t))$ of \eqref{Ham char} with initial datum $(x(0), \xi(0)) = (x_0, \xi_0)$ satisfies $(x(\t_1), \xi(\t_1)) = (x_1, \xi_1)$ if the initial datum is
\begin{equation}  \label{back flow}
x_0 = x_1 + \tilde\b(\t_1,t,x_1), \quad 
\xi_0 = \frac{\xi_1}{1 + \pa_x \tilde\b (\t_1,t,x_1)} \,.
\end{equation}
As a consequence, using \eqref{along} and the initial datum in \eqref{eq p0}, 
we get 
\begin{align*}
p_0(\t_1,t,x_1,\xi_1) &= p_0(0,t,x_0,\xi_0) = q_0(t,x_0,\xi_0) 
\\ & 
= q_0 \Big( t, x_1 + \tilde\b(\t_1,t,x_1), \frac{\xi_1}{1 + \pa_x \tilde\b (\t_1,t,x_1)} \Big).
\end{align*}
We  have a formula for the solution $p_0(\t,t,x,\xi)$ of \eqref{eq p0}:
\begin{equation}  \label{sol p0}
p_0(\t,t,x,\xi) = q_0 \Big( t, x + \tilde\b(\t,t,x), \frac{\xi}{1 + \pa_x \tilde\b (\t,t,x)} \Big).
\end{equation}
Now we study equation \eqref{eq p1}. By the definition of $(x(\t),\xi(\t))$,
\begin{equation}  \label{p1 int z}
p_1(\t,t,x(\t),\xi(\t)) = \int_0^\t z(s,t,x(s),\xi(s)) \, ds,
\end{equation}
where $z$ is given in \eqref{def z}. 
We examine $z$ in detail. 
By \eqref{sol p0}, for $k=1,2$,
\[
\pa_\xi^k p_0(\t,t,x,\xi) 
= (\pa_\xi^k q_0) \Big( t, x + \tilde\b(\t,t,x), \frac{\xi}{1 + \pa_x \tilde\b (\t,t,x)} \Big) \, \frac{1}{ [1 + \pa_x \tilde\b(\t,t,x)]^k }
\]
for all $\t,t,x,\xi$. 
Hence along the curves $(x(s), \xi(s))$, 
by \eqref{flow x(tau)},\eqref{flow xi(tau)}, one has 
\[
(\pa_\xi^k p_0)(s,t,x(s),\xi(s)) 
= \frac{(\pa_\xi^k q_0)( t, x_0, \xi_0)}{ [1 + \pa_x \tilde\b(s,t,x(s))]^k }\,, 
\]
where $(x_0, \xi_0) := (x(0), \xi(0))$,
and therefore, using \eqref{flow xi(tau)} again,
\[
(\pa_\xi p_0 + \xi \pa_{\xi \xi} p_0)(s,t,x(s), \xi(s)) 
= \frac{\pa_\xi q_0 (t, x_0, \xi_0) + \xi_0 \pa_{\xi \xi} q_0(t, x_0, \xi_0)}
{ 1 + \pa_x \tilde\b(s,t,x(s)) }\,.
\]
Now we note that 
\[
\frac{(\pa_{xx} \bb)(s,t,x(s),\xi(s))}{ 1 + (\pa_x \tilde\b)(s,t,x(s))} \, 
= \frac{d}{ds} \bigg\{ 
\frac{ (\pa_{xx} \tilde\b) (s,t,x(s)) }{ [1 + (\pa_x \tilde\b)(s,t,x(s)) ]^2 } \bigg\}\,,
\]
as it can be verified by a straightforward calculation, 
using also \eqref{Ham char} and the definition \eqref{F f pax} of $\bb$.
Hence, recalling the definition \eqref{def z} of $z$, 
\[
z(s,t,x(s),\xi(s)) = \frac{i}{2} 
\{ \pa_\xi q_0 (t, x_0, \xi_0) + \xi_0 \pa_{\xi \xi} q_0(t, x_0, \xi_0) \}
\frac{d}{ds} \bigg\{ \frac{ (\pa_{xx} \tilde\b) (s,t,x(s)) }{ [1 + (\pa_x \tilde\b)(s,t,x(s)) ]^2 } \bigg\},
\]
and, by \eqref{p1 int z},
\[
p_1(\t,t,x(\t),\xi(\t)) 
= \frac{i}{2} \{ \pa_\xi q_0 (t, x_0, \xi_0) + \xi_0 \pa_{\xi \xi} q_0(t, x_0, \xi_0) \}
\frac{ (\pa_{xx}\tilde\b) (\t,t,x(\t)) }{ [ 1 + (\pa_x \tilde\b)(\t, t,x(\t)) ]^2 }
\]
because $\tilde\b|_{\t = 0}=0$. We use the backward flow as above: 
given $\t_1, x_1, \xi_1$, 
the solution $(x(\t), \xi(\t))$ of \eqref{Ham char} with initial datum $(x(0), \xi(0)) = (x_0, \xi_0)$ satisfies $(x(\t_1), \xi(\t_1)) = (x_1, \xi_1)$ if the initial datum is \eqref{back flow}. 
Therefore, replacing $(x_0, \xi_0)$ by \eqref{back flow} in the last equality, 
we get a formula for $p_1$, which, writing $\t,x,\xi$ instead of $\t_1, x_1, \xi_1$, is
\begin{align} \label{sol p1}
& p_1(\t,t,x,\xi) 
= \frac{i}{2} \Big\{ (\pa_\xi q_0) \Big( t, x + \tilde\b(\t,t,x) , 
\frac{\xi}{1 + \pa_x \tilde\b(\t, t, x)} \Big) 
\\ 
& \  
+ \frac{\xi}{1 + \pa_x \tilde\b(\t, t, x)} 
(\pa_{\xi \xi} q_0) \Big( t, x + \tilde\b(\t,t,x), 
\frac{\xi}{1 + \pa_x \tilde\b(\t, t, x)} \Big) \Big\}
\, \frac{ \pa_{xx}\tilde\b (\t,t,x) }{ ( 1 + \pa_x \tilde\b(\t , t,x) )^2 }\,.
\notag 
\end{align}

\subsection{Conjugation of $L$}
%Below it is convenient the notation $a\in O(|\xi|^{m})$ to say that 
%$a$ satisfies 
%$M^m_r(a)=\max_{\a\le r} \max_{|\b|\le 6+r}\sup_{\xi}
%(1+|\xi|)^{|\beta|-m}|\partial_x^\alpha \partial_\xi^\beta a |<+\infty$ 
%for some large enough integer $r$. 
%We intensively use the fact that if $a=O(|\xi|^{m})$ then 
%$\lA \Op(a)\rA_{\Lr(H^\mu,H^{\mu-m})}\le C M^m_r(a)$ 
%(as above mentioned one has to consider $r$ large enough 
%because we are here considering pseudo-differential operators instead of paradifferential ones).

We fix $q_0(\xi)$ to be the symbol of $L$ (see \e{defi:ell-lambda}) with a cut-off around $\xi = 0$, 
namely 
\[
q_0(\xi) 
:= (g + \kappamuet \xi^2)^{\frac12} \lambda(\xi)^{\frac12} \chi(\xi)
= (g + \kappamuet \xi^2)^{\frac12} \la \xi \ra^\mez \tanh^\mez(b\la \xi\ra) \chi(\xi), 
\]
where $\chi(\xi)$ is the cut-off function of Proposition \ref{T24}. 
Note that $\Op(q_0) = L$ on the periodic functions, as their symbols coincide 
at any $\xi \in \xZ$, and therefore no remainder is produced replacing $L$ by $\Op(q_0)$. 
In the previous section we have constructed $p_0, p_1$, 
and we have defined $p := p_0 + p_1$, $Q(\t) := \Op(p)$. 
Then $\mR_1(\t)$ 
defined in \eqref{def R storto} is an operator of order zero and it satisfies estimate \eqref{remainder estimate appendix}.
Now observe, in view of \e{def R storto}, that for any function $u_0\in L^2(\xT)$, 
$R(\tau)u_0$ solves an hyperbolic evolution equation. Using the energy estimate \e{n26}, we deduce that 
the difference $R(\t) := Q(\t) \Psi(\t) - \Psi(\t) L$ (see \eqref{def R dritto})
is also of order zero, and it satisfies the same estimate \eqref{remainder estimate appendix} 
as $\mR_1(\t)$. 
As a consequence, the conjugate of $L$ is 
\begin{equation}  \label{conj L}
\Psi(\t) L \Psi(\t)^{-1} = Q(\t) + \mR_2(\t), \quad \mR_2(\t) := - R(\t) \Psi(\t)^{-1}
\end{equation}
and $\mR_2(\t)$ satisfies the same estimate \eqref{remainder estimate appendix} as $R(\t)$. 
By formula \eqref{sol p0}, $p_0 = q_0 ( \xi (1 + \pa_x \tilde \b)^{-1})$. 
We expand 
\begin{equation}  \label{p0 Taylor}
p_0 = (1 + \pa_x \tilde \b)^{-\frac32} q_0 + r, 
\end{equation}
where the remainder $r$ 
satisfies $\| \Op(r) \|_{\mL(H^\mu, H^{\mu + 1/2})} \les \| \pa_x \tilde \b \|_{H^{\mu + \rho}}$ for all $\mu \geq 0$, for some absolute constant $\rho$ large enough, because 
\[
g + \kappamuet \xi^2 h^2
= h^2 (g + \kappamuet \xi^2) \Big( 1 + \frac{g (1 - h^2)}{h^2 (g + \kappamuet \xi^2)} \Big), 
\quad 
h := (1 + \pa_x \tilde\b)^{-1},
\]
and then use Taylor expansion for the square root of the last factor.
The second component $p_1$ is given by formula \eqref{sol p1}.
By Taylor expansion,
\[
\big| q_0'(\xi) - \tfrac32 \sqrkappa |\xi|^{-\frac12} \xi \big| 
\les (1 + |\xi|)^{-\frac32}, \quad 
\big| q_0''(\xi) - \tfrac34 \sqrkappa |\xi|^{-\frac12} \big| 
\les (1+|\xi|)^{-\frac52}, 
\]
so that we calculate
\begin{equation}  \label{p1 Taylor}
p_1 = i \tfrac98 (\pa_{xx} \tilde\b) \sqrkappa (1 + \pa_x \tilde\b)^{-\frac52} 
|\xi|^{-\frac12} \xi  \, \chi(\xi) + r, 
\end{equation}
where the remainder $r$ satisfies $\| \Op(r) \|_{\mL(H^\mu, H^{\mu + 3/2})} 
\les \| \pa_x \tilde \b \|_{H^{\mu + \rho}}$ for all $\mu \geq 0$, for some $\rho$ large enough. 
Assume that $\| \pa_\tau \tilde\b \|_{H^\mu} \les \| \tilde\b \|_{H^\mu}$ 
(this bound holds for the choice of $\beta$ we make below). 
By \eqref{conj L}, \eqref{p0 Taylor}, \eqref{p1 Taylor}, we have 
\begin{equation}  \label{conj L explicit}
\Psi(\t) L \Psi(\t)^{-1} 
= (1 + \pa_x \tilde \b)^{-\frac32} L 
+ \tfrac98 \sqrkappa (\pa_{xx} \tilde\b) (1 + \pa_x \tilde\b)^{-\frac52} 
|D_x|^{-\frac12} \pa_x + \mR_{0,1},
\end{equation}
where $\mR_{0,1}$ is defined by difference and it satisfies $\| \mR_{0,1} \|_{\mL(H^\mu, H^\mu)} 
\les \| \pa_x \tilde\b \|_{H^{\mu + \rho}}$ for all $\mu \geq 0$, for some $\rho$ large enough.  
With similar calculations, one proves that for any $r \in \R$
\begin{equation}  \label{conj Dx}
\Psi(\t) |D_x|^r \Psi(\t)^{-1} = (1 + \pa_x \tilde \b)^{-r} |D_x|^r + \mR_{0,2}
\end{equation}
where $\mR_{0,2}$ is defined by difference and it satisfies 
$\| \mR_{0,2} \|_{\mL(H^\mu, H^{\mu-r+1})} \les \| \pa_x \tilde\b \|_{H^{\mu + \rho}}$.

\subsection{Conjugation of $\wide P$}
We conjugate the operator in \eqref{P from Thomas} by $\Psi_1 := \Psi(1) = \Psi(\t)|_{\t = 1}$. 
From symbolic calculus it follows that
\be \label{formula LcL}
L^{\frac12} c L^{\frac12} 
= cL - \frac34 \sqrkappa (\pa_x c) \pa_x |D_x|^{-\frac12} + \mR_{0,3}, 
\ee
where $\mR_{0,3}$ is defined by difference and 
it satisfies $\| \mR_{0,3} \|_{\mL(H^\mu, H^{\mu+\frac12})} \les \| \pa_x c \|_{H^{\mu+\rho}}$ 
for all $\mu \geq 0$, for some $\rho$ large enough. We recall that $c-1$ is small, and therefore $\pa_x c$ is small. By definition (see \eqref{Psi},\eqref{Psi -1}), and recalling that $\tilde \b|_{\t = 1} = \tilde\b_1$, $\b|_{\t = 1} = \b_1$, we directly calculate 
\[ 
\Psi_1 \pa_t \Psi_1^{-1} = \pa_t + a_1 \pa_x + r_1, 
\quad 
\Psi_1 \pa_x \Psi_1^{-1} = a_2 \pa_x + r_2, 
\] 
where 
\begin{equation} \label{conj a1 a2}
a_1(t,x) := (\pa_t \b_1)(t,x + \tilde \b_1(t,x)),
\quad 
a_2(t,x) := (1 + \pa_x \tilde \b_1(t,x))^{-1}\,,
\end{equation}
and 
\begin{align*} 
r_1(t,x) & := \frac12 (\pa_{tx} \b_1)(t,x + \tilde\b_1 (t,x)) \, (1 + \pa_x \tilde\b_1(t,x)),
\\ 
r_2(t,x) & := \frac12 (1 + \pa_x \tilde \b_1(t,x)) \, (\pa_{xx} \b_1) (t, x + \tilde\b_1(t,x)).
\end{align*}
The conjugate of any multiplication operator $h \mapsto ah$ is the multiplication operator 
$h \mapsto (\tilde B a) h$,
\[ 
\Psi_1 a \Psi_1^{-1} = (\tilde B a), \quad (\tilde B a)(t,x) := a(t, x + \tilde\b_1(t,x)).
\] 
Thus 
\begin{align*} 
\Psi_1 \wide P \Psi_1^{-1} 
& = \pa_t + a_3 \pa_x 
+ i a_4 L 
+ i a_5 \pa_x |D_x|^{-\frac12} 
+ \tilde R_3 
\end{align*}
where 
\begin{align} 
a_3 & := a_1 + (\tilde B V) a_2, \qquad \ 
a_4 := (\tilde B c) (1 + \pa_x \tilde\b)^{-\frac32}, \quad  
\notag \\
a_5 & := 
- \frac34 \sqrkappa
\Big\{ -\frac{3}{2}\, (\tilde B c) (1 + \pa_x \tilde\b)^{-\frac52} (\pa_{xx} \tilde\b)
+ (\tilde B(\pa_x c)) (1 + \pa_x \tilde\b)^{-\frac12} \Big\},
\notag \\
\label{tilde R3 App}
\tilde R_3 & := r_1 + (\tilde B V) r_2 + i (\tilde B c) \mR_{0,1} 
+ - i \frac34 (\tilde B \pa_x c) r_2 (1 + \pa_x \tilde \b)^{\frac12} |D_x|^{-\frac12} 
+ \mR_{0,2} 
\\ & \quad 
+ i \Psi_1 \mR_{0,3} \Psi_1^{-1} 
+ \Psi_1 R_2 \Psi_1^{-1},  
\notag
\end{align}
$\mR_{0,1}$ is defined in \e{conj L explicit} with $\t = 1$, 
$\mR_{0,2}$ is defined in \e{conj Dx} with $\t = 1$, $r = -1/2$, 
and $\mR_{0,3}$ is defined in \e{formula LcL}. 
The remainder $\tilde R_3$ 
is of order zero and it is estimated in Lemma \ref{lemma:Psi}.
Moreover, as it is immediate to verify, $a_5 = - \frac34 \sqrkappa \pa_x a_4$. 
We choose $\b_1, \tilde\b_1$ such that the highest order coefficient $a_4$ 
is independent of $x$. This means 
\begin{equation} \label{cc1}
a_4(t,x) 
= c(t,x + \tilde\b_1(t,x)) \, (1 + \pa_x \tilde\b_1(t,x))^{-\frac32} = m(t) \quad \forall x \in \T,
\end{equation}
for some function $m(t)$ independent of $x$. 
Applying the inverse diffeomorphism, 
this is equivalent to 
\[ 
c(t,x) \, (1 + \pa_x \b_1(t,x))^{\frac32} = m(t) \quad \forall x \in \T.
\] 
This implies $1 + \pa_x \b_1(t,x) = m(t)^{\frac23} c(t,x)^{-\frac23}$,
which, after an integration in $dx$, gives 
\[ 
m(t) = \Big( \frac{1}{2\p} \int_\T c(t,x)^{-\frac23} \, dx \Big)^{-\frac32}.
\] 
Hence $m$ in \eqref{cc1} is determined. 
We fix $\b_1$ as
\[ 
\b_1(t,x) = \pa_x^{-1} [m(t)^{\frac23} c(t,x)^{-\frac23} - 1],
\] 
and then we fix $\b(\t,t,x) := \t \b_1(t,x)$. 
As a consequence, $\tilde\b(\t,t,y)$, $\tilde \b_1$ are also determined. 
Since $a_4(t,x) = m(t)$ is independent of $x$, 
it follows that $a_5 = - \frac34 \sqrkappa \pa_x a_4 = 0$ 
(as it was natural to expect, because the vector field in $\wide P$ is anti-selfadjoint and the transformation $\Psi$ preserves this structure).
We have conjugated $\wide P$ to 
\begin{equation} \label{conj tilde P 2} 
\wide P_1 := \Psi_1 \wide P \Psi_1^{-1} 
= \pa_t + i m(t) L + a_3 \pa_x + \tilde R_3. 
\end{equation}
We underline that the coefficient $m(t)$ is a function of time, independent of space.

\begin{lemm} \label{lemma:Psi}
There exists a universal constant $\delta_0 \in (0,1)$ such that if 
\[
\| c(t) - 1 \|_{L^\infty} < \delta_0 
\]
then $\| \pa_x \b_1(t) \|_{L^\infty} + \| \pa_x \tilde \b_1(t) \|_{L^\infty} < 1/2$ and 
\[
\| \pa_x \b_1(t) \|_{W^{\mu,\infty}} + \| \pa_x \tilde \b_1(t) \|_{W^{\mu,\infty}}  
\leq C_\mu \| c(t) -1 \|_{W^{\mu,\infty}} \quad \forall \mu \geq 0
\]
for some positive constant $C_\mu$ depending only on $\mu$.
As a consequence, $\Psi_1(t), \Psi_1(t)^{-1}$ are bounded transformations of $H^\mu(\T)$, with 
\[
\| \Psi_1(t) \|_{\mL(H^\mu)} + \| \Psi_1(t)^{-1} \|_{\mL(H^\mu)} 
\leq C_\mu (1 + \| c(t) - 1 \|_{H^\mu}) \quad \forall \mu \geq 0.
\]
Moreover $|m(t) - 1| \leq C \| c(t) - 1 \|_{H^1}$, 
\[
\| a_3(t) \|_{H^\mu} \leq C_\mu ( \| c(t) - 1 \|_{H^\mu} + \| \pa_t c(t) \|_{H^{\mu-1}} ) 
\quad \forall \mu \geq 1.
\]
The remainder $\tilde R_3(t)$ maps $L^2(\T)$ into itself, with
\[
\| \tilde R_3(t) \|_{\mL(L^2)} 
\leq C \big( \| c(t)-1 \|_{H^r} + \| V(t) \|_{L^\infty} + \| \pa_t c(t) \|_{L^\infty}
+ \| R_2(t) \|_{\mL(L^2)} \big),
\]
and, for all $\mu > 1/2$, $\tilde R_3(t)$ also maps $H^\mu(\T)$ into itself, with 
\[
\| \tilde R_3(t) \|_{\mL(H^\mu)} 
\leq C_\mu \big( \| c(t)-1 \|_{H^{\mu+r}} + \| V(t) \|_{H^\mu} + \| \pa_t c(t) \|_{H^\mu}
+ \| R_2(t) \|_{\mL(H^\mu)} \big),
\]
where $r > 0$ is a universal constant. 
\end{lemm}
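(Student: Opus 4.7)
The plan is to read off all the estimates from the explicit formulas obtained above. First, from $1 + \pa_x \b_1(t,x) = m(t)^{2/3} c(t,x)^{-2/3}$ together with $m(t) = (\frac{1}{2\pi}\int_\T c(t,x)^{-2/3}\,dx)^{-3/2}$, I would derive all control of $\b_1$ and $m$. Since $c$ is bounded from below (say by $1/2$) and $\|c-1\|_{L^\infty}$ is small, Taylor expansion of $s\mapsto s^{-2/3}$ around $s=1$ gives $\|c^{-2/3}-1\|_{L^\infty}\lesssim \|c-1\|_{L^\infty}$, then averaging and applying $s\mapsto s^{-3/2}$ yields $|m(t)-1|\lesssim\|c(t)-1\|_{L^1}\lesssim\|c(t)-1\|_{H^1}$. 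Plugging back, $\|\pa_x\b_1(t)\|_{L^\infty}\lesssim\|c(t)-1\|_{L^\infty}$, which for $\delta_0$ small enough gives $\|\pa_x\b_1\|_{L^\infty}<1/2$. By the implicit function theorem the inverse diffeomorphism $y\mapsto y+\tilde\b_1(t,y)$ exists and satisfies the symmetric relation $(1+\pa_y\tilde\b_1)(1+\pa_x\b_1)\circ(\mathrm{id}+\tilde\b_1)=1$, from which the bound on $\pa_y\tilde\b_1$ follows. Higher Sobolev estimates on $\b_1$ come from the tame product and composition rules in $W^{\mu,\infty}$ applied to the formula $\pa_x\b_1 = m^{2/3}c^{-2/3}-1$, and then from the same implicit relation for $\tilde\b_1$.

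The bounds on $\Psi_1(t),\Psi_1(t)^{-1}$ in $\mathcal{L}(H^\mu)$ follow at once from the definitions \e{def Psi1}--\e{def Psi1 inv} using the classical fact that composition with a $C^\infty$-close-to-identity diffeomorphism of $\T$ is bounded on $H^\mu$ with norm controlled by the $W^{\mu,\infty}$ norm of the derivative, together with the paraproduct bound \e{prtame} for multiplication by the Jacobian factor $(1+\pa_y\tilde\b_1)^{1/2}$.

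For $a_3 = a_1 + (\tilde B V)a_2$ with $a_1=(\pa_t\b_1)\circ(\mathrm{id}+\tilde\b_1)$ and $a_2=(1+\pa_x\tilde\b_1)^{-1}$, I would differentiate the identity $1+\pa_x\b_1=m^{2/3}c^{-2/3}$ in time, which gives $\pa_x\pa_t\b_1$ in terms of $\pa_t c$ and $\pa_t m$ (with $|\pa_t m|\lesssim\|\pa_t c\|_{L^1}$), so that $\|\pa_t\b_1\|_{H^\mu}\lesssim\|\pa_t c\|_{H^{\mu-1}}+\|c-1\|_{H^\mu}$ (no derivative loss because the $\pa_x$ is on the left). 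Composition with the diffeomorphism, multiplication by $a_2$, and adding $V$ times $a_2$ then yields the claimed estimate via \e{prtame2}.

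The main obstacle is the estimate for $\tilde R_3$, since this operator aggregates all the remainders produced by the conjugation. I would estimate each term in \e{tilde R3 App} separately. The multiplicative terms $r_1$ and $(\tilde B V)r_2$ contain at most two derivatives of $\b_1$, hence are bounded in $H^\mu$ by the estimates of the first paragraph. The multiplier part $-\tfrac34(\tilde B\pa_x c) r_2(1+\pa_x\tilde\b)^{1/2}|D_x|^{-1/2}$ is of order $-1/2$, hence maps $H^\mu$ into itself with norm controlled by $\|c-1\|_{H^{\mu+1}}$. The pseudo-differential remainders $\mR_{0,1},\mR_{0,2},\mR_{0,3}$ have already been estimated in \e{conj L explicit}, \e{conj Dx}, \e{formula LcL}; each is an operator of order $0$ on $H^\mu$ with norm $\lesssim \|\pa_x\tilde\b\|_{H^{\mu+\rho}}+\|\pa_x c\|_{H^{\mu+\rho}}$ for some universal $\rho$, which, combined with the first paragraph, yields a bound by $\|c-1\|_{H^{\mu+r}}$ for some universal $r$. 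Finally $\Psi_1 R_2\Psi_1^{-1}$ is bounded on $H^\mu$ (resp.\ $L^2$) as a composition, with norm $\lesssim\|R_2\|_{\mathcal{L}(H^\mu)}$, using the $H^\mu$-bound on $\Psi_1^{\pm 1}$ from the second paragraph. Summing these contributions and choosing $r$ large enough to dominate all the $\rho$'s appearing in the symbolic calculus remainders produces the stated bound.
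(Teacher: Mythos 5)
Your proposal is correct and takes essentially the same approach as the paper, which in fact only sketches the argument by pointing to the explicit formulas, the composition-of-functions estimates, and the pseudo-differential remainder bound \e{remainder estimate appendix}; you fill in the natural details. One small thing worth flagging: as you yourself note, $a_3 = a_1 + (\tilde B V)a_2$ contains $V$ explicitly, so the bound on $\|a_3(t)\|_{H^\mu}$ in the statement of the lemma must also contain a term $\|V(t)\|_{H^\mu}$ on the right-hand side (this is consistent with \e{p214} and \e{p214 high}, which do include $\|V\|$); the statement as written appears to have dropped it, and your argument actually shows $\|a_3(t)\|_{H^\mu} \leq C_\mu(\|c(t)-1\|_{H^\mu} + \|\pa_t c(t)\|_{H^{\mu-1}} + \|V(t)\|_{H^\mu})$, so you should state that rather than echo the typo.
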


\begin{proof} The estimates follow from the explicit formulas above, 
the usual estimates for the composition of functions (see, e.g., Appendix B in \cite{Baldi})
and Sobolev estimates for pseudo-differential operators (see \e{remainder estimate appendix}). 
The estimate of the pseudo-differential remainder term is the reason for which $r$ further space-derivatives are required on $c$. 
The term $\pa_t c$ appears only in $a_1$ and $r_1$. The term $V$ appears only in $a_3$ and $\tilde R_3$ where it is explicitly written, and nowhere else. 
The operator $R_2$ only appears in $\tilde R_3$ in the term $\Psi_1 R_2 \Psi_1^{-1}$. 
All the other terms depend only on $c$ and its space-derivatives.
\end{proof}

\subsection{Reparametrization of time}\label{S:AC2}
Now we want to replace the coefficient $m(t)$ in \eqref{conj tilde P 2} with a constant coefficient. 
We consider a diffeomorphism of the time interval
\[
\psi : [0,T] \to [0,T_1], \quad 
\psi(0) = 0, \quad \psi(T) = T_1, 
\quad \psi'(t) > 0,
\]
where $T_1 > 0$ has to be determined. 
We consider the pull-back $\psi_*$ defined as $(\psi_* h)(t,x) := h(\psi(t),x)$, and similar for its inverse $\psi^{-1}$.
Then we calculate the conjugate
\[
\psi_*^{-1} (\pa_t + i m(t) L) \psi_* 
= \psi'(\psi^{-1}(t)) \pa_t + i m(\psi^{-1}(t)) L.
\]
The two time-dependent coefficients are equal if 
$m(t) = \psi'(t)$ for all $t \in [0,T]$. 

We define 
\begin{equation} \label{cc 2.1}
\psi(t) := \int_0^t m(s) \, ds, \quad 
T_1 := \int_0^T m(t) \, dt, \quad 
\rho(t) := 
m(\psi^{-1}(t)).
\end{equation}
Since $|m-1|$ is small, then the ratio $T_1/T$ is close to 1, 
and also $\psi'(t)$ is close to 1 for all $t$. 
We have conjugate 
\begin{equation} \label{cc 2.2}
\psi_*^{-1} \wide P_1 \psi_* = \rho(t) \wide P_2, \quad 
\wide P_2 := \pa_t + i L + a_6 \pa_x + \tilde R_4,
\end{equation}
where 
\begin{equation} \label{cc 2.3}
a_6(t,x) := \frac{a_3(\psi^{-1}(t),x)}{\rho(t)}\,, \quad 
\tilde R_4 := \frac{1}{\rho(t)}\, \psi_*^{-1} \tilde R_3 \psi_*
\end{equation}
(and, more explicitly, $(\psi_*^{-1} \tilde R_3 \psi_*)(t) = \tilde R_3(\psi^{-1}(t))$). 
Now the coefficient of the highest order term $L$ is constant.

\subsection{Translation of the space variable}\label{S:AC3}
The goal of this section is to eliminate the space-average of the coefficient $a_6(t,x)$ in front of $\pa_x$. 
Consider a time-dependent change of the space variable which is simply a translation, 
\[
y = \ph(t,x) = x + p(t) \quad \Leftrightarrow \quad 
x = \ph^{-1}(t,y) = y - p(t),
\]
and its pull-back $(\ph_* h)(t,x) = h(t, \ph(t,x)) = h(t, x + p(t))$, and similarly for $\ph^{-1}$. 
Thus $\ph_*^{-1} \pa_t \ph_* = \pa_t + p'(t) \pa_x$, 
and $\ph_*$ commutes with every Fourier multiplier like $\pa_x, |D_x|^r, L$. 
We calculate the conjugate
\[
\wide P_3 := \ph_*^{-1} \wide P_2 \ph_* 
= \pa_t + i L + a_7 \pa_x + \tilde R_5, 
\]
where
\be \label{def a7}
a_7 := p'(t) + (\ph_*^{-1} a_6), \quad \ 
\tilde R_5 := \ph_*^{-1} \tilde R_4 \ph_* \,.
\ee
Since $\ph_*, \ph_*^{-1}$ preserve the space average, we fix 
\begin{equation}  \label{cc 3.1}
p(t) := - \frac{1}{2\pi} \int_0^t \int_\T a_6(s,x) \, dx \, ds.
\end{equation}
It follows that $\int_\T a_7(t,x) \, dx = 0$ for all $t \in [0,T_1]$.
Note that $\ph_*$ commutes with the multiplication operator $h \mapsto \rho(t) h$,
because $\rho(t)$ is independent of $x$. 
Moreover, by the change of time variable $s = \psi(t)$, $ds = m(t) dt$ in the integral, we get
\be \label{pT}
p(T_1) 
= - \frac{1}{2\pi} \int_0^{T_1} \int_\T a_6(s,x) \, dx ds
= - \frac{1}{2\pi} \int_0^T \int_\T a_3(t,x) \, dx dt.
\ee

\begin{proof}[Proof of Proposition \ref{P:38} concluded]
The composition $\Phi := \ph_*^{-1} \psi_*^{-1} \Psi_1$ 
of the previous three transformations conjugates $\wide P = \Phi^{-1} \rho \wide P_3 \Phi$. 
Also note that $\Phi^{-1} (\rho u) = m \Phi^{-1} u$ for all $u$. 
The transformation $\Psi_1$ is estimated in Lemma \ref{lemma:Psi}. 
The estimates for $\psi_*, \ph_*$ are straightforward.
Finally, rename $W := a_7$ and $R_3 := \tilde R_5$. 
\end{proof}

\noindent\emph{Notation.} In the following Proposition we use the shorter notation $\| u \|_{T, X}$ to denote 
the $C^0([0,T];X)$ norm of any $u$, with $X = L^2(\T), L^\infty(\T), H^\mu(\T), \mL(L^2(\T))$, etc.

\begin{prop}\label{P:38 App C}
Assume the hypotheses of Proposition \ref{P:38}. 

$(i)$ \emph{(Regularity).} 
In addition, let $\mu > 1/2$, let 
$\| c-1 \|_{T,H^\mu} \leq K < \infty$, and let 
\[
\mN_\mu := \| c-1 \|_{T,H^{\mu+r}} + \| V \|_{T,H^\mu} 
+ \| \pa_t c \|_{T,H^\mu} + \| R_2 \|_{T,\mL(H^\mu)} < \infty.
\]
Then $R_3$ maps $C^0([0,T_1] ; H^\mu(\T))$ into itself, with 
\be\label{p215 high}
\| R_3 \|_{T_1,\mL(H^\mu)} \leq C_{\mu,K} \mN_\mu 
\ee
for some constant $C_{\mu,K}$ depending on $\mu,K$. 
For $\mu \geq 1$, 
\be\label{p214 high}
\| W \|_{T_1,H^\mu} \leq C_\mu \big( \| c-1 \|_{T,H^\mu} 
+ \| \pa_t c \|_{T,H^{\mu-1}} + \| V \|_{T,H^\mu} \big)
\ee
and 
\[
\| \Phi u \|_{T_1,H^\mu} 
\leq C_\mu \| c \|_{T,H^\mu} \| u \|_{T,H^\mu},
\qquad 
\| \Phi^{-1} u \|_{T,H^\mu} 
\leq C_\mu \| c \|_{T,H^\mu} \| u \|_{T_1,H^\mu}
\]
for all $u = u(t,x)$, for some constant $C_\mu$ depending only on $\mu$. 

\smallskip

$(ii)$ \emph{(Stability).}
Consider another triple $(c',V',R_2')$ such that $c'$ also satisfies \e{basic smallness}, and $\mN_0 < \infty$ also for $(c',V',R_2')$. 
Let $\Phi', \Psi_1', \ph_*', \psi_*', T_1', W', R_3'$ be the corresponding objects obtained for the triple $(c',V',R_2')$. Then for all $u \in L^2(\T)$, all $t \in [0,T]$, 
\be \label{wwn 1}
\| \Psi_1(t) u - \Psi_1'(t) u \|_{L^2} 
+ \| \Psi_1(t)^{-1} u - \Psi'_1(t)^{-1} u \|_{L^2}
\leq C \| c(t) - c'(t) \|_{L^2} \| u \|_{H^1}\,.
\ee
Let $\lm := T_1 / T_1'$, and let $\mT$ be the time-rescaling operator $(\mT v)(t,x) := v(\lm t,x)$. 
Then for all $\mu \geq 0$, all $v = v(t,x)$, 
\begin{align}
\label{wwn 2}
\| \psi_* \ph_* v - \psi'_* \ph'_* (\mT v) \|_{T,H^\mu} 
& \leq C T \big( \| \pa_t v \|_{T_1,H^\mu} + \| v \|_{T_1,H^{\mu+1}} \big) \Delta_0 
\\
\label{wwn 2.1}
\| \ph_*'^{-1} \psi_*'^{-1} v - \mT (\ph_*^{-1} \psi_*^{-1} v) \|_{T_1',H^\mu} 
& \leq C T \big( \| \pa_t v \|_{T,H^\mu} + \| v \|_{T,H^{\mu+1}} \big) \Delta_0 
\end{align}
where 
\[
\Delta_0 := \| c - c' \|_{T,H^1} + \| \pa_t c - \pa_t c', V - V' \|_{T,L^2}.
\] 
Also, 
\be \label{wwn 3} 
|1 - \lm| + \| m - m' \|_{{C^0([0,T])}} \leq C \| c - c' \|_{T,L^\infty},
\ee
and, if 
\[
M(x) := \{ 1 + \pa_x \tilde \b_1(T, x - p(T_1)) \}^{\frac12}, \quad  
M'(x) := \{ 1 + \pa_x \tilde \b_1'(T, x - p'(T_1')) \}^{\frac12},
\] 
then 
\be \label{wwn 8} 
\| M - M' \|_{L^\infty(\T)} 
\leq C \big( \| c - c' \|_{T,H^2} + \| \pa_t c - \pa_t c', V - V' \|_{T,L^2} \big).
\ee 
For $\mu \geq 1$, if 
\be \label{wwn 4}
\| c-1 \|_{T,H^{\mu+1}} + \| \pa_t c \|_{T,H^\mu} + \| \pa_t^2 c \|_{T,H^{\mu-1}}  
+ \| V \|_{T,H^{\mu+1}} + \| \pa_t V \|_{T,H^\mu} \leq 1,
\ee
and \e{wwn 4} also holds for $c',V'$, then 
\be \label{wwn 5} 
\| W' - \mT W \|_{T_1',H^\mu} \leq C_\mu 
\big( \| c - c' \|_{T,H^\mu} + \| \pa_t c - \pa_t c' \|_{T,H^{\mu-1}} 
+ \| V - V' \|_{T,H^\mu} \big). 
\ee
Moreover, if 
\begin{align} \label{wwn 6}
& \| c-1 \|_{T,H^{r+1}} + \| \pa_t c \|_{T,H^{r+1}} + \| \pa_t^2 c \|_{T,L^2}  
+ \| V \|_{T,H^1} + \| \pa_t V \|_{T,L^2} 
\notag \\ & 
+ \| R_2 \|_{T,\mL(H^1) \cap \mL(L^2)} + \| \pa_t R_2 \|_{T,\mL(L^2)} 
\leq 1
\end{align}
and \e{wwn 6} also holds for $c',V',R_2'$, then 
\begin{multline} \label{wwn 7}
\| R_3' - (\mT R_3) \|_{T_1', \mL(L^2)} 
\leq C \big( \| c - c' \|_{T,H^{r+1}} + \| \pa_t c - \pa_t c' \|_{T,H^1} 
\\
+ \| V - V' \|_{T,H^1} + \| R_2 - R_2' \|_{T,\mL(L^2)} \big).
\end{multline} 
\end{prop}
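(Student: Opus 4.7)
The plan is to revisit each of the three transformations introduced in Sections~\ref{S:AC1}--\ref{S:AC3} (the spatial change of variable $\Psi_1$, the time reparametrization $\psi_*$, and the translation $\ph_*$) and upgrade the $L^2$/$\mathcal{N}_0$-level estimates of Proposition~\ref{P:38} to tame higher-regularity estimates in part~$(i)$, and to Lipschitz-in-data estimates in part~$(ii)$. The explicit formulas \eqref{def Psi1}--\eqref{def Psi1 inv}, \eqref{conj L explicit}, \eqref{conj Dx}, \eqref{formula LcL}, \eqref{tilde R3 App}, \eqref{cc1}--\eqref{cc 2.3}, \eqref{def a7}, \eqref{pT} derived in the appendix express every object ($W$, $R_3$, $\Phi$, $\Phi^{-1}$, $m$, $T_1$, $M$) as a composition of $\tilde\b_1, \b_1, p, \psi$ with $c, V, R_2$ and a finite number of pseudo-differential operators depending smoothly on $\tilde\b_1$.

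For part~$(i)$, the strategy is to propagate Sobolev regularity through these formulas. First I would upgrade Lemma~\ref{lemma:Psi}: writing $1+\pa_x\b_1 = m^{2/3} c^{-2/3}$ and using the composition estimate $\| F(u) \|_{H^\mu} \le C_\mu(\| u \|_{L^\infty})(\| u \|_{H^\mu}+1)$ together with \eqref{esti:F(u)}, one gets $\| \pa_x\b_1 \|_{H^{\mu+r}}, \| \pa_x\tilde\b_1\|_{H^{\mu+r}} \lesssim \| c-1 \|_{H^{\mu+r}}$. Since $\Psi_1, \Psi_1^{-1}$ are essentially composition operators multiplied by $(1+\pa_y\tilde\b_1)^{1/2}$, tame Moser estimates give the bounds for $\| \Phi u\|_{H^\mu}, \| \Phi^{-1} u \|_{H^\mu}$. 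For $W=a_7$, tracing through $a_1=(\pa_t\b_1)\circ(\mathrm{id}+\tilde\b_1)$ and $a_2 = (1+\pa_x\tilde\b_1)^{-1}$ we get \eqref{p214 high}; the $\pa_t c$ appears via $\pa_t\b_1 = \pa_t (m^{2/3} c^{-2/3})\cdot \pa_x^{-1}$, hence only $H^{\mu-1}$ regularity of $\pa_t c$ is needed. For $R_3$ in \eqref{tilde R3 App}, each summand is either a composition with $\Psi_1^{\pm 1}$ of $R_2$ or of one of the pseudo-differential remainders $\mR_{0,j}$, which are controlled by \eqref{remainder estimate appendix} and the bounds following \eqref{p0 Taylor}, \eqref{p1 Taylor}, \eqref{formula LcL}. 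Combining gives \eqref{p215 high}.

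For part~$(ii)$, I would linearize each construction. The difference $\Psi_1 u - \Psi_1' u$ reduces, via the mean value theorem applied in the $y$-variable, to bounds on $\tilde\b_1 - \tilde\b_1'$ and $\pa_y(\tilde\b_1 - \tilde\b_1')$; since $1+\pa_x\b_1 = m^{2/3} c^{-2/3}$, both are controlled by $\| c - c' \|_{L^2}$, yielding \eqref{wwn 1} at the price of one extra derivative on $u$. For \eqref{wwn 3}, the definitions \eqref{cc 2.1} give $m-m'$ explicitly in terms of $\int c^{-2/3} dx - \int c'^{-2/3} dx$, which is Lipschitz in $c$ in $L^\infty$ norm, and $T_1-T_1' = \int_0^T (m-m')$ follows. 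For $M-M'$ in \eqref{wwn 8}, we further need an $L^\infty$ control of $\pa_x\tilde\b_1 - \pa_x\tilde\b_1'$ evaluated at shifted points, which uses $\|c-c'\|_{H^2}$ through Sobolev embedding, plus the estimate on $p(T_1) - p'(T_1')$ inherited from \eqref{pT} and \eqref{cc 3.1}. The time-rescaling difference \eqref{wwn 2}--\eqref{wwn 2.1} follows from $\psi_* v - \psi'_*(\mT v) = v(\psi(t))-v(\psi^{-1}(T_1 t/T_1')\cdot \lm)$-style identities controlled by $\| \pa_t v \|\cdot \| \psi - \psi'\cdot\mathrm{id} \|_\infty$, then using \eqref{wwn 3}.

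The main obstacle, as expected, will be \eqref{wwn 5} and especially \eqref{wwn 7}. For \eqref{wwn 5}, rewriting $W' - \mT W = a_7' - \mT a_7$ and expanding via \eqref{def a7}, \eqref{cc 2.3} and \eqref{conj a1 a2}, one splits the difference into (a) differences of the coefficients $c,V,\pa_t c$ themselves, (b) differences of their compositions with the shifted diffeomorphisms $\mathrm{id}+\tilde\b_1$ vs $\mathrm{id}+\tilde\b_1'$, (c) time-rescaling differences absorbed by \eqref{wwn 2}. Each piece is tame-linear in the corresponding difference provided \eqref{wwn 4} gives a uniform bound on the intermediate norms. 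For \eqref{wwn 7}, we must compare \emph{operators} $R_3$ and $R_3'$, each of which is a sum of conjugates by $\Psi_1^{\pm 1}, \psi_*^{\pm 1}, \ph_*^{\pm 1}$ of $R_2$ and of pseudo-differential remainders whose symbols depend nonlinearly on $\tilde\b_1$ and $c$. The plan is to write $R_3' - \mT R_3$ as a telescoping sum, treating one transformation and one remainder at a time, and to use (a) the operator-norm Lipschitz dependence of $\Psi_1, \Psi_1^{-1}, \ph_*, \psi_*$ proved above (losing one derivative on the argument), (b) symbolic calculus bounds on the Lipschitz dependence of $\Op(p_0), \Op(p_1)$ on $\tilde\b_1$ via \eqref{sol p0}, \eqref{sol p1} and \eqref{remainder estimate appendix}, and (c) the extra time-derivative bounds furnished by \eqref{wwn 6} which are needed to apply the uniform energy estimate that gave $R(\t)$ in \eqref{def R storto} (indeed, the hypothesis $\|\pa_t R_2\|\le 1$ in \eqref{wwn 6} is what allows us to bring \eqref{wwn 2} to bear on the $\Psi_1 R_2 \Psi_1^{-1}$ term). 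The loss of one derivative in each of the intermediate composition estimates is exactly compensated by the assumptions in \eqref{wwn 6}, and one finally recovers \eqref{wwn 7} at the $\mathcal{L}(L^2)$-level.
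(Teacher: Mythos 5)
Your proposal is correct and follows essentially the same route the paper gestures at: the paper's own proof of this proposition is only a brief sketch, invoking the explicit formulas (\ref{def Psi1}), (\ref{conj L explicit}), (\ref{tilde R3 App}), (\ref{cc 2.1})--(\ref{cc 2.3}), (\ref{def a7}), (\ref{pT}), triangle inequalities, and the observation that $\mT\tilde R_5$ is the conjugate $\mT\tilde R_5\mT^{-1}$. Your fleshing out of each of those steps (upgrading Lemma~\ref{lemma:Psi} to $H^\mu$ via tame Moser estimates, linearizing each transformation, telescoping the conjugations for (\ref{wwn 7}), and identifying exactly where the hypotheses (\ref{wwn 4}), (\ref{wwn 6}) enter to absorb the derivative losses) is the same line of attack, just spelled out in more detail than the paper gives.
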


\begin{proof}
To prove statement $(ii)$ we make repeatedly use of triangular inequality and explicit formulas. 
In particular, to estimate $p(\psi(\lm t)) - p'(\psi'(t))$, we use explicit formulas similar to \e{pT}. 
To estimate $\tilde R'_5 - (\mT \tilde R_5)$ we note that the rescaled operator 
$(\mT \tilde R_5)$ is the composition $\mT \tilde R_5 \mT^{-1}$, 
and then we also use \e{wwn 2}-\e{wwn 2.1}. 
Remember that we have renamed $W := a_7$ and $R_3 := \tilde R_5$. 
\end{proof}

\bigskip

\begin{flushleft}

\textbf{Thomas Alazard}\\
CNRS et D\'epartement de Math\'ematiques et Applications UMR 8553\\
\'Ecole Normale Sup\'erieure \\
45 rue d'Ulm, Paris F-75005, France

\medskip

\textbf{Pietro Baldi}\\
Dipartimento di Matematica e Applicazioni ``R. Caccioppoli''\\
Universit\`a di Napoli Federico II \\
Via Cintia, 80126 Napoli, Italy

\medskip 

\textbf{Daniel Han-Kwan}\\
CNRS et Centre de Math\'ematiques Laurent Schwartz UMR 7640\\
{\'E}cole Polytechnique\\
91128 Palaiseau Cedex - France 

\end{flushleft}


\begin{thebibliography}{10}
\small

\bibitem{AB}
Thomas Alazard and Pietro Baldi.
\newblock Gravity capillary standing water waves.
\newblock \emph{Arch. Ration. Mech. Anal.}, to appear. arXiv:1405.1934.

\bibitem{ABZ1}
Thomas Alazard, Nicolas Burq and Claude Zuily.
\newblock On the water-wave equations with surface tension.
\newblock {\em Duke Math. J.}, 158(3):413--499, 2011.

\bibitem{ABZ-ul}
Thomas Alazard, Nicolas Burq and Claude Zuily.
\newblock Cauchy theory for the gravity water waves system with non localized initial data. 
\newblock {\em Ann. Inst. H. Poincar\'e Anal. Non Lin\'eaire}, to appear, arXiv:1305.0457. 

\bibitem{AlDe}
Thomas Alazard and Jean-Marc Delort.
\newblock Sobolev estimates for two dimensional gravity water waves.
\newblock Preprint, 2013.

\bibitem{AM}
Thomas Alazard and Guy M{\'e}tivier.
\newblock Paralinearization of the {D}irichlet to {N}eumann operator, and
  regularity of three-dimensional water waves.
\newblock {\em Comm. Partial Differential Equations}, 34 (2009) 1632--1704.

\bibitem{AnantharamanMacia}
Nalini Anantharaman and Fabricio Maci{\`a}.
\newblock Semiclassical measures for the Schr{\"o}dinger equation on the torus.
\newblock {\em J. Eur. Math. Soc.} 16 (2014), no. 6, 1253--1288.

\bibitem{Baldi}
Pietro Baldi.
\newblock Periodic solutions of fully nonlinear autonomous equations of
  {B}enjamin-{O}no type.
\newblock {\em Ann. Inst. H. Poincar\'e Anal. Non Lin\'eaire}, 30(1):33--77,
  2013.

\bibitem{BallSlemrod}
John Ball and M. Slemrod.
\newblock Nonharmonic Fourier series and the stabilization of distributed semi-linear control systems. 
\newblock {\em Comm. Pure Appl. Math.} 37 (1979), 555--587.

\bibitem{BLR}
Claude Bardos, Gilles Lebeau and Jeffrey Rauch.
\newblock Sharp sufficient conditions for the observation, control, 
and stabilization of waves from the boundary. 
\newblock {\em SIAM J. Control Optim.} 30 (1992), no. 5, 1024--1065.



\bibitem{BG}
K.~Beyer, M.~G\"unther.
\newblock On the Cauchy problem for a capillary drop. I. Irrotational motion.
\newblock {\em Math. Methods Appl. Sci.} 21 (1998),  no. 12, 1149--1183.

\bibitem{BurqLebeau}
Nicolas Burq and Gilles Lebeau.
\newblock Mesures de d{\'e}faut, application au syst{\`e}me de Lam{\'e}. 
\newblock {\em Ann. Sci. {\'E}cole Norm. Sup.} (4) 34 (2001), no. 6, 817--870.

\bibitem{BurqZworski}
Nicolas Burq and Maciej Zworski. 
\newblock Control in the presence of a black box. 
\newblock {\em J. Amer. Math. Soc.}, 17 (2004), 443--471. 

\bibitem{CFGK}
Didier Clamond, Dorian Fructus, John Grue and Oyvind Kristiansen. 
\newblock An efficient model for three-dimensional surface wave simulations. Part II: Generation and absorption. 
\newblock {\em Journal of Computational Physics}, 205 (2005), 686-705.

\bibitem{Coron}
Jean-Michel Coron. 
\newblock Control and Nonlinearity. 
\newblock Math. Surveys Monogr. 136, AMS, Providence, RI, 2007.

\bibitem{Craig1985}
Walter Craig.
\newblock {An existence theory for water waves and the Boussinesq and
 Korteweg-deVries scaling limits}.
\newblock {\em Comm. Partial Differential Equations},
 10(8):787--1003, 1985.


\bibitem{CrSu}
Walter Craig and Catherine Sulem.
\newblock Numerical simulation of gravity waves.
\newblock {\em J. Comput. Phys.}, 108(1):73--83, 1993.

\bibitem{DehmanLebeau}
Belhassen Dehman and Gilles Lebeau.
\newblock Analysis of the HUM control operator and exact controllability for semilinear waves in uniform time.
\newblock  {\em SIAM J. Control Optim.} 48 (2009), no. 2, 
521--550.

\bibitem{GMS3}
Pierre Germain, Nader Masmoudi and Jalal Shatah. 
\newblock Global solutions for capillary waves equation in dimension 3. \newblock {\em Comm. Pure Appl. Math.}, to appear. arXiv:1210.1601.

\bibitem{Haraux}
Alain Haraux.
\newblock Quelques m{\'e}thodes et r{\'e}sultats r{\'e}cents en th{\'e}orie de la contr{\^o}labilit{\'e} 
exacte.
\newblock Rapport de recherche No. 1317, INRIA Rocquencourt, Octobre 1990.

\bibitem{Ifrim-Tataru}
Mihaela Ifrim and Daniel Tataru. 
\newblock The lifespan of small data solutions in two dimensional capillary water waves.
\newblock arXiv:1406.5471.

\bibitem{Iguchi}
Tatsuo Iguchi, 
\newblock A long wave approximation for capillary-gravity waves and an 
effect of the bottom. 
\newblock {\em Comm. Partial Differential Equations}, \textbf{32} (2007), 37--85.

\bibitem{IP-20141}
Alexandru Ionescu and Fabio Pusateri. 
\newblock Global analysis of a model for capillary water waves in 2D. 
\newblock arXiv:1406.6042.

\bibitem{IP-20142}
Alexandru Ionescu and Fabio Pusateri. 
\newblock Global regularity for 2D water waves with surface tension. 
\newblock arXiv:1408.4428.

\bibitem{Jaffard}
St{\'e}phane Jaffard.
\newblock Contr{\^o}le interne exact des vibrations d'une plaque rectangulaire. 
\newblock {\em Portugal. Math.} 47 (1990), no. 4, 423--429.

\bibitem{Kahane}
Jean-Pierre Kahane.
\newblock Pseudo-p{\'e}riodicit{\'e} et s{\'e}ries de Fourier lacunaires. 
\newblock {\em Ann. Sci. {\'E}cole Norm. Sup.}, 79 (1962), 93-150.

\bibitem{KomornikLoreti}
Vilmos Komornik and Paola Loreti. 
\newblock Fourier series in control theory.
\newblock Springer, 2005. 


\bibitem{LannesJAMS}
David Lannes.
\newblock Well-posedness of the water-waves equations.
\newblock {\em J. Amer. Math. Soc.}, 18(3):605--654, 2005.

\bibitem{LannesLivre}
David Lannes.
\newblock The water waves problem: mathematical analysis and asymptotics.
\newblock {\em Mathematical Surveys and Monographs}, 188, 2013.

\bibitem{Laurent}
Camille Laurent.
\newblock  Global controllability and stabilization for the nonlinear Schr{\"o}dinger equation on an interval.
\newblock {\em ESAIM-COCV}, 16(2): 356--379, 2010.


\bibitem{LLR}
Camille Laurent, Felipe Linares and Lionel Rosier.
\newblock Control and Stabilization of the Benjamin-Ono equation in $L^2(\xT)$.
\newblock  arXiv:1402.2621.

\bibitem{LRZ}
Camille Laurent, Lionel Rosier and Bing-Yu Zhang.
\newblock Control and stabilization of the Korteweg-de Vries equation on a periodic domain.
\newblock {\em Comm. Partial Differential Equations}, 35 (2010), no. 4, 707--744.


\bibitem{LinaresOrtega}
Felipe Linares and Jaime Ortega.
\newblock On the controllability and stabilization of the linearized Benjamin-Ono equation.
\newblock {\em ESAIM-COCV}, 11 (02) (2005), 204--218.

\bibitem{LinaresRosier}
Felipe Linares and Lionel Rosier.
\newblock Control and stabilization of the Benjamin-Ono equation on a periodic domain.
\newblock {\em Trans. Amer. Math. Soc.}, to appear, arXiv:1209.5014.


\bibitem{Mesognon}
Beno{\^i}t M{\'e}sognon-Gireau. 
\newblock The Cauchy problem on large time for the Water Waves equations with large topography variations. 
\newblock arXiv:1407.4369.


\bibitem{MePise}
Guy M{\'e}tivier 
\newblock % {\em 
Para-differential calculus and applications to the {C}auchy problem for nonlinear systems, %}, 
volume~5 of {\em Centro di Ricerca Matematica Ennio De Giorgi (CRM) Series}.
\newblock Edizioni della Normale, Pisa, 2008.


\bibitem{MZ}
Guy M\'etivier and Kevin Zumbrun.
\newblock Large viscous boundary layers for noncharacteristic
nonlinear hyperbolic problems.
\newblock {\em Mem. Amer. Math. Soc.} 
\textbf{175}  (2005),  no. 826. % , vi+107 pp.

\bibitem{MicuZuazua}
Sorin Micu and Enrique Zuazua.
\newblock An introduction to the controllability of partial differential equations.
\newblock Quelques questions de th{\'e}orie du contr{\^o}le. In: Sari T.(ed.) Collection Travaux en Cours, 69--157, 2004.


\bibitem{MRT}
Mei Ming, Fr{\'e}d{\'e}ric Rousset and Nikolay Tzvetkov. 
\newblock Multi-solitons and related solutions for the water-waves system. 
\newblock arXiv:1304.5263.

\bibitem{Nalimov}
V.~I. Nalimov.
\newblock The {C}auchy-{P}oisson problem.
\newblock {\em Dinamika Splo\v sn. Sredy}, (Vyp. 18 Dinamika Zidkost. so 
Svobod. Granicami):104--210, 254, 1974.


\bibitem{Rosier}
Lionel Rosier. 
\newblock Exact boundary controllability for the Korteweg-de Vries equation on a bounded domain. 
\newblock {\em ESAIM-COCV} 2 (1997), 33--55.
 
\bibitem{RReid}
Russell M. Reid.
\newblock Control time for gravity-capillary waves on water.
\newblock {\em SIAM J. Control Optim.}, 33(5), (1995) 1577--1586.

\bibitem{WehausenLaitone}
J.~V. Wehausen et E.~V. Laitone.
\newblock Surface waves.
\newblock In {\em Handbuch der Physik, Vol. 9, Part 3}, pages 446--778.
  Springer-Verlag, Berlin, 1960.

\bibitem{WuInvent}
Sijue Wu.
\newblock Well-posedness in {S}obolev spaces of the full water wave problem in
  2-{D}.
\newblock {\em Invent. Math.}, 130(1):39--72, 1997.

\bibitem{Yosihara}
Hideaki Yosihara.
\newblock Gravity waves on the free surface of an incompressible perfect fluid
  of finite depth.
\newblock {\em Publ. Res. Inst. Math. Sci.}, 18(1):49--96, 1982.

\bibitem{Zakharov1968}
Vladimir~E. Zakharov.
\newblock Stability of periodic waves of finite amplitude on the surface of a
  deep fluid.
\newblock {\em J. Appl. Mech. Tech. Phys.},  9(2):190--194, 1968.
\end{thebibliography}
\end{document}